\newtheorem{theorem}{Theorem}[section]
\newtheorem{lemma}[theorem]{Lemma}
\newtheorem{remark}[theorem]{Remark}
\renewcommand \theequation {%
\ifnum \c@section>\z@ \@arabic\c@section.%
\fi\@arabic\c@equation} \@addtoreset{equation}{section}
\providecommand{\abs}[1]{\left\vert#1\right\vert}
\providecommand{\nm}[1]{\left\Vert#1\right\Vert}
\providecommand{\br}[1]{\left\langle #1 \right\rangle}
\providecommand{\tm}[2]{\left\Vert#1\right\Vert_{L^2(#2)}}
\providecommand{\im}[2]{\left\Vert#1\right\Vert_{L^{\infty}(#2)}}
\providecommand{\lnnm}[1]{{\left\Vert#1\right\Vert}_{L^{\infty}L^{\infty}}}
\providecommand{\lnm}[1]{\left\Vert#1\right\Vert_{L^{\infty}}}
\providecommand{\tnm}[1]{\left\Vert#1\right\Vert_{L^{2}}}
\providecommand{\tnnm}[1]{{\left\Vert#1\right\Vert}_{L^{2}L^2}}
\providecommand{\ltnm}[1]{{\left\Vert#1\right\Vert}_{L^{\infty}L^{2}}}
\providecommand{\lss}[2]{\left\Vert#1\right\Vert_{L^{\infty}_{#2}}}
\providecommand{\tss}[2]{\left\Vert#1\right\Vert_{L^{2}_{#2}}}
\def\ud{\mathrm{d}}
\def\p{\partial}
\def\ls{\lesssim}
\def\half{\dfrac{1}{2}}
\def\rt{\rightarrow}
\def\r{\mathbb{R}}
\def\no{\nonumber}
\def\ue{\mathrm{e}}
\def\ds{\displaystyle}
\def\u{U}
\def\ub{\mathscr{U}}
\def\bu{\bar U}
\def\bub{\bar{\mathscr{U}}}
\def\uf{\mathfrak{U}}
\def\buf{\bar{\mathfrak{U}}}
\def\uu{\mathcal{U}}
\def\buu{\bar{\mathcal{U}}}
\def\e{\epsilon}
\def\s{\mathbb{S}}
\def\vx{\vec x}
\def\vw{\vec w}
\def\nx{\nabla_{x}}
\def\l{\lambda}
\def\ll{\mathcal{L}}
\def\q{Q}
\def\qb{\mathscr{Q}}
\def\qf{\mathfrak{Q}}
\def\v{\mathscr{V}}
\def\d{\delta}
\def\vn{\vec\nu}
\def\t{\mathcal{T}}
\def\k{\mathcal{K}}
\def\a{\mathscr{A}}
\def\rk{R_{\kappa}}
\def\id{{\bf{1}}}
\def\rr{\mathscr{R}}
\def\gb{\mathscr{G}}
\def\gf{\mathfrak{G}}
\begin{document}

\title{Boundary Layer of Transport Equation with In-Flow Boundary}

\author[L. Wu]{Lei Wu}
\address[L. Wu]{
   \newline\indent Department of Mathematical Sciences, Carnegie Mellon University
\newline\indent Pittsburgh, PA 15213, USA}
\email{zjkwulei1987@gmail.com}

\subjclass[2010]{35L65, 82B40, 34E05}

\begin{abstract}
Consider the steady neutron transport equation in 2D convex domains with in-flow boundary condition. In this paper, we establish the diffusive limit while the boundary layers are present. Our contribution relies on a delicate decomposition of boundary data to separate the regular and singular boundary layers, novel weighted $W^{1,\infty}$ estimates for the Milne problem with geometric correction in convex domains, as well as an $L^{2m}-L^{\infty}$ framework which yields stronger remainder estimates.\\
\textbf{Keywords:} Boundary layer decomposition; geometric correction; $W^{1,\infty}$ estimates; $L^{2m}-L^{\infty}$ framework.
\end{abstract}

\maketitle

\tableofcontents

\newpage


\pagestyle{myheadings} \thispagestyle{plain} \markboth{LEI WU}{BOUNDARY LAYER OF TRANSPORT EQUATION}

\section{Introduction}

\subsection{Problem Formulation}

We consider the steady neutron transport equation in a
two-dimensional bounded convex domain with in-flow boundary. In the spacial
domain $\vx=(x_1,x_2)\in\Omega$ where $\p\Omega\in C^3$ and the velocity domain
$\vw=(w_1,w_2)\in\s^1$, the neutron density $u^{\e}(\vx,\vw)$
satisfies
\begin{eqnarray}\label{transport}
\left\{
\begin{array}{rcl}\displaystyle
\e \vw\cdot\nabla_x u^{\e}+u^{\e}-\bar u^{\e}&=&0\ \ \text{in}\ \ \Omega,\\
u^{\e}(\vx_0,\vw)&=&g(\vx_0,\vw)\ \ \text{for}\
\ \vw\cdot\vn<0\ \ \text{and}\ \ \vx_0\in\p\Omega,
\end{array}
\right.
\end{eqnarray}
where
\begin{eqnarray}\label{average}
\bar u^{\e}(\vx)=\frac{1}{2\pi}\int_{\s^1}u^{\e}(\vx,\vw)\ud{\vw},
\end{eqnarray}
$\vn$ is the outward unit normal vector, with the Knudsen number $0<\e<<1$.
We intend to study the behavior of $u^{\e}$ as $\e\rt0$.

Based on the flow direction, we can divide the boundary $\Gamma=\{(\vx,\vw): \vx\in\p\Omega\}$ into
the in-flow boundary $\Gamma^-$, the out-flow boundary $\Gamma^+$
and the grazing set $\Gamma^0$ as
\begin{eqnarray}
\Gamma^{-}&=&\{(\vx,\vw): \vx\in\p\Omega,\ \vw\cdot\vn<0\}\\
\Gamma^{+}&=&\{(\vx,\vw): \vx\in\p\Omega,\ \vw\cdot\vn>0\}\\
\Gamma^{0}&=&\{(\vx,\vw): \vx\in\p\Omega,\ \vw\cdot\vn=0\}
\end{eqnarray}
It is easy to see that $\Gamma=\Gamma^+\cup\Gamma^-\cup\Gamma^0$.
Hence, the boundary condition is only given for $\Gamma^{-}$.

\subsection{Background and Method}

\subsubsection{Asymptotic Analysis}

Diffusive limits, or more general hydrodynamic limits, are central to connecting the kinetic theory and fluid mechanics. The basic idea is to consider the asymptotic behaviors of the solutions to Boltzmann equation, transport equation, or Vlasov systems. Since early 20th century, this type of problems have been extensively studied in many different settings: steady or unsteady, linear or nonlinear, strong solution or weak solution, etc.

Among all these variations, one of the simplest but most important models --- neutron transport equation in bounded domains, has attracted a lot of attention since the dawn of atomic age. The neutron transport equation is usually regarded as a linear prototype of the more complicated nonlinear Boltzmann equation, and thus, is an ideal starting point to develop new theories and techniques.
We refer to
\cite{Larsen1974=}, \cite{Larsen1974}, \cite{Larsen1975}, \cite{Larsen1977}, \cite{Larsen.D'Arruda1976}, \cite{Larsen.Habetler1973}, \cite{Larsen.Keller1974}, \cite{Larsen.Zweifel1974}, \cite{Larsen.Zweifel1976}, \cite{Li.Lu.Sun2015=}, \cite{Li.Lu.Sun2015} for more details.

For steady neutron transport equation, the exact solution can be approximated by the sum of an interior solution $\u$ and a boundary layer $\uu$. The interior solution satisfies certain fluid equations or thermodynamic equations, and the boundary layer satisfies a half-space kinetic equation, which decays rapidly when it is away from the boundary.

The justification of diffusive limit usually involves two steps:
\begin{enumerate}
\item
Expanding $\u=\ds\sum_{k=0}^{\infty}\e^k\u_k$ and $\uu=\ds\sum_{k=0}^{\infty}\e^k\uu_k$ as power series of $\e$ and proving the coefficients $\u_k$ and $\uu_k$ are well-defined. Traditionally, the estimates of interior solutions $\u_k$ are relatively straightforward. On the other hand, boundary layers $\uu_k$ satisfy one-dimensional half-space problems which lose some key structures of the original equations. The well-posedness of boundary layer equations are sometimes extremely difficult and it is possible that they are actually ill-posed (e.g. certain type of Prandtl layers).
\item
Proving that $R=u^{\e}-\u_0-\uu_0=o(1)$ as $\e\rt0$. Ideally, this should be done just by expanding to the leading-order level $\u_0$ and $\uu_0$. However, in singular perturbation problems, the estimates of the remainder $R$ usually involves negative powers of $\e$, which requires expansion to higher order terms $\u_N$ and $\uu_N$ for $N\geq1$ such that we have sufficient power of $\e$. In other words, we define $R=u^{\e}-\ds\sum_{k=0}^{N}\e^k\u_k-\ds\sum_{k=0}^{N}\e^k\uu_k$ for $N\geq1$ instead of $R=u^{\e}-\u_0-\uu_0$ to get better estimate of $R$.
\end{enumerate}

\subsubsection{Classical Approach}

The construction of kinetic boundary layers
has long been believed to be satisfactorily solved since Bensoussan, Lions and Papanicolaou published their remarkable paper \cite{Bensoussan.Lions.Papanicolaou1979} in 1979. Their formulation, based on the flat Milne problem, was later extended to treat the nonlinear Boltzmann equation (see \cite{Sone2002} and \cite{Sone2007}).

In detail, in $\Omega$, let $\eta\in[0,\infty)$ denote the rescaled normal variable with respect to the boundary, $\tau\in[-\pi,\pi)$ the tangential variable, and $\phi\in[-\pi,\pi)$ the velocity variable defined in (\ref{substitution 1}), (\ref{substitution 2}), (\ref{substitution 3}), and (\ref{substitution 4}).
The boundary layer $\uu_0$ satisfies the flat Milne problem,
\begin{eqnarray}
\sin\phi\frac{\p \uu_0}{\p\eta}+\uu_0-\bar\uu_0&=&0.
\end{eqnarray}

Unfortunately, in \cite{AA003}, we demonstrated that both the proof and results of this formulation are invalid due to a lack of regularity in estimating $\dfrac{\p\uu_0}{\p\tau}$. This pulls the whole research back to the starting point, and any later results based on this type of boundary layers should be reexamined.

To be more specific, the remainder estimates require that $\uu_1\in L^{\infty}$ which needs $\dfrac{\p\uu_0}{\p\tau}\in L^{\infty}$. However, though \cite{Bensoussan.Lions.Papanicolaou1979} shows that $\uu_0\in L^{\infty}$, it does not necessarily mean that $\dfrac{\p \uu_0}{\p\eta}\in L^{\infty}$. Furthermore, this singularity $\dfrac{\p \uu_0}{\p\eta}\notin L^{\infty}$ will be transferred to $\dfrac{\p \uu_0}{\p\tau}\notin L^{\infty}$. A careful construction of boundary data justifies this invalidity, i.e. the chain of estimates
\begin{eqnarray}
R=o(1)\ \Leftarrow\ \uu_1\in L^{\infty}\ \Leftarrow\ \dfrac{\p\uu_0}{\p\tau}\in L^{\infty}\ \Leftarrow\ \dfrac{\p \uu_0}{\p\eta}\in L^{\infty},
\end{eqnarray}
is broken since the rightmost estimate is wrong.

\subsubsection{Geometric Correction}

While the classical method breaks down, a new approach with geometric correction to the boundary layer construction has been developed to ensure regularity in the cases of disk and annulus in \cite{AA003} and \cite{AA006}. The new boundary layer $\uu_0$ satisfies the $\e$-Milne problem with geometric correction,
\begin{eqnarray}
\sin\phi\frac{\p \uu_0}{\p\eta}+\frac{\e}{R_{\kappa}-\e\eta}\cos\phi\frac{\p
\uu_0}{\p\phi}+\uu_0-\bar\uu_0&=&0,
\end{eqnarray}
where $R_{\kappa}$ is the radius of curvature at boundary. We proved that the solution recovers the well-posedness and exponential decay as in flat Milne problem, and the regularity in $\tau$ is indeed improved, i.e. $\dfrac{\p\uu_0}{\p\tau}\in L^{\infty}$.

However, this new method fails to treat more general domains. Roughly speaking, we have two contradictory goals to achieve:
\begin{itemize}
\item
To prove diffusive limits, the remainder estimates require higher-order regularity estimates of the boundary layer.
\item
The geometric correction $\dfrac{\e}{R_{\kappa}-\e\eta}\cos\phi\dfrac{\p
\uu_0}{\p\phi}$ in the boundary layer equation is related to the curvature of the boundary curve, which prevents higher-order regularity estimates.
\end{itemize}
In other words, the improvement of regularity is still not enough to close the proof. We may analyze the effects of different domains and formulations as follows:
\begin{itemize}
\item
In the absence of the geometric correction $\dfrac{\e}{R_{\kappa}-\e\eta}\cos\phi\dfrac{\p\uu_0}{\p\phi}$, which is the flat Milne problem as in \cite{Bensoussan.Lions.Papanicolaou1979}, the key tangential derivative $\dfrac{\p\uu_0}{\p\tau}$ is not bounded. Therefore, the expansion breaks down.
\item
In the domain of disk or annulus, when $R_{\kappa}$ is constant, as in \cite{AA003} and \cite{AA006}, $\dfrac{\p\uu_0}{\p\tau}$ is bounded, since the tangential derivative $\dfrac{\p}{\p\tau}$ commutes with the equation, and thus we do not even need to estimate $\dfrac{\p \uu_0}{\p\eta}$.
\item
For general smooth convex domains, when $R_{\kappa}$ is a function of $\tau$, $\dfrac{\p\uu_0}{\p\tau}$ relates to the normal derivative $\dfrac{\p\uu_0}{\p\eta}$, which has been shown possibly unbounded in \cite{AA003}. Therefore, we get stuck again at the regularity estimates.
\end{itemize}

\subsubsection{Diffusive Boundary}

In \cite{AA007} and \cite{AA009}, for the case of diffusive boundary, the above argument is pushed from both sides, i.e. improvements in remainder estimates and boundary layer regularity.

In detail, consider the boundary layer expansion
\begin{eqnarray}
\uu(\eta,\tau,\vw)\sim \uu_0(\eta,\tau,\vw)+\e\uu_1(\eta,\tau,\vw).
\end{eqnarray}
The diffusive boundary condition leads to an important simplification that $\uu_0=0$. As \cite{AA003} stated, the next-order boundary layer $\uu_1$ must formally satisfy
\begin{eqnarray}
\sin\phi\frac{\p \uu_1}{\p\eta}+\frac{\e}{R_{\kappa}-\e\eta}\cos\phi\frac{\p
\uu_1}{\p\phi}+\uu_1-\bar\uu_1&=&0.
\end{eqnarray}
Naturally, the diffusive limit requires the estimate of $\dfrac{\p \uu_1}{\p\tau}$. Here, a key observation is that $W=\dfrac{\p \uu_1}{\p\tau}$ satisfies
\begin{eqnarray}
\sin\phi\frac{\p W}{\p\eta}+\frac{\e}{R_{\kappa}-\e\eta}\cos\phi\frac{\p
W}{\p\phi}+W-\bar W&=&-\frac{\p_{\tau}R_{\kappa}}{R_{\kappa}-\e\eta}\bigg(\frac{\e}{R_{\kappa}-\e\eta}\cos\phi\frac{\p
\uu_1}{\p\phi}\bigg).
\end{eqnarray}
Note that the right-hand side is part of the $\uu_1$ equation and its estimate depends on $\sin\phi\dfrac{\p \uu_1}{\p\eta}$. In other words, the estimate of $\dfrac{\p \uu_1}{\p\tau}$ depends on $\sin\phi\dfrac{\p \uu_1}{\p\eta}$, not just $\dfrac{\p \uu_1}{\p\eta}$ which is possibly unbounded. The $\sin\phi$ is crucial to eliminate the singularity. This forms the major proof in \cite{AA007} and \cite{AA009}, i.e. the weighted regularity of $\uu_1$.

Our main idea is
to delicate track $\uu_1$ along the characteristics in the mild formulation, and prove the weighted $W^{1,\infty}$ estimates of the boundary layer.
In particular, we showed that $\dfrac{\p\uu_1}{\p\tau}$ is bounded even when $R_{\kappa}$ is not constant for general convex domains.

Furthermore, with a novel $L^{2m}-L^{\infty}$ framework, we justified an almost optimal remainder estimate to reduce the further regularity requirement of $\uu_1$.

In summary, in \cite{AA007} and \cite{AA009}, we proved the diffusive limit that $u^{\e}$ converges to the solution of a Laplace's equation with Neumann boundary condition.

\subsubsection{In-Flow Boundary and Basic Ideas}

It is notable that, for the case of in-flow boundary as equation (\ref{transport}), the situation is much worse. The leading-order boundary layer $\uu_0$ is no longer zero.
\begin{eqnarray}
\sin\phi\frac{\p \uu_0}{\p\eta}+\frac{\e}{R_{\kappa}-\e\eta}\cos\phi\frac{\p
\uu_0}{\p\phi}+\uu_0-\bar\uu_0&=&0,\\
\sin\phi\frac{\p \uu_1}{\p\eta}+\frac{\e}{R_{\kappa}-\e\eta}\cos\phi\frac{\p
\uu_1}{\p\phi}+\uu_1-\bar\uu_1&=&-\cos\phi\frac{\p\uu_0}{\p\tau}.
\end{eqnarray}
The remainder contains the term $\dfrac{\p\uu_1}{\p\tau}$, which depends on the estimate of $\dfrac{\p^2\uu_0}{\p\tau^2}$. Then we must prove $W^{2,\infty}$ estimates in the boundary layer equation. In principle, this is impossible for general kinetic equations as \cite{Guo.Kim.Tonon.Trescases2013} pointed out.

On the other hand, we have a key observation that actually the singularity that prevents higher-order regularity concentrates in the neighborhood of the grazing set, so it is natural to isolate the singular part from the whole solution and tackle them in different methods.

Inspired by \cite{Li.Lu.Sun2017}, we introduce a new regularization argument. Instead of trying different weighted norms, we may also modify the boundary data and smoothen the boundary layer in this modified problem.

To be precise, we decompose the boundary data $g=\gb+\gf$, such that
\begin{itemize}
\item
the boundary layer $\ub$ with data $\gb$, which we call regular boundary layer, attains second-order regularity in the tangential direction, i.e. $\dfrac{\p^2\ub}{\p\tau^2}\in L^{\infty}$; $\gb=g$ in most of the region except a small neighborhood of the grazing set in order to strengthen the smoothness of $\ub$;
\item
the boundary layer $\uf$ with data $\gf$, which we call singular boundary layer, attains only first-order regularity in the tangential direction i.e. $\dfrac{\p\uf}{\p\tau}\in L^{\infty}$, but the support of $\gf$ is restricted to a very small neighborhood of the grazing set with diameter $\e^{\alpha}$ for some $0<\alpha<1$.
\end{itemize}
In other words, for the remainder estimates, the extra power of $\e$ comes from two sources: $\ub$ gains power by expanding to the higher order, and $\uf$ gains power through a small support $\e^{\alpha}$.

Definitely, this decomposition comes with a price. Even if we assume $\dfrac{\p g}{\p\phi}=O(1)$, after the decomposition, we can at most have $\dfrac{\p\gb}{\p\phi}=O(\e^{-\alpha})$ and $\dfrac{\p\gf}{\p\phi}=O(\e^{-\alpha})$. We have to prove a much stronger weighted $W^{1,\infty}$ estimates to suppress such loss of power in $\e$. Moreover, this decomposition introduces two contradictory goals in the estimates:
\begin{itemize}
\item
to obtain $W^{2,\infty}$ estimate of $\ub$ with data $\gb$, we want $\alpha$ to be as small as possible; the smaller $\alpha$ is (better smoothness of $\gb$), the better estimates we get;
\item
to obtain $W^{1,\infty}$ estimate of $\uf$ with data $\gf$, we want $\alpha$ to be as large as possible; the larger $\alpha$ is (smaller support of $\gf$), the better estimates we get.
\end{itemize}
This balance is quite delicate and the estimates for the $\e$-Milne problem with geometric correction in  \cite{AA003}, \cite{AA006}, \cite{AA007} and \cite{AA009} are not sufficient. We have to start from scratch and prove the stronger version.

\subsubsection{Main Methods}

To fully solve such a problem, we need an intricate synthesis of previously developed methods, and the fresh regularization argument stated as above.

We inherit and modify the following ideas and techniques, which can be considered the minor contribution:
\begin{itemize}
\item
{\bf Geometric Correction:}\\
The $\e$-Milne problem with geometric correction for $f=\ub$ or $\uf$,
\begin{eqnarray}\label{intro 11}
\sin\phi\frac{\p f}{\p\eta}+\frac{\e}{R_{\kappa}-\e\eta}\cos\phi\frac{\p
f}{\p\phi}+f-\bar f&=&S,
\end{eqnarray}
has been shown to be the correct formulation to describe kinetic boundary layers (see \cite{AA003}). In this paper, we start from scratch and justify the detailed dependence on the source term $S$. In particular, we isolate the contribution of $\bar S$ and $S-\bar S$.
\item
{\bf Canonical Weighted $W^{1,\infty}$ Estimates of Boundary Layers:}\\
The weighted $W^{1,\infty}$ estimates in $\e$-Milne problem with geometric correction is the key to estimate $\dfrac{\p f}{\p\tau}$ (see \cite{AA007}). In this paper, we highlight the dependence on the characteristic curves and the boundary data. The convexity and the kinetic distance
\begin{eqnarray}
\zeta(\eta,\phi)=\Bigg(1-\bigg(\frac{\rk-\e\eta}{\rk}\cos\phi\bigg)^2\Bigg)^{\frac{1}{2}},
\end{eqnarray}
is key to this proof.
\item
{\bf Remainder Estimates:}\\
This is the key step to reduce the regularity requirement in boundary layers. It is originally developed in \cite{AA003} and later strengthened in \cite{AA007}. In the remainder equation for $R(\vx,\vw)=u^{\e}-\u-\uu$,
\begin{eqnarray}
\e\vw\cdot\nx
R+R-\bar R&=&S,
\end{eqnarray}
the estimate justified in \cite{AA003} using $L^2-L^{\infty}$ framework is
\begin{eqnarray}
\lnm{R}\ls \frac{1}{\e^3}\tnm{S}+\text{higher order terms}.
\end{eqnarray}
We intend to show that $\lnm{R}=o(1)$ as $\e\rt0$. Since we cannot expand to higher-order boundary layers to further improve $S$, the coefficients $\e^{-3}$ is too singularity. A key improvement in \cite{AA007} for diffusive boundary case is to develop the $L^{2m}-L^{\infty}$ framework to prove a stronger estimate for $m\geq2$,
\begin{eqnarray}
\lnm{R}\ls \frac{1}{\e^{2+\frac{1}{m}}}\nm{S}_{L^{\frac{2m}{2m-1}}}+\text{higher order terms}.
\end{eqnarray}
In this paper, we adapt it to treat the in-flow boundary case with a modified $L^{2m}-L^{\infty}$ framework. The main idea is to introduce a special test function in the weak formulation to treat $\bar R$ and $R-\bar R$ separately, and further to bootstrap to improve the $L^{\infty}$ estimate by a modified double Duhamel's principle. The proof relies on a delicate analysis using interpolation and Young's inequality.
\end{itemize}
The key novelty of this paper lies in the innovative regularization argument and the corresponding regularity estimates, which constitute the major contribution:
\begin{itemize}
\item
{\bf Improved Weighted $W^{1,\infty}$ Estimates of Boundary Layers:}\\
We combine several different formulations to track the characteristics and justify that the solution of (\ref{intro 11}) satisfies
\begin{eqnarray}
&&\lnnm{\zeta\frac{\p f}{\p\eta}}+\lnnm{\frac{\e}{R_{\kappa}-\e\eta}\cos\phi\frac{\p f}{\p\phi}}\\
&\leq&C\abs{\ln(\e)}^8\bigg(\lnm{p}+\lnm{(\e+\zeta)\dfrac{\p p}{\p\phi}}+\lnnm{S}+\lnnm{\zeta\dfrac{\p S}{\p\eta}}+\lnnm{f}\bigg).\no
\end{eqnarray}
where the boundary data $p=\gb$ or $\gf$. The extra weight $\e+\zeta$ suppresses the singularity in $\dfrac{\p\gb}{\p\phi}$ and $\dfrac{\p\gf}{\p\phi}$. In particular, the estimate does not depend on $\dfrac{\p S}{\p\phi}$. This is the key step to isolate the contribution of $\sin\dfrac{\p f}{\p\eta}$ and $\dfrac{\e}{R_{\kappa}-\e\eta}\cos\phi\dfrac{\p f}{\p\phi}$, which is crucial to later $W^{2,\infty}$ estimates.

The estimate is obtained through a delicate absorbing argument and novel characteristic analysis in half-space kinetic equations.

\item
{\bf $\dfrac{\p^2}{\p\tau^2}$ Estimate of Regular Boundary Layer}:\\
\cite{Guo.Kim.Tonon.Trescases2013} pointed out that weighted $W^{2,\infty}$ estimates of general kinetic equations is not available. This is true even for $\ub$ with modified boundary data. In principle, we cannot bound $\dfrac{\p^2\ub_0}{\p\eta^2}$ and $\dfrac{\p^2\ub_0}{\p\phi^2}$. Instead, we propose a delicate analysis to show that we can estimate $\dfrac{\p\ub_1}{\p\tau}$ without referring to the other second-order derivatives. This is quite unusual and cannot be done in a direct fashion.

Roughly speaking, we need a chain of estimates
\begin{eqnarray}
\lnnm{\dfrac{\p\ub_1}{\p\tau}}&\Leftarrow& \lnnm{\frac{\p}{\p\tau}\left(\dfrac{\p\ub_0}{\p\tau}\right)} \\
&\Leftarrow& \lnnm{\zeta\frac{\p}{\p\eta}\left(\dfrac{\p\ub_0}{\p\tau}\right)}
+\lnnm{\frac{\e}{R_{\kappa}-\e\eta}\cos\phi\frac{\p}{\p\phi}\left(\dfrac{\p\ub_0}{\p\tau}\right)}\no\\
&\Leftarrow&\lnnm{\frac{\e}{R_{\kappa}-\e\eta}\cos\phi\frac{\p}{\p\phi}\left(\dfrac{\p\ub_0}{\p\eta}\right)}\no\\
&\Leftarrow&\lnnm{\frac{\e}{R_{\kappa}-\e\eta}\cos\phi\dfrac{\p\ub_0}{\p\phi}}.\no
\end{eqnarray}
Here, none of these steps are direct application of above improved weighted $W^{1,\infty}$ estimates. Instead, we need careful arrangement of dependent terms and utilize absorbing argument in a delicate way. Eventually, we can justify that
\begin{eqnarray}
\lnnm{\dfrac{\p\ub_1}{\p\tau}}\sim \e^{-\alpha}.
\end{eqnarray}
\item
{\bf $\dfrac{\p}{\p\tau}$ Estimate with Smallness of Singular Boundary Layer}:\\
Here, the major difficulty is how to preserve the smallness of boundary data. The key observation is that in our proof of well-posedness and $W^{1,\infty}$ estimates, we only use two types of quantities: the integral in $\phi$ and the value along the characteristics. Therefore, we introduce a domain decomposition as $\chi_1: \zeta\leq\e^{\alpha}$ and $\chi_2:\zeta\geq\e^{\alpha}$, and estimate $\uf$ in each domain separately.
\begin{enumerate}
\item
$\chi_1$: since $\gf=O(1)$, we know $\uf=O(1)$ whose major contribution is from the boundary data, so it is relatively large but is only restricted to a small domain for $\alpha>0$.
\item
$\chi_2$: since $\gf=0$, we know $\uf=O(\e^{\alpha})$ whose major contribution is from the non-local operator $\buf$, so it is relatively small and spread most of the domain.
\end{enumerate}
In the remainder estimate, the estimates of $\uf$ is in $L^{\frac{2m}{2m-1}}$, so we can combine these two contribution in an integral to obtain smallness
\begin{eqnarray}
\nm{\dfrac{\p\uf_0}{\p\tau}}_{L^{\frac{2m}{2m-1}}}\sim \e^{1-\frac{1}{2m}+\frac{(2m-1)\alpha}{2m}}.
\end{eqnarray}
\end{itemize}
Applying these new techniques, we successfully obtain the diffusive limit that $u^{\e}$ converges to the solution of a Laplace's equation with Dirichlet boundary condition.

\subsection{Main Theorem}

\begin{theorem}\label{main}
Assume $g(\vx_0,\vw)\in C^3(\Gamma^-)$. Then for the steady neutron
transport equation (\ref{transport}), there exists a unique solution
$u^{\e}(\vx,\vw)\in L^{\infty}(\Omega\times\s^1)$. Moreover,
\begin{eqnarray}
\lim_{\e\rt0}\nm{u^{\e}-\u-\uu}_{L^{\infty}(\Omega\times\s^1)}=0,
\end{eqnarray}
where $\u(\vx)$ satisfies the Laplace equation with Dirichlet boundary condition
\begin{eqnarray}
\left\{
\begin{array}{l}
\Delta_x\u(\vx)=0\ \ \text{in}\
\ \Omega,\\\rule{0ex}{1.5em}
\u(\vx_0)=D(\vx_0)\ \ \text{on}\ \
\p\Omega,
\end{array}
\right.
\end{eqnarray}
and $\uu(\eta,\tau,\phi)$ satisfies the $\e$-Milne problem with geometric correction
\begin{eqnarray}
\left\{
\begin{array}{l}
\sin\phi\dfrac{\p \uu }{\p\eta}-\dfrac{\e}{\rk(\tau)-\e\eta}\cos\phi\dfrac{\p
\uu }{\p\phi}+\uu -\buu =0,\\\rule{0ex}{1.5em}
\uu (0,\tau,\phi)=g(\tau,\phi)-D(\tau)\ \ \text{for}\ \
\sin\phi>0,\\\rule{0ex}{1.5em}
\uu (L,\tau,\phi)=\uu (L,\tau,\rr[\phi]),
\end{array}
\right.
\end{eqnarray}
for $L=\e^{-\frac{1}{2}}$, $\rr[\phi]=-\phi$, $\eta$ the rescaled normal variable, $\tau$ the tangential variable, and $\phi$ the velocity variable.
\end{theorem}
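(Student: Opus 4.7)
The plan is to approximate $u^{\e}$ by a finite asymptotic expansion and then close everything by controlling the remainder in a modified $L^{2m}-L^{\infty}$ framework. First, I would set up a matched expansion
\begin{equation}
\u = \sum_{k=0}^{N}\e^k\u_k,\qquad \ub = \sum_{k=0}^{N}\e^k\ub_k,\qquad \uf = \sum_{k=0}^{N}\e^k\uf_k,
\end{equation}
with each $\u_k = \u_k(\vx)$ harmonic and each $\ub_k,\uf_k$ solving an $\e$-Milne problem with geometric correction driven by the previous tangential derivative. The matching condition $\ub_k + \uf_k\to 0$ as $\eta\to\infty$ fixes the Dirichlet datum for $\u_k$; at leading order this produces the Laplace problem with datum $D$ stated in the theorem, and the boundary datum $g - D$ for $\uu_0$ is split as $\gb + \gf$ by a cutoff around $\{\sin\phi = 0\}$ at scale $\e^{\alpha}$ for some $0 < \alpha < 1$.

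Next, I would establish the regularity of the boundary layers. Applying the improved weighted $W^{1,\infty}$ estimate of the introduction with $p = \gb$ and $p = \gf$ gives the basic control of $\zeta\dfrac{\p\ub_0}{\p\eta}$, $\dfrac{\e\cos\phi}{\rk - \e\eta}\dfrac{\p\ub_0}{\p\phi}$ and the analogous $\uf_0$ quantities, the $\e + \zeta$ weight in the right-hand side exactly absorbing the $\e^{-\alpha}$ loss coming from the cutoff. The crucial step is to upgrade the $\ub_0$ bound to the tangential-second-order control sketched in the introduction, executing the four-link chain
\begin{equation}
\tfrac{\p \ub_1}{\p\tau}\ \Leftarrow\ \tfrac{\p^2\ub_0}{\p\tau^2}\ \Leftarrow\ \zeta\tfrac{\p}{\p\eta}\tfrac{\p\ub_0}{\p\tau}+\tfrac{\e\cos\phi}{\rk-\e\eta}\tfrac{\p}{\p\phi}\tfrac{\p\ub_0}{\p\tau}\ \Leftarrow\ \tfrac{\e\cos\phi}{\rk-\e\eta}\tfrac{\p}{\p\phi}\tfrac{\p\ub_0}{\p\eta}\ \Leftarrow\ \tfrac{\e\cos\phi}{\rk-\e\eta}\tfrac{\p\ub_0}{\p\phi}
\end{equation}
so as to obtain $\lnnm{\dfrac{\p\ub_1}{\p\tau}} \ls \e^{-\alpha}\abs{\ln\e}^{C}$ without ever invoking $\dfrac{\p^2\ub_0}{\p\eta^2}$ or $\dfrac{\p^2\ub_0}{\p\phi^2}$. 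For the singular layer I would split phase space into $\chi_1 = \{\zeta \le \e^{\alpha}\}$ (where $\uf_0 = O(1)$ but the set is thin) and $\chi_2 = \{\zeta \ge \e^{\alpha}\}$ (where $\uf_0 = O(\e^{\alpha})$ since only $\buf_0$ contributes), and combine the two to get the smallness of $\dfrac{\p\uf_0}{\p\tau}$ in $L^{\frac{2m}{2m-1}}$.

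Then I would close the argument through the remainder
\begin{equation}
R = u^{\e} - \sum_{k=0}^{N}\e^k\u_k - \sum_{k=0}^{N}\e^k\ub_k - \sum_{k=0}^{N}\e^k\uf_k,
\end{equation}
which satisfies $\e\vw\cdot\nx R + R - \bar R = S$ with $S$ controlled in $L^{\frac{2m}{2m-1}}$ by the regularity bounds above. The modified $L^{2m}-L^{\infty}$ framework, using a special test function in the weak formulation to separate $\bar R$ from $R - \bar R$ and a double-Duhamel bootstrap to pass from $L^{2m}$ to $L^{\infty}$, yields
\begin{equation}
\lnm{R} \ls \e^{-2 - \frac{1}{m}}\nm{S}_{L^{\frac{2m}{2m-1}}} + \text{(lower-order terms)}.
\end{equation}
Taking $N$ and $m$ large and $\alpha$ in the admissible window that balances the $\ub$ and $\uf$ contributions, every term on the right becomes $o(1)$ as $\e\to 0$, which is the convergence asserted by the theorem; existence and uniqueness of $u^{\e}$ itself follow from a standard contraction for the in-flow transport operator.

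The hardest step will be the tangential-second-order control of $\ub_0$: since $W^{2,\infty}$ estimates for kinetic boundary layers are in general unavailable (even after regularizing the data), this bound must be produced by carefully commuting $\dfrac{\p}{\p\tau}$ with the Milne operator and reapplying the weighted $W^{1,\infty}$ estimate inside an absorbing argument, never passing through a genuinely second-order $\eta$- or $\phi$-derivative. Tightly coupled to it is the choice of $\alpha$, which must simultaneously be small enough to keep $\gb$ smooth enough for the chain to close and large enough to keep $\gf$ supported in a set narrow enough to beat the $\e^{-2-1/m}$ factor in the remainder bound; the admissible window emerges only after the Stage 3 computation is fully unpacked.
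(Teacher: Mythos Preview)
Your proposal captures the main architecture—the $\gb/\gf$ decomposition, the weighted $W^{1,\infty}$ chain for $\p_\tau^2\ub_0$, the $\chi_1/\chi_2$ splitting for $\uf_0$, and the $L^{2m}$–$L^{\infty}$ remainder estimate—but there is a structural error in the expansion itself. You write all three series to a common order $N$ and propose to close by ``taking $N$ and $m$ large.'' This is precisely what cannot be done, and is the reason the problem is hard.

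The expansion depths are forced, not free. The singular layer $\uf$ must stop at order~$0$: defining $\uf_1$ would put $\cos\phi\,\p_\tau\uf_0$ as its source, and then controlling $\p_\tau\uf_1$ in the remainder would require $\p_\tau^2\uf_0$, which is unavailable since $\gf$ supports only one good tangential derivative. The regular layer $\ub$ stops at order~$1$: your four-link chain delivers $\p_\tau^2\ub_0$ and hence $\p_\tau\ub_1$, but a hypothetical $\ub_2$ would need $\p_\tau^2\ub_1$, hence third-order tangential information on $\ub_0$, which the chain does not give. Only the interior solution can be pushed further (the paper takes it to $k=2$). So the remainder is
\[
R \;=\; u^{\e} - \big(\u_0+\e\u_1+\e^2\u_2\big) - \big(\ub_0+\e\ub_1\big) - \uf_0,
\]
and the closing has to work at these fixed depths $(2,1,0)$. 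The whole purpose of the $L^{2m}$–$L^{\infty}$ framework is to lower the $\e^{-3}$ in the classical remainder bound to $\e^{-2-1/m}$ so that this limited expansion suffices; if arbitrary $N$ were available, the standard $L^2$–$L^{\infty}$ estimate would already close and neither the boundary-data decomposition nor the improved remainder bound would be needed. The admissible $\alpha$-window (in the paper, $\tfrac{3}{2m-1}<\alpha<1-\tfrac{3}{2m}$ for $m>3$) then emerges from balancing the $\e^{-\alpha}$ loss in $\p_\tau\ub_1$ against the $\e^{\alpha}$ gain from the thin support of $\gf$, both measured against the fixed $\e$-powers coming from the $(2,1,0)$ truncation—not from pushing $N$ higher.
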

\begin{remark}
Note that the effects of the boundary layer decays very fast when it is away from the boundary. Roughly speaking, this theorem states that for $\vx$ not very close to the boundary, $u^{\e}(\vx,\vw)$ can be approximated by the solution of a Laplace equation with Dirichlet boundary condition.
\end{remark}

\subsection{Notation and Paper Structure}

Throughout this paper, $C>0$ denotes a constant that only depends on
the parameter $\Omega$, but does not depend on the data. It is
referred as universal and can change from one inequality to another.
When we write $C(z)$, it means a certain positive constant depending
on the quantity $z$. We write $a\ls b$ to denote $a\leq Cb$.

This paper is organized as follows: in Section 2, we present the asymptotic analysis of the equation (\ref{transport.}) and introduce the decomposition of boundary layers;  in Section 3, we establish the $L^{\infty}$ well-posedness of
the remainder equation; in Section 4, we prove the well-posedness and decay of the $\e$-Milne problem with geometric correction; in Section 5, we study the weighted regularity of the $\e$-Milne problem with geometric correction; finally, in Section 6, we give a detailed analysis of the asymptotic expansion and prove the main theorem.

\begin{remark}
The general structure of this paper is very similar to that of \cite{AA007} and \cite{AA009}. In particular, Section 3, 4 and 5 seem to be an obvious adaption of the corresponding theorems there. However, we introduce new techniques to delicately improve the results in \cite{AA007}, so it needs a careful handling and a fresh start from scratch.
\end{remark}

\newpage

\section{Asymptotic Analysis}

In this section, we will present the asymptotic expansions of the neutron transport equation
\begin{eqnarray}\label{transport.}
\left\{
\begin{array}{l}\displaystyle
\e \vw\cdot\nabla_x u^{\e}+u^{\e}-\bar u^{\e}=0\ \ \text{in}\ \ \Omega,\\\rule{0ex}{1.0em}
u^{\e}(\vx_0,\vw)=g(\vx_0,\vw)\ \ \text{for}\
\ \vw\cdot\vn<0\ \ \text{and}\ \ \vx_0\in\p\Omega.
\end{array}
\right.
\end{eqnarray}

\subsection{Interior Expansion}

We define the interior expansion as follows:
\begin{eqnarray}\label{interior expansion}
\u(\vx,\vw)\sim\u_0(\vx,\vw)+\e\u_1(\vx,\vw)+\e^2\u_2(\vx,\vw),
\end{eqnarray}
where $\u_k$ can be determined by comparing the order of $\e$ by
plugging (\ref{interior expansion}) into the equation
(\ref{transport.}). Thus we have
\begin{eqnarray}
\u_0-\bu_0&=&0,\label{expansion temp 1}\\
\u_1-\bu_1&=&-\vw\cdot\nx\u_0,\label{expansion temp 2}\\
\u_2-\bu_2&=&-\vw\cdot\nx\u_1.\label{expansion temp 3}
\end{eqnarray}
Plugging (\ref{expansion temp 1}) into (\ref{expansion temp 2}),
we obtain
\begin{eqnarray}
\u_1=\bu_1-\vw\cdot\nx\bu_0.\label{expansion temp 4}
\end{eqnarray}
Plugging (\ref{expansion temp 4}) into (\ref{expansion temp 3}),
we get
\begin{eqnarray}\label{expansion temp 5}
\u_2-\bu_2&=&-\vw\cdot\nx(\bu_1-\vw\cdot\nx\bu_0)\\
&=&-\vw\cdot\nx\bu_1+w_1^2\p_{x_1x_1}\bu_0+w_2^2\p_{x_2x_2}\bu_0+2w_1w_2\p_{x_1x_2}\bu_0.\no
\end{eqnarray}
Integrating (\ref{expansion temp 5}) over $\vw\in\s^1$, we achieve
the final form
\begin{eqnarray}
\Delta_x\bu_0=0.
\end{eqnarray}
which further implies $\u_0(\vx,\vw)$ satisfies the equation
\begin{eqnarray}\label{interior 1}
\left\{ \begin{array}{rcl} \u_0&=&\bu_0,\\
\Delta_x\bu_0&=&0.
\end{array}
\right.
\end{eqnarray}
In a similar fashion, for $k=1,2$, $\u_k$ satisfies
\begin{eqnarray}\label{interior 2}
\left\{ \begin{array}{rcl} \u_k&=&\bu_k-\vw\cdot\nx\u_{k-1},\\
\Delta_x\bu_k&=&\displaystyle-\int_{\s^1}\vw\cdot\nx\u_{k-1}\ud{\vw}.\end{array}
\right.
\end{eqnarray}
It is easy to see that $\bu_k$ satisfies an elliptic equation. However, the boundary condition of $\bu_k$ is unknown at this stage, since generally $\u_k$ does not necessarily satisfy the diffusive boundary condition of (\ref{transport.}). Therefore, we have to resort to boundary layers.

\subsection{Boundary Layer Expansion}

Besides the Cartesian coordinate
system for interior solutions, we need a local coordinate system in a neighborhood of the boundary to describe boundary layers.

Assume the Cartesian coordinate system is $\vx=(x_1,x_2)$. Using polar coordinates system $(r,\theta)\in[0,\infty)\times[-\pi,\pi)$ and choosing pole in $\Omega$, we assume $\vx_0\in\p\Omega$ is
\begin{eqnarray}
\left\{
\begin{array}{rcl}
x_{1,0}&=&r(\theta)\cos\theta,\\
x_{2,0}&=&r(\theta)\sin\theta,
\end{array}
\right.
\end{eqnarray}
where $r(\theta)>0$ is a given function. Our local coordinate system is similar to the polar coordinate
system, but varies to satisfy the specific requirements.

In a neighborhood of the boundary, for each $\theta$, we have the
outward unit normal vector
\begin{eqnarray}
\vn=\left(\frac{r(\theta)\cos\theta+r'(\theta)\sin\theta}{\sqrt{r(\theta)^2+r'(\theta)^2}},\frac{r(\theta)\sin\theta-r'(\theta)\cos\theta}{\sqrt{r(\theta)^2+r'(\theta)^2}}\right).
\end{eqnarray}
We can determine each
point $\vx\in\bar\Omega$ as $\vx=\vx_0-\mu\vn$ where $\mu$ is the normal distance to a boundary point $\vx_0$. In detail, this means
\begin{eqnarray}\label{local}
\left\{
\begin{array}{rcl}
x_1&=&r(\theta)\cos\theta-\mu\dfrac{r(\theta)\cos\theta+r'(\theta)\sin\theta}{\sqrt{r(\theta)^2+r'(\theta)^2}},\\\rule{0ex}{2.0em}
x_2&=&r(\theta)\sin\theta-\mu\dfrac{r(\theta)\sin\theta-r'(\theta)\cos\theta}{\sqrt{r(\theta)^2+r'(\theta)^2}},
\end{array}
\right.
\end{eqnarray}
where $r'(\theta)=\dfrac{\ud{r}}{\ud{\theta}}$. It is easy to see that $\mu=0$ denotes the boundary $\p\Omega$ and $\mu>0$ denotes the interior of $\Omega$. $(\mu,\theta)$ is the desired local coordinate system.

By chain rule (see \cite{AA007}), we may deduce that
\begin{eqnarray}
\frac{\p\theta}{\p x_1}=\frac{MP}{P^3+Q\mu},&\quad&
\frac{\p\mu}{\p x_1}=-\frac{N}{P},\\
\frac{\p\theta}{\p x_2}=\frac{NP}{P^3+Q\mu},&\quad&
\frac{\p\mu}{\p x_2}=\frac{M}{P},
\end{eqnarray}
where
\begin{eqnarray}
P&=&(r^2+r'^2)^{\frac{1}{2}},\\
Q&=&rr''-r^2-2r'^2,\\
M&=&-r\sin\theta+r'\cos\theta,\\
N&=&r\cos\theta+r'\sin\theta.
\end{eqnarray}
Therefore, note the fact that for $C^2$ convex domains, the curvature
\begin{eqnarray}
\kappa(\theta)=\frac{r^2+2r'^2-rr''}{(r^2+r'^2)^{\frac{3}{2}}},
\end{eqnarray}
and the radius of curvature
\begin{eqnarray}
R_{\kappa}(\theta)=\frac{1}{\kappa(\theta)}=\frac{(r^2+r'^2)^{\frac{3}{2}}}{r^2+2r'^2-rr''}.
\end{eqnarray}
We define substitutions as follows:\\
\ \\
Substitution 1: \\
Let $(x_1,x_2)\rt (\mu,\theta)$ with
$(\mu,\theta)\in [0,R_{\min})\times[-\pi,\pi)$ for $R_{\min}=\min_{\theta}R_{\kappa}$ as
\begin{eqnarray}\label{substitution 1}
\left\{
\begin{array}{rcl}
x_1&=&r(\theta)\cos\theta-\mu\dfrac{r(\theta)\cos\theta+r'(\theta)\sin\theta}{\sqrt{r(\theta)^2+r'(\theta)^2}},\\\rule{0ex}{2.0em}
x_2&=&r(\theta)\sin\theta-\mu\dfrac{r(\theta)\sin\theta-r'(\theta)\cos\theta}{\sqrt{r(\theta)^2+r'(\theta)^2}},
\end{array}
\right.
\end{eqnarray}
and then the equation (\ref{transport.}) is transformed into
\begin{eqnarray}\label{transport 1}
\left\{
\begin{array}{l}
\displaystyle\e\Bigg(w_1\frac{-r\cos\theta-r'\sin\theta}{(r^2+r'^2)^{\frac{1}{2}}}+w_2\frac{-r\sin\theta+r'\cos\theta}{(r^2+r'^2)^{\frac{1}{2}}}\Bigg)\frac{\p u^{\e}}{\p\mu}\\
\displaystyle+\e\Bigg(w_1\frac{-r\sin\theta+r'\cos\theta}{(r^2+r'^2)(1-\kappa\mu)}+w_2\frac{r\cos\theta+r'\sin\theta}{(r^2+r'^2)(1-\kappa\mu)}\Bigg)\frac{\p u^{\e}}{\p\theta}+u^{\e}-\bar u^{\e}=0,\\\rule{0ex}{2.0em}
u^{\e}(0,\theta,\vw)=g(\theta,\vw)\ \ \text{for}\
\ \vw\cdot\vn<0,
\end{array}
\right.
\end{eqnarray}
where
\begin{eqnarray}
\vw\cdot\vn=w_1\frac{r\cos\theta+r'\sin\theta}{(r^2+r'^2)^{\frac{1}{2}}}+w_2\frac{r\sin\theta-r'\cos\theta}{(r^2+r'^2)^{\frac{1}{2}}}.
\end{eqnarray}

Noting the fact that
\begin{eqnarray}
\left(\frac{M}{P}\right)^2+\left(\frac{N}{P}\right)^2=
\left(\frac{-r\cos\theta-r'\sin\theta}{(r^2+r'^2)^{\frac{1}{2}}}\right)^2+\left(\frac{-r\sin\theta+r'\cos\theta}{(r^2+r'^2)^{\frac{1}{2}}}\right)^2=1,
\end{eqnarray}
we can further simplify (\ref{transport 1}).\\
\ \\
Substitution 2: \\
Let $\theta\rt \tau$ with
$\tau\in [-\pi,\pi)$ as
\begin{eqnarray}\label{substitution 2}
\left\{
\begin{array}{rcl}
\sin\tau&=&\dfrac{r\sin\theta-r'\cos\theta}{(r^2+r'^2)^{\frac{1}{2}}},\\\rule{0ex}{2.0em}
\cos\tau&=&\dfrac{r\cos\theta+r'\sin\theta}{(r^2+r'^2)^{\frac{1}{2}}},
\end{array}
\right.
\end{eqnarray}
which implies
\begin{eqnarray}
\frac{\ud{\tau}}{\ud{\theta}}=\kappa(r^2+r'^2)^{\frac{1}{2}}>0.
\end{eqnarray}
Then the equation (\ref{transport.}) is transformed into
\begin{eqnarray}\label{transport 2}
\left\{
\begin{array}{l}\displaystyle
-\e\left(w_1\cos\tau+w_2\sin\tau\right)\frac{\p
u^{\e}}{\p\mu}-\frac{\e}{\rk-\mu}\left(w_1\sin\tau-w_2\cos\tau\right)\frac{\p
u^{\e}}{\p\tau}+u^{\e}-\bar u^{\e}=0,\\\rule{0ex}{2.0em}
u^{\e}(0,\tau,\vw)=g(\tau,\vw)\ \
\text{for}\ \ \vw\cdot\vn<0,
\end{array}
\right.
\end{eqnarray}
where
\begin{eqnarray}
\vw\cdot\vn=w_1\cos\tau+w_2\sin\tau.
\end{eqnarray}
\ \\
Substitution 3:\\
We further make the scaling transform for $\mu\rt
\eta$ with $\eta\in
\left[0,\dfrac{R_{\min}}{\e}\right)$ as
\begin{eqnarray}\label{substitution 3}
\eta&=&\frac{\mu}{\e},
\end{eqnarray}
which implies
\begin{eqnarray}
\frac{\p u^{\e}}{\p\mu}=\frac{1}{\e}\frac{\p u^{\e}}{\p\eta}.
\end{eqnarray}
Then the equation (\ref{transport.}) is transformed into
\begin{eqnarray}\label{transport 3}
\left\{\begin{array}{l}\displaystyle
-\bigg(w_1\cos\tau+w_2\sin\tau\bigg)\frac{\p
u^{\e}}{\p\eta}-\frac{\e}{\rk-\e\eta}\bigg(w_1\sin\tau-w_2\cos\tau\bigg)\frac{\p
u^{\e}}{\p\tau}+u^{\e}-\bar u^{\e}=0,\\\rule{0ex}{2.0em}
u^{\e}(0,\tau,\vw)=g(\tau,\vw)\ \
\text{for}\ \ \vw\cdot\vn<0,
\end{array}
\right.
\end{eqnarray}
where
\begin{eqnarray}
\vw\cdot\vn=w_1\cos\tau+w_2\sin\tau.
\end{eqnarray}
\ \\
Substitution 4:\\
Define the velocity substitution for $(w_1,w_2)\rt
\xi$ with $\xi\in
[-\pi,\pi)$ as
\begin{eqnarray}\label{substitution 4}
\left\{
\begin{array}{rcl}
w_1&=&-\sin\xi\\
w_2&=&-\cos\xi
\end{array}
\right.
\end{eqnarray}
We have the succinct form of the equation (\ref{transport.}) as
\begin{eqnarray}\label{transport 4}
\left\{\begin{array}{l}\displaystyle \sin(\tau+\xi)\frac{\p
u^{\e}}{\p\eta}-\frac{\e}{\rk-\e\xi}\cos(\tau+\xi)\frac{\p
u^{\e}}{\p\tau}+u^{\e}-\bar u^{\e}=0,\\\rule{0ex}{2.0em}
u^{\e}(0,\tau,\xi)=g(\tau,\xi)\ \ \text{for}\ \
\sin(\tau+\xi)>0.
\end{array}
\right.
\end{eqnarray}
\ \\
Substitution 5:\\
As \cite{AA003} and \cite{AA007} reveal, we need a further rotational substitution for $\xi\rt
\phi$ with $\phi\in
[-\pi,\pi)$ as
\begin{eqnarray}\label{substitution 5}
\phi&=&\tau+\xi
\end{eqnarray}
and achieve the form
\begin{eqnarray}\label{transport temp}
\left\{\begin{array}{l}\displaystyle \sin\phi\frac{\p
u^{\e}}{\p\eta}-\frac{\e}{\rk-\e\eta}\cos\phi\bigg(\frac{\p
u^{\e}}{\p\phi}+\frac{\p
u^{\e}}{\p\tau}\bigg)+u^{\e}-\bar u^{\e}=0\\\rule{0ex}{2.0em}
u^{\e}(0,\tau,\phi)=g(\tau,\phi)\ \ \text{for}\ \
\sin\phi>0.
\end{array}
\right.
\end{eqnarray}
This step is trying to compensate the variants of the normal vector $\nu$ along the boundary. A bi-product of such substitution is that we decompose the tangential derivative and introduce a new velocity derivative.

We define the boundary layer expansion as follows:
\begin{eqnarray}\label{boundary layer expansion}
\uu(\eta,\tau,\phi)\sim\uu_0(\eta,\tau,\phi)+\e\uu_1(\eta,\tau,\phi),
\end{eqnarray}
where $\ub_k$ can be determined by comparing the order of $\e$ via
plugging (\ref{boundary layer expansion}) into the equation
(\ref{transport temp}). Thus, in a neighborhood of the boundary, we have
\begin{eqnarray}
\sin\phi\frac{\p \uu_0}{\p\eta}-\frac{\e}{\rk-\e\eta}\cos\phi\frac{\p
\uu_0}{\p\phi}+\uu_0-\buu_0&=&0,\label{expansion temp 6}\\
\sin\phi\frac{\p \uu_1}{\p\eta}-\frac{\e}{\rk-\e\eta}\cos\phi\frac{\p
\uu_1}{\p\phi}+\uu_1-\buu_1&=&\frac{1}{\rk-\e\eta}\cos\phi\frac{\p
\uu_0}{\p\tau},\label{expansion temp 7}
\end{eqnarray}
where
\begin{eqnarray}
\buu_k(\eta,\tau)=\frac{1}{2\pi}\int_{-\pi}^{\pi}\uu_k(\eta,\tau,\phi)\ud{\phi}.
\end{eqnarray}
We call this type of equations the $\e$-Milne problem with geometric correction.

\subsection{Decomposition and Modification}

%

In this section, we prove the important decomposition of boundary data, which can greatly improve the regularity.

Consider the $\e$-Milne problem with geometric correction with $L=\e^{-\frac{1}{2}}$ and $\rr[\phi]=-\phi$,
\begin{eqnarray}\label{deco}
\left\{
\begin{array}{l}
\sin\phi\dfrac{\p f}{\p\eta}-\dfrac{\e}{\rk-\e\eta}\cos\phi\dfrac{\p
f}{\p\phi}+f-\bar f=0,\\\rule{0ex}{1.5em}
f(0,\phi)=g(\phi)\ \ \text{for}\ \
\sin\phi>0,\\\rule{0ex}{1.5em}
f(L,\phi)=f(L,\rr[\phi]).
\end{array}
\right.
\end{eqnarray}
We assume that $g(\phi)$ is not a constant and $0\leq g(\phi)\leq 1$. This is always achievable and we do not lose the generality since the equation is linear. For some $\alpha>0$ which will be determined later, define two $C^{\infty}$ auxiliary functions
\begin{eqnarray}
g_1(\phi)=\left\{
\begin{array}{ll}
0&\ \ \text{for}\ \ \phi\in(0,\e^{\alpha}]\cup[\pi-\e^{\alpha},\pi),\\
g(\phi)&\ \ \text{for}\ \ \phi\in[2\e^{\alpha},\pi-2\e^{\alpha}],\\
\end{array}
\right.
\end{eqnarray}
and
\begin{eqnarray}
g_2(\phi)=\left\{
\begin{array}{ll}
1&\ \ \text{for}\ \ \phi\in(0,\e^{\alpha}]\cup[\pi-\e^{\alpha},\pi),\\
g(\phi)&\ \ \text{for}\ \ \phi\in[2\e^{\alpha},\pi-2\e^{\alpha}].\\
\end{array}
\right.
\end{eqnarray}
Let $f_1(\eta,\phi)$ and $f_2(\eta,\phi)$ be the solutions to the equation (\ref{deco}) with in-flow data $g_1(\phi)$ and $g_2(\phi)$ respectively. Then by Theorem \ref{Milne theorem 1}, we know $f_1$ and $f_2$ are well-defined in $L^{\infty}$. By Theorem \ref{Milne theorem 3}, they satisfy the maximum principle, which means
\begin{eqnarray}
&&f_1(0,0^+)-\bar f_1(0)=f_1(0,\pi^-)-\bar f_1(0)=-\bar f_1(0)<0,\\
&&f_2(0,0^+)-\bar f_2(0)=f_2(0,\pi^-)-\bar f_2(0)=1-\bar f_2(0)>0.
\end{eqnarray}
Therefore, there exists a constant $0<\l<1$ such that
\begin{eqnarray}
\l\Big(f_1(0,0^+)-\bar f_1(0)\Big)+(1-\l)\Big(f_2(0,0^+)-\bar f_2(0)\Big)&=&0,\\
\l\Big(f_1(0,\pi^-)-\bar f_1(0)\Big)+(1-\l)\Big(f_2(0,\pi^-)-\bar f_2(0)\Big)&=&0.
\end{eqnarray}
Let $g_{\l}(\phi)=\l g_1(\phi)+(1-\l)g_2(\phi)$ and the corresponding solution to the equation (\ref{deco}) is $f_{\l}(\eta,\phi)$. We have
\begin{eqnarray}
f_{\l}(0,0^+)-\bar f_{\l}(0)=f_{\l}(0,\pi^-)-\bar f_{\l}(0)=0.
\end{eqnarray}
Since for $\phi\in(0,\e^{\alpha}]\cup[\pi-\e^{\alpha},\pi)$, $g_{\l}=1-\l$ is a constant, we naturally have $\dfrac{\p g_{\l}}{\p\phi}=0$. We may solve from the equation (\ref{deco}) that
\begin{eqnarray}
\\
\dfrac{\p f_{\l}}{\p\eta}\bigg|_{\eta=0,\phi\in(0,\e^{\alpha}]\cup[\pi-\e^{\alpha},\pi)}&=&\frac{1}{\sin\phi}\bigg(\dfrac{\e}{\rk-\e\eta}\cos\phi\dfrac{\p
g_{\l}}{\p\phi}\bigg|_{\phi\in(0,\e^{\alpha}]\cup[\pi-\e^{\alpha},\pi)}-\Big(f_{\l}-\bar f_{\l}\Big)\bigg|_{\eta=0,\phi\in(0,\e^{\alpha}]\cup[\pi-\e^{\alpha},\pi)}\bigg)=0.\no
\end{eqnarray}
Note that $g_{\l}(\phi)=g(\phi)$ for $\phi\in[2\e^{\alpha},\pi-2\e^{\alpha}]$, so our modification is restricted to a small region near the grazing set and we can smoothen the normal derivative at the boundary.

This method can be easily generalized to treat other $g(\phi)$. In principle, for $g(\phi)\in C^1$, we can define a decomposition
\begin{eqnarray}
g(\phi)=\gb(\phi)+\gf(\phi),
\end{eqnarray}
such that $\gf(\phi)=0$ for $\sin\phi\geq2\e^{\alpha}$, and the solution to the equation (\ref{deco}) with in-flow data $\gb(\phi)$ has $L^{\infty}$ normal derivative at $\eta$=0.
Such a decomposition comes with a price. Originally, we have $\lnm{\dfrac{\p g}{\p\phi}}\leq C$. However, now we only have $\lnm{\dfrac{\p\gb}{\p\phi}}\leq C\e^{-\alpha}$ and $\lnm{\dfrac{\p\gf}{\p\phi}}\leq C\e^{-\alpha}$ due to the short-ranged cut-off function.

\subsection{Matching Procedure}

The bridge between the interior solution and boundary layer
is the boundary condition of (\ref{transport.}), so we
first consider the boundary expansion:
\begin{eqnarray}
\u_0+\ub_0+\uf_0&=&g,\\
\u_1+\ub_1&=&0.
\end{eqnarray}
Here $\ub_0$ and $\uf_0$ are boundary layers with corresponding decomposed boundary data $\gb$ and $\gf$. We call $\ub$ the regular boundary layer and $\uf$ the singular boundary layer. They should both satisfy the $\e$-Milne problem with geometric correction.\\
\ \\
Step 0: Preliminaries.\\
Define the weight function
\begin{eqnarray}\label{weight function}
\zeta(\eta,\phi)=\Bigg(1-\bigg(\frac{\rk-\e\eta}{\rk}\cos\phi\bigg)^2\Bigg)^{\frac{1}{2}}.
\end{eqnarray}
Define the force as
\begin{eqnarray}\label{force}
F(\e;\eta,\tau)=-\frac{\e}{\rk(\tau)-\e\eta},
\end{eqnarray}
and the length for $\e$-Milne problem as $L=\e^{-\frac{1}{2}}$. For $\phi\in[-\pi,\pi]$, denote $\rr[\phi]=-\phi$.\\
\ \\
Step 1: Construction of $\ub_0$, $\uf_0$ and $\u_0$.\\
Define the zeroth-order boundary layer as
\begin{eqnarray}\label{expansion temp 9}
\left\{
\begin{array}{l}
\ub_0(\eta,\tau,\phi)=\mathscr{F}_0 (\eta,\tau,\phi)-\mathscr{F}_{0,L}(\tau),\\\rule{0ex}{1.5em}
\sin\phi\dfrac{\p \mathscr{F}_0 }{\p\eta}+F(\e;\eta,\tau)\cos\phi\dfrac{\p
\mathscr{F}_0 }{\p\phi}+\mathscr{F}_0 -\bar{\mathscr{F}}_0 =0,\\\rule{0ex}{1.5em}
\mathscr{F}_0 (0,\tau,\phi)=\gb(\tau,\phi)\ \ \text{for}\ \
\sin\phi>0,\\\rule{0ex}{1.5em}
\mathscr{F}_0 (L,\tau,\phi)=\mathscr{F}_0 (L,\tau,\rr[\phi]),
\end{array}
\right.
\end{eqnarray}
with $\mathscr{F}_{0,L}(\tau)$ is defined in Theorem \ref{Milne theorem 1}, and
\begin{eqnarray}\label{expansion temp 9.}
\left\{
\begin{array}{l}
\uf_0(\eta,\tau,\phi)=\mathfrak{F}_0 (\eta,\tau,\phi)-\mathfrak{F} _{0,L}(\tau),\\\rule{0ex}{1.5em}
\sin\phi\dfrac{\p \mathfrak{F}_0 }{\p\eta}+F(\e;\eta,\tau)\cos\phi\dfrac{\p
\mathfrak{F}_0 }{\p\phi}+\mathfrak{F}_0 -\bar{\mathfrak{F}}_0 =0,\\\rule{0ex}{1.5em}
\mathfrak{F}_0 (0,\tau,\phi)=\gf(\tau,\phi)\ \ \text{for}\ \
\sin\phi>0,\\\rule{0ex}{1.5em}
\mathfrak{F}_0 (L,\tau,\phi)=\mathfrak{F}_0 (L,\tau,\rr[\phi]),
\end{array}
\right.
\end{eqnarray}
with $\mathfrak{F} _{0,L}(\tau)$ is defined in Theorem \ref{Milne theorem 1}.
Also, we define the zeroth-order interior solution $\u_0(\vx,\vw)$ as
\begin{eqnarray}\label{expansion temp 11}
\left\{
\begin{array}{l}
\u_0(\vx,\vw)=\bu_0(\vx) ,\\\rule{0ex}{1.5em} \Delta_x\bu_0(\vx)=0\ \ \text{in}\
\ \Omega,\\\rule{0ex}{1.5em}
\bu_0(\vx_0)=\mathscr{F}_{0,L}(\tau)+\mathfrak{F}_{0,L}(\tau)\ \ \text{on}\ \
\p\Omega.
\end{array}
\right.
\end{eqnarray}
\ \\
Step 2: Construction of $\ub_1$ and $\u_1$.\\
Define the first-order boundary layer as
\begin{eqnarray}\label{expansion temp 10}
\left\{
\begin{array}{l}
\ub_1(\eta,\tau,\phi)=\mathscr{F}_1 (\eta,\tau,\phi)-\mathscr{F} _{1,L}(\tau),\\\rule{0ex}{1.5em}
\sin\phi\dfrac{\p \mathscr{F}_1 }{\p\eta}+F(\e;\eta,\tau)\cos\phi\dfrac{\p
\mathscr{F}_1 }{\p\phi}+\mathscr{F}_1 -\bar{\mathscr{F}}_1 =\dfrac{1}{\rk-\e\eta}\cos\phi\dfrac{\p
\ub_0}{\p\tau},\\\rule{0ex}{1.5em}
\mathscr{F}_1 (0,\tau,\phi)=\vw\cdot\nx\u_0(0,\tau,\vw)\ \ \text{for}\ \
\sin\phi>0,\\\rule{0ex}{1.5em}
\mathscr{F}_1 (L,\tau,\phi)=\mathscr{F}_1 (L,\tau,\rr[\phi]),
\end{array}
\right.
\end{eqnarray}
with $\mathscr{F}_{1,L}(\tau)$ is defined in Theorem \ref{Milne theorem 1}.
Then we define the first-order interior solution $\u_1(\vx,\vw)$ as
\begin{eqnarray}\label{expansion temp 12.}
\left\{
\begin{array}{rcl}
\u_1(\vx,\vw)&=&\bu_1(\vx)-\vw\cdot\nx\u_0(\vx,\vw),\\\rule{0ex}{1.5em}
\Delta_x\bu_1(\vx)&=&-\displaystyle\int_{\s^1}\Big(\vw\cdot\nx\u_{0}(\vx,\vw)\Big)\ud{\vw}\
\ \text{in}\ \ \Omega,\\\rule{0ex}{1.5em} \bu_1(\vx_0)&=&f _{1,L}(\tau)\ \ \text{on}\ \
\p\Omega.
\end{array}
\right.
\end{eqnarray}
Note that we do not define $\uf_1$ here.\\
\ \\
Step 3: Construction of $\u_2$.\\
Since we do not expand to $\ub_2$ and $\uf_2$, we define the second-order interior solution as
\begin{eqnarray}
\left\{
\begin{array}{rcl}
\u_{2}(\vx,\vw)&=&\bu_{2}(\vx)-\vw\cdot\nx\u_{1}(\vx,\vw),\\\rule{0ex}{1.5em}
\Delta_x\bu_{2}(\vx)&=&-\displaystyle\int_{\s^1}\Big(\vw\cdot\nx\u_{1}(\vx,\vw)\Big)\ud{\vw}\
\ \text{in}\ \ \Omega,\\\rule{0ex}{1.5em} \bu_2(\vx_0)&=&0\ \ \text{on}\ \
\p\Omega.
\end{array}
\right.
\end{eqnarray}
Here, we might have $O(\e^3)$ error in this step due to the trivial boundary data. Thanks to the remainder estimate, it will not affect the diffusive limit.

\newpage

\section{Remainder Estimate}

In this section, we consider the remainder equation for $u(\vx,\vw)$ as
\begin{eqnarray}\label{neutron}
\left\{
\begin{array}{l}\displaystyle
\e\vw\cdot\nx u+u-\bar
u=f(\vx,\vw)\ \ \text{in}\ \ \Omega,\\\rule{0ex}{1.0em}
u(\vx_0,\vw)=h(\vx_0,\vw)\ \ \text{for}\ \
\vw\cdot\vn<0\ \ \text{and}\ \ \vx_0\in\p\Omega,
\end{array}
\right.
\end{eqnarray}
where
\begin{eqnarray}
\bar u(\vx)=\frac{1}{2\pi}\int_{\s^1}u(\vx,\vw)\ud{\vw},
\end{eqnarray}
$\vn$ is the outward unit normal vector, with the Knudsen number $0<\e<<1$.

We define the $L^p$ norm with $1\leq p<\infty$ and $L^{\infty}$ norms in $\Omega\times\s^1$ as
usual:
\begin{eqnarray}
\nm{f}_{L^p(\Omega\times\s^1)}&=&\bigg(\int_{\Omega}\int_{\s^1}\abs{f(\vx,\vw)}^p\ud{\vw}\ud{\vx}\bigg)^{\frac{1}{p}},\\
\nm{f}_{L^{\infty}(\Omega\times\s^1)}&=&\sup_{(\vx,\vw)\in\Omega\times\s^1}\abs{f(\vx,\vw)}.
\end{eqnarray}
Define the $L^p$ norm with $1\leq p<\infty$ and $L^{\infty}$ norms on the boundary as follows:
\begin{eqnarray}
\nm{f}_{L^p(\Gamma)}&=&\bigg(\iint_{\Gamma}\abs{f(\vx,\vw)}^p\abs{\vw\cdot\vn}\ud{\vw}\ud{\vx}\bigg)^{\frac{1}{p}},\\
\nm{f}_{L^p(\Gamma^{\pm})}&=&\bigg(\iint_{\Gamma^{\pm}}\abs{f(\vx,\vw)}^p\abs{\vw\cdot\vn}\ud{\vw}\ud{\vx}\bigg)^{\frac{1}{p}},\\
\nm{f}_{L^{\infty}(\Gamma)}&=&\sup_{(\vx,\vw)\in\Gamma}\abs{f(\vx,\vw)},\\
\nm{f}_{L^{\infty}(\Gamma^{\pm})}&=&\sup_{(\vx,\vw)\in\Gamma^{\pm}}\abs{f(\vx,\vw)}.
\end{eqnarray}
In particular, we denote $\ud{\gamma}=(\vw\cdot\vn)\ud{\vw}\ud{\vx_0}$ on the boundary.

The remainder estimates for neutron transport equation with diffusive boundary was proved in \cite{AA007} and \cite{AA009}. Here, the case with in-flow boundary was first shown in \cite{AA003}, so here we will focus on the a priori estimates and prove an improved version.

\subsection{$L^2$ Estimate}

\begin{lemma}(Green's Identity)\label{well-posedness lemma 1}
Assume $u(\vx,\vw),\ v(\vx,\vw)\in L^2(\Omega\times\s^1)$ and
$\vw\cdot\nx u,\ \vw\cdot\nx v\in L^2(\Omega\times\s^1)$ with $u,\
v\in L^2(\Gamma)$. Then
\begin{eqnarray}
\iint_{\Omega\times\s^1}\bigg((\vw\cdot\nx u)v+(\vw\cdot\nx
u)v\bigg)\ud{\vx}\ud{\vw}=\int_{\Gamma}uv\ud{\gamma}.
\end{eqnarray}
\end{lemma}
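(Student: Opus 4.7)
The plan is to prove the identity first for smooth functions by a direct application of the divergence theorem in $\vx$ with $\vw$ held fixed, and then to extend to the full class stated in the hypothesis by a density argument in the natural graph-norm space. (Note that the second summand on the left-hand side should read $(\vw\cdot\nx v)u$; the statement contains a typographical duplication.)

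For the smooth case, suppose $u,v\in C^{1}(\bar\Omega\times\s^{1})$. Fix $\vw\in\s^{1}$; since $\vw$ is independent of $\vx$, the product rule yields
\begin{equation*}
\nx\cdot(\vw\, uv)=(\vw\cdot\nx u)v+u(\vw\cdot\nx v).
\end{equation*}
Integrating over $\Omega$ and applying the classical divergence theorem (recall $\p\Omega\in C^{3}$), we obtain
\begin{equation*}
\int_{\Omega}\bigl((\vw\cdot\nx u)v+u(\vw\cdot\nx v)\bigr)\,\ud{\vx}=\int_{\p\Omega}(\vw\cdot\vn)\,uv\,\ud{S_{\vx_{0}}}.
\end{equation*}
Integrating this against $\ud{\vw}$ over $\s^{1}$ and recognising $\ud{\gamma}=(\vw\cdot\vn)\,\ud{\vw}\,\ud{\vx_{0}}$ on $\Gamma$ gives the claimed identity.

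For the general case, introduce the graph-norm space
\begin{equation*}
X=\bigl\{u\in L^{2}(\Omega\times\s^{1}):\vw\cdot\nx u\in L^{2}(\Omega\times\s^{1})\bigr\},\qquad \nm{u}_{X}^{2}=\nm{u}_{L^{2}}^{2}+\nm{\vw\cdot\nx u}_{L^{2}}^{2}.
\end{equation*}
The idea is to approximate $u,v$ by $u_{n},v_{n}\in C^{\infty}(\bar\Omega\times\s^{1})$ such that $u_{n}\to u$ and $v_{n}\to v$ both in $X$ and in $L^{2}(\Gamma)$; apply the smooth identity to each pair; and pass to the limit using Cauchy--Schwarz on both sides. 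The construction of such approximations is standard: extend $u$ across $\p\Omega$ using a tubular neighbourhood (afforded by $C^{3}$ regularity of $\p\Omega$) and convolve with a mollifier, then use a DiPerna--Lions-type commutator estimate showing $[\vw\cdot\nx,\rho_{\delta}\ast](u)\to 0$ in $L^{2}$ as $\delta\to 0$. Hypothesising $u,v\in L^{2}(\Gamma)$ ensures the boundary traces of the approximations converge in $L^{2}(\Gamma,\,\ud\gamma)$ as well.

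The only genuine obstacle is this density/trace step: one must verify simultaneously that the graph norm, the interior $L^{2}$ norm, and the boundary $L^{2}(\Gamma)$ norm all pass to the limit. This is the standard kinetic trace theory in a convex $C^{3}$ domain and requires no new ideas specific to this paper; the remainder of the identity follows by continuity of the bilinear forms $(u,v)\mapsto\iint(\vw\cdot\nx u)v$ on $X\times L^{2}$ and $(u,v)\mapsto\int_{\Gamma}uv\,\ud\gamma$ on $L^{2}(\Gamma)\times L^{2}(\Gamma)$, each of which is immediate from Cauchy--Schwarz.
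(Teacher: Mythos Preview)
Your proof is correct and follows the standard route (divergence theorem for smooth functions, then density in the graph-norm space with the kinetic trace theory supplying the boundary limit). The paper itself does not give a proof of this lemma at all: it simply cites \cite[Chapter 9]{Cercignani.Illner.Pulvirenti1994} and \cite{Esposito.Guo.Kim.Marra2013}. What you have written is essentially the content of those references, so your approach is not different in spirit---you have just unpacked what the paper leaves to citation. Your observation about the typographical duplication in the integrand is also correct; the second term should be $(\vw\cdot\nx v)u$.
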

\begin{proof}
See \cite[Chapter 9]{Cercignani.Illner.Pulvirenti1994} and
\cite{Esposito.Guo.Kim.Marra2013}.
\end{proof}
\begin{lemma}\label{LT estimate}
The unique solution $u(\vx,\vw)$ to the equation (\ref{neutron}) satisfies
\begin{eqnarray}
\frac{1}{\e^{\frac{1}{2}}}\nm{u}_{L^2(\Gamma^+)}+\nm{u}_{L^2(\Omega\times\s^1)}\leq
C \bigg(
\frac{1}{\e^2}\nm{f}_{L^2(\Omega\times\s^1)}+\frac{1}{\e^{\frac{1}{2}}}\nm{h}_{L^2(\Gamma^-)}\bigg).
\end{eqnarray}
\end{lemma}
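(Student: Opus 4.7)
The plan is a two-step strategy: first an energy identity (multiplying by $u$) controls the microscopic fluctuation $u-\bar u$ and the outgoing trace; then an auxiliary elliptic test function recovers the macroscopic density $\bar u$; Young's inequality closes the argument. For the energy step, multiply (\ref{neutron}) by $u$, integrate over $\Omega\times\s^1$, and apply Lemma \ref{well-posedness lemma 1}; the orthogonality $\int_{\s^1} u(u-\bar u)\ud\vw = \int_{\s^1}(u-\bar u)^2\ud\vw$ turns the collision term into a square and produces
\[
\frac{\e}{2}\nm{u}^2_{L^2(\Gamma^+)} + \nm{u-\bar u}^2_{L^2(\Omega\times\s^1)} \leq \frac{\e}{2}\nm{h}^2_{L^2(\Gamma^-)} + \nm{f}_{L^2}\nm{u}_{L^2},
\]
whence $\nm{u-\bar u}_{L^2}\ls \e^{1/2}\nm{h}_{L^2(\Gamma^-)} + (\nm{f}_{L^2}\nm{u}_{L^2})^{1/2}$ and an analogous bound for $\e^{1/2}\nm{u}_{L^2(\Gamma^+)}$.

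For the macroscopic step, I would introduce $\psi \in H^1_0(\Omega)$ solving $-\Delta_x\psi = \bar u$. Since $\p\Omega\in C^3$, elliptic regularity yields $\nm{\psi}_{H^2(\Omega)}\ls\nm{\bar u}_{L^2(\Omega)}$, and the trace theorem gives $\nm{\nx\psi}_{L^2(\p\Omega)}\ls\nm{\bar u}_{L^2}$. Test (\ref{neutron}) against $\Phi(\vx,\vw) = \vw\cdot\nx\psi(\vx)$ and integrate the streaming term by parts via Green's identity. The key algebraic identities $\int_{\s^1}\vw\cdot\nx(\vw\cdot\nx\psi)\ud\vw = \pi\Delta_x\psi = -\pi\bar u$ and $\int_{\s^1}\bar u\,(\vw\cdot\nx\psi)\ud\vw = 0$ extract the coercive contribution $\e\pi\int_\Omega\bar u^2\ud\vx$, while the remaining terms are controlled by Cauchy--Schwarz using $\nm{\nx\psi}_{L^2}$, $\nm{\nx^2\psi}_{L^2}\ls\nm{\bar u}_{L^2}$, and the trace bound. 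Dividing by $\nm{\bar u}_{L^2}$ produces
\[
\e\nm{\bar u}_{L^2(\Omega)} \ls \nm{f}_{L^2} + \nm{u-\bar u}_{L^2} + \e\nm{u}_{L^2(\Gamma)}.
\]

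To close, substitute the Step 1 bounds for $\nm{u-\bar u}_{L^2}$ and $\nm{u}_{L^2(\Gamma^+)}$ into the Step 2 inequality (estimating $\nm{u}_{L^2(\Gamma)}$ by its $\Gamma^\pm$ parts), and use the orthogonal decomposition $\nm{u}^2_{L^2(\Omega\times\s^1)} = 2\pi\nm{\bar u}^2_{L^2(\Omega)} + \nm{u-\bar u}^2_{L^2(\Omega\times\s^1)}$. Young's inequality in the form $\e^{-1}(\nm{f}_{L^2}\nm{u}_{L^2})^{1/2} \leq \tfrac12\nm{u}_{L^2} + C\e^{-2}\nm{f}_{L^2}$ absorbs $\nm{u}_{L^2}$ to the left-hand side and yields the asserted bound. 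The main obstacle is the bookkeeping in Step 2: every residual term (source, streaming commutator, boundary integral) must factor through $\nm{\bar u}_{L^2}$ so the division is legitimate, and in particular the boundary contribution $\e\int_\Gamma u(\vw\cdot\nx\psi)\ud\gamma$ must be dominated via the $H^{1/2}$-trace of $\psi$ rather than any pointwise bound, so that the explicit factor of $\e$ combines with Step 1 to give the sharp $\e^{-2}$ coefficient on $\nm{f}_{L^2}$ rather than a worse negative power.
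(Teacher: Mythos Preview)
Your proposal is correct and follows essentially the same route as the paper: an energy identity (test function $u$) controlling $u-\bar u$ and the outgoing trace, combined with the auxiliary elliptic test function $\vw\cdot\nabla_x\psi$ (with $-\Delta\psi=\bar u$, $\psi|_{\p\Omega}=0$) to recover $\bar u$, then a Cauchy/Young absorption to close. The only cosmetic differences are the order of the two steps and the precise mechanics of the closing step (the paper squares the kernel estimate and adds a small multiple to the energy identity, whereas you substitute one into the other), but the ingredients and the resulting $\e^{-2}$ coefficient on $\nm{f}_{L^2}$ are identical.
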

\begin{proof}
We divide the proof into several steps:\\
\ \\
Step 1: Kernel Estimate.\\
Applying Lemma \ref{well-posedness lemma 1} to the
equation (\ref{neutron}). Then for any
$\phi\in L^2(\Omega\times\s^1)$ satisfying $\vw\cdot\nx\phi\in
L^2(\Omega\times\s^1)$ and $\phi\in L^2(\Gamma)$, we have
\begin{eqnarray}\label{well-posedness temp 4}
\e\int_{\Gamma}u\phi\ud{\gamma}
-\e\iint_{\Omega\times\s^1}(\vw\cdot\nx\phi)u+\iint_{\Omega\times\s^1}(u-\bar
u)\phi=\iint_{\Omega\times\s^1}f\phi.
\end{eqnarray}
Our goal is to choose a particular test function $\phi$. We first
construct an auxiliary function $\xi$. Since $u\in
L^{2}(\Omega\times\s^1)$, it naturally implies $\bar u\in
L^2(\Omega)$. We define $\xi(\vx)$ on $\Omega$ satisfying
\begin{eqnarray}\label{test temp 1}
\left\{
\begin{array}{rcl}
\Delta \xi&=&\bar u\ \ \text{in}\ \
\Omega,\\\rule{0ex}{1.0em} \xi&=&0\ \ \text{on}\ \ \p\Omega.
\end{array}
\right.
\end{eqnarray}
Hence, in the bounded domain $\Omega$, based on the standard elliptic
estimate, there exists a unique $\xi\in H^2(\Omega)$ such that
\begin{eqnarray}\label{test temp 3}
\nm{\xi}_{H^2(\Omega)}\leq C \nm{\bar
u}_{L^2(\Omega)}.
\end{eqnarray}
We plug the test function
\begin{eqnarray}\label{test temp 2}
\phi=-\vw\cdot\nx\xi
\end{eqnarray}
into the weak formulation (\ref{well-posedness temp 4}) and estimate
each term there. Naturally, we have
\begin{eqnarray}\label{test temp 4}
\nm{\phi}_{L^2(\Omega)}\leq C\nm{\xi}_{H^1(\Omega)}\leq
C \nm{\bar u}_{L^2(\Omega)}.
\end{eqnarray}
Easily we can decompose
\begin{eqnarray}\label{test temp 5}
-\e\iint_{\Omega\times\s^1}(\vw\cdot\nx\phi)u&=&-\e\iint_{\Omega\times\s^1}(\vw\cdot\nx\phi)\bar
u-\e\iint_{\Omega\times\s^1}(\vw\cdot\nx\phi)(u-\bar
u).
\end{eqnarray}
We estimate the two term on the right-hand side separately. By
(\ref{test temp 1}) and (\ref{test temp 2}), we have
\begin{eqnarray}\label{wellposed temp 1}
-\e\iint_{\Omega\times\s^1}(\vw\cdot\nx\phi)\bar
u&=&\e\iint_{\Omega\times\s^1}\bar
u\bigg(w_1(w_1\p_{11}\xi+w_2\p_{12}\xi)+w_2(w_1\p_{12}\xi+w_2\p_{22}\xi)\bigg)\\
&=&\e\iint_{\Omega\times\s^1}\bar
u\bigg(w_1^2\p_{11}\xi+w_2^2\p_{22}\xi\bigg)\nonumber\\
&=&2\e\pi\int_{\Omega}\bar u(\p_{11}\xi+\p_{22}\xi)\nonumber\\
&=&2\e\pi\nm{\bar u}_{L^2(\Omega)}^2\nonumber\\
&=&\e\nm{\bar u}_{L^2(\Omega\times\s^1)}^2\nonumber.
\end{eqnarray}
In the second equality, above cross terms vanish due to the symmetry
of the integral over $\s^1$. On the other hand, for the second term
in (\ref{test temp 5}), H\"older's inequality and the elliptic
estimate imply
\begin{eqnarray}\label{wellposed temp 2}
-\e\iint_{\Omega\times\s^1}(\vw\cdot\nx\phi)(u-\bar
u)&\leq&C \e\nm{u-\bar u}_{L^2(\Omega\times\s^1)}\nm{\xi}_{H^2(\Omega)}\\
&\leq&C \e\nm{u-\bar
u}_{L^2(\Omega\times\s^1)}\nm{\bar
u}_{L^2(\Omega\times\s^1)}\nonumber.
\end{eqnarray}
Using the trace theorem, we have
\begin{eqnarray}\label{wellposed temp 3}
\e\int_{\Gamma}u\phi\ud{\gamma}&=&\e\int_{\Gamma^+}u\phi\ud{\gamma}+\e\int_{\Gamma^-}u\phi\ud{\gamma}
\leq C\e\nm{\phi}_{L^2(\Gamma)}\bigg(\nm{u}_{L^2(\Gamma^+)}+\nm{h}_{L^2(\Gamma^-)}\bigg)\\
&\leq& C\e\nm{\phi}_{H^1(\Omega\times\s^1)}\bigg(\nm{u}_{L^2(\Gamma^+)}+\nm{h}_{L^2(\Gamma^-)}\bigg)\leq C\e\nm{\bar u}_{L^2(\Omega\times\s^1)}\bigg(\nm{u}_{L^2(\Gamma^+)}+\nm{h}_{L^2(\Gamma^-)}\bigg).\no
\end{eqnarray}
Also, we obtain
\begin{eqnarray}\label{wellposed temp 5}
\iint_{\Omega\times\s^1}(u-\bar u)\phi\leq
C \nm{\bar u}_{L^2(\Omega\times\s^1)}\nm{u-\bar
u}_{L^2(\Omega\times\s^1)},
\end{eqnarray}
\begin{eqnarray}\label{wellposed temp 6}
\iint_{\Omega\times\s^1}f\phi\leq C \nm{\bar
u}_{L^2(\Omega\times\s^1)}\nm{f}_{L^2(\Omega\times\s^1)}.
\end{eqnarray}
Collecting terms in (\ref{wellposed temp 1}), (\ref{wellposed temp
2}), (\ref{wellposed temp 3}),
(\ref{wellposed temp 5}) and (\ref{wellposed temp 6}), we obtain
\begin{eqnarray}
\e\nm{\bar u}_{L^2(\Omega\times\s^1)}^2&\leq&
C \nm{\bar u}_{L^2(\Omega\times\s^1)}\bigg(\nm{u-\bar
u}_{L^2(\Omega\times\s^1)}+\e\nm{u}_{L^2(\Gamma^+)}+\nm{f}_{L^2(\Omega\times\s^1)}+\e\tm{h}{\Gamma^-}\bigg).
\end{eqnarray}
Then this naturally implies that
\begin{eqnarray}\label{well-posedness temp 8}
\e\nm{\bar u}_{L^2(\Omega\times\s^1)}&\leq&
C \bigg(\nm{u-\bar
u}_{L^2(\Omega\times\s^1)}+\e\nm{u}_{L^2(\Gamma^+)}+\nm{f}_{L^2(\Omega\times\s^1)}+\e\tm{h}{\Gamma^-}\bigg).
\end{eqnarray}
\ \\
Step 2: Energy Estimate.\\
In the weak formulation (\ref{well-posedness temp 4}), we may take
the test function $\phi=u$ to get the energy estimate
\begin{eqnarray}
\half\e\int_{\Gamma}\abs{u}^2\ud{\gamma}+\nm{u-\bar
u}_{L^2(\Omega\times\s^1)}^2=\iint_{\Omega\times\s^1}fu.
\end{eqnarray}
Then we have
\begin{eqnarray}\label{well-posedness temp 5}
&&\half\e\nm{u}^2_{L^2(\Gamma^+)}+\nm{u-\bar
u}_{L^2(\Omega\times\s^1)}^2= \iint_{\Omega\times\s^1}fu+\e\nm{h}_{L^2(\Gamma^-)}^2.
\end{eqnarray}
On the other hand, we can square on both sides of
(\ref{well-posedness temp 8}) to obtain
\begin{eqnarray}\label{well-posedness temp 6}
\e^2\nm{\bar u}_{L^2(\Omega\times\s^1)}^2&\leq&
C\bigg(\nm{u-\bar
u}_{L^2(\Omega\times\s^1)}^2+\e^2\nm{u}_{L^2(\Gamma^+)}^2+\nm{f}_{L^2(\Omega\times\s^1)}^2+\e^2\tm{h}{\Gamma^-}^2\bigg).
\end{eqnarray}
Multiplying a sufficiently small constant on both sides of
(\ref{well-posedness temp 6}) and adding it to (\ref{well-posedness
temp 5}) to absorb $\nm{u}_{L^2(\Gamma^+)}^2$ and
$\nm{u-\bar u}_{L^2(\Omega\times\s^1)}^2$, we deduce
\begin{eqnarray}
&&\e\nm{u}_{L^2(\Gamma^+)}^2+\e^2\nm{\bar
u}_{L^2(\Omega\times\s^1)}^2+\nm{u-\bar
u}_{L^2(\Omega\times\s^1)}^2\leq
C \bigg(\tm{f}{\Omega\times\s^1}^2+
\iint_{\Omega\times\s^1}fu+\e\nm{h}_{L^2(\Gamma^-)}^2\bigg).
\end{eqnarray}
Hence, we have
\begin{eqnarray}
\e\nm{u}_{L^2(\Gamma^+)}^2+\e^2\nm{u}_{L^2(\Omega\times\s^1)}^2\leq
C \bigg(\tm{f}{\Omega\times\s^1}^2+
\iint_{\Omega\times\s^1}fu+\e\nm{h}_{L^2(\Gamma^-)}^2\bigg).
\end{eqnarray}
A simple application of Cauchy's inequality leads to
\begin{eqnarray}
\iint_{\Omega\times\s^1}fu\leq\frac{1}{4C\e^2}\tm{f}{\Omega\times\s^1}^2+C\e^2\tm{u}{\Omega\times\s^1}^2.
\end{eqnarray}
Taking $C$ sufficiently small to absorb $C\e^2\tm{u}{\Omega\times\s^1}^2$, we obtain
\begin{eqnarray}\label{well-posedness temp 7}
\e\nm{u}_{L^2(\Gamma^+)}^2+\e^2\nm{u}_{L^2(\Omega\times\s^1)}^2\leq
C \bigg(\frac{1}{\e^2}\tm{f}{\Omega\times\s^1}^2+\e\nm{h}_{L^2(\Gamma^-)}^2\bigg).
\end{eqnarray}
Then we can divide $\e^2$ on both sides of (\ref{well-posedness
temp 7}) to obtain
\begin{eqnarray}\label{well-posedness temp 21}
\frac{1}{\e}\nm{u}_{L^2(\Gamma^+)}^2+\nm{u}_{L^2(\Omega\times\s^1)}^2\leq
C \bigg(
\frac{1}{\e^4}\nm{f}_{L^2(\Omega\times\s^1)}^2+\frac{1}{\e}\nm{h}_{L^2(\Gamma^-)}^2\bigg).
\end{eqnarray}
Hence, we naturally have
\begin{eqnarray}
\frac{1}{\e^{\frac{1}{2}}}\nm{u}_{L^2(\Gamma^+)}+\nm{u}_{L^2(\Omega\times\s^1)}\leq
C \bigg(
\frac{1}{\e^2}\nm{f}_{L^2(\Omega\times\s^1)}+\frac{1}{\e^{\frac{1}{2}}}\nm{h}_{L^2(\Gamma^-)}\bigg).
\end{eqnarray}
\end{proof}

\subsection{$L^{\infty}$ Estimate - First Round}

\begin{theorem}\label{LI estimate.}
The unique solution $u(\vx,\vw)$ to the equation (\ref{neutron}) satisfies
\begin{eqnarray}
\im{u}{\Omega\times\s^1}&\leq& C\bigg(\frac{1}{\e^{3}}\tm{f}{\Omega\times\s^1}+\im{f}{\Omega\times\s^1}+\frac{1}{\e^{\frac{3}{2}}}\tm{h}{\Gamma^-}+\im{h}{\Gamma^-}\bigg).
\end{eqnarray}
\end{theorem}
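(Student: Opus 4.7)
My plan is to use the classical $L^2$--$L^\infty$ framework based on the mild formulation along backward characteristics, combined with a double Duhamel's principle to bootstrap from the $L^2$ estimate of Lemma \ref{LT estimate}. First I would write equation (\ref{neutron}) as an ODE along characteristics. For $(\vx,\vw)\in\Omega\times\s^1$, let $t_b(\vx,\vw)$ denote the backward exit time, i.e.\ the smallest $t>0$ with $\vx-t\vw\in\p\Omega$. Rewriting $\e\vw\cdot\nx u+u=\bar u+f$ as $\frac{d}{ds}[e^{-s/\e}u(\vx-s\vw,\vw)]=-\frac{1}{\e}e^{-s/\e}(\bar u+f)(\vx-s\vw,\vw)$ and integrating in $s$ from $0$ to $t_b$ yields
\begin{eqnarray}
u(\vx,\vw)=e^{-t_b/\e}h(\vx-t_b\vw,\vw)+\int_0^{t_b}\frac{1}{\e}e^{-s/\e}\bigl(\bar u+f\bigr)(\vx-s\vw,\vw)\,ds.
\end{eqnarray}
The boundary and source contributions are immediately controlled by $\im{h}{\Gamma^-}$ and $\im{f}{\Omega\times\s^1}$ since $\int_0^\infty\frac{1}{\e}e^{-s/\e}\,ds=1$. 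The essential task is to estimate the $\bar u$-integral.

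Next I would apply velocity averaging and substitute the mild formula a second time. Writing $\bar u(\vx-s\vw)=\frac{1}{2\pi}\int_{\s^1}u(\vx-s\vw,\vw')\,d\vw'$ and expressing $u(\vx-s\vw,\vw')$ once more by the mild formula produces a nested integral over $(s,t,\vw,\vw')$ involving $\bar u$ evaluated at $\vy=\vx-s\vw-r\vw'$, together with boundary and source contributions that are again bounded trivially by $\im{h}{\Gamma^-}$ and $\im{f}{\Omega\times\s^1}$. The key move is then to perform, in the innermost double integral, the change of variables $(\vw',r)\in\s^1\times(0,t_b')\mapsto \vy\in\Omega$. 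Since we are in two space dimensions and $\vw'\in\s^1$ is one-dimensional, polar coordinates centered at $\vx-s\vw$ give Jacobian $r$, so that $d\vw'\,dr = d\vy/r$, producing the singular kernel $\frac{1}{\e r}e^{-r/\e}$.

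To handle the $1/r$ singularity I would split the $r$-integration at a small threshold $r=\kappa\e$ for a constant $\kappa>0$ to be chosen. On the near region $r\leq\kappa\e$ the factor $\int_0^{\kappa\e}\frac{1}{\e}e^{-r/\e}\,dr\leq\kappa$ gives a contribution bounded by $C\kappa\,\im{u}{\Omega\times\s^1}$. On the far region $r\geq\kappa\e$ the kernel satisfies $\frac{1}{\e r}\leq\frac{1}{\kappa\e^2}$, and Cauchy--Schwarz together with the two-dimensional Gaussian bound $\int_\Omega e^{-2|\vy-\vx'|/\e}\,d\vy\leq C\e^2$ yields
\begin{eqnarray}
\int_{r\geq\kappa\e}\frac{1}{\e r}e^{-r/\e}|\bar u(\vy)|\,d\vy\leq\frac{C}{\kappa\e}\tm{\bar u}{\Omega}.
\end{eqnarray}
The outer $(s,\vw)$ integration loses no powers of $\e$, so collecting everything we obtain
\begin{eqnarray}
\im{u}{\Omega\times\s^1}\leq C\im{h}{\Gamma^-}+C\im{f}{\Omega\times\s^1}+C\kappa\,\im{u}{\Omega\times\s^1}+\frac{C}{\kappa\e}\tm{u}{\Omega\times\s^1}.
\end{eqnarray}

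Finally, I would choose $\kappa$ sufficiently small so that the $C\kappa\,\im{u}{\Omega\times\s^1}$ term is absorbed into the left-hand side, and then invoke Lemma \ref{LT estimate} to replace $\tm{u}{\Omega\times\s^1}$ by $C(\e^{-2}\tm{f}{\Omega\times\s^1}+\e^{-1/2}\tm{h}{\Gamma^-})$. Dividing and multiplying by the powers of $\e$ arising from the outer $\e^{-1}$ produces exactly the claimed $\e^{-3}\tm{f}{\Omega\times\s^1}$ and $\e^{-3/2}\tm{h}{\Gamma^-}$ coefficients. The main technical obstacle is the clean bookkeeping in the double Duhamel substitution --- namely the correct Jacobian $r$ in two dimensions, the uniform control (in the outer variable $s$) of the inner integral so that the outer $\frac{1}{\e}e^{-s/\e}$ integrates to a constant, and the careful treatment of the boundary term $e^{-t_b'/\e}h$ that is generated by the second application of the mild formula, which must be bounded in $L^\infty$ rather than $L^2$ here (an $L^2$ refinement is postponed to later rounds).
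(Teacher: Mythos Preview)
Your proposal is correct and follows essentially the same approach as the paper: double Duhamel iteration along characteristics, a change of variables $(\vw',r)\mapsto\vy$ in the inner integral, a near/far splitting to handle the Jacobian singularity, absorption of the small near-region term, and finally invocation of the $L^2$ estimate. The only cosmetic differences are that the paper parametrizes characteristics by the rescaled time variable (so the exponential is $e^{-t}$ and the Jacobian is $\e^2(s_b-r)$) and splits at $s_b-r=\delta$, whereas you work in physical distance and split at $r=\kappa\e$; also, the paper applies H\"older in $(\vw_t,r)$-space before changing variables, while you change variables first and then apply Cauchy--Schwarz in $\vy$. Both routes yield the same $\frac{C}{\e}\tm{\bar u}{\Omega}$ bound on the far region.
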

\begin{proof}
We divide the proof into several steps:\\
\ \\
Step 1: Double Duhamel iterations.\\
We can rewrite the equation
(\ref{neutron}) along the characteristics as
\begin{eqnarray}
u(\vx,\vw)&=&h(\vx-\e t_b\vw,\vw)\ue^{-t_b}+\int_{0}^{t_b}f\Big(\vx-\e(t_b-s)\vw,\vw\Big)\ue^{-(t_b-s)}\ud{s}\\
&&+\frac{1}{2\pi}\int_{0}^{t_b}\bigg(\int_{\s^1}u\Big(\vx-\e(t_b-s)\vw,\vw_t\Big)\ud{\vw_t}\bigg)\ue^{-(t_b-s)}\ud{s}\nonumber.
\end{eqnarray}
where the backward exit time $t_b$ is defined as
\begin{eqnarray}\label{exit time}
t_b(\vx,\vw)=\inf\{t\geq0: (\vx-\e t\vw,\vw)\in\Gamma^-\},
\end{eqnarray}
which represents the first time that the characteristics track back and hit the in-flow boundary. Note we have replaced $\bar u$ by the integral of $u$ over the dummy velocity
variable $\vw_t$. For the last term in this formulation, we apply
the Duhamel's principle again to
$u\Big(\vx-\e(t_b-s)\vw,\vw_t\Big)$ and obtain
\begin{eqnarray}\label{well-posedness temp 8}
u(\vx,\vw)&=&h(\vx-\e t_b\vw,\vw)\ue^{-t_b}+\int_{0}^{t_b}f\Big(\vx-\e(t_b-s)\vw,\vw\Big)\ue^{-(t_b-s)}\ud{s}\\
&&+\frac{1}{2\pi}\int_{0}^{t_b}\bigg(\int_{\s^1}h\Big(\vx-\e(t_b-s)\vw-\e
s_b\vw_t,\vw_t\Big)\ue^{-s_b}\ud{\vw_t}\bigg)\ue^{-(t_b-s)}\ud{s}\nonumber\\
&&+\frac{1}{2\pi}\int_{0}^{t_b}\Bigg(\int_{\s^1}\bigg(\int_{0}^{s_b}f\Big(\vx-\e(t_b-s)\vw-\e
(s_b-r)\vw_t,\vw_t\Big)\ue^{-(s_b-r)}\ud{r}\bigg)\ud{\vw_t}\Bigg)\ue^{-(t_b-s)}\ud{s}\nonumber\\
&&+\frac{1}{2\pi}\int_{0}^{t_b}\Bigg(\int_{\s^1}\bigg(\int_{0}^{s_b}\bar u\Big(\vx-\e(t_b-s)\vw-\e
(s_b-r)\vw_t\Big)\ue^{-(s_b-r)}\ud{r}\bigg)\ud{\vw_t}\Bigg)\ue^{-(t_b-s)}\ud{s}\nonumber,
\end{eqnarray}
where the exiting time from $\Big(\vx-\e(t_b-s)\vw,\vw_t\Big)$ is defined as
\begin{eqnarray}
s_b(\vx,\vw,s,\vw_t)=\inf\{r\geq0: \Big(\vx-\e(t_b-s)\vw-\e
r\vw_t,\vw_t\Big)\in\Gamma^-\}.
\end{eqnarray}
\ \\
Step 2: Estimates of all but the last term in (\ref{well-posedness temp 8}).\\
We can directly estimate as follows:
\begin{eqnarray}\label{im temp 1}
\abs{h(\vx-\e t_b\vw,\vw)\ue^{-t_b}}\leq\im{h}{\Gamma^-},
\end{eqnarray}
\begin{eqnarray}\label{im temp 2}
\abs{\frac{1}{2\pi}\int_{0}^{t_b}\bigg(\int_{\s^1}h\Big(\vx-\e(t_b-s)\vw-\e
s_b\vw_t,\vw_t\Big)\ue^{-s_b}\ud{\vw_t}\bigg)\ue^{-(t_b-s)}\ud{s}} \leq
\im{h}{\Gamma^-},
\end{eqnarray}
\begin{eqnarray}\label{im temp 3}
\abs{\int_{0}^{t_b}f\Big(\vx-\e(t_b-s)\vw,\vw\Big)\ue^{-(t_b-s)}\ud{s}}\leq
\im{f}{\Omega\times\s^1},
\end{eqnarray}
\begin{eqnarray}\label{im temp 4}
\\
\abs{\frac{1}{2\pi}\int_{0}^{t_b}\Bigg(\int_{\s^1}\bigg(\int_{0}^{s_b}f(\vx-\e(t_b-s)\vw-\e
(s_b-r)\vw_t,\vw_t)\ue^{-(s_b-r)}\ud{r}\bigg)\ud{\vw_t}\Bigg)\ue^{-(t_b-s)}\ud{s}}
\leq \im{f}{\Omega\times\s^1}\nonumber.
\end{eqnarray}
\ \\
Step 3: Estimates of the last term in (\ref{well-posedness temp 8}).\\
Now we decompose the last term in (\ref{well-posedness temp 8}) as
\begin{eqnarray}
\int_{0}^{t_b}\int_{\s^1}\int_0^{s_b}=\int_{0}^{t_b}\int_{\s^1}\int_{s_b-r\leq\delta}+
\int_{0}^{t_b}\int_{\s^1}\int_{s_b-r\geq\delta}=I_1+I_2,
\end{eqnarray}
for some $\delta>0$. We can estimate $I_1$ directly as
\begin{eqnarray}\label{im temp 5}
I_1
&\leq&\int_{0}^{t_b}\ue^{-(t_b-s)}\bigg(\int_{\max(0,s_b-\delta)}^{s_b}\im{u}{\Omega\times\s^1}\ud{r}\bigg)\ud{s}\leq\delta\im{u}{\Omega\times\s^1}.
\end{eqnarray}
Then we can bound $I_2$ as
\begin{eqnarray}
I_2&\leq&C\int_{0}^{t_b}\int_{\s^1}\int_{0}^{\max(0,s_b-\delta)}\abs{\bar u\Big(\vx-\e(t_b-s)\vw-\e
(s_b-r)\vw_t\Big)}\ue^{-(t_b-s)}\ud{r}\ud{\vw_t}\ud{s}.
\end{eqnarray}
By the definition of $t_b$ and $s_b$, we always have
$\vx-\e(t_b-s)\vw-\e (s_b-r)\vw_t\in\bar\Omega$. Hence, we may
interchange the order of integration and apply H\"older's inequality
to obtain
\begin{eqnarray}\label{well-posedness temp 22}
I_2&\leq&C\int_{0}^{t_b}\Bigg(\int_{0}^{\max(0,s_b-\delta)}\int_{\s^1}{\bf{1}}_{\Omega}\Big(\vx-\e(t_b-s)\vw-\e
(s_b-r)\vw_t\Big)\\
&&\abs{\bar u\Big(\vx-\e(t_b-s)\vw-\e
(s_b-r)\vw_t\Big)}\ud{\vw_t}\ud{r}\Bigg)\ue^{-(t_b-s)}\ud{s}\nonumber\\
&\leq&C\int_{0}^{t_b}\Bigg(\bigg(\int_{\s^1}\int_{0}^{\max(0,s_b-\delta)}{\bf{1}}_{\Omega}\Big(\vx-\e(t_b-s)\vw-\e
(s_b-r)\vw_t\Big)\nonumber\\
&&\abs{\bar u\Big(\vx-\e(t_b-s)\vw-\e
(s_b-r)\vw_t\Big)}^2\ud{\vw_t}\ud{r}\bigg)^{\frac{1}{2}}\no\\
&&\times\bigg(\int_{\s^1}\int_{0}^{\max(0,s_b-\delta)}{\bf{1}}_{\Omega}\Big(\vx-\e(t_b-s)\vw-\e
(s_b-r)\vw_t\Big)\ud{\vw_t}\ud{r}\bigg)^{\frac{1}{2}}\Bigg)\ue^{-(t_b-s)}\ud{s}\no
\end{eqnarray}
Note $\vw_t\in\s^1$, which is essentially a one-dimensional
variable. Thus, we may write it in a new variable $\psi$ as
$\vw_t=(\cos\psi,\sin\psi)$. Then we define the change of variable
$[-\pi,\pi)\times\r\rt \Omega: (\psi,r)\rt(y_1,y_2)=\vec
y=\vx-\e(t_b-s)\vw-\e (s_b-r)\vw_t$, i.e.
\begin{eqnarray}
\left\{
\begin{array}{rcl}
y_1&=&x_1-\e(t_b-s)w_1-\e (s_b-r)\cos\psi,\\
y_2&=&x_2-\e(t_b-s)w_2-\e (s_b-r)\sin\psi.
\end{array}
\right.
\end{eqnarray}
Therefore, for $s_b-r\geq\delta$, we can directly compute the Jacobian
\begin{eqnarray}
\abs{\frac{\p{(y_1,y_2)}}{\p{(\psi,r)}}}=\abs{\abs{\begin{array}{rc}
-\e(s_b-r)\sin\psi&\e\cos\psi\\
\e(s_b-r)\cos\psi&\e\sin\psi
\end{array}}}=\e^2(s_b-r)\geq\e^2\delta.
\end{eqnarray}
Hence, we may simplify (\ref{well-posedness temp 22}) as
\begin{eqnarray}\label{im temp 6}
I_2&\leq&C\int_{0}^{t_b}\bigg(\int_{\Omega}\frac{1}{\e^2\d}\abs{\bar u(\vec
y)}^2\ud{\vec y}\bigg)^{\frac{1}{2}}\ue^{-(t_b-s)}\ud{s}\\
&\leq&\frac{C}{\e\sqrt{\delta}}\int_{0}^{t_b}\bigg(\int_{\Omega}\abs{\bar u(\vec
y)}^2\ud{\vec y}\bigg)^{\frac{1}{2}}\ue^{-(t_b-s)}\ud{s}\no\\
&\leq&\frac{C}{\e\d^{\frac{1}{2}}}\tm{\bar u}{\Omega}.\no
\end{eqnarray}
\ \\
Step 4: Synthesis.\\
In summary, collecting (\ref{im temp 1}), (\ref{im temp 2}),
(\ref{im temp 3}), (\ref{im temp 4}), (\ref{im temp 5}) and (\ref{im
temp 6}), for fixed $0<\delta<1$, we have
\begin{eqnarray}
\im{u}{\Omega\times\s^1}\leq \delta
\im{u}{\Omega\times\s^1}+\frac{C}{\e\d^{\frac{1}{2}}}\tm{\bar u}{\Omega}+C
\bigg(\im{f}{\Omega\times\s^1}+\im{h}{\Gamma^-}\bigg).
\end{eqnarray}
Then taking $\d$ small to absorb $\delta
\im{u}{\Omega\times\s^1}$ into the left-hand side to get
\begin{eqnarray}
\im{u}{\Omega\times\s^1}\leq
\frac{C}{\e}\tm{\bar u}{\Omega}+C
\bigg(\im{f}{\Omega\times\s^1}+\im{h}{\Gamma^-}\bigg).
\end{eqnarray}
Using Theorem \ref{LT estimate}, we get
\begin{eqnarray}
\im{u}{\Omega\times\s^1}&\leq& C\bigg(\frac{1}{\e^{3}}\tm{f}{\Omega\times\s^1}+\im{f}{\Omega\times\s^1}+\frac{1}{\e^{\frac{3}{2}}}\tm{h}{\Gamma^-}+\im{h}{\Gamma^-}\bigg).
\end{eqnarray}

\end{proof}

\subsection{$L^{2m}$ Estimate}

In this subsection, we try to improve previous estimates. In the following, we assume $m>2$ is an integer and let $o(1)$ denote a sufficiently small constant.
\begin{theorem}\label{LN estimate}
The unique solution $u(\vx,\vw)$ to the equation (\ref{neutron}) satisfies
\begin{eqnarray}
&&\frac{1}{\e^{\frac{1}{2}}}\nm{u}_{L^2(\Gamma^+)}+\nm{
\bar u}_{L^{2m}(\Omega\times\s^1)}+\frac{1}{\e}\nm{u-\bar
u}_{L^2(\Omega\times\s^1)}\\
&\leq&
C\bigg(o(1)\e^{\frac{1}{m}}\nm{u}_{L^{\infty}(\Omega\times\s^1)}+\frac{1}{\e}\tm{f}{\Omega\times\s^1}+
\frac{1}{\e^2}\nm{f}_{L^{\frac{2m}{2m-1}}(\Omega\times\s^1)}+\frac{1}{\e^{\frac{1}{2}}}\nm{h}_{L^2(\Gamma^-)}+\nm{h}_{L^{m}(\Gamma^-)}\bigg).\nonumber
\end{eqnarray}
\end{theorem}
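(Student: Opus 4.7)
The strategy extends Lemma~\ref{LT estimate} by replacing the $L^2$-adapted test function with one tailored to $L^{2m}$ duality, so that the ``kernel'' piece of the Green's identity produces $\|\bar u\|_{L^{2m}}^{2m}$ rather than $\|\bar u\|_{L^2}^2$. Let $\xi$ be the unique weak solution of the Dirichlet Poisson problem $\Delta\xi = |\bar u|^{2m-2}\bar u$ in $\Omega$ with $\xi|_{\partial\Omega}=0$. Calder\'on--Zygmund estimates give $\|\xi\|_{W^{2, 2m/(2m-1)}(\Omega)} \ls \|\bar u\|_{L^{2m}}^{2m-1}$; 2D Sobolev embedding yields $\|\nx\xi\|_{L^{2m/(m-1)}(\Omega)} \ls \|\bar u\|_{L^{2m}}^{2m-1}$, while the trace theorem gives $\|\nx\xi\|_{L^{m/(m-1)}(\partial\Omega)} \ls \|\bar u\|_{L^{2m}}^{2m-1}$. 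Inserting the test function $\phi = -\vw\cdot\nx\xi$ into the weak formulation \eqref{well-posedness temp 4} and using $\int_{\s^1}w_i w_j\,\ud\vw = \pi\delta_{ij}$, the kernel contribution $-\e\iint(\vw\cdot\nx\phi)\bar u$ collapses to $\e\pi\|\bar u\|_{L^{2m}(\Omega)}^{2m}$.

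The remaining four terms in the weak identity are controlled by H\"older's inequality together with the regularity bounds above: $|\iint f\phi| \ls \|f\|_{L^{2m/(m+1)}}\|\bar u\|_{L^{2m}}^{2m-1}$ (pairing against $\phi \in L^{2m/(m-1)}$); $|\iint(u-\bar u)\phi| \ls \|u-\bar u\|_{L^2}\|\bar u\|_{L^{2m}}^{2m-1}$ (via the embedding $L^{2m/(m-1)} \hookrightarrow L^2$); $\e|\iint(\vw\cdot\nx\phi)(u-\bar u)| \ls \e\|u-\bar u\|_{L^{2m}}\|\bar u\|_{L^{2m}}^{2m-1}$; and $\e\babs{\int_\Gamma u\phi\,\ud\gamma} \ls \e(\|u\|_{L^m(\Gamma^+)}+\|h\|_{L^m(\Gamma^-)})\|\bar u\|_{L^{2m}}^{2m-1}$. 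Cancelling $\|\bar u\|_{L^{2m}}^{2m-1}$ and dividing by $\e$ leaves a preliminary inequality for $\|\bar u\|_{L^{2m}}$, and each of the remaining mixed norms is resolved by interpolation:
\[
\|u-\bar u\|_{L^{2m}} \leq \|u-\bar u\|_{L^2}^{1/m}\|u\|_{L^\infty}^{(m-1)/m},\quad \|u\|_{L^m(\Gamma^+)} \leq \|u\|_{L^2(\Gamma^+)}^{2/m}\|u\|_{L^\infty}^{(m-2)/m},
\]
together with $\|f\|_{L^{2m/(m+1)}} \leq \|f\|_{L^2}^{(m-2)/(m-1)}\|f\|_{L^{2m/(2m-1)}}^{1/(m-1)}$. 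Weighted Young's inequality with the scalings $\l = 1/\e$ (volume) and $\l = \e^{-1/m}$ (boundary) then converts each product so that the $L^2$ factors reproduce the LHS norms $\tfrac{1}{\e}\|u-\bar u\|_{L^2}$ and $\tfrac{1}{\e^{1/2}}\|u\|_{L^2(\Gamma^+)}$, while $\|u\|_{L^\infty}$ acquires prefactors $\e^{1/(m-1)}$ and $\e^{1/(m-2)}$. Both equal $\e^{1/m}$ times a positive power of $\e$ when $m>2$, since $\tfrac{1}{m-1}-\tfrac{1}{m}=\tfrac{1}{m(m-1)}>0$ and $\tfrac{1}{m-2}-\tfrac{1}{m}=\tfrac{2}{m(m-2)}>0$, hence are $o(1)\e^{1/m}$. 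The $f$ interpolation produces $\tfrac{C}{\e}\|f\|_{L^2} + \tfrac{C}{\e}\|f\|_{L^{2m/(2m-1)}}$, the latter absorbed into $\tfrac{C}{\e^2}\|f\|_{L^{2m/(2m-1)}}$.

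To reproduce the two $L^2$ LHS pieces with the sharp $\tfrac{1}{\e}\|f\|_{L^2}$ coefficient (strictly better than Lemma~\ref{LT estimate}), I rederive the energy identity \eqref{well-posedness temp 5} and split $|\iint fu| \leq \|f\|_{L^2}\|u-\bar u\|_{L^2} + \|f\|_{L^{2m/(2m-1)}}\|\bar u\|_{L^{2m}}$. Absorbing $\|u-\bar u\|_{L^2}^2$ via Young and taking a square root yields $\|u-\bar u\|_{L^2} + \e^{1/2}\|u\|_{L^2(\Gamma^+)} \ls \|f\|_{L^2} + \|f\|_{L^{2m/(2m-1)}}^{1/2}\|\bar u\|_{L^{2m}}^{1/2} + \e^{1/2}\|h\|_{L^2}$; dividing by $\e$ and applying Young's once more to the middle term contributes $\tfrac{C}{\e^2}\|f\|_{L^{2m/(2m-1)}} + o(1)\|\bar u\|_{L^{2m}}$, with the last piece absorbed into the LHS. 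The principal technical difficulty is the coordinated choice of Young weights: the naive choice leaves $\|u\|_{L^\infty}$ with a divergent prefactor $\e^{-(m-1)/m}$, and only the specific scaling $\l = 1/\e$ exploits the slack $\tfrac{1}{m-1}-\tfrac{1}{m}>0$ to produce a genuinely small $\e^{1/(m-1)} = o(1)\e^{1/m}$. That slack is exactly what forces the hypothesis $m > 2$.
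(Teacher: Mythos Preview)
Your proposal is correct and follows essentially the same route as the paper's proof: the $L^{2m}$-adapted test function $\phi=-\vw\cdot\nx\xi$ with $\Delta\xi=(\bar u)^{2m-1}$, the same interpolation of $\|u-\bar u\|_{L^{2m}}$ and $\|u\|_{L^m(\Gamma^+)}$ between $L^2$ and $L^\infty$, and the same splitting $\iint fu=\iint f\bar u+\iint f(u-\bar u)$ in the energy identity. The only cosmetic differences are that the paper pairs $\iint f\phi$ in $L^2\times L^2$ (bounding $\|\phi\|_{L^2}\le C\|\xi\|_{W^{2,2m/(2m-1)}}$ via Sobolev) rather than in $L^{2m/(m+1)}\times L^{2m/(m-1)}$, and that the paper chooses the Young weight to land directly on $o(1)\e^{1/m}\|u\|_{L^\infty}$ (with $\e^{-(m-1)/m}$ on the $L^2$ factor) rather than matching the LHS coefficient $1/\e$ and then observing $\e^{1/(m-1)}=o(1)\e^{1/m}$; both choices are equivalent.
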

\begin{proof}
We divide the proof into several steps:\\
\ \\
Step 1: Kernel Estimate.\\
Applying Green's identity to the
equation (\ref{neutron}). Then for any
$\phi\in L^2(\Omega\times\s^1)$ satisfying $\vw\cdot\nx\phi\in
L^2(\Omega\times\s^1)$ and $\phi\in L^2(\Gamma)$, we have
\begin{eqnarray}\label{well-posedness temp 4.}
\e\int_{\Gamma}u\phi\ud{\gamma}
-\e\iint_{\Omega\times\s^1}(\vw\cdot\nx\phi)u+\iint_{\Omega\times\s^1}(u-\bar
u)\phi=\iint_{\Omega\times\s^1}f\phi.
\end{eqnarray}
Our goal is to choose a particular test function $\phi$. We first
construct an auxiliary function $\xi$. Naturally $u\in
L^{\infty}(\Omega\times\s^1)$ implies $\bar u\in
L^{2m}(\Omega)$ which further leads to $(\bar u)^{2m-1}\in
L^{\frac{2m}{2m-1}}(\Omega)$. We define $\xi(\vx)$ on $\Omega$ satisfying
\begin{eqnarray}\label{test temp 1.}
\left\{
\begin{array}{rcl}
\Delta \xi&=&(\bar u)^{2m-1}\ \ \text{in}\ \
\Omega,\\\rule{0ex}{1.0em} \xi&=&0\ \ \text{on}\ \ \p\Omega.
\end{array}
\right.
\end{eqnarray}
In the bounded domain $\Omega$, based on the standard elliptic
estimates, we have a unique $\xi$ satisfying
\begin{eqnarray}\label{test temp 3.}
\nm{\xi}_{W^{2,\frac{2m}{2m-1}}(\Omega)}\leq C\nm{(\bar
u)^{2m-1}}_{L^{\frac{2m}{2m-1}}(\Omega)}= C\nm{\bar
u}_{L^{2m}(\Omega)}^{2m-1}.
\end{eqnarray}
We plug the test function
\begin{eqnarray}\label{test temp 2.}
\phi=-\vw\cdot\nx\xi
\end{eqnarray}
into the weak formulation (\ref{well-posedness temp 4.}) and estimate
each term there. By Sobolev embedding theorem, we have
\begin{eqnarray}
\nm{\phi}_{L^2(\Omega)}&\leq& C\nm{\xi}_{H^1(\Omega)}\leq C\nm{\xi}_{W^{2,\frac{2m}{2m-1}}(\Omega)}\leq
C\nm{\bar
u}_{L^{2m}(\Omega)}^{2m-1},\label{test temp 6.}\\
\nm{\phi}_{L^{\frac{2m}{2m-1}}(\Omega)}&\leq&C\nm{\xi}_{W^{1,\frac{2m}{2m-1}}(\Omega)}\leq
C\nm{\bar
u}_{L^{2m}(\Omega)}^{2m-1}.\label{test temp 4.}
\end{eqnarray}
Easily we can decompose
\begin{eqnarray}\label{test temp 5.}
-\e\iint_{\Omega\times\s^1}(\vw\cdot\nx\phi)u&=&-\e\iint_{\Omega\times\s^1}(\vw\cdot\nx\phi)\bar
u-\e\iint_{\Omega\times\s^1}(\vw\cdot\nx\phi)(u-\bar
u).
\end{eqnarray}
We estimate the two term on the right-hand side of (\ref{test temp 5.}) separately. By
(\ref{test temp 1.}) and (\ref{test temp 2.}), we have
\begin{eqnarray}\label{wellposed temp 1.}
-\e\iint_{\Omega\times\s^1}(\vw\cdot\nx\phi)\bar
u&=&\e\iint_{\Omega\times\s^1}\bar
u\bigg(w_1(w_1\p_{11}\xi+w_2\p_{12}\xi)+w_2(w_1\p_{12}\xi+w_2\p_{22}\xi)\bigg)\\
&=&\e\iint_{\Omega\times\s^1}\bar
u\bigg(w_1^2\p_{11}\xi+w_2^2\p_{22}\xi\bigg)\nonumber\\
&=&2\e\pi\int_{\Omega}\bar u(\p_{11}\xi+\p_{22}\xi)\nonumber\\
&=&\e\nm{\bar u}_{L^{2m}(\Omega)}^{2m}\nonumber.
\end{eqnarray}
In the second equality, the cross terms vanish due to the symmetry
of the integral over $\s^1$. On the other hand, for the second term
in (\ref{test temp 5.}), H\"older's inequality and the elliptic
estimate imply
\begin{eqnarray}\label{wellposed temp 2.}
-\e\iint_{\Omega\times\s^1}(\vw\cdot\nx\phi)(u-\bar
u)&\leq&C\e\nm{u-\bar u}_{L^{2m}(\Omega\times\s^1)}\nm{\nx\phi}_{L^{\frac{2m}{2m-1}}(\Omega)}\\
&\leq&C\e\nm{u-\bar u}_{L^{2m}(\Omega\times\s^1)}\nm{\xi}_{W^{2,\frac{2m}{2m-1}}(\Omega)}\no\\
&\leq&C\e\nm{u-\bar
u}_{L^{2m}(\Omega\times\s^1)}\nm{\bar
u}_{L^{2m}(\Omega)}^{2m-1}\nonumber.
\end{eqnarray}
Based on (\ref{test temp 3.}), (\ref{test temp 6.}), (\ref{test temp 4.}), Sobolev embedding theorem and the trace theorem, we have
\begin{eqnarray}
\\
\nm{\nx\xi}_{L^{\frac{m}{m-1}}(\Gamma)}\leq C\nm{\nx\xi}_{W^{\frac{1}{2m},\frac{2m}{2m-1}}(\Gamma)}\leq C\nm{\nx\xi}_{W^{1,\frac{2m}{2m-1}}(\Omega)}\leq C\nm{\xi}_{W^{2,\frac{2m}{2m-1}}(\Omega)}\leq
C\nm{\bar
u}_{L^{2m}(\Omega)}^{2m-1}.\no
\end{eqnarray}
Based on (\ref{test temp 3.}), (\ref{test temp 4.}) and
H\"older's inequality, we have
\begin{eqnarray}\label{wellposed temp 3.}
\e\int_{\Gamma}u\phi\ud{\gamma}&=&\e\int_{\Gamma^+}u\phi\ud{\gamma}+\e\int_{\Gamma^-}u\phi\ud{\gamma}\\
&\leq&C\e\nm{\nx\xi}_{L^{\frac{m}{m-1}}(\Gamma)}\bigg(\nm{u}_{L^{m}(\Gamma^+)}+\nm{h}_{L^{m}(\Gamma^-)}\bigg)\no\\
&\leq&C\e\nm{\bar u}_{L^{2m}(\Omega\times\s^1)}^{2m-1}\bigg(\nm{u}_{L^{m}(\Gamma^+)}+\nm{h}_{L^{m}(\Gamma^-)}\bigg).\no
\end{eqnarray}
Also, we have
\begin{eqnarray}\label{wellposed temp 5.}
\iint_{\Omega\times\s^1}(u-\bar u)\phi\leq
C\nm{\phi}_{L^2(\Omega\times\s^1)}\nm{u-\bar
u}_{L^2(\Omega\times\s^1)}\leq
C\nm{\bar
u}_{L^{2m}(\Omega)}^{2m-1}\nm{u-\bar
u}_{L^2(\Omega\times\s^1)},
\end{eqnarray}
\begin{eqnarray}\label{wellposed temp 6.}
\iint_{\Omega\times\s^1}f\phi\leq C\nm{\phi}_{L^2(\Omega\times\s^1)}\nm{f}_{L^2(\Omega\times\s^1)}\leq C\nm{\bar
u}_{L^{2m}(\Omega)}^{2m-1}\nm{f}_{L^2(\Omega\times\s^1)}.
\end{eqnarray}
Collecting terms in (\ref{wellposed temp 1.}), (\ref{wellposed temp
2.}), (\ref{wellposed temp 3.}),
(\ref{wellposed temp 5.}) and (\ref{wellposed temp 6.}), we obtain
\begin{eqnarray}\label{improve temp 1.}
\e\nm{\bar u}_{L^{2m}(\Omega\times\s^1)}&\leq&
C\bigg(\e\nm{u-\bar
u}_{L^{2m}(\Omega\times\s^1)}+\nm{u-\bar
u}_{L^2(\Omega\times\s^1)}+\e\nm{u}_{L^{m}(\Gamma^+)}\\
&&+\nm{f}_{L^2(\Omega\times\s^1)}+\e\nm{h}_{L^{m}(\Gamma^-)}\bigg)\nonumber,
\end{eqnarray}
\ \\
Step 2: Energy Estimate.\\
In the weak formulation (\ref{well-posedness temp 4.}), we may take
the test function $\phi=u$ to get the energy estimate
\begin{eqnarray}\label{well-posedness temp 9.}
\half\e\int_{\Gamma}\abs{u}^2\ud{\gamma}+\nm{u-\bar
u}_{L^2(\Omega\times\s^1)}^2=\iint_{\Omega\times\s^1}fu.
\end{eqnarray}
Hence, as in $L^2$ estimates, this naturally implies
\begin{eqnarray}\label{well-posedness temp 5.}
\e\nm{u}_{L^2(\Gamma^+)}^2+\nm{u-\bar
u}_{L^2(\Omega\times\s^1)}^2= \iint_{\Omega\times\s^1}fu+\e\nm{h}_{L^2(\Gamma^-)}^2.
\end{eqnarray}
On the other hand, we can square on both sides of
(\ref{improve temp 1.}) to obtain
\begin{eqnarray}\label{well-posedness temp 6.}
\e^2\nm{\bar u}_{L^{2m}(\Omega\times\s^1)}^2&\leq&
C\bigg(\e^2\nm{u-\bar
u}_{L^{2m}(\Omega\times\s^1)}^2+\nm{u-\bar
u}_{L^2(\Omega\times\s^1)}^2+\e^2\nm{u}_{L^{m}(\Gamma^+)}\\
&&+\nm{f}_{L^2(\Omega\times\s^1)}^2+\e^2\nm{h}_{L^{m}(\Gamma^-)}^2\bigg)\nonumber,
\end{eqnarray}
Multiplying a sufficiently small constant on both sides of
(\ref{well-posedness temp 6.}) and adding it to (\ref{well-posedness
temp 5.}) to absorb
$\nm{u-\bar u}_{L^2(\Omega\times\s^1)}^2$, we deduce
\begin{eqnarray}\label{wt 03.}
&&\e\nm{u}_{L^2(\Gamma^+)}^2+\e^2\nm{\bar
u}_{L^{2m}(\Omega\times\s^1)}^2+\nm{u-\bar
u}_{L^2(\Omega\times\s^1)}^2\\
&\leq&
C\bigg(\e^2\nm{u-\bar
u}_{L^{2m}(\Omega\times\s^1)}^2+\e^2\nm{u}_{L^{m}(\Gamma^+)}+\tm{f}{\Omega\times\s^1}^2+
\iint_{\Omega\times\s^1}fu+\e\nm{h}_{L^2(\Gamma^-)}^2+\e^2\nm{h}_{L^{m}(\Gamma^-)}^2\bigg).\nonumber
\end{eqnarray}
By interpolation estimate and Young's inequality, we have
\begin{eqnarray}
\nm{u}_{L^{m}(\Gamma^+)}&\leq&\nm{u}_{L^2(\Gamma^+)}^{\frac{2}{m}}\nm{u}_{L^{\infty}(\Gamma^+)}^{\frac{m-2}{m}}\\
&=&\bigg(\frac{1}{\e^{\frac{m-2}{m^2}}}\nm{u}_{L^2(\Gamma^+)}^{\frac{2}{m}}\bigg)
\bigg(\e^{\frac{m-2}{m^2}}\nm{u}_{L^{\infty}(\Gamma^+)}^{\frac{m-2}{m}}\bigg)\no\\
&\leq&C\bigg(\frac{1}{\e^{\frac{m-2}{m^2}}}\nm{u}_{L^2(\Gamma^+)}^{\frac{2}{m}}\bigg)^{\frac{m}{2}}+o(1)
\bigg(\e^{\frac{m-2}{m^2}}\nm{u}_{L^{\infty}(\Gamma^+)}^{\frac{m-2}{m}}\bigg)^{\frac{m}{m-2}}\no\\
&\leq&\frac{C}{\e^{\frac{m-2}{2m}}}\nm{u}_{L^2(\Gamma^+)}+o(1)\e^{\frac{1}{m}}\nm{u}_{L^{\infty}(\Gamma^+)}\no\\
&\leq&\frac{C}{\e^{\frac{m-2}{2m}}}\nm{u}_{L^2(\Gamma^+)}+o(1)\e^{\frac{1}{m}}\nm{u}_{L^{\infty}(\Omega\times\s^1)}.\no
\end{eqnarray}
Similarly, we have
\begin{eqnarray}
\nm{u-\bar u}_{L^{2m}(\Omega\times\s^1)}&\leq&\nm{u-\bar u}_{L^2(\Omega\times\s^1)}^{\frac{1}{m}}\nm{u-\bar u}_{L^{\infty}(\Omega\times\s^1)}^{\frac{m-1}{m}}\\
&=&\bigg(\frac{1}{\e^{\frac{m-1}{m^2}}}\nm{u-\bar u}_{L^2(\Omega\times\s^1)}^{\frac{1}{m}}\bigg)\bigg(\e^{\frac{m-1}{m^2}}\nm{u-\bar u}_{L^{\infty}(\Omega\times\s^1)}^{\frac{m-1}{m}}\bigg)\no\\
&\leq&C\bigg(\frac{1}{\e^{\frac{m-1}{m^2}}}\nm{u-\bar u}_{L^2(\Omega\times\s^1)}^{\frac{1}{m}}\bigg)^{m}+o(1)\bigg(\e^{\frac{m-1}{m^2}}\nm{u-\bar u}_{L^{\infty}(\Omega\times\s^1)}^{\frac{m-1}{m}}\bigg)^{\frac{m}{m-1}}\no\\
&\leq&\frac{C}{\e^{\frac{m-1}{m}}}\nm{u-\bar u}_{L^2(\Omega\times\s^1)}+o(1)\e^{\frac{1}{m}}\nm{u-\bar u}_{L^{\infty}(\Omega\times\s^1)}.\no
\end{eqnarray}
We need this extra $\e^{\frac{1}{m}}$ for the convenience of $L^{\infty}$ estimate.
Then we know for sufficiently small $\e$,
\begin{eqnarray}
\e^2\nm{u}_{L^{m}(\Gamma^+)}^2
&\leq&C\e^{2-\frac{m-2}{m}}\nm{u}_{L^2(\Gamma^+)}^2+o(1)\e^{2+\frac{2}{m}}\nm{u}_{L^{\infty}(\Gamma^+)}^2\\
&\leq&o(1)\e\nm{u}_{L^2(\Gamma^+)}^2+o(1)\e^{2+\frac{2}{m}}\nm{u}_{L^{\infty}(\Omega\times\s^1)}^2.\no
\end{eqnarray}
Similarly, we have
\begin{eqnarray}
\e^2\nm{u-\bar
u}_{L^{2m}(\Omega\times\s^1)}^2&\leq&\e^{2-\frac{2m-2}{m}}\nm{u-\bar u}_{L^2(\Omega\times\s^1)}^2+o(1)\e^{2+\frac{2}{m}}\nm{u}_{L^{\infty}(\Omega\times\s^1)}^2\\
&\leq& o(1)\nm{u-\bar u}_{L^2(\Omega\times\s^1)}^2+o(1)\e^{2+\frac{2}{m}}\nm{u}_{L^{\infty}(\Omega\times\s^1)}^2.\no
\end{eqnarray}
In (\ref{wt 03.}), we can absorb $\nm{u-\bar u}_{L^2(\Omega\times\s^1)}$ and $\e\nm{u}_{L^2(\Gamma^+)}^2$ into left-hand side to obtain
\begin{eqnarray}\label{wt 04.}
&&\e\nm{u}_{L^2(\Gamma^+)}^2+\e^2\nm{\bar
u}_{L^{2m}(\Omega\times\s^1)}^2+\nm{u-\bar
u}_{L^2(\Omega\times\s^1)}^2\\
&\leq&
C\bigg(o(1)\e^{2+\frac{2}{m}}\nm{u}_{L^{\infty}(\Omega\times\s^1)}^2+\tm{f}{\Omega\times\s^1}^2+
\iint_{\Omega\times\s^1}fu+\e\nm{h}_{L^2(\Gamma^-)}^2+\e^2\nm{h}_{L^{m}(\Gamma^-)}^2\bigg).\no
\end{eqnarray}
We can decompose
\begin{eqnarray}
\iint_{\Omega\times\s^1}fu=\iint_{\Omega\times\s^1}f\bar u+\iint_{\Omega\times\s^1}f(u-\bar u).
\end{eqnarray}
H\"older's inequality and Cauchy's inequality imply
\begin{eqnarray}
\iint_{\Omega\times\s^1}f\bar u\leq\nm{f}_{L^{\frac{2m}{2m-1}}(\Omega\times\s^1)}\nm{\bar u}_{L^{2m}(\Omega\times\s^1)}
\leq\frac{C}{\e^{2}}\nm{f}_{L^{\frac{2m}{2m-1}}(\Omega\times\s^1)}^2+o(1)\e^2\nm{\bar u}_{L^{2m}(\Omega\times\s^1)}^2,
\end{eqnarray}
and
\begin{eqnarray}
\iint_{\Omega\times\s^1}f(u-\bar u)\leq C\nm{f}_{L^{2}(\Omega\times\s^1)}^2+o(1)\nm{u-\bar u}_{L^2(\Omega\times\s^1)}^2.
\end{eqnarray}
Hence, absorbing $\e^2\nm{\bar u}_{L^{2m}(\Omega\times\s^1)}^2$ and $\nm{u-\bar u}_{L^2(\Omega\times\s^1)}^2$ into left-hand side of (\ref{wt 04.}), we get
\begin{eqnarray}\label{wt 06.}
&&\e\nm{u}_{L^2(\Gamma^+)}^2+\e^2\nm{\bar
u}_{L^{2m}(\Omega\times\s^1)}^2+\nm{u-\bar
u}_{L^2(\Omega\times\s^1)}^2\\
&\leq&
C\bigg(o(1)\e^{2+\frac{2}{m}}\nm{u}_{L^{\infty}(\Omega\times\s^1)}^2+\tm{f}{\Omega\times\s^1}^2+
\frac{1}{\e^2}\nm{f}_{L^{\frac{2m}{2m-1}}(\Omega\times\s^1)}^2+\e\nm{h}_{L^2(\Gamma^-)}^2+\e^2\nm{h}_{L^{m}(\Gamma^-)}^2\bigg),\nonumber
\end{eqnarray}
which implies
\begin{eqnarray}\label{wt 07.}
&&\frac{1}{\e^{\frac{1}{2}}}\nm{u}_{L^2(\Gamma^+)}+\nm{
\bar u}_{L^{2m}(\Omega\times\s^1)}+\frac{1}{\e}\nm{u-\bar
u}_{L^2(\Omega\times\s^1)}\\
&\leq&
C\bigg(o(1)\e^{\frac{1}{m}}\nm{u}_{L^{\infty}(\Omega\times\s^1)}+\frac{1}{\e}\tm{f}{\Omega\times\s^1}+
\frac{1}{\e^2}\nm{f}_{L^{\frac{2m}{2m-1}}(\Omega\times\s^1)}+\frac{1}{\e^{\frac{1}{2}}}\nm{h}_{L^2(\Gamma^-)}+\nm{h}_{L^{m}(\Gamma^-)}\bigg).\nonumber
\end{eqnarray}

\end{proof}

\subsection{$L^{\infty}$ Estimate - Second Round}

\begin{theorem}\label{LI estimate}
The unique solution $u(\vx,\vw)$ to the equation (\ref{neutron}) satisfies
\begin{eqnarray}
\im{u}{\Omega\times\s^1}&\leq& C\bigg(\frac{1}{\e^{1+\frac{1}{m}}}\tm{f}{\Omega\times\s^1}+
\frac{1}{\e^{2+\frac{1}{m}}}\nm{f}_{L^{\frac{2m}{2m-1}}(\Omega\times\s^1)}+\im{f}{\Omega\times\s^1}\\
&&+\frac{1}{\e^{\frac{1}{2}+\frac{1}{m}}}\nm{h}_{L^2(\Gamma^-)}+\frac{1}{\e^{\frac{1}{m}}}\nm{h}_{L^{m}(\Gamma^-)}+\im{h}{\Gamma^-}\bigg).\no
\end{eqnarray}
\end{theorem}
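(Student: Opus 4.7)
The plan is to repeat the double Duhamel iteration of Theorem \ref{LI estimate.} but, at the single place where one is forced to estimate an integral of $\bar u$ along overlapping characteristics, to replace the crude $L^2$ Cauchy--Schwarz step by a sharper H\"older pairing with exponents $2m$ and $\frac{2m}{2m-1}$. The $L^{2m}$ control of $\bar u$ furnished by Theorem \ref{LN estimate}, which itself only costs $o(1)\e^{1/m}\lnm{u}$ on its right-hand side, is precisely what is needed to trade the disastrous $\e^{-3}$ of the first round for $\e^{-2-1/m}$, and to close the estimate by absorption.

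Concretely, I would start from the twice-iterated Duhamel representation (\ref{well-posedness temp 8}). The boundary and source terms in the first three lines are pointwise bounded by $\im{h}{\Gamma^-}+\im{f}{\Omega\times\s^1}$ exactly as in (\ref{im temp 1})--(\ref{im temp 4}). Only the last quadruple integral, involving $\bar u$, needs new treatment. Split it at the parameter $\delta$ as $I_1+I_2$. The near piece $I_1$ is bounded by $\delta\lnm{u}$ as in (\ref{im temp 5}) and will be absorbed at the end by taking $\delta$ small.

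For the far piece $I_2$, instead of applying Cauchy--Schwarz as in (\ref{well-posedness temp 22}), I apply H\"older in $(r,\vw_t)$ with conjugate exponents $2m$ and $\frac{2m}{2m-1}$:
\begin{eqnarray}
I_2 &\ls& \int_0^{t_b} \bigg(\int_{\s^1}\int_0^{\max(0,s_b-\delta)}\id_\Omega\,\abs{\bar u(\vx-\e(t_b-s)\vw-\e(s_b-r)\vw_t)}^{2m}\ud{r}\ud{\vw_t}\bigg)^{\!\frac{1}{2m}} \no\\
&&\qquad\times\bigg(\int_{\s^1}\int_0^{\max(0,s_b-\delta)}\id_\Omega\,\ud{r}\ud{\vw_t}\bigg)^{\!\frac{2m-1}{2m}} \ue^{-(t_b-s)}\ud{s}. \no
\end{eqnarray}
The second factor is a bounded constant since $(s_b-r)$ and $\vw_t$ range over bounded sets. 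For the first, I parametrize $\vw_t=(\cos\psi,\sin\psi)$ and invoke the same change of variables $(\psi,r)\rt \vec y$ used in the first-round proof, whose Jacobian is still bounded below by $\e^2\delta$ on $\{s_b-r\geq\delta\}$. This yields
\begin{eqnarray}
I_2 \ls \frac{C}{(\e^2\delta)^{\frac{1}{2m}}}\nm{\bar u}_{L^{2m}(\Omega)} \ls \frac{C}{\e^{\frac{1}{m}}\delta^{\frac{1}{2m}}}\nm{\bar u}_{L^{2m}(\Omega\times\s^1)}. \no
\end{eqnarray}

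Collecting everything and choosing $\delta$ sufficiently small gives, after absorbing $\delta\lnm{u}$,
\begin{eqnarray}
\lnm{u} \ls \frac{1}{\e^{\frac{1}{m}}}\nm{\bar u}_{L^{2m}(\Omega\times\s^1)} + \im{f}{\Omega\times\s^1} + \im{h}{\Gamma^-}. \no
\end{eqnarray}
Now I feed in Theorem \ref{LN estimate} on the right-hand side. The key point is that the $L^{2m}$ bound produces the term $o(1)\e^{\frac{1}{m}}\lnm{u}$, which becomes $o(1)\lnm{u}$ after multiplication by $\e^{-1/m}$ and is absorbed into the left-hand side; all remaining source/boundary contributions acquire the extra $\e^{-1/m}$ factor, producing exactly the powers $\e^{-1-\frac{1}{m}}$, $\e^{-2-\frac{1}{m}}$, $\e^{-\frac{1}{2}-\frac{1}{m}}$ and $\e^{-\frac{1}{m}}$ claimed in the statement. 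The main technical point to be careful about is matching the H\"older exponent to the Jacobian blow-up: picking the pair $(2m,\frac{2m}{2m-1})$ gives the singularity $\delta^{-1/(2m)}$, which is mild enough that the $\delta$ in the $I_1$ bound still wins, so the absorbing argument remains valid for every $m\geq 2$.
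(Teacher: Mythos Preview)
Your proposal is correct and follows essentially the same approach as the paper: the same double Duhamel iteration, the same replacement of Cauchy--Schwarz by H\"older with exponents $(2m,\tfrac{2m}{2m-1})$ in the $I_2$ piece, the same Jacobian lower bound $\e^2\delta$, and the same two-step absorption (first $\delta\lnm{u}$, then the $o(1)\lnm{u}$ produced when Theorem~\ref{LN estimate} is multiplied by $\e^{-1/m}$).
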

\begin{proof}
Following the argument in the proof of Theorem \ref{LI estimate.}, by double Duhamel's principle along the characteristics,
we may apply H\"{o}lder's inequality to obtain
\begin{eqnarray}\label{well-posedness temp 22.}
I_2&\leq&
C\int_{0}^{t_b}\Bigg(\bigg(\int_{\s^1}\int_{0}^{\max(0,s_b-\delta)}{\bf{1}}_{\Omega}\Big(\vx-\e(t_b-s)\vw-\e
(s_b-r)\vw_t\Big)\\
&&\abs{\bar u\Big(\vx-\e(t_b-s)\vw-\e
(s_b-r)\vw_t\Big)}^{2m}\ud{\vw_t}\ud{r}\bigg)^{\frac{1}{2m}}\no\\
&&\times\bigg(\int_{\s^1}\int_{0}^{\max(0,s_b-\delta)}{\bf{1}}_{\Omega}\Big(\vx-\e(t_b-s)\vw-\e
(s_b-r)\vw_t\Big)\ud{\vw_t}\ud{r}\bigg)^{\frac{2m-1}{2m}}\Bigg)\ue^{-(t_b-s)}\ud{s}\no
\end{eqnarray}
Then, using the same substitution, for
$\vw_t=(\cos\psi,\sin\psi)$, we define the change of variable
$[-\pi,\pi)\times\r\rt \Omega: (\psi,r)\rt(y_1,y_2)=\vec
y=\vx-\e(t_b-s)\vw-\e (s_b-r)\vw_t$, which, for $s_b-r\geq\delta$, implies the Jacobian
\begin{eqnarray}
\abs{\frac{\p{(y_1,y_2)}}{\p{(\psi,r)}}}=\abs{\abs{\begin{array}{rc}
-\e(s_b-r)\sin\psi&\e\cos\psi\\
\e(s_b-r)\cos\psi&\e\sin\psi
\end{array}}}=\e^2(s_b-r)\geq\e^2\delta.
\end{eqnarray}
Hence, we may simplify (\ref{well-posedness temp 22.}) as
\begin{eqnarray}
I_2&\leq&C\int_{0}^{t_b}\bigg(\int_{\Omega}\frac{1}{\e^2\d}\abs{\bar u(\vec
y)}^{2m}\ud{\vec y}\bigg)^{\frac{1}{2m}}\ue^{-(t_b-s)}\ud{s}\\
&\leq&\frac{C}{\e^{\frac{1}{m}}\d^{\frac{1}{2m}}}\int_{0}^{t_b}\bigg(\int_{\Omega}\abs{\bar u(\vec
y)}^{2m}\ud{\vec y}\bigg)^{\frac{1}{2m}}\ue^{-(t_b-s)}\ud{s}\no\\
&\leq&\frac{C}{\e^{\frac{1}{m}}\d^{\frac{1}{2m}}}\nm{
\bar u}_{L^{2m}(\Omega\times\s^1)}.\no
\end{eqnarray}
Hence, for fixed $0<\delta<1$, we have
\begin{eqnarray}
\im{u}{\Omega\times\s^1}\leq \delta
\im{u}{\Omega\times\s^1}+\frac{C}{\e^{\frac{1}{m}}\d^{\frac{1}{2m}}}\nm{
\bar u}_{L^{2m}(\Omega\times\s^1)}+C
\bigg(\im{f}{\Omega\times\s^1}+\im{h}{\Gamma^-}\bigg).
\end{eqnarray}
Then taking $\d$ small to absorb $\delta
\im{u}{\Omega\times\s^1}$ into the left-hand side to get
\begin{eqnarray}
\im{u}{\Omega\times\s^1}\leq
\frac{C}{\e^{\frac{1}{m}}\d^{\frac{1}{2m}}}\nm{
\bar u}_{L^{2m}(\Omega\times\s^1)}+C
\bigg(\im{f}{\Omega\times\s^1}+\im{h}{\Gamma^-}\bigg).
\end{eqnarray}
Using Theorem \ref{LN estimate}, we get
\begin{eqnarray}
\im{u}{\Omega\times\s^1}&\leq& C\bigg(\frac{1}{\e^{1+\frac{1}{m}}}\tm{f}{\Omega\times\s^1}+
\frac{1}{\e^{2+\frac{1}{m}}}\nm{f}_{L^{\frac{2m}{2m-1}}(\Omega\times\s^1)}+\im{f}{\Omega\times\s^1}\\
&&+\frac{1}{\e^{\frac{1}{2}+\frac{1}{m}}}\nm{h}_{L^2(\Gamma^-)}+\frac{1}{\e^{\frac{1}{m}}}\nm{h}_{L^{m}(\Gamma^-)}+\im{h}{\Gamma^-}\bigg)+o(1)\im{u}{\Omega\times\s^1}.\no
\end{eqnarray}
Absorbing $\im{u}{\Omega\times\s^1}$ into the left-hand side, we obtain
\begin{eqnarray}
\im{u}{\Omega\times\s^1}&\leq& C\bigg(\frac{1}{\e^{1+\frac{1}{m}}}\tm{f}{\Omega\times\s^1}+
\frac{1}{\e^{2+\frac{1}{m}}}\nm{f}_{L^{\frac{2m}{2m-1}}(\Omega\times\s^1)}+\im{f}{\Omega\times\s^1}\\
&&+\frac{1}{\e^{\frac{1}{2}+\frac{1}{m}}}\nm{h}_{L^2(\Gamma^-)}+\frac{1}{\e^{\frac{1}{m}}}\nm{h}_{L^{m}(\Gamma^-)}+\im{h}{\Gamma^-}\bigg).\no
\end{eqnarray}

\end{proof}

\newpage

\section{Well-Posedness of $\e$-Milne Problem with Geometric Correction}

We consider the $\e$-Milne problem with geometric correction for $f^{\e}(\eta,\tau,\phi)$ in
the domain $(\eta,\tau,\phi)\in[0,L]\times[-\pi,\pi)\times[-\pi,\pi)$ where $L=\e^{-\frac{1}{2}}$ as
\begin{eqnarray}\label{Milne problem.}
\left\{ \begin{array}{l}\displaystyle
\sin\phi\frac{\p
f^{\e}}{\p\eta}+F(\e;\eta,\tau)\cos\phi\frac{\p
f^{\e}}{\p\phi}+f^{\e}-\bar f^{\e}=S^{\e}(\eta,\tau,\phi),\\\rule{0ex}{1.5em}
f^{\e}(0,\tau,\phi)= h^{\e}(\tau,\phi)\ \ \text{for}\
\ \sin\phi>0,\\\rule{0ex}{1.5em}
f^{\e}(L,\tau,\phi)=f^{\e}(L,\tau,\rr[\phi]),
\end{array}
\right.
\end{eqnarray}
where $\rr[\phi]=-\phi$ and
\begin{eqnarray}
F(\e;\eta,\tau)=-\frac{\e}{\rk(\tau)-\e\eta},
\end{eqnarray}
for the radius of curvature $\rk$. In this section, for convenience, we temporarily ignore the superscript on $\e$ and $\tau$. In other words, we will study
\begin{eqnarray}\label{Milne problem}
\left\{ \begin{array}{l}\displaystyle
\sin\phi\frac{\p
f}{\p\eta}+F(\eta)\cos\phi\frac{\p
f}{\p\phi}+f-\bar f=S(\eta,\phi),\\\rule{0ex}{1.5em}
f(0,\phi)= h(\phi)\ \ \text{for}\
\ \sin\phi>0,\\\rule{0ex}{1.5em}
f(L,\phi)=f(L,\rr[\phi]).
\end{array}
\right.
\end{eqnarray}
Define potential function $V(\eta)$ satisfying $V(0)=0$ and $\dfrac{\p V}{\p\eta}=-F(\eta)$. Then we can direct compute
\begin{eqnarray}
V(\eta)=\ln\left(\frac{\rk}{\rk-\e\eta}\right).
\end{eqnarray}
Define the weight function
\begin{eqnarray}
\zeta(\eta,\phi)=\Bigg(1-\bigg(\frac{\rk-\e\eta}{\rk}\cos\phi\bigg)^2\Bigg)^{\frac{1}{2}}.
\end{eqnarray}
We can easily show that
\begin{eqnarray}
\sin\phi\frac{\p \zeta}{\p\eta}+F(\eta)\cos\phi\frac{\p
\zeta}{\p\phi}=0.
\end{eqnarray}
We define the norms in the
space $(\eta,\phi)\in[0,\infty)\times[-\pi,\pi)$ as follows:
\begin{eqnarray}
\tnnm{f}&=&\bigg(\int_0^{L}\int_{-\pi}^{\pi}\abs{f(\eta,\phi)}^2\ud{\phi}\ud{\eta}\bigg)^{\frac{1}{2}},\\
\lnnm{f}&=&\sup_{(\eta,\phi)\in[0,L]\times[-\pi,\pi)}\abs{f(\eta,\phi)}.
\end{eqnarray}
Similarly,
\begin{eqnarray}
\tnm{f(\eta)}&=&\bigg(\int_{-\pi}^{\pi}\abs{f(\eta,\phi)}^2\ud{\phi}\bigg)^{\frac{1}{2}},\\
\lnm{f(\eta)}&=&\sup_{\phi\in[-\pi,\pi)}\abs{f(\eta,\phi)}.
\end{eqnarray}
Also, we define the weighted norms at in-flow boundary as
\begin{eqnarray}
\tss{h}{-}&=&\bigg(\int_{\sin\phi>0}\abs{h(\phi)}^2\sin\phi\ud{\phi}\bigg)^{\frac{1}{2}},\\
\lss{h}{-}&=&\sup_{\sin\phi>0}\abs{h(\phi)}.
\end{eqnarray}
Also define
\begin{eqnarray}
\br{f,g}_{\phi}(\eta)=\int_{-\pi}^{\pi}f(\eta,\phi)g(\eta,\phi)\ud{\phi},
\end{eqnarray}
as the $L^2$ inner product in $\phi$.

In the following, we will always assume that for some $K>0$,
\begin{eqnarray}
\lss{h}{-}+\lnnm{\ue^{K\eta}S}\leq C.
\end{eqnarray}
The well-posedness, exponential decay and maximum principle of the equation (\ref{Milne problem}) has been well studied in \cite{AA003}. Here we will focus on the a priori estimates and present detail structure of the dependence of the boundary data $h$ and the source term $S$.

\subsection{$L^2$ Estimates}

\subsubsection{$\bar S=0$ Case}

Assume that $S$ satisfies $\bar S(\eta)=0$ for any
$\eta$. We may decompose the solution
\begin{eqnarray}
f(\eta,\phi)=q_f(\eta)+r_f(\eta,\phi),
\end{eqnarray}
where the hydrodynamical part $q_f$ is in the null space of the
operator $f-\bar f$, and the microscopic part $r_f$ is
the orthogonal complement, i.e.
\begin{eqnarray}\label{hydro}
q_f(\eta)=\frac{1}{2\pi}\int_{-\pi}^{\pi}f(\eta,\phi)\ud{\phi}=\bar f,\quad
r_f(\eta,\phi)=f(\eta,\phi)-q_f(\eta).
\end{eqnarray}
In the following, when there is no confusion, we simply write
$f=q+r$.
\begin{lemma}\label{Milne finite LT}
Assume $\bar S(\eta)=0$ for any $\eta\in[0,L]$. Then the unique solution $f(\eta,\phi)$ to the equation
(\ref{Milne problem}) satisfies
\begin{eqnarray}
&&\tnnm{r}\leq C\bigg(\tss{h}{-}+\tnnm{S}\bigg)\label{Milne temp 8},
\end{eqnarray}
and there exists $q_L\in\r$ such that
\begin{eqnarray}
\abs{q_L}&\leq&C\bigg(\tss{h}{-}+\tnnm{S}\bigg)+C\abs{\int_0^{L}\br{\sin\phi,S}_{\phi}(y)\ud{y}}\label{Milne temp 17},\\
\tnnm{q-q_L}&\leq&C\bigg(\tss{h}{-}+\tnnm{S}\bigg)+C\Bigg(\int_0^{L}\bigg(\int_{\eta}^{L}\br{\sin\phi,S}_{\phi}(y)\ud{y}\bigg)^2\ud{\eta}\Bigg)^{\frac{1}{2}}.\label{Milne temp 10}
\end{eqnarray}
Also, for any $\eta\in[0,L]$,
\begin{eqnarray}
&&\br{\sin\phi,r}_{\phi}(\eta)=0\label{Milne temp 19}.
\end{eqnarray}
\end{lemma}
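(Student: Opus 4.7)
The plan is to establish the four assertions in the order (\ref{Milne temp 19}), (\ref{Milne temp 8}), and then (\ref{Milne temp 17}) and (\ref{Milne temp 10}) together via a representation formula for the hydrodynamic part $q$.

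For (\ref{Milne temp 19}), the key step is to integrate (\ref{Milne problem}) in $\phi\in[-\pi,\pi)$: periodicity plus an integration by parts convert the flux term to $\p_\eta\br{\sin\phi,f}_\phi + F(\eta)\br{\sin\phi,f}_\phi$, while both the penalty and $2\pi\bar S$ vanish. Since $V'(\eta) = -F(\eta)$, this reduces to $\p_\eta(\ue^{-V}\br{\sin\phi,f}_\phi) = 0$. Specular reflection at $\eta = L$ makes $f(L,\cdot)$ even in $\phi$, so $\br{\sin\phi,f}_\phi(L) = 0$, and hence $\br{\sin\phi,f}_\phi \equiv 0$ on $[0,L]$. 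Combined with $\br{\sin\phi,q}_\phi = 0$, this gives (\ref{Milne temp 19}).

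For (\ref{Milne temp 8}), I would multiply (\ref{Milne problem}) by the weighted test function $\ue^{-V(\eta)}f$. The weight $\ue^{-V}$ is tuned so that the integration-by-parts term $F\sin\phi f^2/2$ cancels against $\p_\eta\ue^{-V} = F\ue^{-V}$, turning the transport identity into a pure divergence. The boundary at $\eta = L$ integrates to zero by the odd-in-$\phi$ specular symmetry, while the $\eta = 0$ boundary contributes $-\tfrac{1}{2}\tss{h}{-}^2$ plus a positive outflow trace. The penalty term yields $\int_0^L\ue^{-V}\tnm{r(\eta)}^2\ud\eta \gs \tnnm{r}^2$ since $\ue^{-V}$ is bounded below, and the source reduces (using $\bar S = 0$) to $\iint\ue^{-V}Sr$, which Young's inequality absorbs.

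For (\ref{Milne temp 17}) and (\ref{Milne temp 10}), I would derive an ODE for $q$ by multiplying (\ref{Milne problem}) by $\sin\phi$ and integrating in $\phi$. Using (\ref{Milne temp 19}), $\int\sin^2\phi\,\ud\phi = \pi$, and $\p_\phi(\sin\phi\cos\phi) = \cos 2\phi$, this yields
\begin{equation*}
\pi q'(\eta) = \br{\sin\phi,S}_\phi(\eta) - B'(\eta) + F(\eta)\br{\cos 2\phi,r}_\phi(\eta), \quad B(\eta) := \br{\sin^2\phi,r}_\phi(\eta).
\end{equation*}
Integrating from $\eta$ to $L$ and defining $q_L := q(L)$ gives the representation
\begin{equation*}
q(\eta) - q_L = \tfrac{1}{\pi}\bigl[B(\eta) - B(L)\bigr] - \tfrac{1}{\pi}\int_\eta^L\br{\sin\phi,S}_\phi(y)\,\ud y - \tfrac{1}{\pi}\int_\eta^L F(y)\br{\cos 2\phi,r}_\phi(y)\,\ud y.
\end{equation*}
For (\ref{Milne temp 10}), take the $L^2_\eta$-norm: the $S$-integral matches the expression in (\ref{Milne temp 10}) verbatim; the $F$-integral is small because $|F|\ls\e$ and $L=\e^{-\frac{1}{2}}$; and $\|B(\cdot)-B(L)\|_{L^2_\eta}$ is handled via $\|B\|_{L^2_\eta}\ls\tnnm{r}$ (pointwise domination by $\tnm{r(\eta)}$) plus a trace bound for $|B(L)|$. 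For (\ref{Milne temp 17}), evaluate the representation at $\eta = 0$ and solve for $q_L$: the term $\int_0^L\br{\sin\phi,S}_\phi(y)\,\ud y$ is exactly the extra contribution in (\ref{Milne temp 17}), while $|q(0)|$, $|B(0)|$, $|B(L)|$ must be controlled using the in-flow condition and the outflow trace produced in Step 2.

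The hard part will be the trace estimates for $|B(L)|$ and $|q(0)|$. At $\eta = L$ the specular symmetry kills the leading boundary contribution in the energy identity, so $B(L)$ cannot be recovered from Step 2 directly; my intended remedy is an auxiliary localized test function of the form $\chi(\eta)\sin\phi$ with $\chi$ concentrated near $L$, which isolates $B(L)$ modulo controllable bulk terms. At $\eta = 0$ the energy estimate provides only a $|\sin\phi|$-weighted $L^2$ trace of $f(0,\cdot)$; converting this to the unweighted mean $|q(0)| = \big|\tfrac{1}{2\pi}\int f(0,\phi)\,\ud\phi\big|$ requires careful treatment of the grazing singularity at $\sin\phi = 0$, drawing on the standing $L^\infty$ hypothesis $\lss{h}{-}\leq C$ and the maximum principle of \cite{AA003}. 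These delicate trace recoveries are the technical crux of the lemma and mirror the same kind of grazing difficulty that drives the regularization decomposition introduced earlier in the paper.
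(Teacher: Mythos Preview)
Your treatment of (\ref{Milne temp 19}) and (\ref{Milne temp 8}) is sound and essentially matches the paper (the paper multiplies by $f$ rather than $\ue^{-V}f$ and carries the $F$ term through an ODE for $\alpha(\eta)=\tfrac12\br{f,f\sin\phi}_\phi$, but the integrating factor is exactly your $\ue^{-V}$, so the two computations coincide).

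The gap is in your choice $q_L := q(L)$. With this choice your representation contains the constant $B(L)=\br{\sin^2\phi,r}_\phi(L)$, and for (\ref{Milne temp 10}) you must control $\nm{B(L)}_{L^2_\eta[0,L]}=\sqrt{L}\,\abs{B(L)}=\e^{-1/4}\abs{B(L)}$; an $O(1)$ trace bound on $\abs{B(L)}$ is therefore not enough for a uniform constant. Your localized test function $\chi(\eta)\sin\phi$ does not rescue this: integrating $\chi(\eta)\sin^2\phi\,\p_\eta f$ by parts produces the bulk term $\int_0^L\chi'(\eta)\br{\sin^2\phi,f}_\phi\,\ud\eta = \int_0^L\chi'(\eta)\bigl(\pi q(\eta)+B(\eta)\bigr)\ud\eta$, which reinjects the as-yet-uncontrolled $q$ and is circular. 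Likewise, recovering the unweighted mean $\abs{q(0)}$ from a $\abs{\sin\phi}$-weighted trace genuinely fails at the grazing set, and the maximum principle you invoke applies only when $S=0$ and belongs to the $L^\infty$ theory downstream of this lemma.

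The paper exploits the freedom in ``there exists $q_L$'' and does \emph{not} take $q_L=q(L)$. It sets $\beta(\eta):=\br{\sin^2\phi,f}_\phi(\eta)=\pi q(\eta)+B(\eta)$ and defines
\[
q_L:=\frac{\beta(L)}{\pi}=q(L)+\frac{B(L)}{\pi},
\]
explicitly remarking that $q_L$ need not equal $q(L)$. Then $\pi\bigl(q(\eta)-q_L\bigr)=\beta(\eta)-\beta(L)-B(\eta)$, so the problematic constant $B(L)$ is absorbed into $q_L$ and only $B(\eta)$ remains in the $\eta$-dependent part, with $\nm{B}_{L^2_\eta}\le\tnnm{r}$. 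For (\ref{Milne temp 17}) one bounds $\abs{\beta(L)}\le\abs{\beta(0)}+\int_0^L\abs{D}$, and the crucial point is that $\beta(0)=\br{\sin^2\phi,f}_\phi(0)$ already carries the weight $\sin^2\phi$, so Cauchy--Schwarz gives $\abs{\beta(0)}\le C\bigl(\br{\abs{\sin\phi}f,f}_\phi(0)\bigr)^{1/2}$, which is precisely the $\abs{\sin\phi}$-weighted boundary trace delivered by the energy identity in Step~1. No unweighted trace, no cutoff, and no appeal to the maximum principle is needed.
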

\begin{proof}
We divide the proof into several steps:\\
\ \\
Step 1: Estimate of $r$.\\
Multiplying $f$
on both sides of (\ref{Milne problem}) and
integrating over $\phi\in[-\pi,\pi)$, we get the energy estimate
\begin{eqnarray}\label{Milne temp 31}
\half\frac{\ud{}}{\ud{\eta}}\br{f
,f\sin\phi}_{\phi}(\eta)+F(\eta)\br{\frac{\p
f}{\p\phi},f\cos\phi}_{\phi}(\eta)+\tnm{r(\eta)}^2=\br{S,f}_{\phi}(\eta).
\end{eqnarray}
An integration by parts reveals
\begin{eqnarray}
F(\eta)\br{\frac{\p
f}{\p\phi},f\cos\phi}_{\phi}(\eta)&=&\half
F(\eta)\br{f,f\sin\phi}_{\phi}(\eta).
\end{eqnarray}
Also, the assumption $\bar S(\eta)=0$ leads to
\begin{eqnarray}
\br{S,f}_{\phi}(\eta)&=&\br{S,q}_{\phi}(\eta)+\br{S,r}_{\phi}(\eta)=\br{S,r}_{\phi}(\eta).
\end{eqnarray}
Hence, we have the simplified form of (\ref{Milne temp 31}) as
follows:
\begin{eqnarray}\label{Milne temp 32}
\half\frac{\ud{}}{\ud{\eta}}\br{
f,f\sin\phi}_{\phi}(\eta)+\half
F(\eta)\br{
f,f\sin\phi}_{\phi}(\eta)+\tnm{r(\eta)}^2=\br{S,r}_{\phi}(\eta).
\end{eqnarray}
Define
\begin{eqnarray}
\alpha(\eta)=\half\br{f,f\sin\phi }_{\phi}(\eta).
\end{eqnarray}
Then (\ref{Milne temp 32}) can be rewritten as follows:
\begin{eqnarray}
\frac{\ud{\alpha}}{\ud{\eta}}+F(\eta)\alpha(\eta)+\tnm{r(\eta)}^2=\br{S,r}_{\phi}(\eta).
\end{eqnarray}
We can solve this differential equation for $\alpha$ on $[\eta,L]$ and $[0,\eta]$ respectively to obtain
\begin{eqnarray}
\label{Milne temp 4}
\alpha(\eta)&=&\alpha(L)\exp\bigg(\int_{\eta}^LF(y)\ud(y)\bigg)+\int_{\eta}^L\exp\bigg(\int_{\eta}^yF(z)\ud{z}\bigg)
\bigg(\tnm{r(y)}^2-\br{S,r}_{\phi}(y)\bigg)\ud{y},\\
\label{Milne temp 5}
\alpha(\eta)&=&\alpha(0)\exp\bigg(-\int_{0}^{\eta}F(y)\ud(y)\bigg)+\int_{0}^{\eta}\exp\bigg(-\int_{y}^{\eta}F(z)\ud{z}\bigg)
\bigg(-\tnm{r(y)}^2+\br{S,r}_{\phi}(y)\bigg)\ud{y}.
\end{eqnarray}
The specular reflexive boundary $f(L,\phi)=f(L,\rr[\phi])$
ensures $\alpha(L)=0$. Hence, based on (\ref{Milne temp 4}), we have
\begin{eqnarray}\label{Milne temp 36}
\alpha(\eta)\geq\int_{\eta}^L\exp\bigg(\int_{\eta}^yF(z)\ud{z}\bigg)\bigg(-\br{S,r}_{\phi}(y)\bigg)\ud{y}\geq -C\int_{\eta}^L\br{S,r}_{\phi}(y)\ud{y}.
\end{eqnarray}
Also, (\ref{Milne temp 5}) implies
\begin{eqnarray}
\alpha(\eta)&\leq&\alpha(0)\exp\bigg(-\int_{0}^{\eta}F(y)\ud(y)\bigg)+\int_{0}^{\eta}\exp\bigg(-\int_{y}^{\eta}F(z)\ud{z}\bigg)
\bigg(\br{S,r}_{\phi}(y)\bigg)\ud{y}\\
&\leq&C\tss{h}{-}^2+C\int_{0}^{\eta}\bigg(\br{S,r}_{\phi}(y)\bigg)\ud{y}\nonumber,
\end{eqnarray}
due to the fact
\begin{eqnarray}
\alpha(0)=\half\br{\sin\phi
f,f}_{\phi}(0)\leq\half\bigg(\int_{\sin\phi>0}h^2(\phi)\sin\phi
\ud{\phi}\bigg)\leq C\tss{h}{-}^2.
\end{eqnarray}
Then in (\ref{Milne temp 5}) taking $\eta=L$, from $\alpha(L)=0$, we have
\begin{eqnarray}\label{Milne temp 6}
\int_{0}^L\exp\bigg(\int_{0}^yF(z)\ud{z}\bigg)\tnm{r(y)}^2\ud{y}
&\leq&\alpha(0)+\int_{0}^L\exp\bigg(\int_{0}^yF(z)\ud{z}\bigg)\br{S,r}_{\phi}(y)\ud{y}\\
&\leq&
C\tss{h}{-}^2+C\int_{0}^L\br{S,r}_{\phi}(y)\ud{y}\nonumber.
\end{eqnarray}
On the other hand, we can directly
estimate as follows:
\begin{eqnarray}\label{Milne temp 7}
\int_{0}^L\exp\bigg(\int_{0}^yF(z)\ud{z}\bigg)\tnm{r(y)}^2\ud{y}\geq
C\int_{0}^L\tnm{r(y)}^2\ud{y}.
\end{eqnarray}
Combining (\ref{Milne temp 6}) and (\ref{Milne temp 7}) yields
\begin{eqnarray}\label{Milne temp 33}
\int_{0}^L\tnm{r(\eta)}^2\ud{\eta}&\leq&
C\tss{h}{-}^2+C\int_{0}^L\br{S,r}_{\phi}(y)\ud{y}.
\end{eqnarray}
By Cauchy's inequality, we have
\begin{eqnarray}\label{Milne temp 34}
\abs{\int_0^L\br{S,r}_{\phi}(y)\ud{y}}\leq
C_0\int_{0}^L\tnm{r(\eta)}^2\ud{\eta}+\frac{4}{C_0}\int_{0}^L\tnm{S(\eta)}^2\ud{\eta},
\end{eqnarray}
for $C_0>0$ small. Therefore, absorbing $\ds\int_{0}^L\tnm{r(\eta)}^2\ud{\eta}$ and summarizing (\ref{Milne temp 33}) and (\ref{Milne temp
34}), we deduce
\begin{eqnarray}\label{Milne temp 37}
\int_{0}^L\tnm{r(\eta)}^2\ud{\eta}&\leq&
C\bigg(\tss{h}{-}^2+\int_{0}^L\tnm{S(\eta)}^2\ud{\eta}\bigg).
\end{eqnarray}
\ \\
Step 2: Orthogonality relation.\\
A direct integration over $\phi\in[-\pi,\pi)$ in
(\ref{Milne problem}) implies
\begin{eqnarray}
\frac{\ud{}}{\ud{\eta}}\br{\sin\phi,f}_{\phi}(\eta)=-F\br{\cos\phi,\frac{\ud{f}}{\ud{\phi}}}_{\phi}(\eta)
+\bar S(\eta)=-F\br{\sin\phi,f}_{\phi}(\eta),
\end{eqnarray}
due to $\bar S=0$. The specular reflexive boundary
$f(L,\phi)=f(L,\rr[\phi])$ implies
$\br{\sin\phi,f}_{\phi}(L)=0$. Then we have
\begin{eqnarray}
\br{\sin\phi,f}_{\phi}(\eta)=0.
\end{eqnarray}
It is easy to see
\begin{eqnarray}
\br{\sin\phi,q}_{\phi}(\eta)=0.
\end{eqnarray}
Hence, we may derive
\begin{eqnarray}
\br{\sin\phi,r}_{\phi}(\eta)=0.
\end{eqnarray}
This leads to orthogonal relation (\ref{Milne temp 19}).\\
\ \\
Step 3: Estimate of $q$.\\
Multiplying $\sin\phi$ on
both sides of (\ref{Milne problem}) and
integrating over $\phi\in[-\pi,\pi)$ lead to
\begin{eqnarray}
\label{Milne temp 18}
\frac{\ud{}}{\ud{\eta}}\br{\sin^2\phi,f}_{\phi}(\eta)=
-\br{\sin\phi,r}_{\phi}(\eta)-F(\eta)\br{\sin\phi\cos\phi,\frac{\p
f}{\p\phi}}_{\phi}(\eta)+\br{\sin\phi,S}_{\phi}(\eta).
\end{eqnarray}
We can further integrate by parts as follows:
\begin{eqnarray}
-F(\eta)\br{\sin\phi\cos\phi,\frac{\p
f}{\p\phi}}_{\phi}(\eta)=F(\eta)\br{\cos(2\phi),f}_{\phi}(\eta)=F(\eta)\br{\cos(2\phi),r}_{\phi}(\eta).
\end{eqnarray}
Using the orthogonal relation (\ref{Milne temp 19}), we obtain
\begin{eqnarray}
\frac{\ud{}}{\ud{\eta}}\br{\sin^2\phi,f}_{\phi}(\eta)&=&F(\eta)\br{\cos(2\phi),r}_{\phi}(\eta)+\br{\sin\phi,S}_{\phi}(\eta).\nonumber
\end{eqnarray}
Define
\begin{eqnarray}
\beta(\eta)=\br{\sin^2\phi,f}_{\phi}(\eta),
\end{eqnarray}
and
\begin{eqnarray}\label{Milne t 21}
\frac{\ud{\beta}}{\ud{\eta}}=D(\eta,\phi),
\end{eqnarray}
where
\begin{eqnarray}
D(\eta,\phi)=F(\eta)\br{\cos(2\phi),r}_{\phi}+\br{\sin\phi,S}_{\phi}(\eta).
\end{eqnarray}
Hence, we can integrate (\ref{Milne t 21}) over $[0,\eta]$ to get that
\begin{eqnarray}
\beta(\eta)-\beta(0)=\int_0^{\eta}F(y)\br{\cos(2\phi),r}_{\phi}(y)\ud{y}+\int_0^{\eta}\br{\sin\phi,S}_{\phi}(y)\ud{y}.
\end{eqnarray}
Then the initial data
\begin{eqnarray}
\beta(0)=\br{\sin^2\phi,f}_{\phi}(0)\leq \bigg(\br{
f,f\abs{\sin\phi}}_{\phi}(0)\bigg)^{\frac{1}{2}}\tnm{\sin\phi}^{3/2}\leq
C\bigg(\br{ f,f\abs{\sin\phi}}_{\phi}(0)\bigg)^{\frac{1}{2}}.
\end{eqnarray}
Obviously, we have
\begin{eqnarray}
\br{f, f\abs{\sin\phi}
}_{\phi}(0)=\int_{\sin\phi>0}h^2(\phi)\sin\phi
\ud{\phi}-\int_{\sin\phi<0}\bigg(f(0,\phi)\bigg)^2\sin\phi\ud{\phi}.
\end{eqnarray}
However, based on the definition of $\alpha(\eta)$ and (\ref{Milne temp 36}), we can obtain
\begin{eqnarray}
\int_{\sin\phi>0} h^2(\phi)\sin\phi\ud{\phi}+\int_{\sin\phi<0}
\bigg(f(0,\phi)\bigg)^2\sin\phi\ud{\phi}=2\alpha(0)
&\geq&-C\int_0^L\br{S,r}_{\phi}(y)\ud{y}\nonumber.
\end{eqnarray}
Hence, we can deduce
\begin{eqnarray}
-\int_{\sin\phi<0}
\bigg(f(0,\phi)\bigg)^2\sin\phi\ud{\phi}&\leq&\int_{\sin\phi>0}
h^2(\phi)\sin\phi\ud{\phi}+C\int_{0}^L\br{S,r}_{\phi}(y)\ud{y}\\
&\leq&
C\bigg(\tss{h}{-}^2+\int_{0}^L\tnm{S(\eta)}^2\ud{\eta}\bigg)\nonumber.
\end{eqnarray}
From (\ref{Milne temp 37}), we can deduce
\begin{eqnarray}\label{Milne t 05}
\beta(0)&\leq& C\bigg(\tss{h}{-}+\tnnm{S}\bigg).
\end{eqnarray}
Since $F\in L^1[0,L]\cap
L^2[0,L]$, $r\in L^2([0,L]\times[-\pi,\pi))$, by (\ref{Milne t 05}) and (\ref{Milne temp
8}), we have
\begin{eqnarray}
\abs{\beta(L)}&\leq&\abs{\beta(0)}+\abs{\int_0^{L}F(y)\br{\cos(2\phi),r}_{\phi}(y)\ud{y}}
+\abs{\int_0^{L}\br{\sin\phi,S}_{\phi}(y)\ud{y}}\\
&\leq&C\bigg(\tss{h}{-}+\tnnm{S}\bigg)+C\tnnm{F}
\tnnm{r}+\abs{\int_0^{L}\br{\sin\phi,S}_{\phi}(y)\ud{y}}\no\\
&\leq&C\bigg(\tss{h}{-}+\tnnm{S}\bigg)+\abs{\int_0^{L}\br{\sin\phi,S}_{\phi}(y)\ud{y}}.\no
\end{eqnarray}
We define
\begin{eqnarray}
q_L=\frac{\beta(L)}{\tnm{\sin\phi}^2}.
\end{eqnarray}
Naturally, we have
\begin{eqnarray}
\abs{q_L}&\leq&C\bigg(\tss{h}{-}+\tnnm{S}\bigg)+C\abs{\int_0^{L}\br{\sin\phi,S}_{\phi}(y)\ud{y}}.
\end{eqnarray}
Note that $q_L$ is not necessarily $q(L)$. Moreover,
\begin{eqnarray}
\beta(L)-\beta(\eta)=\int_{\eta}^{L}D(y)\ud{y}=\int_{\eta}^{L}F(y)\br{\cos(2\phi),r}_{\phi}(y)\ud{y}+
\int_{\eta}^{L}\br{\sin\phi,S}_{\phi}(y)\ud{y}.
\end{eqnarray}
Note
\begin{eqnarray}
\beta(\eta)=\br{\sin^2\phi,f}_{\phi}(\eta)=\br{\sin^2\phi,q}_{\phi}(\eta)+\br{\sin^2\phi,r}_{\phi}(\eta)
=q(\eta)\tnm{\sin\phi}^2+\br{\sin^2\phi,r}_{\phi}(\eta).
\end{eqnarray}
Thus we can estimate
\begin{eqnarray}\label{Milne t 22}
&&\tnm{\sin\phi}^2\tnm{q(\eta)-q_L}\\
&=&\beta(L)-\beta(\eta)+\br{\sin^2\phi,r}_{\phi}(\eta)\no\\
&\leq&
C\bigg(\int_{\eta}^{L}\abs{F(y)\br{\cos(2\phi),r(y)}_{\phi}\ud{y}}\ud{\eta}
+\abs{\int_{\eta}^{L}\br{\sin\phi,S}_{\phi}(y)\ud{y}}+\abs{\br{\sin^2\phi,r}_{\phi}(\eta)}\bigg)\nonumber\\
&\leq&C\bigg(\tnm{r(\eta)}+\int_{\eta}^{L}\abs{F(y)}\tnm{r(y)}\ud{y}+\abs{\int_{\eta}^{L}\br{\sin\phi,S}_{\phi}(y)\ud{y}}\bigg)\nonumber.
\end{eqnarray}
Then we
integrate (\ref{Milne t 22}) over $\eta\in[0,L]$.
Cauchy's inequality implies
\begin{eqnarray}
&&\int_0^{L}\bigg(\int_{\eta}^{L}\abs{F(y)}\tnm{r(y)}\ud{y}\bigg)^2\ud{\eta}\leq
\tnnm{r}^2\int_0^{L}\int_{\eta}^{L}\abs{F(y)}^2\ud{y}\ud{\eta}\leq
C\tnnm{r}^2.
\end{eqnarray}
Hence, we have
\begin{eqnarray}
\tnnm{q-q_L}\leq C\bigg(\tss{h}{-}+\tnnm{S}\bigg)+C\Bigg(\int_0^{L}\bigg(\int_{\eta}^{L}\br{\sin\phi,S}_{\phi}(y)\ud{y}\bigg)^2\ud{\eta}\Bigg)^{\frac{1}{2}}.
\end{eqnarray}

\end{proof}

\subsubsection{$\bar S\neq0$ Case}

For general $S$, we define $S=\bar S+(S-\bar S)=S_Q+S_R$.
\begin{lemma}\label{Milne infinite LT general}
The unique solution $f(\eta,\phi)$ to the equation
(\ref{Milne problem}) satisfies
\begin{eqnarray}
&&\tnnm{r}\leq C\bigg(\tss{h}{-}+\tnnm{S}\bigg)+C\Bigg(\int_0^{L}\bigg(\int_{\eta}^{L}\abs{S_Q(y)}\ud{y}\bigg)^2\ud{\eta}\Bigg)^{\frac{1}{2}}\label{Milne temp 39},
\end{eqnarray}
and there exists $q_L\in\r$ such that
\begin{eqnarray}
\abs{q_L}&\leq&C\bigg(\tss{h}{-}+\tnnm{S}\bigg)\label{Milne temp 41}\\
&&+C\abs{\int_0^{L}\br{\sin\phi,S_R}_{\phi}(y)\ud{y}}+C\abs{\int_0^{L}\int_{\eta}^{L}\abs{S_Q(y)}\ud{y}\ud{\eta}}\no,\\
\tnnm{q-q_L}&\leq&C\bigg(\tss{h}{-}+\tnnm{S}\bigg)\label{Milne temp 43}\\
&&+C\Bigg(\int_0^{L}\bigg(\int_{\eta}^{L}\br{\sin\phi,S_R}_{\phi}(y)\ud{y}\bigg)^2\ud{\eta}\Bigg)^{\frac{1}{2}}
+C\Bigg(\int_0^{L}\bigg(\int_{\eta}^{L}\int_{y}^{L}\abs{S_Q(z)}\ud{z}\ud{y}\bigg)^2\ud{\eta}\Bigg)^{\frac{1}{2}}.\no
\end{eqnarray}
Also, for any $\eta\in[0,L]$,
\begin{eqnarray}
&&\br{\sin\phi,r}_{\phi}(\eta)=-\int_{\eta}^{L}\ue^{V(\eta)-V(y)}S_Q(y)\ud{y}\label{Milne temp 40}.
\end{eqnarray}
\end{lemma}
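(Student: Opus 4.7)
The plan is to first derive the new quasi-orthogonality formula (\ref{Milne temp 40}) and then reduce the general $\bar S \neq 0$ setting to the $\bar S = 0$ setting of Lemma \ref{Milne finite LT} via an explicit one-mode lifting that absorbs the macroscopic part of the source. To obtain (\ref{Milne temp 40}), I would integrate (\ref{Milne problem}) over $\phi\in[-\pi,\pi)$: the $f-\bar f$ term vanishes, integration by parts converts $F\br{\cos\phi,\p_\phi f}_\phi$ into $F\br{\sin\phi,f}_\phi$, and the right-hand side produces $\br{1,S}_\phi=2\pi S_Q$. Setting $g(\eta):=\br{\sin\phi,f}_\phi(\eta)$, which coincides with $\br{\sin\phi,r}_\phi(\eta)$ since $\br{\sin\phi,1}_\phi=0$, yields the linear ODE $g'(\eta)+F(\eta)g(\eta)=2\pi S_Q(\eta)$. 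The specular reflection $f(L,\phi)=f(L,\rr[\phi])$ combined with the oddness of $\sin\phi$ forces $g(L)=0$. Multiplication by the integrating factor $\ue^{-V(\eta)}$ (using $V'=-F$) and integration from $\eta$ to $L$ produce (\ref{Milne temp 40}) up to an inessential normalization constant.

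To prove (\ref{Milne temp 39})--(\ref{Milne temp 43}), I would introduce the one-mode lifting $\psi(\eta,\phi):=A(\eta)\sin\phi$, where $A$ is the unique solution of the terminal-value problem
\begin{eqnarray*}
A'(\eta)+F(\eta)A(\eta)=2S_Q(\eta),\qquad A(L)=0,
\end{eqnarray*}
that is, $A(\eta)=-2\int_\eta^L\ue^{V(\eta)-V(y)}S_Q(y)\ud y$. A direct substitution shows that $\psi$ satisfies the $\e$-Milne equation with source $S_Q(\eta)+(FA-A')(\tfrac{1}{2}-\cos^2\phi)+A\sin\phi$, whose $\phi$-mean is exactly $S_Q(\eta)$. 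Consequently $\tilde f:=f-\psi$ solves the same Milne problem with zero-mean source
\begin{eqnarray*}
\tilde S:=S_R-(FA-A')\bigl(\cos^2\phi-\tfrac{1}{2}\bigr)-A\sin\phi,
\end{eqnarray*}
modified in-flow data $\tilde h(\phi):=h(\phi)-A(0)\sin\phi$, and intact specular reflection at $\eta=L$ because $A(L)=0$. Since $\bar\psi=0$ we also have $q_f=q_{\tilde f}$ and $r_f=\tilde r+\psi$, so Lemma \ref{Milne finite LT} applied to $\tilde f$ translates directly back to $f$.

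With this reduction in place, I would trace all additional terms produced by the lifting. The pointwise bound $\abs{A(\eta)}\leq C\int_\eta^L\abs{S_Q(y)}\ud y$ gives $\tnnm{\psi}\ls\bigl(\int_0^L(\int_\eta^L\abs{S_Q})^2\ud\eta\bigr)^{1/2}$, producing exactly the extra term in (\ref{Milne temp 39}); using $A'=2S_Q-FA$ together with the fact that $F\in L^1[0,L]\cap L^2[0,L]$ with norms uniformly bounded as $\e\to 0$ controls the $(FA-A')$ and $A\sin\phi$ pieces of $\tilde S$ by the same iterated $S_Q$ integrals plus $\tnnm{S}$. The boundary correction yields $\tss{\tilde h}{-}\ls\tss{h}{-}+\abs{A(0)}$, absorbed in the same way. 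Finally, the identity $\br{\sin\phi,\tilde S}_\phi=\br{\sin\phi,S_R}_\phi-\pi A(\eta)$ shows that $\int_0^L\br{\sin\phi,\tilde S}_\phi\ud y$ generates both the $S_R$ term and the double integral $\int_0^L\int_\eta^L\abs{S_Q(y)}\ud y\ud\eta$ in (\ref{Milne temp 41}), and the analogous computation for the outer-integrated version produces the triple iterated $S_Q$ integral appearing in (\ref{Milne temp 43}).

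The main obstacle is purely organizational: one must verify that every extra term spawned by the lifting---the cross-term $FA$ inside $\tilde S$, the correction $A(0)\sin\phi$ in the boundary data, and the direct contribution of $\psi$ itself---collapses into the structure on the right-hand sides of (\ref{Milne temp 39})--(\ref{Milne temp 43}) rather than generating genuinely new quantities. The key enabling fact is that $F(\eta)=-\e/(\rk-\e\eta)$ has uniformly bounded $L^1[0,L]$ and $L^2[0,L]$ norms as $\e\to 0$ with $L=\e^{-1/2}$, so the pointwise bound on $A$ in terms of $\int_\eta^L\abs{S_Q}$ is enough to convert all cross-terms into iterated integrals of $\abs{S_Q}$ of the precise shape already appearing in the statement.
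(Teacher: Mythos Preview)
Your proposal is correct and follows essentially the same approach as the paper: both arguments hinge on the one-mode lifting $A(\eta)\sin\phi$ solving $A'+FA=2S_Q$ with vanishing terminal value, which absorbs the macroscopic source and reduces the problem to the $\bar S=0$ case of Lemma~\ref{Milne finite LT}. The only organizational difference is that the paper uses linear superposition (splitting $f=f^1+f^2+f^3$ with $f^1$ handling $(h,S_R)$ and $f^2+f^3$ handling $(0,S_Q)$), whereas you subtract the lifting directly from $f$; your $\tilde f$ is precisely the paper's $f^1+f^3$, so the bookkeeping is equivalent.
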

\begin{proof}
We can apply superposition property for this linear problem. For simplicity, we just above estimates as the $L^2$ estimates. \\
\ \\
Step 1: Construction of auxiliary function $f^1$.\\
We first solve $f^1$ as the solution to
\begin{eqnarray}
\left\{
\begin{array}{l}\displaystyle
\sin\phi\frac{\p f^1}{\p\eta}+F(\eta)\cos\phi\frac{\p
f^1}{\p\phi}+f^1-\bar f^1=S_R(\eta,\phi),\\\rule{0ex}{1.5em}
f^1(0,\phi)=h(\phi)\ \ \text{for}\ \ \sin\phi>0,\\\rule{0ex}{1.5em}
f^1(L,\phi)=f^1(L,\rr[\phi]).
\end{array}
\right.
\end{eqnarray}
Since $\bar S_R=0$, by Lemma \ref{Milne finite LT}, we know there
exists a unique solution $f^1$
satisfying the $L^2$ estimate.\\
\ \\
Step 2: Construction of auxiliary function $f^2$.\\
We seek a function $f^{2}$ satisfying
\begin{eqnarray}\label{Milne temp 83}
-\frac{1}{2\pi}\int_{-\pi}^{\pi}\bigg(\sin\phi\frac{\p
f^{2}}{\p\eta}+F(\eta)\cos\phi\frac{\p
f^{2}}{\p\phi}\bigg)\ud{\phi}+S_Q=0.
\end{eqnarray}
The following analysis shows this type of function can always be
found. An integration by parts transforms the equation (\ref{Milne temp 83}) into
\begin{eqnarray}\label{Milne t 07}
-\int_{-\pi}^{\pi}\sin\phi\frac{\p
f^{2}}{\p\eta}\ud{\phi}-\int_{-\pi}^{\pi}F(\eta)\sin\phi
f^{2}\ud{\phi}+2\pi S_Q=0.
\end{eqnarray}
Setting
\begin{eqnarray}
f^{2}(\phi,\eta)=a(\eta)\sin\phi.
\end{eqnarray}
and plugging this ansatz into (\ref{Milne t 07}), we have
\begin{eqnarray}
-\frac{\ud{a}}{\ud{\eta}}\int_{-\pi}^{\pi}\sin^2\phi\ud{\phi}-F(\eta)a(\eta)\int_{-\pi}^{\pi}\sin^2\phi\ud{\phi}+2\pi
S_Q=0.
\end{eqnarray}
Hence, we have
\begin{eqnarray}
-\frac{\ud{a}}{\ud{\eta}}-F(\eta)a(\eta)+2S_Q=0.
\end{eqnarray}
This is a first order linear ordinary differential equation, which
possesses infinite solutions. We can directly solve it to obtain
\begin{eqnarray}
a(\eta)=\exp\bigg(-\int_0^{\eta}F(y)\ud{y}\bigg)\bigg(a(0)+\int_0^{\eta}\exp\bigg(\int_0^yF(z)\ud{z}\bigg)2S_Q(y)\ud{y}\bigg).
\end{eqnarray}
We may take
\begin{eqnarray}
a(0)=-\int_0^{L}\exp\bigg(\int_0^yF(z)\ud{z}\bigg)2S_Q(y)\ud{y}.
\end{eqnarray}
Then, we can directly verify
\begin{eqnarray}
\abs{a(\eta)}\leq C\int_{\eta}^L\abs{S_Q(y)}\ud{y},
\end{eqnarray}
and $f^2$
satisfies the $L^2$ estimate.\\
\ \\
Step 3: Construction of auxiliary function $f^3$.\\
Based on above construction, we can directly verify that
\begin{eqnarray}\label{Milne temp 84}
\int_{-\pi}^{\pi}\bigg(-\sin\phi\frac{\p
f^{2}}{\p\eta}-F(\eta)\cos\phi\frac{\p f^{2}}{\p\phi}-f^{2}+\bar
f^{2}+S_Q\bigg)\ud{\phi}=0.
\end{eqnarray}
Then we can solve $f^3$ as the solution to
\begin{eqnarray}
\left\{
\begin{array}{l}\displaystyle
\sin\phi\frac{\p f^3}{\p\eta}+F(\eta)\cos\phi\frac{\p
f^3}{\p\phi}+f^3-\bar f^3=-\sin\phi\dfrac{\p
f^{2}}{\p\eta}-F(\eta)\cos\phi\dfrac{\p
f^{2}}{\p\phi}-f^{2}+\bar f^{2}+S_Q,\\\rule{0ex}{1.5em}
f^3(0,\phi)=-a(0)\sin\phi\ \ \text{for}\ \ \sin\phi>0,\\\rule{0ex}{1.5em}
f^3(L,\phi)=f^3(L,\rr[\phi]).
\end{array}
\right.
\end{eqnarray}
By (\ref{Milne temp 84}), we can apply Lemma \ref{Milne finite LT}
to obtain a unique solution $f^3$
satisfying the $L^2$ estimate.\\
\ \\
Step 4: Construction of auxiliary function $f^4$.\\
We now define $f^4=f^2+f^3$ and an explicit verification shows
\begin{eqnarray}
\left\{
\begin{array}{l}\displaystyle
\sin\phi\frac{\p f^4}{\p\eta}+F(\eta)\cos\phi\frac{\p
f^4}{\p\phi}+f^4-\bar f^4=S_Q(\eta,\phi),\\\rule{0ex}{1.5em}
f^4(0,\phi)=0\ \ \text{for}\ \ \sin\phi>0,\\\rule{0ex}{1.5em}
f^4(L,\phi)=f^4(L,\rr[\phi]),
\end{array}
\right.
\end{eqnarray}
and $f^4$
satisfies the $L^2$ estimate.\\
\ \\
In summary, we deduce that $f^1+f^4$ is the solution of (\ref{Milne
problem}) and satisfies the $L^2$ estimate.
\end{proof}
Combining all above, we have the following theorem.
\begin{theorem}\label{Milne lemma 6}
The unique solution $f(\eta,\phi)$ to the equation
(\ref{Milne problem}) satisfies
\begin{eqnarray}
\tnnm{f-f_L}&\leq& C\bigg(\tss{h}{-}+\tnnm{S}\bigg)+C\Bigg(\int_0^{L}\bigg(\int_{\eta}^{L}\br{\sin\phi,S_R}_{\phi}(y)\ud{y}\bigg)^2\ud{\eta}\Bigg)^{\frac{1}{2}}\\
&&+C\Bigg(\int_0^{L}\bigg(\int_{\eta}^{L}\abs{S_Q(y)}\ud{y}\bigg)^2\ud{\eta}\Bigg)^{\frac{1}{2}}
+C\Bigg(\int_0^{L}\bigg(\int_{\eta}^{L}\int_{y}^{L}\abs{S_Q(z)}\ud{z}\ud{y}\bigg)^2\ud{\eta}\Bigg)^{\frac{1}{2}},\no
\end{eqnarray}
for some $f_L\in\r$ satisfying
\begin{eqnarray}
\abs{f_L}&\leq&C\bigg(\tss{h}{-}+\tnnm{S}\bigg)+C\abs{\int_0^{L}\br{\sin\phi,S_R}_{\phi}(y)\ud{y}}+C\abs{\int_0^{L}\int_{\eta}^{L}\abs{S_Q(y)}\ud{y}\ud{\eta}}.
\end{eqnarray}
\end{theorem}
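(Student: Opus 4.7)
The plan is to present this theorem as a straightforward synthesis of Lemma \ref{Milne infinite LT general}, rather than a fresh computation. All the hard analysis, namely the energy estimate for the microscopic part $r$, the orthogonality relation (\ref{Milne temp 40}), the auxiliary decomposition $f=f^1+f^2+f^3$ that handles $\bar S\neq 0$, and the identification of the asymptotic constant $q_L$, has already been carried out in Lemmas \ref{Milne finite LT} and \ref{Milne infinite LT general}. What remains is bookkeeping.

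First, I would invoke the decomposition $f=q+r$ introduced in (\ref{hydro}), so that $f-q_L = (q-q_L)+r$, and then define $f_L:=q_L$ with $q_L$ taken exactly as produced in Lemma \ref{Milne infinite LT general}. With this choice, the bound on $\abs{f_L}$ is precisely (\ref{Milne temp 41}), which gives the second inequality in the statement without further work.

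Next, I would apply the triangle inequality
\begin{equation*}
\tnnm{f-f_L}\;\leq\;\tnnm{q-q_L}+\tnnm{r},
\end{equation*}
and plug in the two estimates (\ref{Milne temp 39}) and (\ref{Milne temp 43}) from Lemma \ref{Milne infinite LT general}. The $C\bigl(\tss{h}{-}+\tnnm{S}\bigr)$ contributions from the two bounds merge into one (up to doubling the universal constant $C$); the $S_R$ double-integral term appears only in the $q-q_L$ estimate; the single $\int_\eta^L\abs{S_Q(y)}\ud y$ term comes from the $r$ estimate; and the iterated $\int_\eta^L\int_y^L\abs{S_Q(z)}\ud z\ud y$ term comes from the $q-q_L$ estimate. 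Assembling these four contributions yields exactly the inequality claimed in the theorem.

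I do not expect any genuine obstacle here, since the problem is linear and the decomposition $f=q+r$ is orthogonal in the $L^2_\phi$ sense. The only mild subtlety is making sure that the same $q_L$ is used on both sides, i.e.\ that the constant identified in the proof of (\ref{Milne temp 41}) is the same one appearing in (\ref{Milne temp 43}); this is immediate from the construction $q_L=\beta(L)/\tnm{\sin\phi}^2$ given earlier. Thus the whole argument reduces to ``add the two estimates and relabel''.
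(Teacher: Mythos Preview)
Your proposal is correct and matches the paper's approach exactly: the paper simply states ``Combining all above, we have the following theorem'' after Lemma \ref{Milne infinite LT general}, and what you have written is precisely the bookkeeping that this sentence encodes --- set $f_L=q_L$, use $\tnnm{f-f_L}\leq\tnnm{q-q_L}+\tnnm{r}$, and add (\ref{Milne temp 39}) to (\ref{Milne temp 43}).
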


\subsection{$L^{\infty}$ Estimates}

\subsubsection{Formulation}

Consider the $\e$-transport problem for $f(\eta,\phi)$ in $(\eta,\phi)\in[0,L]\times[-\pi,\pi)$
\begin{eqnarray}\label{Milne finite problem LI}
\left\{
\begin{array}{l}\displaystyle
\sin\phi\frac{\p f}{\p\eta}+F(\eta)\cos\phi\frac{\p
f}{\p\phi}+f=H(\eta,\phi),\\\rule{0ex}{1.5em}
f(0,\phi)=h(\phi)\ \ \text{for}\ \ \sin\phi>0,\\\rule{0ex}{1.5em}
f(L,\phi)=f(L,\rr[\phi]).
\end{array}
\right.
\end{eqnarray}
Define the energy as follows:
\begin{eqnarray}
E(\eta,\phi)=\ue^{-V(\eta)}\cos\phi.
\end{eqnarray}
Along the characteristics, this energy is conserved and the equation can be simplified as follows:
\begin{eqnarray}
\sin\phi\frac{\ud{f}}{\ud{\eta}}+f=H.
\end{eqnarray}
An implicit function
$\eta^+(\eta,\phi)$ can be determined through
\begin{eqnarray}
\abs{E(\eta,\phi)}=\ue^{-V(\eta^+)}.
\end{eqnarray}
which means $(\eta^+,\phi_0)$ with $\sin\phi_0=0$ is on the same characteristics as $(\eta,\phi)$.
Define the quantities for $0\leq\eta'\leq\eta^+$ as follows:
\begin{eqnarray}
\phi'(\eta,\phi;\eta')&=&\cos^{-1}\Big(\ue^{V(\eta')-V(\eta)}\cos\phi\Big),\\
\rr[\phi'(\eta,\phi;\eta')]&=&-\cos^{-1}\Big(\ue^{V(\eta')-V(\eta)}\cos\phi\Big)=-\phi'(\eta,\phi;\eta'),
\end{eqnarray}
where the inverse trigonometric function can be defined
single-valued in the domain $[0,\pi)$ and the quantities are always well-defined due to the monotonicity of $V$.
Finally we put
\begin{eqnarray}
G_{\eta,\eta'}(\phi)&=&\int_{\eta'}^{\eta}\frac{1}{\sin\Big(\phi'(\eta,\phi;\xi)\Big)}\ud{\xi}.
\end{eqnarray}
We can rewrite the solution to the equation (\ref{Milne finite problem LI}) along
the characteristics as
\begin{eqnarray}
f(\eta,\phi)=\k[h](\phi)+\t[H](\eta,\phi),
\end{eqnarray}
where\\
\ \\
Region I:\\
For $\sin\phi>0$,
\begin{eqnarray}\label{Milne t 08}
\k[h](\phi)&=&h\Big(\phi'(\eta,\phi;0)\Big)\exp(-G_{\eta,0}),\\
\t[H](\eta,\phi)&=&\int_0^{\eta}\frac{H\Big(\eta',\phi'(\eta,\phi;\eta')\Big)}{\sin\Big(\phi'(\eta,\phi;\eta')\Big)}\exp(-G_{\eta,\eta'})\ud{\eta'}.
\end{eqnarray}
\ \\
Region II:\\
For $\sin\phi<0$ and $\abs{E(\eta,\phi)}\leq \ue^{-V(L)}$,
\begin{eqnarray}\label{Milne t 09}
\k[h](\phi)&=&h(\phi'(\eta,\phi;0))\exp(-G_{L,0}-G_{L,\eta})\\
\t[H](\eta,\phi)&=&\int_0^{L}\frac{H\Big(\eta',\phi'(\eta,\phi;\eta')\Big)}{\sin\Big(\phi'(\eta,\phi;\eta')\Big)}
\exp(-G_{L,\eta'}-G_{L,\eta})\ud{\eta'}\\
&&+\int_{\eta}^{L}\frac{H\Big(\eta',\rr[\phi'(\eta,\phi;\eta')]\Big)}{\sin\Big(\phi'(\eta,\phi;\eta')\Big)}\exp(G_{\eta,\eta'})\ud{\eta'}\nonumber.
\end{eqnarray}
\ \\
Region III:\\
For $\sin\phi<0$ and $\abs{E(\eta,\phi)}\geq \ue^{-V(L)}$,
\begin{eqnarray}\label{Milne t 10}
\k[h](\phi)&=&h\Big(\phi'(\eta,\phi;0)\Big)\exp(-G_{\eta^+,0}-G_{\eta^+,\eta})\\
\t[H](\eta,\phi)&=&\int_0^{\eta^+}\frac{H\Big(\eta',\phi'(\eta,\phi;\eta')\Big)}{\sin\Big(\phi'(\eta,\phi;\eta')\Big)}
\exp(-G_{\eta^+,\eta'}-G_{\eta^+,\eta})\ud{\eta'}\\
&&+\int_{\eta}^{\eta^+}\frac{H\Big(\eta',\rr[\phi'(\eta,\phi;\eta')]\Big)}{\sin\Big(\phi'(\eta,\phi;\eta')\Big)}\exp(G_{\eta,\eta'})\ud{\eta'}\nonumber.
\end{eqnarray}
In order to achieve the estimate
of $f$, we need to control $\k[h]$ and $\t[H]$.

\subsubsection{Preliminaries}

We first give several technical lemmas to be used for proving
$L^{\infty}$ estimates of $f$.
\begin{lemma}\label{Milne lemma 1}
For any $0\leq\beta\leq1$, we have
\begin{eqnarray}\label{Milne temp 51}
\lnm{\ue^{\beta\eta}\k[h]}\leq \lnm{h}.
\end{eqnarray}
In particular,
\begin{eqnarray}
\lnm{\k[h]}\leq \lnm{h}\label{Milne temp 52}.
\end{eqnarray}
\end{lemma}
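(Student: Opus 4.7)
The plan is to work directly from the three explicit formulas for $\k[h]$ given in (\ref{Milne t 08}), (\ref{Milne t 09}), (\ref{Milne t 10}), and reduce the estimate to a uniform lower bound on the exponent $G_{\eta,\eta'}$. Since the second inequality (\ref{Milne temp 52}) is just the $\beta = 0$ special case of (\ref{Milne temp 51}), it suffices to prove (\ref{Milne temp 51}). In each of the three regions, $\k[h](\phi)$ is the value of $h$ at some angle times an exponential of (negative) sums of $G$'s, so the pointwise factor $\abs{h(\phi'(\eta,\phi;0))} \leq \lnm{h}$ is immediate, and the entire task is to show that $\ue^{\beta\eta}$ times the exponential weight is bounded by $1$.

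The key observation I would isolate as a subclaim is the elementary inequality
\begin{equation*}
G_{\eta,\eta'}(\phi) \;=\; \int_{\eta'}^{\eta}\frac{\ud\xi}{\sin\big(\phi'(\eta,\phi;\xi)\big)} \;\geq\; \eta - \eta',
\end{equation*}
valid whenever the integrand is well-defined, simply because $\abs{\sin}\leq 1$ and the integrand is nonnegative on the admissible range of $\xi$. Everything else is a case analysis.

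In Region~I ($\sin\phi>0$), this gives $G_{\eta,0}\geq\eta$, hence $\ue^{\beta\eta}\exp(-G_{\eta,0})\leq\ue^{(\beta-1)\eta}\leq 1$ since $\beta\leq 1$. In Region~II ($\sin\phi<0$, $\abs{E(\eta,\phi)}\leq\ue^{-V(L)}$), we get $G_{L,0}+G_{L,\eta}\geq L+(L-\eta)=2L-\eta$, so $\ue^{\beta\eta}\exp(-G_{L,0}-G_{L,\eta})\leq\ue^{(1+\beta)\eta-2L}\leq 1$ because $(1+\beta)\eta\leq 2\eta\leq 2L$. In Region~III ($\sin\phi<0$, $\abs{E(\eta,\phi)}\geq\ue^{-V(L)}$), the turning point satisfies $\eta^+\geq\eta$, so $G_{\eta^+,0}+G_{\eta^+,\eta}\geq\eta^++(\eta^+-\eta)\geq\eta$, giving $\ue^{\beta\eta}\exp(-G_{\eta^+,0}-G_{\eta^+,\eta})\leq\ue^{(\beta-1)\eta}\leq 1$. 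Combining the three regions and taking the supremum yields (\ref{Milne temp 51}).

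There is really no serious obstacle here; the only mildly delicate point is to make sure that the change-of-variable angle $\phi'(\eta,\phi;\xi)$ is well-defined and stays in $[0,\pi)$ along the entire path of integration in each region, so that $\sin(\phi'(\eta,\phi;\xi))\geq 0$ and the bound $\sin\leq 1$ can be applied termwise. This follows from the single-valued choice of inverse cosine stated before the definition of $G_{\eta,\eta'}$ together with the monotonicity of $V$, so the subclaim $G_{\eta,\eta'}\geq\eta-\eta'$ applies in all three regimes, and no further work is required.
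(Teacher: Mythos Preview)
Your proof is correct and follows essentially the same approach as the paper: both reduce to the observation that $\sin(\phi'(\eta,\phi;\xi))\in[0,1]$ forces $G_{\eta,\eta'}\geq\eta-\eta'$, and then check in each of the three regions that the resulting exponential weight absorbs $\ue^{\beta\eta}$. The only cosmetic difference is that in Region~III the paper drops $G_{\eta^+,\eta}$ and uses monotonicity $G_{\eta^+,0}\geq G_{\eta,0}\geq\eta$, whereas you keep both terms and use $2\eta^+-\eta\geq\eta$; both routes give the same bound.
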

\begin{proof}
Since $\phi'$ is always in the domain $[0,\pi)$, we naturally have
\begin{eqnarray}
0\leq\sin\Big(\phi'(\eta,\phi;\xi)\Big)\leq 1,
\end{eqnarray}
which further implies
\begin{eqnarray}
\frac{1}{\sin\Big(\phi'(\eta,\phi;\xi)\Big)}\geq 1.
\end{eqnarray}
Combined with the fact $L\geq\eta^+\geq\eta$, this implies
\begin{eqnarray}
\exp(-G_{\eta,0})&\leq&\ue^{-\eta},\\
\exp(-G_{L,0}-G_{L,\eta})&\leq&\ue^{-\eta},\\
\exp(-G_{\eta^+,0}-G_{\eta^+,\eta})&\leq&\exp(-G_{\eta^+,0})\leq\exp(-G_{\eta,0})\leq
\ue^{-\eta}.
\end{eqnarray}
Hence, our result easily follows.
\end{proof}
\begin{lemma}\label{Milne lemma 2}
The integral operator $\t$ satisfies
\begin{eqnarray}\label{Milne temp 53}
\lnnm{\t[H]}\leq \lnnm{H},
\end{eqnarray}
and for any $0\leq\beta\leq\dfrac{1}{2}$
\begin{eqnarray}\label{Milne temp 54}
\lnnm{\ue^{\beta\eta}\t[H]}\leq \lnnm{\ue^{\beta\eta}H}.
\end{eqnarray}
\end{lemma}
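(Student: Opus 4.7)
The plan is to treat each of the three regions in the definition of $\t[H]$ separately and, in every case, reduce the characteristic integral to a one-dimensional exponential. The unifying device is the change of variable $t=G_{\cdot,\cdot}(\eta')$ along the characteristic, whose differential $dt=\frac{1}{\sin\phi'(\eta,\phi;\eta')}\,d\eta'$ exactly absorbs the singular factor $\frac{1}{\sin\phi'}$ in the integrand.

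First I would handle Region I. Substituting $t=G_{\eta,\eta'}$ in
$$\t[H](\eta,\phi)=\int_0^{\eta}\frac{H(\eta',\phi')}{\sin\phi'}\exp(-G_{\eta,\eta'})\,d\eta'$$
yields $\int_0^{G_{\eta,0}}H(\cdot)\ue^{-t}\,dt$, from which $\lnnm{\t[H]}\leq\lnnm{H}$ is immediate. For the weighted bound I write $|H(\eta',\phi')|\leq \ue^{-\beta\eta'}\lnnm{\ue^{\beta\eta}H}$ and exploit the inequality $\eta-\eta'\leq G_{\eta,\eta'}$, which follows at once from $0\leq\sin\phi'\leq 1$ (the same inequality used in Lemma \ref{Milne lemma 1}). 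This converts $\ue^{\beta(\eta-\eta')}$ into $\ue^{\beta G_{\eta,\eta'}}$, so after the change of variable the estimate reduces to $\lnnm{\ue^{\beta\eta}H}\int_0^{G_{\eta,0}}\ue^{-(1-\beta)t}\,dt$, which is controlled for $\beta\leq\tfrac12$.

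In Regions II and III the specular reflection at $\eta=L$ (resp.\ the turning point $\eta^+$) makes the representation a sum of two integrals. The key algebraic fact is the additivity of the path function, $G_{L,\eta'}=G_{\eta,\eta'}+G_{L,\eta}$ for $\eta'\leq\eta$, which lets me rewrite the exponential factor $\exp(-G_{L,\eta'}-G_{L,\eta})$ as $\exp(-G_{\eta,\eta'})\cdot\exp(-2G_{L,\eta})$. The substitution $s=G_{L,\eta}+G_{L,\eta'}$ then reduces the first piece to $\int_{G_{L,\eta}}^{G_{L,\eta}+G_{L,0}}\ue^{-s}\,ds$, while $t=-G_{\eta,\eta'}$ reduces the second piece to $\int_0^{G_{L,\eta}}\ue^{-t}\,dt$. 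Their sum collapses into $1-\ue^{-G_{L,\eta}-G_{L,0}}\leq 1$, yielding the unweighted bound. For the weighted version I again pull out $\lnnm{\ue^{\beta\eta}H}$ and use $|\eta-\eta'|\leq G_{L,\eta'}+G_{L,\eta}$ along the reflected characteristic; Region III is handled identically with $L$ replaced by $\eta^+$.

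The main obstacle is not analytic but combinatorial: tracking the sign of $G_{\eta,\eta'}$ (which becomes negative past the reflection point), the direction of each piece of the reflected characteristic, and the correct additivity identity at the reflection or turning point. Once this bookkeeping is set up, all estimates reduce to the same change-of-variable computation. The restriction $\beta\leq\tfrac12$ enters precisely because the reflected characteristic can roughly double the effective path, so the exponential weight $\ue^{\beta|\eta-\eta'|}$ must still be dominated by $\ue^{-G}$; $\beta\leq\tfrac12$ is the natural threshold beyond which the balance $(1-2\beta)G\geq 0$ breaks down and the integrals cease to be uniformly bounded.
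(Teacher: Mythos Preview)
Your approach is correct and essentially coincides with the paper's: both proofs reduce the characteristic integrals via the substitution $t=G_{\cdot,\cdot}$ and use the path-length inequality $|\eta-\eta'|\leq G$ to absorb the exponential weight, handling Regions~II and~III by the same bookkeeping at the reflection/turning point. One small correction to your final paragraph: your own computation produces the kernel $\ue^{-(1-\beta)t}$, not $\ue^{-(1-2\beta)t}$, so your argument actually yields the bound for every $\beta<1$; the restriction $\beta\leq\tfrac12$ is an artifact of the paper's particular splitting $G=\tfrac12 G+\tfrac12 G$ rather than a genuine obstruction coming from the reflected characteristic.
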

\begin{proof}
For (\ref{Milne temp 53}), when $\sin\phi>0$
\begin{eqnarray}
\abs{\t[H]}&\leq&\int_0^{\eta}\abs{H\Big(\eta',\phi'(\eta,\phi;\eta')\Big)}\frac{1}{\sin\Big(\phi'(\eta,\phi;\eta')\Big)}\exp(-G_{\eta,\eta'})\ud{\eta'}\\
&\leq&\lnnm{H}\int_0^{\eta}\frac{1}{\sin\Big(\phi'(\eta,\phi;\eta')\Big)}\exp(-G_{\eta,\eta'})\ud{\eta'}\nonumber.
\end{eqnarray}
We can directly estimate
\begin{eqnarray}\label{Milne t 11}
\int_0^{\eta}\frac{1}{\sin\Big(\phi'(\eta,\phi;\eta')\Big)}\exp(-G_{\eta,\eta'})\ud{\eta'}\leq\int_0^{\infty}\ue^{-z}\ud{z}=1,
\end{eqnarray}
and (\ref{Milne temp 53}) naturally follows. For $\sin\phi<0$ and
$\abs{E(\eta,\phi)}\leq \ue^{-V(L)}$,
\begin{eqnarray}
\abs{\t[H]}&\leq&\int_{\eta}^{\infty}\abs{H\Big(\eta',\phi'(\eta,\phi;\eta')\Big)}\frac{1}{\sin\Big(\phi'(\eta,\phi;\eta')\Big)}\exp(G_{\eta,\eta'})\ud{\eta'}\\
&\leq&\lnnm{H}\int_{\eta}^{\infty}\frac{1}{\sin\Big(\phi'(\eta,\phi;\eta')\Big)}\exp(G_{\eta,\eta'})\ud{\eta'}\nonumber.
\end{eqnarray}
we have
\begin{eqnarray}
\int_{\eta}^{\infty}\frac{1}{\sin\Big(\phi'(\eta,\phi;\eta')\Big)}\exp(G_{\eta,\eta'})\ud{\eta'}\leq\int_{-\infty}^0\ue^{z}\ud{z}=1,
\end{eqnarray}
and (\ref{Milne temp 53}) easily follows. The region $\sin\phi<0$ and
$\abs{E(\eta,\phi)}\geq \ue^{-V(L)}$ can be proved combining
above two
techniques, so we omit it here.

For (\ref{Milne temp 54}), when $\sin\phi>0$, $\eta\geq\eta'$ and
$\beta<\dfrac{1}{2}$, since $G_{\eta,\eta'}\geq\eta-\eta'$, we have
\begin{eqnarray}
\beta(\eta-\eta')-G_{\eta,\eta'}\leq
\beta(\eta-\eta')-\half(\eta-\eta')-\half G_{\eta,\eta'}\leq -\half
G_{\eta,\eta'}.
\end{eqnarray}
Then it is natural that
\begin{eqnarray}
\int_0^{\eta}\frac{1}{\sin\Big(\phi'(\eta,\phi;\eta')\Big)}\exp\Big(\beta(\eta-\eta')-G_{\eta,\eta'}\Big)\ud{\eta'}
&\leq&\int_0^{\eta}\frac{1}{\sin\Big(\phi'(\eta,\phi;\eta')\Big)}\exp\left(-\frac{G_{\eta,\eta'}}{2}\right)\ud{\eta'}\\
&\leq&\int_0^{\infty}\ue^{-\frac{z}{2}}\ud{z}=2\nonumber.
\end{eqnarray}
This leads to
\begin{eqnarray}
\abs{\ue^{\beta\eta}\t[H]}&\leq&
\ue^{\beta\eta}\int_0^{\eta}\abs{H\Big(\eta',\phi'(\eta,\phi;\eta')\Big)}\frac{1}{\sin\Big(\phi'(\eta,\phi;\eta')\Big)}\exp(-G_{\eta,\eta'})\ud{\eta'}\\
&\leq&\lnnm{\ue^{\beta\eta}H}\int_0^{\eta}\frac{1}{\sin\Big(\phi'(\eta,\phi;\eta')\Big)}\exp\Big(\beta(\eta-\eta')-G_{\eta,\eta'}\Big)\ud{\eta'}\nonumber\\
&\leq&C\lnnm{\ue^{\beta\eta}H},\nonumber
\end{eqnarray}
and (\ref{Milne temp 54}) naturally follows. For $\sin\phi<0$ and
$\abs{E(\eta,\phi)}\leq \ue^{-V(L)}$, note that $-G_{L,\eta'}-G_{L,\eta}\leq -G_{\eta,\eta'}$ and for $\eta'\geq\eta$
\begin{eqnarray}
\beta(\eta-\eta')+G_{\eta,\eta'}\leq
\beta(\eta-\eta')+\half(\eta-\eta')+\half G_{\eta,\eta'}\leq \half
G_{\eta,\eta'}.
\end{eqnarray}
Then (\ref{Milne temp 54}) holds by obvious modifications of
$\sin\phi>0$ region. The case $\sin\phi<0$ and $\abs{E(\eta,\phi)}\geq
\ue^{-V(L)}$ can be shown by combining above two regions, so we
omit it here.
\end{proof}
\begin{lemma}\label{Milne lemma 3}
For any $\delta>0$ there is a constant $C(\delta)>0$ independent of
data such that
\begin{eqnarray}
\ltnm{\t[H]}\leq C(\delta)\tnnm{H}+\delta\lnnm{H}\label{Milne temp
11}.
\end{eqnarray}
\end{lemma}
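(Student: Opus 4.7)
The plan is to estimate $\|\t[H](\eta,\cdot)\|_{L^2_\phi}$ uniformly in $\eta$ by splitting the velocity variable into a \emph{grazing} region, where $\sin\phi$ is small but the measure is small, and a \emph{non-grazing} region, where the standard $L^2$ change of variables along characteristics applies. The small parameter $\delta$ will precisely control the width of the grazing cut-off: on the narrow grazing strip we pay with the $L^\infty$ norm of $H$, while on the bulk region we can afford the blow-up $C(\delta)$ and convert everything to $\tnnm{H}$.

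First, fix $\eta\in[0,L]$ and, for a small parameter $\sigma>0$ to be identified with $\delta$ at the end, split
\[
\int_{-\pi}^{\pi}|\t[H](\eta,\phi)|^2\,\ud\phi=\int_{|\sin\phi|<\sigma}+\int_{|\sin\phi|\geq\sigma}=I_{\text{gr}}+I_{\text{ng}}.
\]
For $I_{\text{gr}}$ I would apply Lemma \ref{Milne lemma 2} to bound $|\t[H]|\leq\lnnm{H}$ pointwise, then use the obvious measure estimate $\text{meas}\{|\sin\phi|<\sigma\}\leq C\sigma$ to get $I_{\text{gr}}\leq C\sigma\lnnm{H}^2$. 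This contributes the $\delta\lnnm{H}$ term after taking square roots (with the identification $\delta\sim\sigma^{1/2}$).

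For $I_{\text{ng}}$ the idea is Cauchy--Schwarz plus a change of variables. Working in Region I for concreteness, from (\ref{Milne t 08}) and the elementary estimate (\ref{Milne t 11}) I get
\[
|\t[H](\eta,\phi)|^2\leq\int_0^\eta\frac{|H(\eta',\phi'(\eta,\phi;\eta'))|^2}{\sin\phi'(\eta,\phi;\eta')}\exp(-G_{\eta,\eta'})\,\ud\eta'.
\]
Fubini gives
\[
I_{\text{ng}}\leq\int_0^\eta\exp(-G_{\eta,\eta'})\left(\int_{|\sin\phi|\geq\sigma}\frac{|H(\eta',\phi'(\eta,\phi;\eta'))|^2}{\sin\phi'(\eta,\phi;\eta')}\,\ud\phi\right)\ud\eta'.
\]
Now the conservation law $\cos\phi'=\ue^{V(\eta')-V(\eta)}\cos\phi$ yields $\sin\phi'\,\ud\phi'=\ue^{V(\eta')-V(\eta)}\sin\phi\,\ud\phi$, so the inner integral transforms to
\[
\int\frac{|H(\eta',\phi')|^2}{\ue^{V(\eta')-V(\eta)}\,|\sin\phi|}\,\ud\phi'
\leq\frac{C}{\sigma}\int_{-\pi}^{\pi}|H(\eta',\phi')|^2\,\ud\phi',
\]
where I used $|\sin\phi|\geq\sigma$ on the bulk region and the boundedness of $\ue^{V(\eta')-V(\eta)}$. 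Combining with $\exp(-G_{\eta,\eta'})\leq\ue^{-(\eta-\eta')}$ and extending the $\eta'$ integration to $[0,L]$,
\[
I_{\text{ng}}\leq\frac{C}{\sigma}\int_0^L\tnm{H(\eta',\cdot)}^2\,\ud\eta'=\frac{C}{\sigma}\tnnm{H}^2.
\]

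Putting the two pieces together, $\tnm{\t[H](\eta,\cdot)}\leq C\sigma^{1/2}\lnnm{H}+C\sigma^{-1/2}\tnnm{H}$, and choosing $\sigma$ so that $C\sigma^{1/2}\leq\delta$ delivers the stated inequality. Regions II and III are treated analogously, splitting the characteristic integral at $\eta^+$ (resp. $L$) and applying the same grazing/non-grazing split; the reflection $\rr[\phi']$ does not alter the Jacobian, so the change-of-variables argument transfers verbatim. I expect the main technical nuisance to be in Region III, where $\eta^+$ depends on $\phi$, so one must verify that the singular factor $1/\sin\phi'$ remains integrable uniformly near the turning point $\phi'=0$; the key observation is that $\exp(G_{\eta,\eta'})$ in the reflected branch produces the same exponential damping used in Lemma \ref{Milne lemma 2}, and the change of variables computation still yields a factor $|\sin\phi|^{-1}$ that is harmless away from the grazing set.
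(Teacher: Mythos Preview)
Your approach is correct and in fact somewhat cleaner than the paper's. The essential difference is \emph{where} the small-parameter cut-off is placed. You split the outer $\phi$-integral at $|\sin\phi|=\sigma$ and then, on the non-grazing set, perform an explicit change of variables $\phi\mapsto\phi'$ using the Jacobian relation $\sin\phi'\,\ud\phi'=\ue^{V(\eta')-V(\eta)}\sin\phi\,\ud\phi$; this cancels the $1/\sin\phi'$ and leaves a harmless $1/|\sin\phi|\leq 1/\sigma$. The paper instead splits the \emph{inner} $\eta'$-integral according to whether $\sin\phi'(\eta,\phi;\eta')\gtrless m$: when $\sin\phi'\geq m$ one gains a factor $1/m$ from $1/\sin^2\phi'\leq (1/m)(1/\sin\phi')$, and when $\sin\phi'\leq m$ one uses the monotonicity $\sin\phi\leq\sin\phi'$ (valid in Region~I) to recover small measure in $\phi$. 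In Region~II that monotonicity reverses, so the paper must introduce a second cut-off parameter $\sigma$ on $\eta'-\eta$ and balance $m\ll\sigma$ to force $|\sin\phi|$ small when both $\sin\phi'$ and $\eta'-\eta$ are small. Your change-of-variables argument bypasses this second parameter entirely and treats all three regions by the same mechanism; the paper's route has the advantage of never computing a Jacobian, at the cost of the extra case analysis.

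One cosmetic point: $G_{\eta,\eta'}$ depends on $\phi$, so in your displayed ``Fubini'' line the factor $\exp(-G_{\eta,\eta'})$ cannot legitimately sit outside the $\phi$-integral. Simply bound $\exp(-G_{\eta,\eta'})\leq 1$ \emph{before} swapping the order of integration; once the $\eta'$-integral is extended to $[0,L]$ the exponential decay is not needed anyway.
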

\begin{proof}
In the following, we use $\chi_i$ to represent certain indicator functions. Also, we let $m>0$ and $\sigma>0$ be some constants that are determined later.\\
\ \\
Region I: $\sin\phi>0$.\\
We have
\begin{eqnarray}
\t[H](\eta,\phi)&=&\int_0^{\eta}\frac{H\Big(\eta',\phi'(\eta,\phi;\eta')\Big)}{\sin\Big(\phi'(\eta,\phi;\eta')\Big)}\exp(-G_{\eta,\eta'})\ud{\eta'}.
\end{eqnarray}
We consider
\begin{eqnarray}
I&=&\int_{\sin\phi>0}\abs{\t
[H](\eta,\phi)}^2\ud{\phi}=\int_{\sin\phi>0}\bigg(\int_0^{\eta}\frac{H\Big(\eta',\phi'(\eta,\phi;\eta')\Big)}{\sin\Big(\phi'(\eta,\phi;\eta')\Big)}
\exp(-G_{\eta,\eta'})\ud{\eta'}\bigg)^2\ud{\phi}\\
&=&I_1+I_2\nonumber.
\end{eqnarray}
Region I - Case I: $\chi_1: \sin(\phi'(\eta,\phi;\eta'))\geq m$.\\
By Cauchy's inequality and (\ref{Milne t 11}), we get
\begin{eqnarray}\label{Milne temp 55}
I_1&\leq&\int_{\sin\phi>0}\bigg(\int_0^{\eta}\abs{H\Big(\eta',\phi'(\eta,\phi;\eta')\Big)}^2\ud{\eta'}\bigg)
\bigg(\int_0^{\eta}\chi_1\frac{\exp(-2G_{\eta,\eta'})}{\sin^2\Big(\phi'(\eta,\phi;\eta')\Big)}
\ud{\eta'}\bigg)\ud{\phi}\\
&\leq&\frac{1}{m}\int_{\sin\phi>0}\bigg(\int_0^{\eta}\abs{H\Big(\eta',\phi'(\eta,\phi;\eta')\Big)}^2\ud{\eta'}\bigg)
\bigg(\int_0^{\eta}\chi_1\frac{\exp(-2G_{\eta,\eta'})}{\sin\Big(\phi'(\eta,\phi;\eta')\Big)}
\ud{\eta'}\bigg)\ud{\phi}\no\\
&\leq&\frac{1}{m}\tnnm{H}^2\Bigg(\int_{\sin\phi>0}
\bigg(\int_0^{\eta}\chi_1\frac{\exp(-2G_{\eta,\eta'})}{\sin\Big(\phi'(\eta,\phi;\eta')\Big)}
\ud{\eta'}\bigg)^2\ud{\phi}\Bigg)^{\frac{1}{2}}\nonumber\\
&\leq&\frac{C}{m}\tnnm{H}^2\nonumber.
\end{eqnarray}
due to
\begin{eqnarray}
\int_0^{\eta}
\frac{1}{\sin\Big(\phi'(\eta,\phi;\eta')\Big)}\exp(-2G_{\eta,\eta'})\ud{\eta'}\leq\int_{0}^{\infty}\ue^{-2z}\ud{z}=\frac{1}{2}.
\end{eqnarray}
\ \\
Region I - Case II: $\chi_2: \sin\Big(\phi'(\eta,\phi;\eta')\Big)\leq m$.\\
For $\eta'\leq\eta$, we can directly estimate
$\phi'(\eta,\phi;\eta')\geq\phi$. Hence, we have the relation
\begin{eqnarray}
\sin\phi\leq\sin\Big(\phi'(\eta,\phi;\eta')\Big).
\end{eqnarray}
Therefore, we can directly estimate $I_2$ as follows:
\begin{eqnarray}\label{Milne temp 56}
I_2&\leq&\lnnm{H}^2\int_{\sin\phi>0}
\bigg(\int_0^{\eta}\chi_2
\frac{1}{\sin\Big(\phi'(\eta,\phi;\eta')\Big)}\exp(-G_{\eta,\eta'})\ud{\eta'}\bigg)^2\ud{\phi}\\
&\leq&\lnnm{H}^2\int_{\sin\phi>0}\chi_2\ud{\phi}\no\\
&\leq&Cm\lnnm{H}^2,\no
\end{eqnarray}
due to
\begin{eqnarray}
\int_0^{\eta}
\frac{1}{\sin\Big(\phi'(\eta,\phi;\eta')\Big)}\exp(-G_{\eta,\eta'})\ud{\eta'}\leq\int_{0}^{\infty}\ue^{-z}\ud{z}=1.
\end{eqnarray}
Summing up (\ref{Milne temp 55}) and (\ref{Milne temp 56}), for $m$
sufficiently small, we deduce (\ref{Milne temp 11}).\\
\ \\
Region II: $\sin\phi<0$ and $\abs{E(\eta,\phi)}\leq \ue^{-V(L)}$.\\
We have
\begin{eqnarray}
\t[H](\eta,\phi)&=&\int_0^{L}\frac{H\Big(\eta',\phi'(\eta,\phi;\eta')\Big)}{\sin\Big(\phi'(\eta,\phi;\eta')\Big)}
\exp(-G_{L,\eta'}-G_{L,\eta})\ud{\eta'}\\
&&+\int_{\eta}^{L}\frac{H\Big(\eta',\rr[\phi'(\eta,\phi;\eta')]\Big)}{\sin\Big(\phi'(\eta,\phi;\eta')\Big)}\exp(G_{\eta,\eta'})\ud{\eta'}\nonumber.
\end{eqnarray}
Since $-G_{L,\eta'}-G_{L,\eta}\leq -G_{\eta,\eta'}$, it suffices to estimate
\begin{eqnarray}
II&=&\int_{\sin\phi<0}{\bf{1}}_{\{\abs{E(\eta,\phi)}\leq
\ue^{-V(L)}\}}\bigg(\int_{\eta}^{L}
\frac{H\Big(\eta',\rr[\phi'(\eta,\phi;\eta')]\Big)}{\sin\Big(\phi'(\eta,\phi;\eta')\Big)}
\exp(G_{\eta,\eta'})\ud{\eta'}\bigg)^2\ud{\phi}\no\\
&=&II_1+II_2+II_3\nonumber.
\end{eqnarray}
\ \\
Region II - Case I: $\chi_1: \sin(\phi'(\eta,\phi;\eta'))>m$.\\
We can directly estimate $II_1$ as follows:
\begin{eqnarray}\label{Milne temp 57}
II_1&\leq&\int_{\sin\phi<0}\bigg(\int_{\eta}^{L}\abs{H\Big(\eta',\rr[\phi'(\eta,\phi;\eta')]\Big)}^2\ud{\eta'}\bigg)
\bigg(\int_{\eta}^{L}\chi_1\frac{\exp(2G_{\eta,\eta'})}{\sin^2\Big(\phi'(\eta,\phi;\eta')\Big)}
\ud{\eta'}\bigg)\ud{\phi}\\
&\leq&\frac{1}{m}\int_{\sin\phi<0}\bigg(\int_{\eta}^{L}\abs{H\Big(\eta',\rr[\phi'(\eta,\phi;\eta')]\Big)}^2\ud{\eta'}\bigg)
\bigg(\int_{\eta}^{L}\chi_1\frac{\exp(2G_{\eta,\eta'})}{\sin\Big(\phi'(\eta,\phi;\eta')\Big)}
\ud{\eta'}\bigg)\ud{\phi}\no\\
&\leq&\frac{1}{m}\tnnm{H}^2\int_{\sin\phi<0}
\bigg(\int_{\eta}^{L}\chi_1\frac{\exp(2G_{\eta,\eta'})}{\sin\Big(\phi'(\eta,\phi;\eta')\Big)}
\ud{\eta'}\bigg)\ud{\phi}\nonumber\\
&\leq&\frac{C}{m}\tnnm{H}^2\nonumber,
\end{eqnarray}
due to
\begin{eqnarray}
\int_{\eta}^L
\frac{1}{\sin\Big(\phi'(\eta,\phi;\eta')\Big)}\exp(2G_{\eta,\eta'})\ud{\eta'}\leq\int^{0}_{-\infty}\ue^{2z}\ud{z}=\frac{1}{2}.
\end{eqnarray}
\ \\
Region II - Case II: $\chi_2: \sin\Big(\phi'(\eta,\phi;\eta')\Big)>m,\ \eta'-\eta\geq\sigma$.\\
We have
\begin{eqnarray}
II_2&\leq&\lnnm{H}^2\int_{\sin\phi<0}
\bigg(\int_{\eta}^{L}\chi_2
\frac{\exp(G_{\eta,\eta'})}{\sin\Big(\phi'(\eta,\phi;\eta')\Big)}\ud{\eta'}\bigg)^2\ud{\phi}.
\end{eqnarray}
Note
\begin{eqnarray}
G_{\eta,\eta'}=\int_{\eta'}^{\eta}\frac{1}{\sin\Big(\phi'(\eta,\phi;y)\Big)}\ud{y}\leq-\frac{\eta'-\eta}{m}=-\frac{\sigma}{m}.
\end{eqnarray}
Then we can obtain
\begin{eqnarray}\label{Milne temp 58}
II_2&\leq&\lnnm{H}^2\int_{\sin\phi<0}
\bigg(\int^{-\frac{\sigma}{m}}_{-\infty}\ue^z\ud{z}\bigg)^2\ud{\phi}\leq C
\ue^{-\frac{\sigma}{m}}\lnnm{H}^2.
\end{eqnarray}
\ \\
Region II - Case III: $\chi_3: \sin\Big(\phi'(\eta,\phi;\eta')\Big)>m,\ \eta'-\eta\leq\sigma$\\
For $II_3$, we can estimate as follows:
\begin{eqnarray}
I_3&\leq&\lnnm{H}^2\int_{\sin\phi<0}
\bigg(\int_{\eta}^{L}\chi_3
\frac{\exp(G_{\eta,\eta'})}{\sin\Big(\phi'(\eta,\phi;\eta')\Big)}\ud{\eta'}\bigg)^2\ud{\phi}\\
&\leq&\lnnm{H}^2\int_{\sin\phi<0}\chi_3
\bigg(\int_{\eta}^{\eta+\sigma}
\frac{\exp(G_{\eta,\eta'})}{\sin\Big(\phi'(\eta,\phi;\eta')\Big)}\ud{\eta'}\bigg)^2\ud{\phi}\nonumber.
\end{eqnarray}
Note that
\begin{eqnarray}
\int_{\eta}^{L}
\frac{1}{\sin\Big(\phi'(\eta,\phi;\eta')\Big)}\exp(G_{\eta,\eta'})\ud{\eta'}\leq\int_{-\infty}^{0}\ue^{z}\ud{z}=1.
\end{eqnarray}
Then $1\leq\alpha=\ue^{V(\eta')-V(\eta)}\leq
\ue^{V(\eta+\sigma)-V(\eta)}\leq 1+4\sigma$, and for $\eta'\in[\eta,\eta+\sigma]$,
$\sin\Big(\phi'(\eta,\phi;\eta')\Big)=\sin\bigg(\cos^{-1}(\alpha\cos\phi)\bigg)$,
$\sin\Big(\phi'(\eta,\phi;\eta')\Big)<m$ lead to
\begin{eqnarray}
\abs{\sin\phi}&=&\sqrt{1-\cos^2\phi}=\sqrt{1-\frac{\cos^2\Big(\phi'(\eta,\phi;\eta')\Big)}{\alpha^2}}
=\frac{\sqrt{\alpha^2-\bigg(1-\sin^2\Big(\phi'(\eta,\phi;\eta')\Big)\bigg)}}{\alpha}\\
&\leq&\frac{\sqrt{\alpha^2-1+m^2}}{\alpha}\leq\frac{\sqrt{(1+4\sigma)^2-1+m^2}}{\alpha}\leq
\sqrt{9\sigma+m^2}.\nonumber
\end{eqnarray}
Hence, we can obtain
\begin{eqnarray}\label{Milne temp 59}
I_3&\leq&\lnnm{H}^2\int_{\sin\phi<0}\chi_3\ud{\phi}\leq\lnnm{H}^2
\int_{\sin\phi<0}{\bf{1}}_{\{\abs{\sin\phi}\leq \sqrt{9\sigma+m^2}\}}\ud{\phi}\leq C\sqrt{\sigma+m^2}.
\end{eqnarray}
Summarizing (\ref{Milne temp 57}), (\ref{Milne temp 58}) and
(\ref{Milne temp 59}), for sufficiently small $\sigma$,
we can always choose $m<<\sigma$ small enough to guarantee the relation (\ref{Milne temp 11}).\\
\ \\
Region III: $\sin\phi<0$ and $\abs{E(\eta,\phi)}\geq \ue^{-V(L)}$.\\
We have
\begin{eqnarray}
\t[H](\eta,\phi)&=&\int_0^{\eta^+}\frac{H\Big(\eta',\phi'(\eta,\phi;\eta')\Big)}{\sin\Big(\phi'(\eta,\phi;\eta')\Big)}
\exp(-G_{\eta^+,\eta'}-G_{\eta^+,\eta})\ud{\eta'}\\
&&+\int_{\eta}^{\eta^+}\frac{H\Big(\eta',\rr[\phi'(\eta,\phi;\eta')]\Big)}{\sin\Big(\phi'(\eta,\phi;\eta')\Big)}\exp(G_{\eta,\eta'})\ud{\eta'}\nonumber.
\end{eqnarray}
We can decompose $\t[H]$.
For the integral on $[0,\eta]$, we can apply a similar argument as in Region 1 and for
the integral on $[\eta,\eta^+]$, a similar argument as in Region 2 concludes the proof.
\end{proof}

\subsubsection{Estimates of $\e$-Milne Equation with Geometric Correction}

Consider the equation satisfied by $\v=f-f_L$ as follows:
\begin{eqnarray}\label{difference equation}
\left\{
\begin{array}{l}\displaystyle
\sin\phi\frac{\p \v}{\p\eta}+F(\eta)\cos\phi\frac{\p
\v}{\p\phi}+\v=\bar \v+S,\\\rule{0ex}{1.5em}
\v(0,\phi)=p(\phi)=h(\phi)-f_L\ \ \text{for}\ \ \sin\phi>0,\\\rule{0ex}{1.5em}
\v(L,\phi)=\v(L,\rr[\phi]).
\end{array}
\right.
\end{eqnarray}
\begin{theorem}\label{Milne lemma 5}
The unique solution $f(\eta,\phi)$ to the equation
(\ref{Milne problem}) satisfies
\begin{eqnarray}
\lnnm{f-f_L}\leq C\bigg(\abs{f_L}+\lnm{h}+\lnnm{S}+\tnnm{f-f_L}\bigg).
\end{eqnarray}
\end{theorem}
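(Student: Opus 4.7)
The plan is to combine the mild-formulation representation of Section 4.2.1 with the ingredients of Lemmas \ref{Milne lemma 1}, \ref{Milne lemma 2} and \ref{Milne lemma 3}. Equation (\ref{difference equation}) for $\v=f-f_L$ has exactly the shape of (\ref{Milne finite problem LI}) with boundary datum $p=h-f_L$ and source $H=\bar\v+S$, so one may write
\begin{eqnarray}
\v=\k[p]+\t[\bar\v]+\t[S].
\end{eqnarray}
Lemma \ref{Milne lemma 1} gives $\lnnm{\k[p]}\leq\lnm{p}\leq\lnm{h}+\abs{f_L}$, and Lemma \ref{Milne lemma 2} gives $\lnnm{\t[S]}\leq\lnnm{S}$. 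Hence the whole task reduces to controlling $\lnnm{\t[\bar\v]}$ by $\tnnm{\v}$ modulo a small multiple of $\lnnm{\v}$ that can be absorbed.

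The crucial structural fact is that $\bar\v(\eta')$ is a function of $\eta'$ alone, so the angular variable enters only through the kinetic factor $1/\sin(\phi'(\eta,\phi;\eta'))$ and the decay $\exp(-G_{\eta,\eta'})$. Split the characteristic integrand via $\chi_1:\sin(\phi'(\eta,\phi;\eta'))\geq m$ and $\chi_2:\sin(\phi'(\eta,\phi;\eta'))<m$ for a small parameter $m>0$ to be chosen, exactly as in the proof of Lemma \ref{Milne lemma 3}. On $\chi_1$ one applies Cauchy--Schwarz together with the change of variable $z=G_{\eta,\eta'}$ (which always yields $\int\exp(-G_{\eta,\eta'})/\sin\phi'\,d\eta'\leq 1$) and Jensen's inequality $\|\bar\v\|_{L^2_\eta}\leq C\tnnm{\v}$ to obtain
\begin{eqnarray}
\abs{\t[\bar\v]\chi_1}^2\leq\frac{1}{m}\int_0^{\eta}\abs{\bar\v(\eta')}^2\exp(-G_{\eta,\eta'})\,d\eta'\leq\frac{C}{m}\tnnm{\v}^2.
\end{eqnarray}
The three regions of the mild formulation are handled in parallel, the specular reflection at $\eta=L$ being absorbed via $-G_{L,\eta'}-G_{L,\eta}\leq -G_{\eta,\eta'}$ exactly as in Lemma \ref{Milne lemma 2}.

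The main obstacle is the near-grazing piece $\chi_2$, where the naive pointwise bound $\abs{\t[\bar\v]\chi_2}\leq\lnnm{\bar\v}\leq\lnnm{\v}$ is not absorbable. Here I would iterate the mild formulation once more by inserting
\begin{eqnarray}
\bar\v(\eta')=\frac{1}{2\pi}\int_{-\pi}^{\pi}\v(\eta',\psi)\,d\psi=\frac{1}{2\pi}\int_{-\pi}^{\pi}\bigl(\k[p]+\t[\bar\v]+\t[S]\bigr)(\eta',\psi)\,d\psi
\end{eqnarray}
into the $\chi_2$ integral. The $\k[p]$ and $\t[S]$ contributions produce boundary and source terms of the allowed form, while the remaining $\t[\bar\v]$ piece is a double characteristic integral to which Cauchy--Schwarz in $\psi$ can be applied; the $O(m)$ angular measure of the grazing set $\{\abs{\sin\phi}<m\}$ then forces a gain of the form $C\sqrt{m}\,\lnnm{\v}$. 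This is the precise pointwise analogue of the Region I, Case II analysis in the proof of Lemma \ref{Milne lemma 3}.

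Collecting everything yields $\lnnm{\v}\leq C\bigl(\abs{f_L}+\lnm{h}+\lnnm{S}\bigr)+\tfrac{C}{\sqrt{m}}\tnnm{\v}+C\sqrt{m}\,\lnnm{\v}$, and choosing $m$ sufficiently small allows the $\lnnm{\v}$ term on the right to be absorbed, which produces the stated estimate. The hardest technical ingredient is the bookkeeping on $\chi_2$ in Regions II and III: the degenerate factor $1/\sin\phi'$ must be controlled by the small angular measure and the double-Duhamel iteration, while the specular reflection at $\eta=L$ has to be threaded consistently through the three regions of Section 4.2.1.
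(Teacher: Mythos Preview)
Your setup is correct and your handling of $\k[p]$ and $\t[S]$ is fine, but your treatment of $\t[\bar\v]$ is both more complicated than necessary and, as written, contains a gap. The paper avoids any double-Duhamel iteration entirely by passing through the intermediate norm $\ltnm{\cdot}=\sup_\eta\tnm{\cdot}$. The key elementary observation you are missing is
\[
\lnnm{\bar\v}\leq\ltnm{\v},
\]
which follows from Cauchy--Schwarz in $\phi$ since $\bar\v(\eta)$ is a $\phi$-average. With this in hand, Lemma~\ref{Milne lemma 3} applied to $H=\bar\v$ gives
\[
\ltnm{\t[\bar\v]}\leq C(\delta)\tnnm{\bar\v}+\delta\lnnm{\bar\v}\leq C(\delta)\tnnm{\v}+\delta\ltnm{\v},
\]
so that $\ltnm{\v}\leq\lnm{p}+\lnnm{S}+C(\delta)\tnnm{\v}+\delta\ltnm{\v}$, and choosing $\delta=\tfrac12$ closes an estimate on $\ltnm{\v}$. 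Then one simply uses Lemma~\ref{Milne lemma 2} to bound $\lnnm{\t[\bar\v]}\leq\lnnm{\bar\v}\leq\ltnm{\v}$, and the $L^\infty L^\infty$ bound on $\v$ follows immediately.

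By contrast, your $\chi_2$ step tries to gain smallness at a \emph{fixed} $(\eta,\phi)$ with $\sin\phi<m$ by iterating the mild formulation. The asserted gain ``$C\sqrt{m}\,\lnnm{\v}$ from the $O(m)$ angular measure of the grazing set'' is not justified: after inserting $\bar\v=\overline{\k[p]}+\overline{\t[\bar\v]}+\overline{\t[S]}$, the $\psi$-integral defining $\overline{\t[\bar\v]}(\eta')$ runs over all of $[-\pi,\pi)$, and the inner characteristic splitting $\chi_1/\chi_2$ depends on $(\eta',\psi,\eta'')$, not on $\psi$ alone; the mechanism by which the small angular measure enters is not specified. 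One can salvage the argument by noting that the grazing contribution to $\bar\v$ itself is $O(m)\lnnm{\v}$ while the non-grazing part of $\v$ is already controlled, but this is precisely the inequality $\lnnm{\bar\v}\leq\ltnm{\v}$ in disguise, and once you use it the iteration becomes superfluous. In short: route everything through $\ltnm{\v}$ rather than attempting a pointwise double Duhamel.
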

\begin{proof}
We first show the following important facts:
\begin{eqnarray}
\tnnm{\bar \v}&\leq&\tnnm{\v}\label{Milne temp 61},\\
\lnnm{\bar \v}&\leq&\ltnm{\v}\label{Milne temp 62}.
\end{eqnarray}
We can directly derive them by Cauchy's inequality as follows:
\begin{eqnarray}
\\
\tnnm{\bar
\v}^2&=&\int_0^{\infty}\int_{-\pi}^{\pi}\bigg(\frac{1}{2\pi}\bigg)^2\bigg(\int_{-\pi}^{\pi}\v(\eta,\phi)\ud{\phi}\bigg)^2\ud{\phi}\ud{\eta}
\leq\int_0^{\infty}\int_{-\pi}^{\pi}\bigg(\frac{1}{2\pi}\bigg)\bigg(\int_{-\pi}^{\pi}\v^2(\eta,\phi)\ud{\phi}\bigg)\ud{\phi}\ud{\eta}\nonumber\\
&=&\int_0^{\infty}\bigg(\int_{-\pi}^{\pi}\v^2(\eta,\phi)\ud{\phi}\bigg)\ud{\eta}=\tnnm{\v}^2\nonumber.\\
\\
\lnnm{\bar \v}^2&=&\sup_{\eta}\bar
\v^2(\eta)=\sup_{\eta}\bigg(\frac{1}{2\pi}\int_{-\pi}^{\pi}\v(\eta,\phi)\ud{\phi}\bigg)^2
\leq\sup_{\eta}\bigg(\frac{1}{2\pi}\bigg)^2\bigg(\int_{-\pi}^{\pi}\v^2(\eta,\phi)\ud{\phi}\bigg)\bigg(\int_{-\pi}^{\pi}1^2\ud{\phi}\bigg)\nonumber\\
&=&\sup_{\eta}\bigg(\int_{-\pi}^{\pi}\v^2(\eta,\phi)\ud{\phi}\bigg)=\ltnm{\v}^2\nonumber.
\end{eqnarray}
By (\ref{difference equation}), $\v=\k[p]+\t[\bar \v]+\t[S]$ leads to
\begin{eqnarray}
\t[\bar \v]=\v-\k[p]-\t[S],
\end{eqnarray}
Then by Lemma \ref{Milne lemma 3}, (\ref{Milne temp 61}) and
(\ref{Milne temp 62}), we can show
\begin{eqnarray}\label{Milne temp 63}
\ltnm{\v-\k[p]-\t[S]}&\leq& C(\delta)\tnnm{\bar \v}+\delta\lnnm{\bar \v}\leq
C(\delta)\tnnm{\v}+\delta\ltnm{\v}.
\end{eqnarray}
Therefore, based on Lemma \ref{Milne lemma 1}, Lemma \ref{Milne lemma 2} and (\ref{Milne temp
63}), we can directly estimate
\begin{eqnarray}
\ltnm{\v}&\leq&\lnm{\k[p]}+\ltnm{\t[S]}+C(\delta)\tnnm{\v}+\delta\ltnm{\v}\\
&\leq&\lnm{p}+\lnnm{S}+C(\delta)\tnnm{\v}+\delta\ltnm{\v}\nonumber.
\end{eqnarray}
We can take $\delta=\dfrac{1}{2}$ to obtain
\begin{eqnarray}\label{Milne temp 64}
\ltnm{\v}\leq C\bigg(\tnnm{\v}+\lnm{p}+\lnnm{S}\bigg).
\end{eqnarray}
Therefore, based on Lemma \ref{Milne lemma 2}, (\ref{Milne temp 64})
and (\ref{Milne temp 62}), we can achieve
\begin{eqnarray}
\lnnm{\v}&\leq&\lnnm{\k[p]}+\lnnm{\t[S]}+\lnnm{\t[\bar \v]}\\
&\leq&
C\bigg(\lnm{p}+\lnnm{S}+\lnnm{\bar \v}\bigg)\no\\
&\leq&C\bigg(\lnm{p}+\lnnm{S}+\ltnm{\v}\bigg)\no\\
&\leq&
C\bigg(\lnm{p}+\lnnm{S}+\tnnm{\v}\bigg)\nonumber.
\end{eqnarray}
\end{proof}

Combining Theorem \ref{Milne lemma 5} and Theorem \ref{Milne lemma
6}, we deduce the main theorem.
\begin{theorem}\label{Milne theorem 1}
The unique solution $f(\eta,\phi)$ to the equation
(\ref{Milne problem}) satisfies
\begin{eqnarray}
\lnnm{f-f_L}&\leq& C\bigg(\tss{h}{-}+\tnnm{S}+\lnm{h}+\lnnm{S}\bigg)\\
&&+C\Bigg(\int_0^{L}\bigg(\int_{\eta}^{L}\br{\sin\phi,S_R}_{\phi}(y)\ud{y}\bigg)^2\ud{\eta}\Bigg)^{\frac{1}{2}}
+C\abs{\int_0^{L}\br{\sin\phi,S_R}_{\phi}(y)\ud{y}}\no\\
&&+C\abs{\int_0^{L}\int_{\eta}^{L}\abs{S_Q(y)}\ud{y}\ud{\eta}}+C\Bigg(\int_0^{L}\bigg(\int_{\eta}^{L}\abs{S_Q(y)}\ud{y}\bigg)^2\ud{\eta}\Bigg)^{\frac{1}{2}}\no\\
&&+C\Bigg(\int_0^{L}\bigg(\int_{\eta}^{L}\int_{y}^{L}\abs{S_Q(z)}\ud{z}\ud{y}\bigg)^2\ud{\eta}\Bigg)^{\frac{1}{2}},\no
\end{eqnarray}
for some $f_L\in\r$ satisfying
\begin{eqnarray}
\abs{f_L}&\leq&C\bigg(\tss{h}{-}+\tnnm{S}\bigg)+C\abs{\int_0^{L}\br{\sin\phi,S_R}_{\phi}(y)\ud{y}}+C\abs{\int_0^{L}\int_{\eta}^{L}\abs{S_Q(y)}\ud{y}\ud{\eta}}.
\end{eqnarray}
\end{theorem}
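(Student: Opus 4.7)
The strategy is to chain the $L^\infty$-in-terms-of-$L^2$ estimate of Theorem \ref{Milne lemma 5} with the $L^2$ bounds of Theorem \ref{Milne lemma 6}, which already carry the explicit source-integral dependence appearing on the right-hand side of the statement. No new machinery is required; the remaining work is purely algebraic combination.

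First I would apply Theorem \ref{Milne lemma 5} to the difference equation (\ref{difference equation}) satisfied by $\v = f - f_L$. The in-flow boundary data is $p(\phi) = h(\phi) - f_L$, so $\lnm{p} \leq \lnm{h} + |f_L|$, and the theorem produces
$$\lnnm{f - f_L} \leq C\bigl(|f_L| + \lnm{h} + \lnnm{S} + \tnnm{f-f_L}\bigr).$$
Next I would invoke Theorem \ref{Milne lemma 6} to replace $|f_L|$ and $\tnnm{f-f_L}$ on the right-hand side by their explicit bounds in terms of $\tss{h}{-}$, $\tnnm{S}$, and the four families of source integrals built from $S_R = S - \bar S$ and $S_Q = \bar S$. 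Collecting terms then yields precisely the inequality in the statement. No absorbing into the left-hand side is needed, since $|f_L|$ is already controlled independently by Theorem \ref{Milne lemma 6} rather than being coupled to $\lnnm{f-f_L}$.

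The only delicate point, and the ``main obstacle'' such as it is, is bookkeeping: the quantity $|f_L|$ contributes the $\sup$-in-$\eta$ versions of the source integrals, namely
$$\left|\int_0^{L}\br{\sin\phi,S_R}_{\phi}(y)\,\ud{y}\right|, \qquad \left|\int_0^{L}\int_{\eta}^{L}\abs{S_Q(y)}\,\ud{y}\,\ud{\eta}\right|,$$
whereas the quantity $\tnnm{f-f_L}$ contributes the $L^2$-in-$\eta$ versions such as
$$\left(\int_0^{L}\Big(\int_{\eta}^{L}\br{\sin\phi,S_R}_{\phi}(y)\,\ud{y}\Big)^2\ud{\eta}\right)^{1/2}.$$
Both families must be carried through into the final bound, because neither dominates the other on the interval $[0,L] = [0,\e^{-1/2}]$: the $\sup$-type terms capture the boundary value $\v(L)$ of the hydrodynamic part, while the $L^2$ terms capture its oscillation away from that boundary value. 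Once this is respected, the proof reduces to writing the two inequalities on top of one another.
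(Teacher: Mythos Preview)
Your proposal is correct and matches the paper's proof exactly: the paper simply states ``Combining Theorem \ref{Milne lemma 5} and Theorem \ref{Milne lemma 6}, we deduce the main theorem,'' and your write-up spells out precisely that combination, including the correct bookkeeping of which source-integral terms arise from $|f_L|$ versus from $\tnnm{f-f_L}$.
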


\subsection{Exponential Decay}

In this section, we prove the spatial decay of the solution to the
Milne problem.
\begin{theorem}\label{Milne theorem 2}
For $K_0>0$ sufficiently small, the unique solution $f(\eta,\phi)$ to the equation
\begin{eqnarray}
\lnnm{f-f_L}&\leq& C\bigg(\tss{h}{-}+\tnnm{\ue^{K_0\eta}S}+\lnm{h}+\lnnm{\ue^{K_0\eta}S}\bigg)\\
&&+C\Bigg(\int_0^{L}\ue^{2K_0\eta}\bigg(\int_{\eta}^{L}\br{\sin\phi,S_R}_{\phi}(y)\ud{y}\bigg)^2\ud{\eta}\Bigg)^{\frac{1}{2}}
+C\abs{\int_0^{L}\br{\sin\phi,S_R}_{\phi}(y)\ud{y}}\no\\
&&+C\abs{\int_0^{L}\int_{\eta}^{L}\abs{S_Q(y)}\ud{y}\ud{\eta}}+C\Bigg(\int_0^{L}\ue^{2K_0\eta}\bigg(\int_{\eta}^{L}\abs{S_Q(y)}\ud{y}\bigg)^2\ud{\eta}\Bigg)^{\frac{1}{2}}\no\\
&&+C\Bigg(\int_0^{L}\ue^{2K_0\eta}\bigg(\int_{\eta}^{L}\int_{y}^{L}\abs{S_Q(z)}\ud{z}\ud{y}\bigg)^2\ud{\eta}\Bigg)^{\frac{1}{2}},\no
\end{eqnarray}
for some $f_L\in\r$ satisfying
\begin{eqnarray}
\abs{f_L}&\leq&C\bigg(\tss{h}{-}+\tnnm{S}\bigg)+C\abs{\int_0^{L}\br{\sin\phi,S_R}_{\phi}(y)\ud{y}}+C\abs{\int_0^{L}\int_{\eta}^{L}\abs{S_Q(y)}\ud{y}\ud{\eta}}.
\end{eqnarray}
\end{theorem}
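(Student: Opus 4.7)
The plan is to mirror the chain of arguments leading from Lemma \ref{Milne finite LT} through Theorem \ref{Milne theorem 1}, but to carry the exponential factor $\ue^{K_0\eta}$ through every step. A crucial prerequisite is that Lemmas \ref{Milne lemma 1} and \ref{Milne lemma 2} were already stated with a weight parameter $\beta\in[0,1]$ and $\beta\in[0,1/2]$ respectively, so once $K_0\le \frac{1}{2}$ the boundary contribution $\k[p]$ and the source contribution $\t[S]$ automatically satisfy the weighted $L^\infty$ bounds
\begin{eqnarray*}
\lnm{\ue^{K_0\eta}\k[p]}\ls \lnm{p},\qquad \lnnm{\ue^{K_0\eta}\t[S]}\ls \lnnm{\ue^{K_0\eta}S}.
\end{eqnarray*}
The work is therefore concentrated in two pieces: (i) a weighted $L^2$ analogue of Theorem \ref{Milne lemma 6}, and (ii) a weighted version of Lemma \ref{Milne lemma 3} so that $\t[\bar\v]$ can be bootstrapped inside the mild formulation.

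For (i), I would redo the energy identity in the proof of Lemma \ref{Milne finite LT} after multiplying by $\ue^{2K_0\eta}$. Setting $\tilde\alpha(\eta)=\ue^{2K_0\eta}\alpha(\eta)$ turns $\frac{\ud\alpha}{\ud\eta}+F(\eta)\alpha+\tnm{r(\eta)}^2=\br{S,r}_\phi(\eta)$ into
\begin{eqnarray*}
\frac{\ud\tilde\alpha}{\ud\eta}+\bigl(F(\eta)-2K_0\bigr)\tilde\alpha(\eta)+\ue^{2K_0\eta}\tnm{r(\eta)}^2=\ue^{2K_0\eta}\br{S,r}_\phi(\eta).
\end{eqnarray*}
For $K_0$ sufficiently small relative to the curvature bound on $\rk$, the modified integrating factor $\exp\bigl(\int_\eta^y(F-2K_0)\ud z\bigr)$ is uniformly comparable to a constant on $[0,L]$, the specular-reflection identity $\tilde\alpha(L)=0$ still holds, and the argument of Lemma \ref{Milne finite LT} reproduces $\tnnm{\ue^{K_0\eta}r}\ls \tss{h}{-}+\tnnm{\ue^{K_0\eta}S}$. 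The $\sin^2\phi$-projection that controlled $q-q_L$ is repeated verbatim; the only change is that each tail integral $\int_\eta^L\br{\sin\phi,S_R}_\phi\ud y$ and $\int_\eta^L|S_Q|\ud y$ is squared against the weight $\ue^{2K_0\eta}$, which produces exactly the weighted tail integrals appearing on the right-hand side. The auxiliary construction $f^2=a(\eta)\sin\phi$ in Lemma \ref{Milne infinite LT general} transfers once one verifies $|\ue^{K_0\eta}a(\eta)|\ls \int_\eta^L \ue^{K_0 y}|S_Q(y)|\ud y$, which is immediate from the explicit solution of the underlying first-order ODE.

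For (ii), the proof of Lemma \ref{Milne lemma 3} partitions each characteristic region into subregions governed by cutoff parameters $m$ and $\sigma$. Replacing $H$ by $\ue^{K_0\eta}H$ and using the bound $\ue^{K_0\eta}\exp(-G_{\eta,\eta'})\le \exp\bigl(-(1-K_0)(\eta-\eta')\bigr)$ in Region I (and the analogous bounds in Regions II and III, where the specular reflection contributes $\exp(-G_{L,\eta'}-G_{L,\eta})$) lets every geometric estimate go through structurally unchanged, yielding $\ltnm{\ue^{K_0\eta}\t[H]}\le C(\delta)\tnnm{\ue^{K_0\eta}H}+\delta\lnnm{\ue^{K_0\eta}H}$. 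Applying this with $H=\bar\v$ in $\v=\k[p]+\t[\bar\v]+\t[S]$ and assembling as in Theorem \ref{Milne lemma 5}, I obtain $\lnnm{\ue^{K_0\eta}\v}\ls \lnm{p}+\lnnm{\ue^{K_0\eta}S}+\tnnm{\ue^{K_0\eta}\v}$, into which the weighted $L^2$ bound from (i) is fed to conclude. The main obstacle is reconciling two smallness conditions on $K_0$: Lemma \ref{Milne lemma 2} requires $K_0\le \frac{1}{2}$, while the weighted energy step requires $K_0$ small enough that the perturbation $-2K_0\tilde\alpha$ cannot destroy the dissipation structure used to absorb $r$ against $S$. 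Both can be met by a single $K_0>0$ chosen independently of $\e$, which is the content of the ``sufficiently small'' hypothesis; the bound on $|f_L|$ is simply inherited from Theorem \ref{Milne theorem 1} and requires no weighting.
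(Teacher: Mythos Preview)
Your weighted $L^2$ step contains a genuine error. You claim that the modified integrating factor $\exp\bigl(\int_\eta^y(F-2K_0)\,\ud z\bigr)$ is uniformly comparable to a constant on $[0,L]$; this is false, since $L=\e^{-1/2}$ while $K_0>0$ is fixed independent of $\e$, so $\exp(-2K_0(y-\eta))$ is not uniformly bounded below. More damagingly, even if you proceed formally, the $\ue^{-2K_0 y}$ in the integrating factor exactly cancels the weight $\ue^{2K_0 y}$ sitting on the dissipation term, and the identity collapses back to the \emph{unweighted} estimate $\tnnm{r}\ls\tss{h}{-}+\tnnm{S}$; you never obtain $\tnnm{\ue^{K_0\eta}r}$. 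The correct maneuver---which your closing paragraph gestures toward but does not execute---is to leave only $F$ in the integrating factor and absorb $-2K_0\tilde\alpha$ into the dissipation via
\[
\abs{2K_0\tilde\alpha}=K_0\ue^{2K_0\eta}\abs{\br{r,r\sin\phi}_\phi}\le K_0\,\ue^{2K_0\eta}\tnm{r(\eta)}^2,
\]
so that the effective dissipation is $\ge\tfrac12\ue^{2K_0\eta}\tnm{r(\eta)}^2$ for $K_0<\tfrac12$. With the integrating factor now just $\exp(\int F)$, which \emph{is} uniformly bounded, the argument of Lemma~\ref{Milne finite LT} runs and gives $\tnnm{\ue^{K_0\eta}r}$; this is precisely what the paper does in (\ref{Milne temp 15})--(\ref{Milne temp 71}).

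Your $L^\infty$ step, by contrast, is a legitimate alternative to the paper's route once the $L^2$ input is repaired. You propose to establish a weighted analogue of Lemma~\ref{Milne lemma 3} and apply it to $\ue^{K_0\eta}\t[\bar\v]$ directly; this works because in every region the kernel bound $\ue^{K_0(\eta-\eta')}\exp(-G_{\eta,\eta'})\le\exp\bigl(-(1-K_0)G_{\eta,\eta'}\bigr)$ (or its trivial counterpart when $\eta'>\eta$) preserves the cutoff analysis. The paper instead sets $Z=\ue^{K_0\eta}\v$, derives the equation for $Z$ with the extra source $K_0\sin\phi\,Z$, and applies the \emph{unweighted} Lemma~\ref{Milne lemma 3} to $\t[\bar Z]$, absorbing $\lnnm{\t[K_0\sin\phi Z]}\le K_0\lnnm{Z}$ at the end. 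The paper's route trades re-verifying Lemma~\ref{Milne lemma 3} with a weight for a single absorption; yours avoids the commutator term but reopens the case analysis. Both close.
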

\begin{proof}
Define $Z=\ue^{K_0\eta}\v$ for $\v=f-f_L$.
We divide the analysis into several steps:\\
\ \\
Step 1: $L^2$ estimates.\\
The orthogonal property
reveals
\begin{eqnarray}
\br{f,f\sin\phi}_{\phi}(\eta)=\br{r,r\sin\phi}_{\phi}(\eta).
\end{eqnarray}
Multiplying $\ue^{2K_0\eta}f$ on both sides of equation (\ref{Milne
problem}) and integrating over $\phi\in[-\pi,\pi)$, we obtain
\begin{eqnarray}\label{Milne temp 15}
\half\frac{\ud{}}{\ud{\eta}}\bigg(\ue^{2K_0\eta}\br{r,r\sin\phi}_{\phi}(\eta)\bigg)
+\half
F(\eta)\bigg(\ue^{2K_0\eta}\br{r,r\sin\phi}_{\phi}(\eta)\bigg)\\
-\ue^{2K_0\eta}\bigg(K_0\br{r,r\sin\phi}_{\phi}(\eta)-\br{r,
r}_{\phi}(\eta)\bigg)
&=&\ue^{2K_0\eta}\br{S,f}_{\phi}(\eta)\nonumber.
\end{eqnarray}
For $K_0<\min\left\{\dfrac{1}{2},K\right\}$, we have
\begin{eqnarray}\label{Milne temp 16}
\frac{3}{2}\tnm{r(\eta)}^2\geq-K_0\br{r,r\sin\phi}_{\phi}(\eta)+\br{r,r}_{\phi}(\eta)\geq
\half\tnm{r(\eta)}^2.
\end{eqnarray}
Similar to the proof of Lemma \ref{Milne finite LT}, formula as (\ref{Milne temp 15}) and
(\ref{Milne temp 16}) imply
\begin{eqnarray}\label{Milne temp 71}
\tnnm{\ue^{K_0\eta}r}^2=\int_0^{L}\ue^{2K_0\eta}\br{r,r}_{\phi}(\eta)\ud{\eta}\leq
C\bigg(\tss{h}{-}^2+\tnnm{\ue^{K_0\eta}S}^2\bigg).
\end{eqnarray}
From the proof of Lemma \ref{Milne finite LT} and Cauchy's inequality, we can deduce
\begin{eqnarray}
&&\int_0^{L}\ue^{2K_0\eta}\bigg(\int_{-\pi}^{\pi}(f(\eta,\phi)-f_L)^2\ud{\phi}\bigg)\ud{\eta}\\
&\leq&\int_0^{L}\ue^{2K_0\eta}\bigg(\int_{-\pi}^{\pi}r^2(\eta,\phi)\ud{\phi}\bigg)\ud{\eta}+
\int_0^{L}\ue^{2K_0\eta}\bigg(\int_{-\pi}^{\pi}\Big(q(\eta)-q_L\Big)^2\ud{\phi}\bigg)\ud{\eta}
\nonumber\\
&\leq&\int_0^{L}\ue^{2K_0\eta}\tnm{r(\eta)}^2\ud{\eta}\nonumber\\
&&+\int_0^{L}\ue^{2K_0\eta}\bigg(\int_{\eta}^{L}\abs{F(y)}\tnm{r(y)}\ud{y}\bigg)^2\ud{\eta}
+\int_0^{L}\ue^{2K_0\eta}\bigg(\int_{\eta}^{L}\br{\sin\phi,S}_{\phi}(y)\ud{y}\bigg)^2\ud{\eta}
\nonumber\\
&\leq&C\bigg(\tss{h}{-}^2+\tnnm{\ue^{K_0\eta}S}^2\bigg)\no\\
&&+C\bigg(\int_0^{L}\ue^{2K_0\eta}\tnm{r(\eta)}^2\ud{\eta}\bigg)
\bigg(\int_0^{L}\int_{\eta}^{L}\ue^{2K_0(\eta-y)}F^2(y)\ud{y}\ud{\eta}\bigg)
+\int_0^{L}\ue^{2K_0\eta}\bigg(\int_{\eta}^{L}\br{\sin\phi,S}_{\phi}(y)\ud{y}\bigg)^2\ud{\eta}
\nonumber\\
&\leq&C\bigg(\tss{h}{-}^2+\tnnm{\ue^{K_0\eta}S}^2\bigg)\no\\
&&+C\bigg(\int_0^{L}\ue^{2K_0\eta}\tnm{r(\eta)}^2\ud{\eta}\bigg)
\bigg(\int_0^{L}\int_{\eta}^{L}F^2(y)\ud{y}\ud{\eta}\bigg)
+\int_0^{L}\ue^{2K_0\eta}\bigg(\int_{\eta}^{L}\br{\sin\phi,S}_{\phi}(y)\ud{y}\bigg)^2\ud{\eta}
\nonumber\\
&\leq&C\bigg(\tss{h}{-}^2+\tnnm{\ue^{K_0\eta}S}^2\bigg)+\int_0^{L}\ue^{2K_0\eta}\bigg(\int_{\eta}^{L}\br{\sin\phi,S}_{\phi}(y)\ud{y}\bigg)^2\ud{\eta}\nonumber.
\end{eqnarray}
This completes the proof of $L^2$ estimate when $\bar S=0$.
By the method introduced in Lemma \ref{Milne infinite LT general},
we can extend above $L^2$ estimates to the general $S$ case. Note
all the auxiliary functions
constructed in Lemma \ref{Milne infinite LT general} satisfy the estimates.
We have
\begin{eqnarray}
\tnnm{Z}&\leq& C\bigg(\tss{h}{-}+\tnnm{S}\bigg)+C\Bigg(\int_0^{L}\ue^{2K_0\eta}\bigg(\int_{\eta}^{L}\br{\sin\phi,S_R}_{\phi}(y)\ud{y}\bigg)^2\ud{\eta}\Bigg)^{\frac{1}{2}}\\
&&+C\Bigg(\int_0^{L}\ue^{2K_0\eta}\bigg(\int_{\eta}^{L}\abs{S_Q(y)}\ud{y}\bigg)^2\ud{\eta}\Bigg)^{\frac{1}{2}}
+C\Bigg(\int_0^{L}\ue^{2K_0\eta}\bigg(\int_{\eta}^{L}\int_{y}^{L}\abs{S_Q(z)}\ud{z}\ud{y}\bigg)^2\ud{\eta}\Bigg)^{\frac{1}{2}},\no
\end{eqnarray}
\ \\
Step 2: $L^{\infty}$ estimates.\\
$Z$ satisfies the equation
\begin{eqnarray}\label{decay equation}
\left\{
\begin{array}{l}\displaystyle
\sin\phi\frac{\p Z}{\p\eta}+F(\eta)\cos\phi\frac{\p
Z}{\p\phi}+Z=\bar Z+\ue^{K_0\eta}S+K_0\sin\phi Z,\\\rule{0ex}{1.5em}
Z(0,\phi)=p(\phi)=h(\phi)-f_L\ \ \text{for}\ \ \sin\phi>0\\\rule{0ex}{1.5em}
Z(L,\phi)=Z(L,\rr[\phi]).
\end{array}
\right.
\end{eqnarray}
Since we know $Z=\k[p]+\t[\bar Z+\ue^{K_0\eta}S+K_0\sin\phi Z]$
leads to
\begin{eqnarray}
\t[\bar Z]=Z-\k[p]-\t[\ue^{K_0\eta}S]-\t[K_0\sin\phi Z],
\end{eqnarray}
then by Lemma \ref{Milne lemma 3}, (\ref{Milne temp 61}) and
(\ref{Milne temp 62}), we can show
\begin{eqnarray}\label{Milne temp 73}
\ltnm{Z-\k[p]-\t[\ue^{K_0\eta}S]-\t[K_0\sin\phi Z]}
&\leq& C(\delta)\tnnm{\bar
Z}+\delta\lnnm{\bar Z}\\
&\leq&
C(\delta)\tnnm{Z}+\delta\ltnm{Z}\nonumber.
\end{eqnarray}
Therefore, based on Lemma \ref{Milne lemma 1} and (\ref{Milne temp
63}), we can directly estimate
\begin{eqnarray}
\\
\ltnm{Z}
&\leq&\lnm{\k[p]}+\lnnm{\t[\ue^{K_0\eta}S]}+\lnnm{\t[K_0\sin\phi Z]}+C(\delta)\tnnm{Z}+\delta\ltnm{Z}\no\\
&\leq&\lnm{p}+\lnnm{\ue^{K_0\eta}S}+K_0\lnnm{Z}+C(\delta)\tnnm{Z}+\delta\ltnm{Z}\nonumber.
\end{eqnarray}
We can take $\delta=\dfrac{1}{2}$ to obtain
\begin{eqnarray}\label{Milne temp 74}
\ltnm{Z}\leq
C\bigg(\lnm{p}+\lnnm{\ue^{K_0\eta}S}+K_0\lnnm{Z}+\tnnm{Z}\bigg).
\end{eqnarray}
Then based on Lemma \ref{Milne lemma 1}, Lemma \ref{Milne lemma 2}
and Lemma \ref{Milne lemma 3}, we can deduce
\begin{eqnarray}
\lnnm{Z}&\leq& \lnm{\ue^{K_0\eta}\k[p]}+\lnnm{\ue^{K_0\eta}\t[S]}+\lnnm{\bar Z}+\lnnm{K_0\sin\phi Z}\\
&\leq& \lnm{p}+\lnnm{\ue^{K_0\eta}S}+\lnnm{\bar Z}+K_0\lnnm{Z}\nonumber\\
&\leq& \lnm{p}+\lnnm{\ue^{K_0\eta}S}+\ltnm{Z}+K_0\lnnm{Z}\nonumber\\
&\leq&
C\bigg(\tnnm{Z}+\tnnm{\ue^{K_0\eta}S}+\lnnm{\ue^{K_0\eta}S}+\lnm{p}+K_0\lnnm{Z}\bigg)\nonumber.
\end{eqnarray}
Taking $K_0$ sufficiently small, we absorb $K_0\lnnm{Z}$ to the left-hand side and obtain
\begin{eqnarray}
\lnnm{Z}
&\leq&
C\bigg(\tnnm{Z}+\tnnm{\ue^{K_0\eta}S}+\lnnm{\ue^{K_0\eta}S}+\lnm{p}\bigg).
\end{eqnarray}
Then the final result is obvious.
\end{proof}

\subsection{Maximum Principle}

In \cite{AA003}, the author proved the maximum principle.
\begin{theorem}\label{Milne theorem 3}
The unique solution $f(\eta,\phi)$ to the equation with $S=0$ satisfies the maximum principle, i.e.
\begin{eqnarray}
\min_{\sin\phi>0}h(\phi)\leq f(\eta,\phi)\leq
\max_{\sin\phi>0}h(\phi).
\end{eqnarray}
\end{theorem}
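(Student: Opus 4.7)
The plan is to use the mild characteristic formulation derived in Section~4.2.1, which expresses $f$ as a convex combination of the incoming boundary data $h$ and of values of $\bar f$ sampled along the characteristic through $(\eta,\phi)$, possibly after one specular bounce at $\eta = L$. First I would rewrite the equation with $S = 0$ as
\begin{equation*}
f(\eta,\phi) \;=\; \k[h](\eta,\phi) \;+\; \t[\bar f](\eta,\phi),
\end{equation*}
using the explicit representations in (\ref{Milne t 08})--(\ref{Milne t 10}).

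The decisive algebraic observation is that the kernel in this representation is nonnegative and the total weight is exactly one on every characteristic. In Region~I this reduces to
\begin{equation*}
\exp(-G_{\eta,0}) \;+\; \int_0^{\eta} \frac{1}{\sin(\phi'(\eta,\phi;\eta'))}\,\exp(-G_{\eta,\eta'})\,\ud{\eta'} \;=\; 1,
\end{equation*}
which is an immediate consequence of $\tfrac{d}{d\eta'}\exp(-G_{\eta,\eta'}) = \tfrac{1}{\sin(\phi'(\eta,\phi;\eta'))}\exp(-G_{\eta,\eta'})$. In Regions~II and~III the analogous identity holds, since the exponential kernels $\exp(-G_{L,\eta'}-G_{L,\eta})$ and $\exp(G_{\eta,\eta'})$ telescope through the specular reflection at $\eta = L$ (and through the turning point $\eta^+$ in Region~III), so that the total probability weight along the possibly-bounced characteristic remains one.

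From here the maximum principle follows by a standard argument. Set $M := \max_{\sin\phi > 0} h(\phi)$ and $M^{*} := \lnnm{f}$, which is finite by Theorem~\ref{Milne theorem 1}. Since $\bar f(\eta) \le M^{*}$ pointwise and the kernels are nonnegative with total weight one,
\begin{equation*}
f(\eta,\phi) \;\le\; M\, w_{\partial}(\eta,\phi) \;+\; M^{*}\,\bigl(1 - w_{\partial}(\eta,\phi)\bigr),
\end{equation*}
where $w_{\partial} = \exp(-G_{\eta,0})$ in Region~I, with the obvious analogues in Regions~II and~III. Taking the supremum over $(\eta,\phi)$ on the left forces $M^{*} \le M$, provided $w_{\partial}$ does not vanish uniformly along a near-maximizing sequence. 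The lower bound $f \ge \min_{\sin\phi>0} h$ is obtained by the identical argument applied to $-f$ with boundary data $-h$.

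The only point requiring care is the uniform positivity of $w_{\partial}$: in Regions~I and~II this is automatic since $\exp(-G_{\eta,0})$ and $\exp(-G_{L,0}-G_{L,\eta})$ are strictly positive for every $(\eta,\phi)$ with $\eta \le L$. Region~III is the delicate case, where a single pass of the characteristic turns around at $\eta^+ < L$ without reaching the incoming boundary; here I would instead phrase the argument through the monotone iteration $f_{n+1} = \k[h] + \t[\bar f_n]$ initialized at the constant $M$, observing that the operator $g \mapsto \k[h] + \t[\bar g]$ preserves the slab $m \le g \le M$ by the convex-combination property, so every iterate, and hence the unique fixed point $f$, lies in $[m,M]$. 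I expect the verification of the convex-combination identity in Region~III, with its reflection-and-turning-point bookkeeping, to be the main technical step; everything else is routine.
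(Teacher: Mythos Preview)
The paper does not actually supply a proof of this theorem; it simply records that the maximum principle was established in the earlier paper \cite{AA003}. So there is no in-paper argument to compare against.

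Your approach via the mild formulation is sound and is the natural route. The convex-combination identity you state is correct in all three regions (in Region~II the three pieces $\exp(-G_{L,0}-G_{L,\eta})$, $(1-\exp(-G_{L,0}))\exp(-G_{L,\eta})$, and $1-\exp(-G_{L,\eta})$ telescope to $1$, and similarly in Region~III with $\eta^+$ in place of $L$). You are also right to flag the gap in the direct ``take the supremum'' argument: since $w_{\partial}(\eta,\phi)=\exp(-G_{\eta,0})\le\ue^{-\eta}$ in Region~I, the boundary weight is \emph{not} bounded away from zero on $[0,L]\times[-\pi,\pi)$, so one cannot conclude $M^{*}\le M$ from $f\le M\,w_{\partial}+M^{*}(1-w_{\partial})$ without knowing the supremum is attained, and continuity of $f$ across the grazing set $\sin\phi=0$ is not available a~priori.

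Your monotone-iteration repair is the correct fix and closes the gap cleanly. Starting from $f_0\equiv M$, the convex-combination property gives $m\le f_{n+1}\le f_n\le M$ for all $n$; the monotone limit $f_{\infty}$ exists pointwise, dominated convergence passes the limit through $\t[\bar f_n]$ (the kernels are nonnegative with total mass $1$), and uniqueness from Theorem~\ref{Milne theorem 1} forces $f_{\infty}=f$. The lower bound follows symmetrically by iterating from $f_0\equiv m$. The Region~III bookkeeping you anticipate is routine once you note that $-G_{\eta^+,\eta'}-G_{\eta^+,\eta}$ and $G_{\eta,\eta'}$ combine exactly as in Region~II, with $\eta^+$ playing the role of $L$; the integrals are still absolutely convergent because $G_{\eta^+,\eta'}$ stays finite (the singularity of $1/\sin\phi'$ at $\eta'=\eta^+$ is integrable).
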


\newpage

\section{Regularity of $\e$-Milne Problem with Geometric Correction}

We consider the $\e$-Milne problem with geometric correction for $f^{\e}(\eta,\tau,\phi)$ in
the domain $(\eta,\tau,\phi)\in[0,L]\times[-\pi,\pi)\times[-\pi,\pi)$ where $L=\e^{-\frac{1}{2}}$ as
\begin{eqnarray}\label{Milne problem.,}
\left\{ \begin{array}{l}\displaystyle
\sin\phi\frac{\p
f^{\e}}{\p\eta}+F(\e;\eta,\tau)\cos\phi\frac{\p
f^{\e}}{\p\phi}+f^{\e}-\bar f^{\e}=S^{\e}(\eta,\tau,\phi),\\\rule{0ex}{1.5em}
f^{\e}(0,\tau,\phi)= h^{\e}(\tau,\phi)\ \ \text{for}\
\ \sin\phi>0,\\\rule{0ex}{1.5em}
f^{\e}(L,\tau,\phi)=f^{\e}(L,\tau,\rr[\phi]),
\end{array}
\right.
\end{eqnarray}
where $\rr[\phi]=-\phi$ and
\begin{eqnarray}
F(\e;\eta,\tau)=-\frac{\e}{\rk(\tau)-\e\eta},
\end{eqnarray}
for the radius of curvature $\rk$. In this section, for convenience, we temporarily ignore the superscript on $\e$ and $\tau$. In other words, we will study
\begin{eqnarray}\label{Milne problem,}
\left\{ \begin{array}{l}\displaystyle
\sin\phi\frac{\p
f}{\p\eta}+F(\eta)\cos\phi\frac{\p
f}{\p\phi}+f-\bar f=S(\eta,\phi),\\\rule{0ex}{1.5em}
f(0,\phi)= h(\phi)\ \ \text{for}\
\ \sin\phi>0,\\\rule{0ex}{1.5em}
f(L,\phi)=f(L,\rr[\phi]).
\end{array}
\right.
\end{eqnarray}
Define potential function $V(\eta)$ satisfying $V(0)=0$ and $\dfrac{\p V}{\p\eta}=-F(\eta)$. Then we can direct compute
\begin{eqnarray}
V(\eta)=\ln\left(\frac{\rk}{\rk-\e\eta}\right).
\end{eqnarray}
Define the weight function
\begin{eqnarray}
\zeta(\eta,\phi)=\Bigg(1-\bigg(\frac{\rk-\e\eta}{\rk}\cos\phi\bigg)^2\Bigg)^{\frac{1}{2}}.
\end{eqnarray}
We can easily show that
\begin{eqnarray}
\sin\phi\frac{\p \zeta}{\p\eta}+F(\eta)\cos\phi\frac{\p
\zeta}{\p\phi}=0.
\end{eqnarray}
It is easy to see $\v(\eta,\tau,\phi)=f(\eta,\tau,\phi)-f_L(\tau)$ satisfies the equation
\begin{eqnarray}\label{Milne difference problem}
\left\{
\begin{array}{l}\displaystyle
\sin\phi\frac{\p \v}{\p\eta}+F(\eta)\cos\phi\frac{\p
\v}{\p\phi}+\v=\bar \v+S,\\\rule{0ex}{1.5em}
\v(0,\phi)=p(\phi)=h(\phi)-f_L\ \ \text{for}\ \ \sin\phi>0,\\\rule{0ex}{1.5em}
\v(L,\phi)=\v(L,\rr[\phi]).
\end{array}
\right.
\end{eqnarray}
The regularity has been thoroughly studied in \cite{AA007}. However, here we will focus on the a priori estimates and prove an improved version of the regularity theorem. The major upshot is that we can avoid using the information of $\dfrac{\p S}{\p\phi}$.

\subsection{Mild Formulation}

Consider the $\e$-transport problem for $\a=\zeta\dfrac{\p\v}{\p\eta}$ as
\begin{eqnarray}
\left\{
\begin{array}{l}\displaystyle
\sin\phi\frac{\p\a}{\p\eta}+F(\eta)\cos\phi\frac{\p
\a}{\p\phi}+\a=\tilde\a+S_{\a},\\\rule{0ex}{1.5em}
\a(0,\phi)=p_{\a}(\phi)\ \ \text{for}\ \ \sin\phi>0,\\\rule{0ex}{1.5em}
\a(L,\phi)=\a(L,R\phi),
\end{array}
\right.
\end{eqnarray}
where $p_{\a}$ and $S_{\a}$ will be specified later with
\begin{eqnarray}
\tilde\a(\eta,\phi)=\frac{1}{2\pi}\int_{-\pi}^{\pi}\frac{\zeta(\eta,\phi)}{\zeta(\eta,\phi_{\ast})}\a(\eta,\phi_{\ast})\ud{\phi_{\ast}}.
\end{eqnarray}
Define the energy as before
\begin{eqnarray}
E(\eta,\phi)=\ue^{-V(\eta)}\cos\phi=\cos\phi\frac{\rk-\e\eta}{\rk}.
\end{eqnarray}
Along the characteristics, where this energy is conserved and $\zeta$ is a constant, the equation can be simplified as follows:
\begin{eqnarray}
\sin\phi\frac{\ud{\a}}{\ud{\eta}}+\a=\tilde\a+S_{\a}.
\end{eqnarray}
An implicit function
$\eta^+(\eta,\phi)$ can be determined through
\begin{eqnarray}
\abs{E(\eta,\phi)}=\ue^{-V(\eta^+)}.
\end{eqnarray}
which means $(\eta^+,\phi_0)$ with $\sin\phi_0=0$ is on the same characteristics as $(\eta,\phi)$.
Define the quantities for $0\leq\eta'\leq\eta^+$ as follows:
\begin{eqnarray}
\phi'(\eta,\phi;\eta')&=&\cos^{-1}\Big(\ue^{V(\eta')-V(\eta)}\cos\phi \Big),\\
\rr[\phi'(\eta,\phi;\eta')]&=&-\cos^{-1}\Big(\ue^{V(\eta')-V(\eta)}\cos\phi \Big)=-\phi'(\eta,\phi;\eta'),
\end{eqnarray}
where the inverse trigonometric function can be defined
single-valued in the domain $[0,\pi)$ and the quantities are always well-defined due to the monotonicity of $V$. Note that $\sin\phi'\geq0$, even if $\sin\phi<0$.
Finally we put
\begin{eqnarray}
G_{\eta,\eta'}(\phi)&=&\int_{\eta'}^{\eta}\frac{1}{\sin\Big(\phi'(\eta,\phi;\xi)\Big)}\ud{\xi}.
\end{eqnarray}
Similar to $\e$-Milne problem, we can define the solution along the characteristics as follows:
\begin{eqnarray}
\a(\eta,\phi)=\k[p_{\a}]+\t[\tilde\a+S_{\a}],
\end{eqnarray}
where\\
\ \\
Region I:\\
For $\sin\phi>0$,
\begin{eqnarray}
\k[p_{\a}]&=&p_{\a}\Big(\phi'(\eta,\phi;0)\Big)\exp(-G_{\eta,0})\\
\t[\tilde\a+S_{\a}]&=&\int_0^{\eta}\frac{(\tilde\a+S_{\a})\Big(\eta',\phi'(\eta,\phi;\eta')\Big)}{\sin\Big(\phi'(\eta,\phi;\eta')\Big)}\exp(-G_{\eta,\eta'})\ud{\eta'}.
\end{eqnarray}
\ \\
Region II:\\
For $\sin\phi<0$ and $\abs{E(\eta,\phi)}\leq \ue^{-V(L)}$,
\begin{eqnarray}
\k[p_{\a}]&=&p_{\a}\Big(\phi'(\eta,\phi;0)\Big)\exp(-G_{L,0}-G_{L,\eta})\\
\t[\tilde\a+S_{\a}]&=&\int_0^{L}\frac{(\tilde\a+S)\Big(\eta',\phi'(\eta,\phi;\eta')\Big)}{\sin\Big(\phi'(\eta,\phi;\eta')\Big)}
\exp(-G_{L,\eta'}-G_{L,\eta})\ud{\eta'}\\
&&+\int_{\eta}^{L}\frac{(\tilde\a+S)\Big(\eta',\rr[\phi'(\eta,\phi;\eta')]\Big)}{\sin\Big(\phi'(\eta,\phi;\eta')\Big)}\exp(-G_{\eta',\eta})\ud{\eta'}\nonumber.
\end{eqnarray}
\ \\
Region III:\\
For $\sin\phi<0$ and $\abs{E(\eta,\phi)}\geq \ue^{-V(L)}$,
\begin{eqnarray}
\k[p_{\a}]&=&p_{\a}\Big(\phi'(\eta,\phi;0)\Big)\exp(-G_{\eta^+,0}-G_{\eta^+,\eta})\\
\t[\tilde\a+S_{\a}]&=&\int_0^{\eta^+}\frac{(\tilde\a+S_{\a})\Big(\eta',\phi'(\eta,\phi;\eta')\Big)}{\sin\Big(\phi'(\eta,\phi;\eta')\Big)}
\exp(-G_{\eta^+,\eta'}-G_{\eta^+,\eta})\ud{\eta'}\\
&&+
\int_{\eta}^{\eta^+}\frac{(\tilde\a+S_{\a})\Big(\eta',\rr[\phi'(\eta,\phi;\eta')]\Big)}{\sin\Big(\phi'(\eta,\phi;\eta')\Big)}\exp(-G_{\eta',\eta})\ud{\eta'}\nonumber.
\end{eqnarray}
Then we need to estimate $\k[p_{\a}]$ and $\t[\tilde\a+S_{\a}]$ in each region. We assume $0<\d<<1$ and $0<\d_0<<1$ are small quantities which will be determined later.
Since we always assume that $(\eta,\phi)$ and $(\eta',\phi')$ are on the same characteristics, when there is no confusion, we simply write $\phi'$ or $\phi'(\eta')$ instead of $\phi'(\eta,\phi;\eta')$.

\subsection{Region I: $\sin\phi>0$}

We consider
\begin{eqnarray}
\k[p_{\a}]&=&p_{\a}\Big(\phi'(\eta,\phi;0)\Big)\exp(-G_{\eta,0})\\
\t[\tilde\a+S_{\a}]&=&\int_0^{\eta}\frac{(\tilde\a+S_{\a})\Big(\eta',\phi'(\eta,\phi;\eta')\Big)}{\sin\Big(\phi'(\eta,\phi;\eta')\Big)}\exp(-G_{\eta,\eta'})\ud{\eta'}.
\end{eqnarray}
Based on \cite[Lemma 4.7, Lemma 4.8]{AA003},
we can directly obtain
\begin{eqnarray}
\lnm{\k[p_{\a}]}&\leq&\lnm{p_{\a}},\\
\lnnm{\t[S_{\a}]}&\leq&\lnnm{S_{\a}}.
\end{eqnarray}
Hence, we only need to estimate
\begin{eqnarray}\label{mild 1}
I=\t[\tilde\a]=\int_0^{\eta}\frac{\tilde\a\Big(\eta',\phi'(\eta,\phi;\eta')\Big)}{\sin\Big(\phi'(\eta,\phi;\eta')\Big)}\exp(-G_{\eta,\eta'})\ud{\eta'}.
\end{eqnarray}
We divide it into several steps:\\
\ \\
Step 0: Preliminaries.\\
We have
\begin{eqnarray}
E(\eta',\phi')=\frac{\rk-\e\eta'}{\rk}\cos\phi'.
\end{eqnarray}
We can directly obtain
\begin{eqnarray}\label{pt 01}
\zeta(\eta',\phi')&=&\frac{1}{\rk}\sqrt{\rk^2-\bigg((\rk-\e\eta')\cos\phi'\bigg)^2}
=\frac{1}{\rk}\sqrt{\rk^2-(\rk-\e\eta')^2+(\rk-\e\eta')^2\sin^2\phi'},\\
&\leq& \sqrt{\rk^2-(\rk-\e\eta')^2}+\sqrt{(\rk-\e\eta')^2\sin^2\phi'}\leq C\bigg(\sqrt{\e\eta'}+\sin\phi'\bigg),\no
\end{eqnarray}
and
\begin{eqnarray}\label{pt 02}
\zeta(\eta',\phi')\geq\frac{1}{\rk}\sqrt{\rk^2-(\rk-\e\eta')^2}\geq C\sqrt{\e\eta'}.
\end{eqnarray}
Also, we know for $0\leq\eta'\leq\eta$,
\begin{eqnarray}
\sin\phi'&=&\sqrt{1-\cos^2\phi'}=\sqrt{1-\bigg(\frac{\rk-\e\eta}{\rk-\e\eta'}\bigg)^2\cos^2\phi}\\
&=&\frac{\sqrt{(\rk-\e\eta')^2\sin^2\phi+(2\rk-\e\eta-\e\eta')(\e\eta-\e\eta')\cos^2\phi}}{\rk-\e\eta'}.
\end{eqnarray}
Since
\begin{eqnarray}
0\leq(2\rk-\e\eta-\e\eta')(\e\eta-\e\eta')\cos^2\phi\leq 2\rk\e(\eta-\eta'),
\end{eqnarray}
we have
\begin{eqnarray}
\sin\phi\leq\sin\phi'
\leq2\sqrt{\sin^2\phi+\e(\eta-\eta')},
\end{eqnarray}
which means
\begin{eqnarray}
\frac{1}{2\sqrt{\sin^2\phi+\e(\eta-\eta')}}\leq\frac{1}{\sin\phi'}
\leq\frac{1}{\sin\phi}.
\end{eqnarray}
Therefore,
\begin{eqnarray}\label{pt 03}
-\int_{\eta'}^{\eta}\frac{1}{\sin\phi'(y)}\ud{y}&\leq& -\int_{\eta'}^{\eta}\frac{1}{2\sqrt{\sin^2\phi+\e(\eta-y)}}\ud{y}\\
&=&\frac{1}{\e}\bigg(\sin\phi-\sqrt{\sin^2\phi+\e(\eta-\eta')}\bigg)\no\\
&=&-\frac{\eta-\eta'}{\sin\phi+\sqrt{\sin^2\phi+\e(\eta-\eta')}}\no\\
&\leq&-\frac{\eta-\eta'}{2\sqrt{\sin^2\phi+\e(\eta-\eta')}}.\no
\end{eqnarray}
Define a cut-off function $\chi\in C^{\infty}[-\pi,\pi]$ satisfying
\begin{eqnarray}
\chi(\phi)=\left\{
\begin{array}{ll}
1&\text{for}\ \ \abs{\sin\phi}\leq\d,\\
0&\text{for}\ \ \abs{\sin\phi}\geq2\d,
\end{array}
\right.
\end{eqnarray}
In the following, we will divide the estimate of $I$ into several cases based on the value of $\sin\phi$, $\abs{\cos\phi}$, $\sin\phi'$, $\e\eta'$ and $\e(\eta-\eta')$. Let $\id$ denote the indicator function. We write
\begin{eqnarray}
I&=&\int_0^{\eta}\id_{\{\sin\phi\geq\d_0\}}\id_{\{\abs{\cos\phi}\geq\d_0\}}+\int_0^{\eta}\id_{\{0\leq\sin\phi\leq\d_0\}}\id_{\{\chi(\phi_{\ast})<1\}}\\
&&+\int_0^{\eta}\id_{\{0\leq\sin\phi\leq\d_0\}}\id_{\{\chi(\phi_{\ast})=1\}}\id_{\{ \sqrt{\e\eta'}\geq\sin\phi'\}}\no\\
&&+\int_0^{\eta}\id_{\{0\leq\sin\phi\leq\d_0\}}\id_{\{\chi(\phi_{\ast})=1\}}\id_{\{\sqrt{\e\eta'}\leq\sin\phi'\}}\id_{\{\sin^2\phi\leq\e(\eta-\eta')\}}\no\\
&&+\int_0^{\eta}\id_{\{0\leq\sin\phi\leq\d_0\}}\id_{\{\chi(\phi_{\ast})=1\}}\id_{\{\sqrt{\e\eta'}\leq\sin\phi'\}}\id_{\{\sin^2\phi\geq\e(\eta-\eta')\}}\no\\
&&+\int_0^{\eta}\id_{\{\abs{\cos\phi}\leq\d_0\}}\no\\
&=&I_1+I_2+I_3+I_4+I_5+I_6.\no
\end{eqnarray}
\ \\
Step 1: Estimate of $I_1$ for $\sin\phi\geq\d_0$ and $\abs{\cos\phi}\geq\d_0$.\\
For $\sin\phi\geq\d_0$ and $\abs{\cos\phi}\geq\d_0$, we do not need the mild formulation of $\a$. Instead, we directly estimate
\begin{eqnarray}
\abs{\a}\leq \abs{\frac{\p\v}{\p\eta}}.
\end{eqnarray}
We will estimate $I_1$ based on the characteristics of $\v$ itself instead of the derivative.
Here, we will use two formulations of the equation (\ref{Milne difference problem}) along the characteristics
\begin{itemize}
\item
Formulation I: $\eta$ is the principal variable, $\phi=\phi(\eta)$, and the equation can be rewritten as
\begin{eqnarray}
\sin\phi\frac{\ud{\v}}{\ud{\eta}}+\v=\bar\v+S.
\end{eqnarray}
\item
Formulation II: $\phi$ is the principal variable, $\eta=\eta(\phi)$ and the equation can be rewritten as
\begin{eqnarray}
F(\eta)\cos\phi\frac{\ud{\v}}{\ud{\phi}}+\v=\bar\v+ S.
\end{eqnarray}
\end{itemize}
These two formulations are equivalent and can be applied to different regions of the domain.

We may decompose $\v=\v_1+\v_2$ where $\v_1$ satisfies
\begin{eqnarray}\label{Milne difference problem 1}
\left\{
\begin{array}{l}\displaystyle
\sin\phi\frac{\p \v_1}{\p\eta}+F(\eta)\cos\phi\frac{\p
\v_1}{\p\phi}+\v_1=\bar \v,\\\rule{0ex}{1.5em}
\v_1(0,\phi)=p(\phi)\ \ \text{for}\ \ \sin\phi>0,\\\rule{0ex}{1.5em}
\v_1(L,\phi)=\v_1(L,\rr[\phi]),
\end{array}
\right.
\end{eqnarray}
and $\v_2$ satisfies
\begin{eqnarray}\label{Milne difference problem 2}
\left\{
\begin{array}{l}\displaystyle
\sin\phi\frac{\p \v_2}{\p\eta}+F(\eta)\cos\phi\frac{\p
\v_2}{\p\phi}+\v_2=S,\\\rule{0ex}{1.5em}
\v_2(0,\phi)=0\ \ \text{for}\ \ \sin\phi>0,\\\rule{0ex}{1.5em}
\v_2(L,\phi)=\v_2(L,\rr[\phi]).
\end{array}
\right.
\end{eqnarray}
Assume $\v$ is well-defined. Then we can easily see that $\v_1$ and $\v_2$ are well-defined.

Using Formulation I, we rewrite the equation (\ref{Milne difference problem 1}) along the characteristics as
\begin{eqnarray}\label{mt 01}
\v_1(\eta,\phi)&=&\exp\left(-G_{\eta,0}\right)\Bigg(p\Big(\phi'(0)\Big)
+\int_0^{\eta}\frac{\bar\v(\eta')}{\sin\Big(\phi'(\eta')\Big)}
\exp\left(G_{\eta',0}\right)\ud{\eta'}\Bigg)
\end{eqnarray}
where $(\eta',\phi')$, $(0,\phi'(0))$ and $(\eta,\phi)$ are on the same characteristic with $\sin\phi'\geq0$, and
\begin{eqnarray}
G_{t,s}&=&\int_{s}^{t}\frac{1}{\sin\Big(\phi'(\xi)\Big)}\ud{\xi}.
\end{eqnarray}
Taking $\eta$ derivative on both sides of (\ref{mt 01}), we have
\begin{eqnarray}\label{mt 04}
\frac{\p\v_1}{\p\eta}&=&X_1+X_2+X_3+X_4+X_5,
\end{eqnarray}
where
\begin{eqnarray}
X_1&=&-\exp\left(-G_{\eta,0}\right)\frac{\p G_{\eta,0}}{\p\eta}\Bigg(p\Big(\phi'(0)\Big)
+\int_0^{\eta}\frac{\bar\v(\eta')}{\sin\Big(\phi'(\eta')\Big)}
\exp\left(G_{\eta',0}\right)\ud{\eta'}\Bigg),\no\\
X_2&=&\exp\left(-G_{\eta,0}\right)\frac{\p p\Big(\phi'(0)\Big)}{\p\eta},\\
X_3&=&\frac{\bar\v(\eta)}{\sin\phi},\\
X_4&=&-\exp\left(-G_{\eta,0}\right)\int_0^{\eta}\bar\v(\eta')
\exp\left(G_{\eta',0}\right)
\frac{\cos\Big(\phi'(\eta')\Big)}{\sin^2\Big(\phi'(\eta')\Big)}\frac{\p\phi'(\eta')}{\p\eta}\ud{\eta'},\no\\
X_5&=&\exp\left(-G_{\eta,0}\right)\int_0^{\eta}\frac{\bar\v(\eta')}{\sin\Big(\phi'(\eta')\Big)}
\exp\left(G_{\eta',0}\right)\frac{\p G_{\eta',0}}{\p\eta}\ud{\eta'}.
\end{eqnarray}
Then we need to estimate each term. This procedure is standard, so we omit the details. Note that fact that for $0\leq\eta'\leq\eta$, we have $\sin\phi'\geq\sin\phi\geq\d_0$ and
\begin{eqnarray}
\int_0^{\eta}\frac{1}{\sin\Big(\phi'(\eta')\Big)}
\exp\left(-G_{\eta,\eta'}\right)\ud{\eta'}\leq \int_0^{\infty}\ue^{-y}\ud{y}=1,
\end{eqnarray}
with the substitution $y=G_{\eta,\eta'}$. The estimates can be listed as below:
\begin{eqnarray}
\abs{X_1}&\leq&\frac{C}{\d_0}\lnnm{\v},\\
\abs{X_2}&\leq&\frac{C}{\d_0}\lnm{\frac{\p p}{\p\phi}},\\
\abs{X_3}&\leq&\frac{C}{\d_0}\lnnm{\v},\\
\abs{X_4}&\leq&\frac{C}{\d_0}\lnnm{\v},\\
\abs{X_5}&\leq&\frac{C}{\d_0}\lnnm{\v}.
\end{eqnarray}
In total, we have
\begin{eqnarray}
\abs{\frac{\p\v_1}{\p\eta}}\leq \frac{C}{\d_0}\bigg(\lnm{\frac{\p p}{\p\phi}}+\lnnm{\v}\bigg).
\end{eqnarray}

Using Formulation II, we rewrite the equation (\ref{Milne difference problem 2}) along the characteristics as
\begin{eqnarray}\label{mt 01'}
\v_2(\eta,\phi)&=&\exp\left(-H_{\phi,\phi_{\ast}}\right)\int_{\phi_{\ast}}^{\phi}\frac{S\Big(\eta'(\phi'),\phi'\Big)}{F\Big(\eta'(\phi')\Big)\cos\phi'}
\exp\left(H_{\phi',\phi_{\ast}}\right)\ud{\phi'}.
\end{eqnarray}
where $(\eta',\phi')$, $(0,\phi_{\ast})$ and $(\eta,\phi)$ are on the same characteristic with $\sin\phi'\geq0$, and
\begin{eqnarray}
H_{t,s}&=&\int_{s}^{t}\frac{1}{F\Big(\eta'(\xi)\Big)\cos\xi'}\ud{\xi}.
\end{eqnarray}
Taking $\eta$ derivative on both sides of (\ref{mt 01'}), we have
\begin{eqnarray}\label{mt 04'}
\frac{\p\v_2}{\p\eta}&=&Y_1+Y_2+Y_3+Y_4+Y_5,
\end{eqnarray}
where
\begin{eqnarray}
Y_1&=&-\exp\left(-H_{\phi,\phi_{\ast}}\right)\frac{\p H_{\phi,\phi_{\ast}}}{\p\eta}\int_{\phi_{\ast}}^{\phi}\frac{S\Big(\eta'(\phi'),\phi'\Big)}{F\Big(\eta'(\phi')\Big)\cos\phi'}
\exp\left(H_{\phi',\phi_{\ast}}\right)\ud{\phi'},\no\\
Y_2&=&\frac{S(0,\phi_{\ast})}{F(0)\cos\phi_{\ast}}\frac{\p\phi_{\ast}}{\p\eta},\\
Y_3&=&-\exp\left(-H_{\phi,\phi_{\ast}}\right)\int_{\phi_{\ast}}^{\phi}S\Big(\eta'(\phi'),\phi'\Big)\frac{1}{F^2\Big(\eta'(\phi')\Big)\cos\phi'}\frac{\p F\Big(\eta'(\phi')\Big)}{\p\eta}
\exp\left(H_{\phi',\phi_{\ast}}\right)\ud{\phi'},\no\\
Y_4&=&\exp\left(-H_{\phi,\phi_{\ast}}\right)\int_{\phi_{\ast}}^{\phi}\frac{S\Big(\eta'(\phi'),\phi'\Big)}{F\Big(\eta'(\phi')\Big)\cos\phi'}
\exp\left(H_{\phi',\phi_{\ast}}\right)\frac{\p H_{\phi',\phi_{\ast}}}{\p\eta}\ud{\phi'},\\
Y_5&=&\exp\left(-H_{\phi,\phi_{\ast}}\right)\int_{\phi_{\ast}}^{\phi}\frac{\p_{\eta'}S\Big(\eta'(\phi'),\phi'\Big)}{F\Big(\eta'(\phi')\Big)\cos\phi'}\frac{\p\eta'(\phi')}{\p\eta}
\exp\left(H_{\phi',\phi_{\ast}}\right)\ud{\phi'}.
\end{eqnarray}
Then we just need to estimate each term. Along the characteristics, we know
\begin{eqnarray}
\ue^{-V(\eta')}\cos\phi'=\ue^{-V(\eta)}\cos\phi,
\end{eqnarray}
which implies
\begin{eqnarray}
\cos\phi'&=&\ue^{V(\eta')-V(\eta)}\cos\phi\geq \ue^{V(0)-V(L)}\cos\phi\geq \ue^{V(0)-V(L)}\d_0.
\end{eqnarray}
We can further deduce that
\begin{eqnarray}
\cos\phi'\geq\bigg(1-\frac{\e^{\frac{1}{2}}}{\rk}\bigg)\d_0\geq \frac{\d_0}{2},
\end{eqnarray}
when $\e$ is sufficiently small. Also, we have
\begin{eqnarray}
\int_{\phi_{\ast}}^{\phi}\frac{1}{F\Big(\eta'(\phi')\Big)\cos\phi'}
\exp\left(H_{\phi,\phi'}\right)\ud{\phi'}\leq \int_0^{\infty}\ue^{-y}\ud{y}=1,
\end{eqnarray}
with the substitution $y=H_{\phi,\phi'}$. Similar to $X_i$ estimates, we may directly obtain
\begin{eqnarray}
\abs{Y_1}&\leq&\frac{C}{\d_0}\lnnm{S},\\
\abs{Y_2}&\leq&\frac{C}{\d_0}\lnnm{S},\\
\abs{Y_3}&\leq&\frac{C}{\d_0}\lnnm{S},\\
\abs{Y_4}&\leq&\frac{C}{\d_0}\lnnm{S},\\
\abs{Y_5}&\leq&\frac{C}{\d_0}\lnnm{\frac{\p S}{\p\eta}}.
\end{eqnarray}
In total, we have
\begin{eqnarray}
\abs{\frac{\p\v_2}{\p\eta}}\leq \frac{C}{\d_0}\bigg(\lnnm{S}+\lnnm{\frac{\p S}{\p\eta}}\bigg).
\end{eqnarray}
Combining all above, we have
\begin{eqnarray}
\abs{\frac{\p\v}{\p\eta}}&\leq&\abs{\frac{\p\v_1}{\p\eta}}+\abs{\frac{\p\v_2}{\p\eta}}\leq \frac{C}{\d_0}\bigg(\lnm{\frac{\p p}{\p\phi}}+\lnnm{S}+\lnnm{\frac{\p S}{\p\eta}}+\lnnm{\v}\bigg).
\end{eqnarray}
Hence, noting that $\zeta\geq\d_0$, we know
\begin{eqnarray}
I_1&\leq&\frac{C}{\d_0^2}\bigg(\lnm{\zeta\frac{\p p}{\p\phi}}+\lnnm{S}+\lnnm{\zeta\frac{\p S}{\p\eta}}+\lnnm{\v}\bigg).
\end{eqnarray}
\ \\
Step 2: Estimate of $I_2$ for $0\leq\sin\phi\leq\d_0$ and $\chi(\phi_{\ast})<1$.\\
We have
\begin{eqnarray}
I_2&=&\frac{1}{2\pi}\int_0^{\eta}\bigg(\int_{-\pi}^{\pi}\frac{\zeta(\eta',\phi')}{\zeta(\eta',\phi_{\ast})}\Big(1-\chi(\phi_{\ast})\Big)
\a(\eta',\phi_{\ast})\ud{\phi_{\ast}}\bigg)
\frac{1}{\sin\phi'}\exp(-G_{\eta,\eta'})\ud{\eta'}\\
&=&\frac{1}{2\pi}\int_0^{\eta}\bigg(\int_{-\pi}^{\pi}\zeta(\eta',\phi')\Big(1-\chi(\phi_{\ast})\Big)
\frac{\v(\eta',\phi_{\ast})}{\p\eta'}\ud{\phi_{\ast}}\bigg)\frac{1}{\sin\phi'}\exp(-G_{\eta,\eta'})\ud{\eta'}.\no
\end{eqnarray}
Based on the $\e$-Milne problem of $\v$ as
\begin{eqnarray}
\sin\phi_{\ast}\frac{\p\v(\eta',\phi_{\ast})}{\p\eta'}+F(\eta')\cos\phi_{\ast}\frac{\p\v(\eta',\phi_{\ast})}{\p\phi_{\ast}}+\v(\eta',\phi_{\ast})-\bar\v(\eta')=S(\eta',\phi_{\ast}),
\end{eqnarray}
we have
\begin{eqnarray}
\frac{\p\v(\eta',\phi_{\ast})}{\p\eta'}=-\frac{1}{\sin\phi_{\ast}}
\bigg(F(\eta')\cos\phi_{\ast}\frac{\p\v(\eta',\phi_{\ast})}{\p\phi_{\ast}}+\v(\eta',\phi_{\ast})-\bar\v(\eta')-S(\eta',\phi_{\ast})\bigg)
\end{eqnarray}
Hence, we have
\begin{eqnarray}
\tilde\a&=&\int_{-\pi}^{\pi}\zeta(\eta',\phi')\Big(1-\chi(\phi_{\ast})v)
\frac{\p\v(\eta',\phi_{\ast})}{\p\eta'}\ud{\phi_{\ast}}\\
&=&-\int_{-\pi}^{\pi}\zeta(\eta',\phi')\Big(1-\chi(\phi_{\ast})\Big)
\frac{1}{\sin\phi_{\ast}}
\bigg(\v(\eta',\phi_{\ast})-\bar\v(\eta')-S(\eta',\phi_{\ast})\bigg)\ud{\phi_{\ast}}\no\\
&&-\int_{-\pi}^{\pi}\zeta(\eta',\phi')\Big(1-\chi(\phi_{\ast})\Big)
\frac{1}{\sin\phi_{\ast}}
F(\eta')\cos\phi_{\ast}\frac{\p\v(\eta',\phi_{\ast})}{\p\phi_{\ast}}\ud{\phi_{\ast}}\no\\
&=&\tilde\a_1+\tilde\a_2.\no
\end{eqnarray}
We may directly obtain
\begin{eqnarray}
\abs{\tilde\a_1}&\leq&\int_{-\pi}^{\pi}\zeta(\eta',\phi')\Big(1-\chi(\phi_{\ast})\Big)
\frac{1}{\sin\phi_{\ast}}
\bigg(\v(\eta',\phi_{\ast})-\bar\v(\eta')-S(\eta',\phi_{\ast})\bigg)\ud{\phi_{\ast}}\\
&\leq&\frac{\rk}{\d}\abs{\int_{-\pi}^{\pi}
\bigg(\v(\eta',\phi_{\ast})-\bar\v(\eta')-S(\eta',\phi_{\ast})\bigg)\ud{\phi_{\ast}}}\no\\
&\leq&\frac{C}{\d}\bigg(\lnnm{\v}+\lnnm{S}\bigg).\no
\end{eqnarray}
On the other hand, an integration by parts yields
\begin{eqnarray}
\tilde\a_2&=&\int_{-\pi}^{\pi}\frac{\p}{\p\phi_{\ast}}\bigg(\zeta(\eta',\phi')\Big(1-\chi(\phi_{\ast})\Big)
\frac{1}{\sin\phi_{\ast}}
F(\eta')\cos\phi_{\ast}\bigg)\v(\eta',\phi_{\ast})\ud{\phi_{\ast}},
\end{eqnarray}
which further implies
\begin{eqnarray}
\abs{\tilde\a_2}&\leq&\frac{C\e}{\d^2}\lnnm{\v}.
\end{eqnarray}
Since we can use substitution to show
\begin{eqnarray}
\int_0^{\eta}\frac{1}{\sin\phi'}\exp(-G_{\eta,\eta'})\ud{\eta'}\leq 1,
\end{eqnarray}
we have
\begin{eqnarray}
\abs{I_2}&\leq&C\bigg(\frac{1}{\d}+\frac{\e}{\d^2}\bigg)\bigg(\lnnm{\v}+\lnnm{S}\bigg)\int_0^{\eta}\frac{1}{\sin\phi'}\exp(-G_{\eta,\eta'})\ud{\eta'}\\
&\leq&C\bigg(\frac{1}{\d}+\frac{\e}{\d^2}\bigg)\bigg(\lnnm{\v}+\lnnm{S}\bigg).\no
\end{eqnarray}
\ \\
Step 3: Estimate of $I_3$ for $0\leq\sin\phi\leq\d_0$, $\chi(\phi_{\ast})=1$ and $\sqrt{\e\eta'}\geq\sin\phi'$.\\
Based on (\ref{pt 01}), this implies
\begin{eqnarray}
\zeta(\eta',\phi')\leq C\sqrt{\e\eta'}.\no
\end{eqnarray}
Then combining this with (\ref{pt 02}), we can directly obtain
\begin{eqnarray}
\int_{-\pi}^{\pi}\frac{\zeta(\eta',\phi')}{\zeta(\eta',\phi_{\ast})}\chi(\phi_{\ast})
\a(\eta',\phi_{\ast})\ud{\phi_{\ast}}&\leq&C\int_{-\d}^{\d}
\a(\eta',\phi_{\ast})\ud{\phi_{\ast}}\leq C\d\lnnm{\a}.
\end{eqnarray}
Hence, we have
\begin{eqnarray}
\abs{I_3}&\leq&C\d\lnnm{\a}\int_0^{\eta}\frac{1}{\sin\phi'}\exp(-G_{\eta,\eta'})\ud{\eta'}\leq C\d\lnnm{\a}.
\end{eqnarray}
\ \\
Step 4: Estimate of $I_4$ for $0\leq\sin\phi\leq\d_0$, $\chi(\phi_{\ast})=1$, $\sqrt{\e\eta'}\leq\sin\phi'$ and $\sin^2\phi\leq\e(\eta-\eta')$.\\
Based on (\ref{pt 01}), this implies
\begin{eqnarray}
\zeta(\eta',\phi')\leq C\sin\phi'.
\end{eqnarray}
Based on (\ref{pt 03}), we have
\begin{eqnarray}
-G_{\eta,\eta'}=-\int_{\eta'}^{\eta}\frac{1}{\sin\phi'(y)}\ud{y}&\leq&-\frac{\eta-\eta'}{2\sqrt{\e(\eta-\eta')}}\leq-C\sqrt{\frac{\eta-\eta'}{\e}}.
\end{eqnarray}
Hence, considering $\zeta(\eta',\phi_{\ast})\geq\sqrt{\e\eta'}$, we know
\begin{eqnarray}
\abs{I_4}&\leq&C\int_0^{\eta}\bigg(\int_{-\pi}^{\pi}\frac{\zeta(\eta',\phi')}{\zeta(\eta',\phi_{\ast})}\chi(\phi_{\ast})
\a(\eta',\phi_{\ast})\ud{\phi_{\ast}}\bigg)
\frac{1}{\sin\phi'}\exp(-G_{\eta,\eta'})\ud{\eta'}\\
&\leq&C\int_0^{\eta}\bigg(\int_{-\d}^{\d}\frac{1}{\zeta(\eta',\phi_{\ast})}
\a(\eta',\phi_{\ast})\ud{\phi_{\ast}}\bigg)
\frac{\zeta(\eta',\phi')}{\sin\phi'}\exp(-G_{\eta,\eta'})\ud{\eta'}\no\\
&\leq&C\lnnm{\a}\int_0^{\eta}\bigg(\int_{-\d}^{\d}\frac{1}{\zeta(\eta',\phi_{\ast})}
\ud{\phi_{\ast}}\bigg)
\frac{\sin\phi'}{\sin\phi'}\exp(-G_{\eta,\eta'})\ud{\eta'}\no\\
&\leq&C\d\lnnm{\a}\int_0^{\eta}\frac{1}{\sqrt{\e\eta'}}\exp(-G_{\eta,\eta'})\ud{\eta'}\no\\
&\leq&C\d\lnnm{\a}\int_0^{\eta}\frac{1}{\sqrt{\e\eta'}}\exp\bigg(-C\sqrt{\frac{\eta-\eta'}{\e}}\bigg)\ud{\eta'}\no
\end{eqnarray}
Define $z=\dfrac{\eta'}{\e}$, which implies $\ud{\eta'}=\e\ud{z}$. Substituting this into above integral, we have
\begin{eqnarray}
\abs{I_4}&\leq&C\d\lnnm{\a}\int_0^{\frac{\eta}{\e}}\frac{1}{\sqrt{z}}\exp\bigg(-C\sqrt{\frac{\eta}{\e}-z}\bigg)\ud{z}\\
&=&C\d\lnnm{\a}\Bigg(\int_0^{1}\frac{1}{\sqrt{z}}\exp\bigg(-C\sqrt{\frac{\eta}{\e}-z}\bigg)\ud{z}
+\int_1^{\frac{\eta}{\e}}\frac{1}{\sqrt{z}}\exp\bigg(-C\sqrt{\frac{\eta}{\e}-z}\bigg)\ud{z}\Bigg).\no
\end{eqnarray}
We can estimate these two terms separately.
\begin{eqnarray}
\int_0^{1}\frac{1}{\sqrt{z}}\exp\bigg(-C\sqrt{\frac{\eta}{\e}-z}\bigg)\ud{z}&\leq&\int_0^{1}\frac{1}{\sqrt{z}}\ud{z}=2.
\end{eqnarray}
\begin{eqnarray}
\int_1^{\frac{\eta}{\e}}\frac{1}{\sqrt{z}}\exp\bigg(-C\sqrt{\frac{\eta}{\e}-z}\bigg)\ud{z}&\leq&\int_1^{\frac{\eta}{\e}}\exp\bigg(-C\sqrt{\frac{\eta}{\e}-z}\bigg)\ud{z}
\overset{t^2=\frac{\eta}{\e}-z}{\leq}2\int_0^{\infty}t\ue^{-Ct}\ud{t}<\infty.
\end{eqnarray}
Hence, we know
\begin{eqnarray}
\abs{I_4}&\leq&C\d\lnnm{\a}.
\end{eqnarray}
\ \\
Step 5: Estimate of $I_5$ for $0\leq\sin\phi\leq\d_0$, $\chi(\phi_{\ast})=1$, $\sqrt{\e\eta'}\leq\sin\phi'$ and $\sin^2\phi\geq\e(\eta-\eta')$.\\
Based on (\ref{pt 01}), this implies
\begin{eqnarray}
\zeta(\eta',\phi')\leq C\sin\phi'.\no
\end{eqnarray}
Based on (\ref{pt 03}), we have
\begin{eqnarray}
-G_{\eta,\eta'}=-\int_{\eta'}^{\eta}\frac{1}{\sin\phi'(y)}\ud{y}&\leq&-\frac{C(\eta-\eta')}{\sin\phi}.
\end{eqnarray}
Hence, we have
\begin{eqnarray}
\abs{I_5}&\leq& C\lnnm{\a}\int_0^{\eta}\bigg(\int_{-\d}^{\d}\frac{1}{\zeta(\eta',\phi_{\ast})}
\ud{\phi_{\ast}}\bigg)
\exp\left(-\frac{C(\eta-\eta')}{\sin\phi}\right)\ud{\eta'}
\end{eqnarray}
Here, we use a different way to estimate the inner integral. We use substitution to find
\begin{eqnarray}
\int_{-\d}^{\d}\frac{1}{\zeta(\eta',\phi_{\ast})}
\ud{\phi_{\ast}}
&=&\int_{-\d}^{\d}\frac{1}{\bigg(\rk^2-(\rk-\e\eta')^2\cos\phi_{\ast}^2\bigg)^{1/2}}
\ud{\phi_{\ast}}\\
&\overset{\sin\phi_{\ast}\ small}{\leq}&C\int_{-\d}^{\d}\frac{\cos\phi_{\ast}}{\bigg(\rk^2-(\rk-\e\eta')^2\cos\phi_{\ast}^2\bigg)^{1/2}}
\ud{\phi_{\ast}}\no\\
&=&C\int_{-\d}^{\d}\frac{\cos\phi_{\ast}}{\bigg(\rk^2-(\rk-\e\eta')^2+(\rk-\e\eta')^2\sin\phi_{\ast}^2\bigg)^{1/2}}
\ud{\phi_{\ast}}\no\\
&\overset{y=\sin\phi_{\ast}}{=}&C\int_{-\d}^{\d}\frac{1}{\bigg(\rk^2-(\rk-\e\eta')^2+(\rk-\e\eta')^2y^2\bigg)^{1/2}}
\ud{y}.\no
\end{eqnarray}
Define
\begin{eqnarray}
p&=&\sqrt{\rk^2-(\rk-\e\eta')^2}=\sqrt{2\rk\e\eta'-\e^2\eta'^2}\leq C\sqrt{\e\eta'},\\
q&=&\rk-\e\eta'\geq C,\\
r&=&\frac{p}{q}\leq C\sqrt{\e\eta'}.
\end{eqnarray}
Then we have
\begin{eqnarray}
\int_{-\d}^{\d}\frac{1}{\zeta(\eta',\phi_{\ast})}\ud{\phi_{\ast}}&\leq&C\int_{-\d}^{\d}\frac{1}{(p^2+q^2y^2)^{1/2}}\ud{y}\\
&\leq&C\int_{-2}^{2}\frac{1}{(p^2+q^2y^2)^{1/2}}\ud{y}\leq C\int_{-2}^{2}\frac{1}{(r^2+y^2)^{1/2}}\ud{y}\no\\
&\leq&C\int_{0}^{2}\frac{1}{(r^2+y^2)^{1/2}}\ud{y}=\bigg(\ln(y+\sqrt{r^2+y^2})-\ln(r)\bigg)\bigg|_0^{2}\no\\
&\leq&C\bigg(\ln(2+\sqrt{r^2+4})-\ln{r}\bigg)\leq C\bigg(1+\ln(r)\bigg)\no\\
&\leq&C\bigg(1+\abs{\ln(\e)}+\abs{\ln(\eta')}\bigg).\no
\end{eqnarray}
Hence, we know
\begin{eqnarray}
\abs{I_5}&\leq&C\lnnm{\a}\int_0^{\eta}\bigg(1+\abs{\ln(\e)}+\abs{\ln(\eta')}\bigg)
\exp\left(-\frac{C(\eta-\eta')}{\sin\phi}\right)\ud{\eta'}
\end{eqnarray}
We may directly compute
\begin{eqnarray}
\abs{\int_0^{\eta}\bigg(1+\abs{\ln(\e)}\bigg)
\exp\left(-\frac{C(\eta-\eta')}{\sin\phi}\right)\ud{\eta'}}\leq C\sin\phi(1+\abs{\ln(\e)}).
\end{eqnarray}
Hence, we only need to estimate
\begin{eqnarray}
\abs{\int_0^{\eta}\abs{\ln(\eta')}
\exp\left(-\frac{C(\eta-\eta')}{\sin\phi}\right)\ud{\eta'}}.
\end{eqnarray}
If $\eta\leq 2$, using Cauchy's inequality, we have
\begin{eqnarray}
\abs{\int_0^{\eta}\abs{\ln(\eta')}
\exp\left(-\frac{C(\eta-\eta')}{\sin\phi}\right)\ud{\eta'}}
&\leq&\bigg(\int_0^{\eta}\ln^2(\eta')\ud{\eta'}\bigg)^{\frac{1}{2}}\bigg(\int_0^{\eta}
\exp\left(-\frac{2C(\eta-\eta')}{\sin\phi}\right)\ud{\eta'}\bigg)^{\frac{1}{2}}\\
&\leq&\bigg(\int_0^{2}\ln^2(\eta')\ud{\eta'}\bigg)^{\frac{1}{2}}\bigg(\int_0^{\eta}
\exp\left(-\frac{2C(\eta-\eta')}{\sin\phi}\right)\ud{\eta'}\bigg)^{\frac{1}{2}}\no\\
&\leq&\sqrt{\sin\phi}.\no
\end{eqnarray}
If $\eta\geq 2$, we decompose and apply Cauchy's inequality to obtain
\begin{eqnarray}
&&\abs{\int_0^{\eta}\abs{\ln(\eta')}
\exp\left(-\frac{C(\eta-\eta')}{\sin\phi}\right)\ud{\eta'}}\\
&\leq&\abs{\int_0^{2}\abs{\ln(\eta')}
\exp\left(-\frac{C(\eta-\eta')}{\sin\phi}\right)\ud{\eta'}}+\abs{\int_2^{\eta}\ln(\eta')
\exp\left(-\frac{C(\eta-\eta')}{\sin\phi}\right)\ud{\eta'}}\no\\
&\leq&\bigg(\int_0^{2}\ln^2(\eta')\ud{\eta'}\bigg)^{\frac{1}{2}}\bigg(\int_0^{2}
\exp\left(-\frac{2C(\eta-\eta')}{\sin\phi}\right)\ud{\eta'}\bigg)^{\frac{1}{2}}+\ln(L)\abs{\int_2^{\eta}
\exp\left(-\frac{C(\eta-\eta')}{\sin\phi}\right)\ud{\eta'}}\no\\
&\leq&C\bigg(\sqrt{\sin\phi}+\abs{\ln(\e)}\sin\phi\bigg)\leq C\Big(1+\abs{\ln(\e)}\Big)\sqrt{\sin\phi}.\no
\end{eqnarray}
Hence, we have
\begin{eqnarray}
\abs{I_5}\leq C\Big(1+\abs{\ln(\e)}\Big)\sqrt{\d_0}\lnnm{\a}.
\end{eqnarray}
\ \\
Step 6: Estimate of $I_6$ for $\abs{\cos\phi}<\d_0$.\\
We have
\begin{eqnarray}
I_6&=&\frac{1}{2\pi}\int_0^{\eta}\bigg(\int_{-\pi}^{\pi}\frac{\zeta(\eta',\phi')}{\zeta(\eta',\phi_{\ast})}
\a(\eta',\phi_{\ast})\ud{\phi_{\ast}}\bigg)
\frac{1}{\sin\phi'}\exp(-G_{\eta,\eta'})\ud{\eta'}\\
&=&\frac{1}{2\pi}\int_0^{\eta}\bigg(\int_{-\pi}^{\pi}\zeta(\eta',\phi')\Big(1-\chi(\phi_{\ast})\Big)
\frac{\v(\eta',\phi_{\ast})}{\p\eta'}\ud{\phi_{\ast}}\bigg)\frac{1}{\sin\phi'}\exp(-G_{\eta,\eta'})\ud{\eta'}\no\\
&&+\frac{1}{2\pi}\int_0^{\eta}\bigg(\int_{-\pi}^{\pi}\chi(\phi_{\ast})
\frac{\zeta(\eta',\phi')}{\zeta(\eta',\phi_{\ast})}
\a(\eta',\phi_{\ast})\ud{\phi_{\ast}}\bigg)\frac{1}{\sin\phi'}\exp(-G_{\eta,\eta'})\ud{\eta'}.\no
\end{eqnarray}
The first term can be estimated as $I_2$.
\begin{eqnarray}
&&\frac{1}{2\pi}\int_0^{\eta}\bigg(\int_{-\pi}^{\pi}\zeta(\eta',\phi')\Big(1-\chi(\phi_{\ast})\Big)
\frac{\v(\eta',\phi_{\ast})}{\p\eta'}\ud{\phi_{\ast}}\bigg)\frac{1}{\sin\phi'}\exp(-G_{\eta,\eta'})\ud{\eta'}\\
&\leq&C\bigg(\frac{1}{\d}+\frac{\e}{\d^2}\bigg)\bigg(\lnnm{\v}+\lnnm{S}\bigg).\no
\end{eqnarray}
It is easy to check that $\sqrt{\e\eta'}\leq\sin\phi\leq\sin\phi'$ and $\sin^2\phi\geq\e(\eta-\eta')$, so the second term can be estimated as $I_5$.
\begin{eqnarray}
&&\frac{1}{2\pi}\int_0^{\eta}\bigg(\int_{-\pi}^{\pi}\chi(\phi_{\ast})
\frac{\zeta(\eta',\phi')}{\zeta(\eta',\phi_{\ast})}
\a(\eta',\phi_{\ast})\ud{\phi_{\ast}}\bigg)\frac{1}{\sin\phi'}\exp(-G_{\eta,\eta'})\ud{\eta'}\\
&\leq&C\Big(1+\abs{\ln(\e)}\Big)\sqrt{\sin\phi}\sup_{\abs{\sin\phi_{\ast}}\leq\d}\abs{\a(\eta,\phi_{\ast})}\leq C\Big(1+\abs{\ln(\e)}\Big)\sup_{\abs{\sin\phi_{\ast}}\leq\d}\abs{\a(\eta,\phi_{\ast})}.\no
\end{eqnarray}
Note that now we lose the smallness since $\sin\phi\geq\dfrac{1}{2}$, so we need a more detailed analysis. Actually, the value of $\abs{\a}$ for $\abs{\sin\phi}\leq\d$, is covered in $I_2,I_3,I_4,I_5$ and the following $II_2,II_3,II_4$. Therefore, in fact, we get the estimate
\begin{eqnarray}
&&\frac{1}{2\pi}\int_0^{\eta}\bigg(\int_{-\pi}^{\pi}\chi(\phi_{\ast})
\frac{\zeta(\eta',\phi')}{\zeta(\eta',\phi_{\ast})}
\a(\eta',\phi_{\ast})\ud{\phi_{\ast}}\bigg)\frac{1}{\sin\phi'}\exp(-G_{\eta,\eta'})\ud{\eta'}\\
&\leq&C\Big(1+\abs{\ln(\e)}\Big)\bigg(\lnm{p_{\a}}+\lnnm{S_{\a}}\bigg)+C\Big(1+\abs{\ln(\e)}\Big)\bigg(\frac{1}{\d}+\frac{\e}{\d^2}\bigg)\bigg(\lnnm{\v}+\lnnm{S}\bigg)\no\\
&&+C\Big(1+\abs{\ln(\e)}\Big)\bigg(\d+\Big(1+\abs{\ln(\e)}\Big)\sqrt{\d_0}\bigg)\lnnm{\a}.\no
\end{eqnarray}
Therefore, we have
\begin{eqnarray}
\\
\abs{I_6}&\leq&C\Big(1+\abs{\ln(\e)}\Big)\bigg(\lnm{p_{\a}}+\lnnm{S_{\a}}\bigg)+C\Big(1+\abs{\ln(\e)}\Big)\bigg(\frac{1}{\d}+\frac{\e}{\d^2}\bigg)\bigg(\lnnm{\v}+\lnnm{S}\bigg)\no\\
&&+C\Big(1+\abs{\ln(\e)}\Big)\bigg(\d+\Big(1+\abs{\ln(\e)}\Big)\sqrt{\d_0}\bigg)\lnnm{\a}.\no
\end{eqnarray}
\ \\
Step 7: Synthesis.\\
Collecting all the terms in previous steps, we have proved
\begin{eqnarray}
\abs{I}&\leq&C\Big(1+\abs{\ln(\e)}\Big)\bigg(\lnm{p_{\a}}+\lnnm{S_{\a}}\bigg)\\
&&+\frac{C}{\d_0^2}\bigg(\zeta\lnm{\frac{\p p}{\p\phi}}+\lnnm{S}+\lnnm{\zeta\frac{\p S}{\p\eta}}+\lnnm{\v}\bigg)\no\\
&&+C\Big(1+\abs{\ln(\e)}\Big)\bigg(\frac{1}{\d}+\frac{\e}{\d^2}\bigg)\bigg(\lnnm{\v}+\lnnm{S}\bigg)\no\\
&&+C\Big(1+\abs{\ln(\e)}\Big)\bigg(\d+\Big(1+\abs{\ln(\e)}\Big)\sqrt{\d_0}\bigg)\lnnm{\a}.\no
\end{eqnarray}

\subsection{Region II: $\sin\phi<0$ and $\abs{E(\eta,\phi)}\leq \ue^{-V(L)}$}

We consider
\begin{eqnarray}
\k[p_{\a}]&=&p_{\a}\Big(\phi'(\eta,\phi;0)\Big)\exp(-G_{L,0}-G_{L,\eta})\\
\t[\tilde\a+S_{\a}]&=&\int_0^{L}\frac{(\tilde\a+S)\Big(\eta',\phi'(\eta,\phi;\eta')\Big)}{\sin\Big(\phi'(\eta,\phi;\eta')\Big)}
\exp(-G_{L,\eta'}-G_{L,\eta})\ud{\eta'}\\
&&+\int_{\eta}^{L}\frac{(\tilde\a+S)\Big(\eta',\rr[\phi'(\eta,\phi;\eta')]\Big)}{\sin\Big(\phi'(\eta,\phi;\eta')\Big)}\exp(-G_{\eta',\eta})\ud{\eta'}\nonumber.
\end{eqnarray}
Based on \cite[Lemma 4.7, Lemma 4.8]{AA003},
we can directly obtain
\begin{eqnarray}
\abs{\k[p_{\a}]}&\leq&\lnm{p_{\a}},\\
\abs{\t[S_{\a}]}&\leq&\lnnm{S_{\a}}.
\end{eqnarray}
Hence, we only need to estimate
\begin{eqnarray}\label{mild 2}
II=\t[\tilde\a+S_{\a}]&=&\int_0^{L}\frac{\tilde\a\Big(\eta',\phi'(\eta,\phi;\eta')\Big)}{\sin\Big(\phi'(\eta,\phi;\eta')\Big)}
\exp(-G_{L,\eta'}-G_{L,\eta})\ud{\eta'}\\
&&+\int_{\eta}^{L}\frac{\tilde\a\Big(\eta',\rr[\phi'(\eta,\phi;\eta')]\Big)}{\sin\Big(\phi'(\eta,\phi;\eta')\Big)}\exp(-G_{\eta',\eta})\ud{\eta'}\nonumber.
\end{eqnarray}
In particular,
since the integral $\displaystyle\int_0^{\eta}\cdots$ can be estimated as in Region I, so we only need to estimate the integral $\displaystyle\int_{\eta}^L\cdots$. Also, noting that fact that \begin{eqnarray}
\exp(-G_{L,\eta'}-G_{L,\eta})\leq \exp(-G_{\eta',\eta}),
\end{eqnarray}
we only need to estimate
\begin{eqnarray}
\int_{\eta}^{L}\frac{\tilde\a\Big(\eta',\rr[\phi'(\eta,\phi;\eta')]\Big)}{\sin\Big(\phi'(\eta,\phi;\eta')\Big)}\exp(-G_{\eta',\eta})\ud{\eta'}.
\end{eqnarray}
Here the proof is almost identical to that in Region I, so we only point out the key differences.\\
\ \\
Step 0: Preliminaries.\\
We need to update one key result. For $0\leq\eta\leq\eta'$,
\begin{eqnarray}
\sin\phi'&=&\sqrt{1-\cos^2\phi'}=\sqrt{1-\bigg(\frac{\rk-\e\eta}{\rk-\e\eta'}\bigg)^2\cos^2\phi}\\
&=&\frac{\sqrt{(\rk-\e\eta')^2\sin^2\phi+(2\rk-\e\eta-\e\eta')(\e\eta'-\e\eta)\cos^2\phi}}{\rk-\e\eta'}\no\\
&\leq&\abs{\sin\phi}.\no
\end{eqnarray}
Then we have
\begin{eqnarray}\label{pt 04}
-\int_{\eta}^{\eta'}\frac{1}{\sin\phi'(y)}\ud{y}&\leq&-\frac{\eta'-\eta}{\abs{\sin\phi}}.
\end{eqnarray}
In the following, we will divide the estimate of $II$ into several cases based on the value of $\sin\phi$, $\abs{\cos\phi}$, $\sin\phi'$ and $\e\eta'$. We write
\begin{eqnarray}
II&=&\int_{\eta}^L\id_{\{\sin\phi\leq-\d_0\}}\id_{\{\abs{\cos\phi}\geq\d_0\}}+\int_{\eta}^L\id_{\{-\d_0\leq\sin\phi\leq0\}}\id_{\{\chi(\phi_{\ast})<1\}}\\
&&+\int_{\eta}^L\id_{\{-\d_0\leq\sin\phi\leq0\}}\id_{\{\chi(\phi_{\ast})=1\}}\id_{\{\sqrt{\e\eta'}\geq\sin\phi'\}}
+\int_{\eta}^L\id_{\{-\d_0\leq\sin\phi\leq0\}}\id_{\{\chi(\phi_{\ast})=1\}}\id_{\{\sqrt{\e\eta'}\leq\sin\phi'\}}\no\\
&&+\int_{\eta}^L\id_{\{\abs{\cos\phi}\leq\d_0\}}\no\\
&=&II_1+II_2+II_3+II_4+II_5.\no
\end{eqnarray}
\ \\
Step 1: Estimate of $II_1$ for $\sin\phi\leq-\d_0$.\\
We first estimate $\sin\phi'$. Along the characteristics, we know
\begin{eqnarray}
\ue^{-V(\eta')}\cos\phi'=\ue^{-V(\eta)}\cos\phi,
\end{eqnarray}
which implies
\begin{eqnarray}
\cos\phi'&=&\ue^{V(\eta')-V(\eta)}\cos\phi\leq \ue^{V(L)-V(0)}\cos\phi= \ue^{V(L)-V(0)}\sqrt{1-\d_0^2}.
\end{eqnarray}
We can further deduce that
\begin{eqnarray}
\cos\phi'\leq \bigg(1-\frac{\e^{\frac{1}{2}}}{\rk}\bigg)^{-1}\sqrt{1-\d_0^2}.
\end{eqnarray}
Then we have
\begin{eqnarray}
\sin\phi'\geq\sqrt{1-\bigg(1-\frac{\e^{\frac{1}{2}}}{\rk}\bigg)^{-2}(1-\d_0^2)}\geq \d_0-\e^{\frac{1}{4}}>\frac{\d_0}{2},
\end{eqnarray}
when $\e$ is sufficiently small.

Similar to Region I, we will use two formulations to handle different terms and we will decompose $\v=\v_1+\v_2$.

Using Formulation I, we rewrite the $\v_1$ equation along the characteristics as
\begin{eqnarray}\label{mt 03}
\v_1(\eta,\phi)&=&p\Big(\phi'(0)\Big)\exp(-G_{L,0}-G_{L,\eta})\\
&&+\int_0^{L}\frac{\bar\v(\eta')}{\sin\Big(\phi'(\eta')\Big)}
\exp(-G_{L,\eta'}-G_{L,\eta})\ud{\eta'}+\int_{\eta}^{L}\frac{\bar\v(\eta')}{\sin\Big(\phi'(\eta')\Big)}\exp(-G_{\eta',\eta})\ud{\eta'}\no
\end{eqnarray}
where $(\eta',\phi')$ and $(\eta,\phi)$ are on the same characteristic with $\sin\phi'\geq0$.
Then taking $\eta$ derivative on both sides of (\ref{mt 03}) yields
\begin{eqnarray}
\frac{\p\v_1}{\p\eta}&=&X_1+X_2+X_3+X_4+X_5+X_6+X_7,
\end{eqnarray}
where
\begin{eqnarray}
X_1&=&\frac{\p p\Big(\phi'(0)\Big)}{\p\eta}\exp(-G_{L,0}-G_{L,\eta}),\\
X_2&=&-p\Big(\phi'(0)\Big)\exp(-G_{L,0}-G_{L,\eta})\bigg(\frac{\p G_{L,0}}{\p\eta}+\frac{\p G_{L,\eta}}{\p\eta}\bigg),\\
X_3&=&-\int_0^{L}\bar\v(\eta')\frac{\cos\Big(\phi'(\eta')\Big)}{\sin^2\Big(\phi'(\eta')\Big)}\frac{\p\phi'(\eta')}{\p\eta}\exp(-G_{L,\eta'}-G_{L,\eta})\ud{\eta'},\\
X_4&=&-\int_0^{L}\frac{\bar\v(\eta')}{\sin\Big(\phi'(\eta')\Big)}\exp(-G_{L,\eta'}-G_{L,\eta})\bigg(\frac{\p G_{L,\eta'}}{\p\eta}+\frac{\p G_{L,\eta}}{\p\eta}\bigg)\ud{\eta'},\\
X_5&=&-\int_{\eta}^{L}\bar\v(\eta')\frac{\cos\Big(\phi'(\eta')\Big)}{\sin^2\Big(\phi'(\eta')\Big)}\frac{\p\phi'(\eta')}{\p\eta}\exp(-G_{\eta',\eta})\ud{\eta'},\\
X_6&=&-\int_{\eta}^{L}\frac{\bar\v(\eta')}{\sin\Big(\phi'(\eta')\Big)}\exp(-G_{\eta',\eta})\frac{\p G_{\eta',\eta}}{\p\eta}\ud{\eta'},\\
X_7&=&-\frac{\bar\v(\eta)}{\sin(\phi)}.
\end{eqnarray}
We need to estimate each term. The estimates are standard, so we only list the results:
\begin{eqnarray}
\abs{X_1}&\leq&\frac{C}{\d_0}\lnm{\frac{\p p}{\p\phi}},\\
\abs{X_2}&\leq&\frac{C}{\d_0}\lnm{p},\\
\abs{X_3}&\leq&\frac{C}{\d_0}\lnnm{\v},\\
\abs{X_4}&\leq&\frac{C}{\d_0}\lnnm{\v},\\
\abs{X_5}&\leq&\frac{C}{\d_0}\lnnm{\v},\\
\abs{X_6}&\leq&\frac{C}{\d_0}\lnnm{\v},\\
\abs{X_7}&\leq&\frac{C}{\d_0}\lnnm{\v}.
\end{eqnarray}
In total, we have
\begin{eqnarray}
\abs{\frac{\p\v_1}{\p\eta}}\leq \frac{C}{\d_0}\bigg(\lnm{p}+\lnm{\frac{\p p}{\p\phi}}+\lnnm{\v}\bigg).
\end{eqnarray}

Using Formulation II, we rewrite the $\v_2$ equation along the characteristics as
\begin{eqnarray}\label{mt 03'}
\\
\v_2(\eta,\phi)&=&\int_{\phi_{\ast}}^{\phi^{\ast}}\frac{S\Big(\eta'(\phi'),\phi'\Big)}{F\Big(\eta'(\phi')\Big)\cos(\phi')}
\exp(-H_{\phi^{\ast},\phi'}-H_{-\phi^{\ast},\phi})\ud{\phi'}
+\int_{\phi}^{-\phi^{\ast}}\frac{S\Big(\eta'(\phi'),\phi'\Big)}{F\Big(\eta'(\phi')\Big)\cos(\phi')}\exp(-H_{\phi',\phi})\ud{\phi'}\no
\end{eqnarray}
where $(\eta',\phi')$, $(0,\phi_{\ast})$, $(L,\phi^{\ast})$, $(L,-\phi^{\ast})$ and $(\eta,\phi)$ are on the same characteristic with $\sin\phi'\geq0$ and $\phi^{\ast}\geq0$.
Then taking $\eta$ derivative on both sides of (\ref{mt 03'}) yields
\begin{eqnarray}
\frac{\p\v_2}{\p\eta}&=&Y_1+Y_2+Y_3+Y_4+Y_5+Y_6+Y_7+Y_8,
\end{eqnarray}
where
\begin{eqnarray}
Y_1&=&\frac{S(L,\phi^{\ast})}{F(L)\cos(\phi^{\ast})}\exp(-H_{-\phi^{\ast},\phi})\frac{\p\phi^{\ast}}{\p\eta}-\frac{S(0,\phi_{\ast})}{F(0)\cos(\phi_{\ast})}
\exp(-H_{\phi^{\ast},\phi_{\ast}}-H_{-\phi^{\ast},\phi})\frac{\p\phi_{\ast}}{\p\eta},\no\\
Y_2&=&-\int_{\phi_{\ast}}^{\phi^{\ast}}S\Big(\eta'(\phi'),\phi'\Big)\frac{1}{F^2\Big(\eta'(\phi')\Big)\cos(\phi')}\frac{\p F\Big(\eta'(\phi')\Big)}{\p\eta}
\exp(-H_{\phi^{\ast},\phi'}-H_{-\phi^{\ast},\phi})\ud{\phi'},\no\\
Y_3&=&-\int_{\phi_{\ast}}^{\phi^{\ast}}\frac{S\Big(\eta'(\phi'),\phi'\Big)}{F\Big(\eta'(\phi')\Big)\cos(\phi')}
\exp(-H_{\phi^{\ast},\phi'}-H_{-\phi^{\ast},\phi})\bigg(\frac{\p H_{\phi^{\ast},\phi'}}{\p\eta}+\frac{\p H_{-\phi^{\ast},\phi}}{\p\eta}\bigg)\ud{\phi'},\\
Y_4&=&\int_{\phi_{\ast}}^{\phi^{\ast}}\frac{\p_{\eta'}S\Big(\eta'(\phi'),\phi'\Big)}{F\Big(\eta'(\phi')\Big)\cos(\phi')}\frac{\p \eta'(\phi')}{\p\eta}
\exp(-H_{\phi^{\ast},\phi'}-H_{-\phi^{\ast},\phi})\ud{\phi'},\\
Y_5&=&-\frac{S(L,-\phi^{\ast})}{F(L)\cos(-\phi^{\ast})}\exp(-H_{-\phi^{\ast},\phi})\frac{\p \phi^{\ast}}{\p\eta},\\
Y_6&=&-\int_{\phi}^{-\phi^{\ast}}S\Big(\eta'(\phi'),\phi'\Big)\frac{1}{F^2\Big(\eta'(\phi')\Big)\cos(\phi')}\frac{\p F\Big(\eta'(\phi')\Big)}{\p\eta}\exp(-H_{\phi',\phi})\ud{\phi'},\\
Y_7&=&-\int_{\phi}^{-\phi^{\ast}}\frac{S\Big(\eta'(\phi'),\phi'\Big)}{F\Big(\eta'(\phi')\Big)\cos(\phi')}\exp(-H_{\phi',\phi})\frac{\p H_{\phi',\phi}}{\p\eta}\ud{\phi'},\\
Y_8&=&\int_{\phi}^{-\phi^{\ast}}\frac{\p_{\eta'}S\Big(\eta'(\phi'),\phi'\Big)}{F\Big(\eta'(\phi')\Big)\cos(\phi')}\frac{\p \eta'(\phi')}{\p\eta}\exp(-H_{\phi',\phi})\ud{\phi'}.
\end{eqnarray}
We need to estimate each term. The estimates are standard, so we only list the results:
\begin{eqnarray}
\abs{Y_1}&\leq&\frac{C}{\d_0}\lnnm{S},\\
\abs{Y_2}&\leq&\frac{C}{\d_0}\lnnm{S},\\
\abs{Y_3}&\leq&\frac{C}{\d_0}\lnnm{S},\\
\abs{Y_4}&\leq&\frac{C}{\d_0}\lnnm{\frac{\p S}{\p\eta}},\\
\abs{Y_5}&\leq&\frac{C}{\d_0}\lnnm{S},\\
\abs{Y_6}&\leq&\frac{C}{\d_0}\lnnm{S},\\
\abs{Y_7}&\leq&\frac{C}{\d_0}\lnnm{S},\\
\abs{Y_8}&\leq&\frac{C}{\d_0}\lnnm{\frac{\p S}{\p\eta}},\\
\end{eqnarray}
In total, we have
\begin{eqnarray}
\abs{\frac{\p\v_2}{\p\eta}}\leq \frac{C}{\d_0}\bigg(\lnnm{S}+\lnnm{\frac{\p S}{\p\eta}}\bigg).
\end{eqnarray}
Combining all above, noting that $\zeta\geq\d_0$, we have
\begin{eqnarray}
\abs{II_1}\leq \frac{C}{\d_0^2}\bigg(\lnm{p}+\lnm{\zeta\frac{\p p}{\p\phi}}+\lnnm{S}+\lnnm{\zeta\frac{\p S}{\p\eta}}+\lnnm{\v}\bigg).
\end{eqnarray}
\ \\
Step 2: Estimate of $II_2$ for $-\d_0\leq\sin\phi\leq0$ and $\chi(\phi_{\ast})<1$.\\
This is similar to the estimate of $I_2$ based on the integral
\begin{eqnarray}
\int_{\eta}^{L}\frac{1}{\sin\phi'}\exp(-G_{\eta',\eta})\ud{\eta'}\leq 1.
\end{eqnarray}
Then we have
\begin{eqnarray}
\abs{II_2}
&\leq&\bigg(\frac{1}{\d}+\frac{\e}{\d^2}\bigg)\bigg(\lnnm{\v}+\lnnm{S}\bigg).
\end{eqnarray}
\ \\
Step 3: Estimate of $II_3$ for $-\d_0\leq\sin\phi\leq0$, $\chi(\phi_{\ast})=1$ and $\sqrt{\e\eta'}\geq\sin\phi'$.\\
This is identical to the estimate of $I_4$, we have
\begin{eqnarray}
\abs{II_3}&\leq&C\d\lnnm{\a}.
\end{eqnarray}
\ \\
Step 4: Estimate of $II_4$ for $-\d_0\leq\sin\phi\leq0$, $\chi(\phi_{\ast})=1$ and $\sqrt{\e\eta'}\leq\sin\phi'$.\\
This step is different. We do not need to further decompose the cases.
Based on (\ref{pt 04}), we have,
\begin{eqnarray}
-G_{\eta,\eta'}&\leq&-\frac{\eta'-\eta}{\abs{\sin\phi}}.
\end{eqnarray}
Then following the same argument in estimating $I_5$, we obtain
\begin{eqnarray}
\abs{II_4}&\leq&C\lnnm{\a}\int_{\eta}^{L}\bigg(1+\abs{\ln(\e)}+\abs{\ln(\eta')}\bigg)
\exp\left(-\frac{\eta'-\eta}{\abs{\sin\phi}}\right)\ud{\eta'}
\end{eqnarray}
If $\eta\geq 2$, we directly obtain
\begin{eqnarray}
\abs{\int_{\eta}^{L}\abs{\ln(\eta')}
\exp\left(-\frac{\eta'-\eta}{\abs{\sin\phi}}\right)\ud{\eta'}}&\leq& \abs{\int_{2}^{L}\ln(\eta')
\exp\left(-\frac{\eta'-\eta}{\abs{\sin\phi}}\right)\ud{\eta'}}\\
&\leq&\ln(2)\abs{\int_{2}^{L}
\exp\left(-\frac{\eta'-\eta}{\abs{\sin\phi}}\right)\ud{\eta'}}\no\\
&\leq&C\sqrt{\abs{\sin\phi}}.\no
\end{eqnarray}
If $\eta\leq 2$, we decompose as
\begin{eqnarray}
&&\abs{\int_{\eta}^{L}\abs{\ln(\eta')}
\exp\left(-\frac{\eta'-\eta}{\abs{\sin\phi}}\right)\ud{\eta'}}\\
&\leq&\abs{\int_{\eta}^{2}\abs{\ln(\eta')}
\exp\left(-\frac{\eta'-\eta}{\abs{\sin\phi}}\right)\ud{\eta'}}+\abs{\int_{2}^{L}\abs{\ln(\eta')}
\exp\left(-\frac{\eta'-\eta}{\abs{\sin\phi}}\right)\ud{\eta'}}.\no
\end{eqnarray}
The second term is identical to the estimate in $\eta\geq2$. We apply Cauchy's inequality to the first term
\begin{eqnarray}
\abs{\int_{\eta}^{2}\abs{\ln(\eta')}
\exp\left(-\frac{\eta'-\eta}{\abs{\sin\phi}}\right)\ud{\eta'}}
&\leq&\bigg(\int_{\eta}^{2}\ln^2(\eta')\ud{\eta'}\bigg)^{\frac{1}{2}}\bigg(\int_{\eta}^{2}
\exp\left(-\frac{2(\eta'-\eta)}{\abs{\sin\phi}}\right)\ud{\eta'}\bigg)^{\frac{1}{2}}\\
&\leq&\bigg(\int_0^{2}\ln^2(\eta')\ud{\eta'}\bigg)^{\frac{1}{2}}\bigg(\int_{\eta}^{2}
\exp\left(-\frac{2(\eta'-\eta)}{\abs{\sin\phi}}\right)\ud{\eta'}\bigg)^{\frac{1}{2}}\no\\
&\leq&C\sqrt{\abs{\sin\phi}}.\no
\end{eqnarray}
Hence, we have
\begin{eqnarray}
\abs{II_4}\leq C(1+\abs{\ln(\e)})\sqrt{\d_0}\lnnm{\a}.
\end{eqnarray}
Step 5: Estimate of $II_5$ for $\abs{\cos\phi}<\d_0$.\\
This is identical to the estimate of $I_6$, we have
\begin{eqnarray}
\\
\abs{II_5}&\leq&C\Big(1+\abs{\ln(\e)}\Big)\bigg(\lnm{p_{\a}}+\lnnm{S_{\a}}\bigg)+C\Big(1+\abs{\ln(\e)}\Big)\bigg(\frac{1}{\d}+\frac{\e}{\d^2}\bigg)\bigg(\lnnm{\v}+\lnnm{S}\bigg)\no\\
&&+C\Big(1+\abs{\ln(\e)}\Big)\bigg(\d+\Big(1+\abs{\ln(\e)}\Big)\sqrt{\d_0}\bigg)\lnnm{\a}.\no
\end{eqnarray}
\ \\
Step 6: Synthesis.\\
\begin{eqnarray}
\abs{II}&\leq&C\Big(1+\abs{\ln(\e)}\Big)\bigg(\lnm{p_{\a}}+\lnnm{S_{\a}}\bigg)\\
&&+\frac{C}{\d_0^2}\bigg(\lnm{p}+\lnm{\zeta\frac{\p p}{\p\phi}}+\lnnm{S}+\lnnm{\zeta\frac{\p S}{\p\eta}}+\lnnm{\v}\bigg)\no\\
&&+C\Big(1+\abs{\ln(\e)}\Big)\bigg(\frac{1}{\d}+\frac{\e}{\d^2}\bigg)\bigg(\lnnm{\v}+\lnnm{S}\bigg)\no\\
&&+C\Big(1+\abs{\ln(\e)}\Big)\bigg(\d+\Big(1+\abs{\ln(\e)}\Big)\sqrt{\d_0}\bigg)\lnnm{\a}.\no
\end{eqnarray}

\subsection{Region III: $\sin\phi<0$ and $\abs{E(\eta,\phi)}\geq \ue^{-V(L)}$}

We consider
\begin{eqnarray}
\k[p_{\a}]&=&p_{\a}\Big(\phi'(\eta,\phi;0)\Big)\exp(-G_{\eta^+,0}-G_{\eta^+,\eta})\\
\t[\tilde\a+S_{\a}]&=&\int_0^{\eta^+}\frac{(\tilde\a+S_{\a})\Big(\eta',\phi'(\eta,\phi;\eta')\Big)}{\sin\Big(\phi'(\eta,\phi;\eta')\Big)}
\exp(-G_{\eta^+,\eta'}-G_{\eta^+,\eta})\ud{\eta'}\\
&&+
\int_{\eta}^{\eta^+}\frac{(\tilde\a+S_{\a})\Big(\eta',\rr[\phi'(\eta,\phi;\eta')]\Big)}{\sin\Big(\phi'(\eta,\phi;\eta')\Big)}\exp(-G_{\eta',\eta})\ud{\eta'}\nonumber.
\end{eqnarray}
Based on \cite[Lemma 4.7, Lemma 4.8]{AA003}, we still have
\begin{eqnarray}
\abs{\k[p_{\a}]}&\leq&\lnm{p_{\a}},\\
\abs{\t[S_{\a}]}&\leq&\lnnm{S_{\a}}.
\end{eqnarray}
Hence, we only need to estimate
\begin{eqnarray}
III=\t[\tilde\a]&=&\int_0^{\eta^+}\frac{\tilde\a\Big(\eta',\phi'(\eta,\phi;\eta')\Big)}{\sin\Big(\phi'(\eta,\phi;\eta')\Big)}
\exp(-G_{\eta^+,\eta'}-G_{\eta^+,\eta})\ud{\eta'}\\
&&+
\int_{\eta}^{\eta^+}\frac{\tilde\a\Big(\eta',\rr[\phi'(\eta,\phi;\eta')]\Big)}{\sin\Big(\phi'(\eta,\phi;\eta')\Big)}\exp(-G_{\eta',\eta})\ud{\eta'}\nonumber.
\end{eqnarray}
Note that $\abs{E(\eta,\phi)}\geq \ue^{-V(L)}$ implies
\begin{eqnarray}
\ue^{-V(\eta)}\cos\phi\geq \ue^{-V(L)}.
\end{eqnarray}
Hence, we can further deduce that
\begin{eqnarray}
\cos\phi&\geq&\ue^{V(\eta)-V(L)}\geq \ue^{V(0)-V(L)}\geq \bigg(1-\frac{\e^{\frac{1}{2}}}{\rk}\bigg).
\end{eqnarray}
Hence, we know
\begin{eqnarray}
\abs{\sin\phi}\leq\sqrt{1-\bigg(1-\frac{\e^{\frac{1}{2}}}{\rk}\bigg)^2}\leq \e^{\frac{1}{4}}.
\end{eqnarray}
Hence, when $\e$ is sufficiently small, we always have
\begin{eqnarray}
\abs{\sin\phi}\leq \e^{\frac{1}{4}}\leq \d_0.
\end{eqnarray}
This means we do not need to bother with the estimate of $\sin\phi\leq-\d_0$ as Step 1 in estimating $I$ and $II$. Also, it is not necessary to discuss the case $\abs{\cos\phi}<\d_0$.

Then the integral $\displaystyle\int_0^{\eta}(\cdots)$ is similar to the argument in Region I, and the integral $\displaystyle\int_{\eta}^{\eta^+}(\cdots)$ is similar to the argument in Region II.
Hence, combining the methods in Region I and Region II, we can show the desired result, i.e.
\begin{eqnarray}
\abs{III}&\leq&C\Big(1+\abs{\ln(\e)}\Big)\bigg(\lnm{p_{\a}}+\lnnm{S_{\a}}\bigg)\\
&&+C\Big(1+\abs{\ln(\e)}\Big)\bigg(\frac{1}{\d}+\frac{\e}{\d^2}\bigg)\bigg(\lnnm{\v}+\lnnm{S}\bigg)\no\\
&&+C\Big(1+\abs{\ln(\e)}\Big)\bigg(\d+\Big(1+\abs{\ln(\e)}\Big)\sqrt{\d_0}\bigg)\lnnm{\a}.\no
\end{eqnarray}

\subsubsection{Estimate of Normal Derivative}

Combining the analysis in these three regions, we have
\begin{eqnarray}
\abs{\a}&\leq&C\Big(1+\abs{\ln(\e)}\Big)\bigg(\lnm{p_{\a}}+\lnnm{S_{\a}}\bigg)\\
&&+\frac{C}{\d_0^2}\bigg(\lnm{p}+\lnm{\zeta\frac{\p p}{\p\phi}}+\lnnm{S}+\lnnm{\zeta\frac{\p S}{\p\eta}}+\lnnm{\v}\bigg)\no\\
&&+C\Big(1+\abs{\ln(\e)}\Big)\bigg(\frac{1}{\d}+\frac{\e}{\d^2}\bigg)\bigg(\lnnm{\v}+\lnnm{S}\bigg)\no\\
&&+C\Big(1+\abs{\ln(\e)}\Big)\bigg(\d+\Big(1+\abs{\ln(\e)}\Big)\sqrt{\d_0}\bigg)\lnnm{\a}.\no
\end{eqnarray}
Taking supremum over all $(\eta,\phi)$, we have
\begin{eqnarray}\label{pt 05}
\lnnm{\a}&\leq&C\Big(1+\abs{\ln(\e)}\Big)\bigg(\lnm{p_{\a}}+\lnnm{S_{\a}}\bigg)\\
&&+\frac{C}{\d_0^2}\bigg(\lnm{p}+\lnm{\zeta\frac{\p p}{\p\phi}}+\lnnm{S}+\lnnm{\zeta\frac{\p S}{\p\eta}}+\lnnm{\v}\bigg)\no\\
&&+C\Big(1+\abs{\ln(\e)}\Big)\bigg(\frac{1}{\d}+\frac{\e}{\d^2}\bigg)\bigg(\lnnm{\v}+\lnnm{S}\bigg)\no\\
&&+C\Big(1+\abs{\ln(\e)}\Big)\bigg(\d+\Big(1+\abs{\ln(\e)}\Big)\sqrt{\d_0}\bigg)\lnnm{\a}.\no
\end{eqnarray}
Then we choose these constants to perform absorbing argument. First we choose $\d=C_0\Big(1+\abs{\ln(\e)}\Big)^{-1}$ for $C_0>0$ sufficiently small such that
\begin{eqnarray}
C\d\leq\frac{1}{4}.
\end{eqnarray}
Then we take $\d_0=C_0\Big(1+\abs{\ln(\e)}\Big)^{-4}$ such that
\begin{eqnarray}
C\Big(1+\abs{\ln(\e)}\Big)^2\sqrt{\d_0}\leq \frac{1}{4}.
\end{eqnarray}
for $\e$ sufficiently small. Note that this mild decay of $\d_0$ with respect to $\e$ also justifies the assumption in Case III that
\begin{eqnarray}
\e^{\frac{1}{4}}\leq \frac{\d_0}{2},
\end{eqnarray}
for $\e$ sufficiently small. Hence, we can absorb all the term related to $\lnnm{\a}$ on the right-hand side of (\ref{pt 05}) to the left-hand side to obtain
\begin{eqnarray}
\lnnm{\a}&\leq&C\abs{\ln(\e)}\bigg(\lnm{p_{\a}}+\lnnm{S_{\a}}\bigg)\\
&&+C\abs{\ln(\e)}^8\bigg(\lnm{p}+\lnm{\zeta\dfrac{\p p}{\p\phi}}+\lnnm{S}+\lnnm{\zeta\dfrac{\p S}{\p\eta}}+\lnnm{\v}\bigg).\no
\end{eqnarray}

\subsection{A Priori Estimate of Derivatives}

In this subsection, we further estimate the normal and velocity derivatives.
\begin{theorem}\label{pt theorem 1}
We have
\begin{eqnarray}
&&\lnnm{\zeta\frac{\p\v}{\p\eta}}+\lnnm{F(\eta)\cos\phi\frac{\p\v}{\p\phi}}\\
&\leq&C\abs{\ln(\e)}^8\bigg(\lnm{p}+\lnm{(\e+\zeta)\dfrac{\p p}{\p\phi}}+\lnnm{S}+\lnnm{\zeta\dfrac{\p S}{\p\eta}}+\lnnm{\v}\bigg).\no
\end{eqnarray}
\end{theorem}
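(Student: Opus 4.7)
The plan is to derive the transport problem satisfied by $\a=\zeta\dfrac{\p\v}{\p\eta}$, apply the a priori $L^{\infty}$ bound for $\a$ established in the preceding subsection, and then close the resulting circular dependence on $\dfrac{\p\v}{\p\phi}$ using the original Milne equation together with the geometric identity $|\sin\phi|\lesssim\zeta$.

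First I would compute $p_\a$ and $S_\a$ explicitly. Differentiating the $\v$-equation in $\eta$, multiplying by $\zeta$, and using the characteristic invariance $\sin\phi\,\p_\eta\zeta+F\cos\phi\,\p_\phi\zeta=0$, together with the observation $\tilde\a=\zeta\,\overline{\p_\eta\v}$, one finds that $\a$ satisfies exactly the transport problem treated in Regions I, II, III, with source
\[
S_\a=-\zeta F'(\eta)\cos\phi\,\frac{\p\v}{\p\phi}+\zeta\,\frac{\p S}{\p\eta}.
\]
The boundary datum is recovered from the $\v$-equation at $\eta=0$. Since $\zeta(0,\phi)=|\sin\phi|$, for $\sin\phi>0$ one obtains
\[
p_\a(\phi)=-F(0)\cos\phi\,\frac{\p p}{\p\phi}(\phi)-p(\phi)+\bar\v(0)+S(0,\phi).
\]
Using $|F(0)|=\e/\rk$ gives $\lnm{p_\a}\leq C\lnm{(\e+\zeta)\p_\phi p}+C\lnm{p}+C\lnnm{\v}+C\lnnm{S}$, while $|F'/F|=\e/(\rk-\e\eta)\leq C\e$ yields $\lnnm{S_\a}\leq C\e\,\lnnm{F\cos\phi\,\p_\phi\v}+\lnnm{\zeta\,\p_\eta S}$.

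The key closure step is to bound $\lnnm{F\cos\phi\,\p_\phi\v}$ in terms of $\lnnm{\a}$. Rearranging the $\v$-equation gives
\[
F(\eta)\cos\phi\,\frac{\p\v}{\p\phi}=-\frac{\sin\phi}{\zeta}\a-\v+\bar\v+S.
\]
The identity $\rk^2\zeta^2=(2\rk-\e\eta)\e\eta+(\rk-\e\eta)^2\sin^2\phi\geq(\rk-\e\eta)^2\sin^2\phi$ yields $|\sin\phi|\leq C\zeta$, so the apparently singular factor $\sin\phi/\zeta$ is in fact uniformly bounded. Hence
\[
\lnnm{F\cos\phi\,\p_\phi\v}\leq C\bigl(\lnnm{\a}+\lnnm{\v}+\lnnm{S}\bigr).
\]

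Inserting these bounds into the a priori estimate for $\lnnm{\a}$, the term $C|\ln\e|\lnnm{S_\a}$ contributes a factor $C\e|\ln\e|\lnnm{\a}$ on the right, which for $\e$ sufficiently small is absorbed on the left. This yields the asserted bound on $\lnnm{\zeta\,\p_\eta\v}$, and the corresponding bound on $\lnnm{F\cos\phi\,\p_\phi\v}$ follows by substitution into the rearranged equation. The main obstacle is precisely this circular dependence: $S_\a$ contains $\p_\phi\v$ through $F'\cos\phi\,\p_\phi\v$, and a priori there is no reason this should not obstruct closure. The resolution rests on two structural features of the $\e$-Milne problem with geometric correction, namely the smallness $|F'/F|=O(\e)$ coming from the specific form of $F$ and the weight bound $|\sin\phi|\lesssim\zeta$ built into $\zeta$. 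It is noteworthy that no estimate of $\dfrac{\p S}{\p\phi}$ enters, which is essential for the later $W^{2,\infty}$ analysis of the regular boundary layer.
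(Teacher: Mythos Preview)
Your proposal is correct and follows essentially the same route as the paper: compute $p_{\a}$ and $S_{\a}$ for the transport problem satisfied by $\a=\zeta\,\p_\eta\v$, invoke the a priori bound on $\lnnm{\a}$ from the preceding subsections, use $|F'/F|\leq C\e$ to get the small factor in front of $\lnnm{F\cos\phi\,\p_\phi\v}$, and close via the identity $|\sin\phi|\leq C\zeta$ applied to the rearranged $\v$-equation. The only cosmetic difference is the order of absorption: the paper substitutes the $\a$-bound into the expression for $\lnnm{F\cos\phi\,\p_\phi\v}$ and absorbs that quantity, whereas you substitute in the other direction and absorb $\lnnm{\a}$; the two are equivalent.
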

\begin{proof}
We have
\begin{eqnarray}\label{pt 06}
\lnnm{\a}&\leq&C\abs{\ln(\e)}\bigg(\lnm{p_{\a}}+\lnnm{S_{\a}}\bigg)\\
&&+C\abs{\ln(\e)}^8\bigg(\lnm{p}+\lnm{\zeta\dfrac{\p p}{\p\phi}}+\lnnm{S}+\lnnm{\zeta\dfrac{\p S}{\p\eta}}+\lnnm{\v}\bigg).\no
\end{eqnarray}
Taking derivatives on both sides of (\ref{Milne difference problem}) and multiplying $\zeta$, we have
\begin{eqnarray}
p_{\a}&=&\e\cos\phi\frac{\p p}{\p\phi}+p-\bar\v(0)-S(0,\phi),\\
S_{\a}&=&\frac{\p{F}}{\p{\eta}}\zeta\cos\phi\frac{\p\v}{\p\phi}+\zeta\frac{\p S}{\p\eta}.
\end{eqnarray}
Since $\abs{F(\eta)}\leq C\e$ and $\abs{\dfrac{\p F}{\p\eta}}\leq C\e F$, we may directly estimate
\begin{eqnarray}
\lnm{p_{\a}}&\leq&C\bigg(\lnm{p}+\e\lnm{\dfrac{\p p}{\p\phi}}+\lnnm{S}+\lnnm{\v}\bigg),\\
\lnnm{S_{\a}}&\leq&C\bigg(\e\lnnm{F(\eta)\cos\phi\frac{\p\v}{\p\phi}}+\lnnm{\zeta\dfrac{\p S}{\p\eta}}\bigg)
\end{eqnarray}
Then we derive
\begin{eqnarray}
\lnnm{\a}&\leq&C\e\lnnm{F(\eta)\cos\phi\frac{\p\v}{\p\phi}}\\
&&+C\abs{\ln(\e)}^8\bigg(\lnm{p}+\lnm{(\e+\zeta)\dfrac{\p p}{\p\phi}}+\lnnm{S}+\lnnm{\zeta\dfrac{\p S}{\p\eta}}+\lnnm{\v}\bigg).\no
\end{eqnarray}
We know
\begin{eqnarray}
\lnnm{\zeta\frac{\p\v}{\p\eta}}&\leq&C\e\lnnm{F(\eta)\cos\phi\frac{\p\v}{\p\phi}}\\
&&+C\abs{\ln(\e)}^8\bigg(\lnm{p}+\lnm{(\e+\zeta)\dfrac{\p p}{\p\phi}}+\lnnm{S}+\lnnm{\zeta\dfrac{\p S}{\p\eta}}+\lnnm{\v}\bigg).\no
\end{eqnarray}
Considering the equation (\ref{Milne difference problem}), since $\zeta(\eta,\phi)\geq\abs{\sin\phi}$, we have
\begin{eqnarray}
\lnnm{F(\eta)\cos\phi\frac{\p\v}{\p\phi}}&\leq&\lnnm{\sin\phi\frac{\p\v}{\p\eta}}+\lnnm{\v}+\lnnm{\bar\v}+\lnnm{S}\\
&\leq&C\e\lnnm{F(\eta)\cos\phi\frac{\p\v}{\p\phi}}\no\\
&&+C\abs{\ln(\e)}^8\bigg(\lnm{p}+\lnm{(\e+\zeta)\dfrac{\p p}{\p\phi}}+\lnnm{S}+\lnnm{\zeta\dfrac{\p S}{\p\eta}}+\lnnm{\v}\bigg).\no
\end{eqnarray}
Absorbing $\lnnm{F(\eta)\cos\phi\dfrac{\p\v}{\p\phi}}$ into the left-hand side, we obtain
\begin{eqnarray}
\\
\lnnm{F(\eta)\cos\phi\frac{\p\v}{\p\phi}}&\leq&C\abs{\ln(\e)}^8\bigg(\lnm{p}+\lnm{(\e+\zeta)\dfrac{\p p}{\p\phi}}+\lnnm{S}+\lnnm{\zeta\dfrac{\p S}{\p\eta}}+\lnnm{\v}\bigg).\no
\end{eqnarray}
Therefore, we further derive
\begin{eqnarray}
\\
\lnnm{\zeta\frac{\p\v}{\p\eta}}&\leq&C\abs{\ln(\e)}^8\bigg(\lnm{p}+\lnm{(\e+\zeta)\dfrac{\p p}{\p\phi}}+\lnnm{S}+\lnnm{\zeta\dfrac{\p S}{\p\eta}}+\lnnm{\v}\bigg).\no
\end{eqnarray}
\end{proof}

\begin{theorem}\label{pt theorem 2}
For $K_0>0$ sufficiently small, we have
\begin{eqnarray}
&&\lnnm{\ue^{K_0\eta}\zeta\frac{\p\v}{\p\eta}}+\lnnm{\ue^{K_0\eta}F(\eta)\cos\phi\frac{\p\v}{\p\phi}}\\
&\leq&C\abs{\ln(\e)}^8\bigg(\lnm{p}+\lnm{(\e+\zeta)\dfrac{\p p}{\p\phi}}+\lnnm{\ue^{K_0\eta}S}+\lnnm{\ue^{K_0\eta}\zeta\dfrac{\p S}{\p\eta}}+\lnnm{\ue^{K_0\eta}\v}\bigg).\no
\end{eqnarray}
\end{theorem}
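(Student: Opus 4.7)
The plan is to adapt the entire mild-formulation argument of Theorem \ref{pt theorem 1} to the exponentially weighted setting by introducing $Z=\ue^{K_0\eta}\a$, exactly in the same spirit that Theorem \ref{Milne theorem 2} upgrades the unweighted $L^{\infty}$ bound of Theorem \ref{Milne theorem 1}. A direct computation shows that $Z$ satisfies
\begin{eqnarray}
\sin\phi\frac{\p Z}{\p\eta}+F(\eta)\cos\phi\frac{\p
Z}{\p\phi}+Z=\tilde Z+\ue^{K_0\eta}S_{\a}+K_0\sin\phi\, Z,
\end{eqnarray}
with in-flow data $p_{\a}$ and specular outflow at $\eta=L$. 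Thus $Z$ still obeys the same type of mild formulation as $\a$, with two modifications: the source term carries the weight $\ue^{K_0\eta}$, and an extra perturbative term $K_0\sin\phi\, Z$ appears on the right-hand side.

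First I would reprove the analogues of Lemma \ref{Milne lemma 1} and Lemma \ref{Milne lemma 2} for the weighted integrals. Since $G_{\eta,\eta'}\geq|\eta-\eta'|$ along each characteristic, for $K_0<\tfrac12$ the weight $\ue^{K_0\eta}$ can be absorbed into a reduced exponential decay $\ue^{-G_{\eta,\eta'}/2}$, so every kernel estimate used in the six case-split of Region I (and the analogous splittings of Regions II and III) carries through with only an extra factor of $2$. The crucial point is that in Step 1 of Region I, where Formulations I and II track $\frac{\p\v}{\p\eta}$ itself, one multiplies $\v$ by $\ue^{K_0\eta}$ at the outset: the exponentially-damped Green's kernels $\exp(-G_{\eta,\eta'})$ and $\exp(-H_{\phi,\phi'})$ still dominate, now producing bounds in terms of $\lnnm{\ue^{K_0\eta}\v}$, $\lnnm{\ue^{K_0\eta}S}$, and $\lnnm{\ue^{K_0\eta}\zeta\frac{\p S}{\p\eta}}$. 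The boundary terms at $\eta=0$ still involve only $\lnm{p}$ and $\lnm{(\e+\zeta)\frac{\p p}{\p\phi}}$ since $\ue^{K_0\cdot 0}=1$.

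Next I would handle the new perturbation $K_0\sin\phi\,Z$, exactly as in Step 2 of the proof of Theorem \ref{Milne theorem 2}: by Lemma \ref{Milne lemma 2} (in its weighted form) we obtain $\lnnm{\t[K_0\sin\phi\, Z]}\leq K_0\lnnm{Z}$, which can be absorbed on the left-hand side once $K_0$ is chosen small enough. Combined with the already-small choices $\d=C_0|\ln\e|^{-1}$ and $\d_0=C_0|\ln\e|^{-4}$ from the proof of Theorem \ref{pt theorem 1}, a final absorbing argument on $\lnnm{Z}=\lnnm{\ue^{K_0\eta}\a}$ yields
\begin{eqnarray}
\lnnm{\ue^{K_0\eta}\a}
&\leq& C\abs{\ln(\e)}\bigg(\lnm{p_{\a}}+\lnnm{\ue^{K_0\eta}S_{\a}}\bigg)\\
&&+C\abs{\ln(\e)}^8\bigg(\lnm{p}+\lnm{\zeta\tfrac{\p p}{\p\phi}}+\lnnm{\ue^{K_0\eta}S}+\lnnm{\ue^{K_0\eta}\zeta\tfrac{\p S}{\p\eta}}+\lnnm{\ue^{K_0\eta}\v}\bigg).\no
\end{eqnarray}
Plugging in $p_{\a}$ and $S_{\a}$ as in the proof of Theorem \ref{pt theorem 1}, using $|F|\leq C\e$ and $|\p_{\eta}F|\leq C\e F$, and absorbing $\e\lnnm{\ue^{K_0\eta}F\cos\phi\,\p_{\phi}\v}$ by choosing $\e$ small, completes the bound for the normal-derivative piece. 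The velocity-derivative piece then follows by rearranging the original equation \eqref{Milne difference problem}, multiplied by $\ue^{K_0\eta}$, exactly as at the end of Theorem \ref{pt theorem 1}.

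The main obstacle I anticipate is bookkeeping in the six sub-regions of Region I and the parallel splittings in Regions II and III: the weighted versions of the logarithmic integrals such as $\int_0^{\eta}|\ln(\eta')|\exp(-C(\eta-\eta')/\sin\phi)\,\ud\eta'$ need to be redone with an extra factor $\ue^{K_0(\eta-\eta')}$, and one must verify that the resulting integrals remain uniformly bounded once $K_0<\tfrac12$. These are routine but delicate, since each case previously extracted either a factor of $\delta$, $\sqrt{\delta_0}$, or a power of $|\ln\e|$, and in each case the exponential weight must shift smoothly into the corresponding norm on the data side without destroying the $|\ln\e|^8$ accounting.
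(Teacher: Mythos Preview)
Your proposal is correct and follows essentially the same approach as the paper: the paper's proof is a one-line remark that the argument of Theorem \ref{pt theorem 1} goes through verbatim once one observes that the exponential weight adds the perturbative term $K_0\a\sin\phi$ to $S_{\a}$, which is absorbed on the left for $K_0$ small. Your write-up spells out in detail the weighted bookkeeping (absorbing $\ue^{K_0(\eta-\eta')}$ into $\exp(-G_{\eta,\eta'}/2)$ via $K_0<\tfrac12$, handling $\t[K_0\sin\phi\,Z]$ as in Theorem \ref{Milne theorem 2}) that the paper leaves implicit, but the strategy is identical.
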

\begin{proof}
This proof is almost identical to Theorem \ref{pt theorem 1}. The only difference is that $S_{\a}$ is added by $K_0\a\sin\phi$. When $K_0$ is sufficiently small, we can also absorb them into the left-hand side. Hence, this is obvious.
\end{proof}

\newpage

\section{Diffusive Limit}

\subsection{Analysis of Regular Boundary Layer}

In this subsection, we will justify that the regular boundary layers are all well-defined. We divide it into several steps:\\
\ \\
Step 1: Well-Posedness of $\ub_0$.\\
$\ub_0$ satisfies the $\e$-Milne problem with geometric correction
\begin{eqnarray}
\left\{
\begin{array}{l}
\sin\phi\dfrac{\p \ub_0 }{\p\eta}+F(\eta)\cos\phi\dfrac{\p
\ub_0 }{\p\phi}+\ub_0 -\bub_0 =0,\\\rule{0ex}{1.5em}
\ub_0 (0,\tau,\phi)=\gb(\tau,\phi)-\mathscr{F}_0(\tau)\ \ \text{for}\ \
\sin\phi>0,\\\rule{0ex}{1.5em}
\ub_0 (L,\tau,\phi)=\ub_0 (L,\tau,\rr[\phi]),
\end{array}
\right.
\end{eqnarray}
Therefore, since $\lnm{\gb}\leq C$, by Theorem \ref{Milne theorem 2}, we know
\begin{eqnarray}
\lnnm{\ue^{K_0\eta}\ub_0}\leq C.
\end{eqnarray}
\ \\
Step 2: Tangential Derivatives of $\ub_0$.\\
The $\tau$ derivative $W=\dfrac{\p\ub_0}{\p\tau}$ satisfies
\begin{eqnarray}
\left\{
\begin{array}{l}
\sin\phi\dfrac{\p W}{\p\eta}+F(\eta)\cos\phi\dfrac{\p W}{\p\phi}+W-\bar W=-\dfrac{\rk'}{\rk-\e\eta}F(\eta)\cos\phi\dfrac{\p \ub_0}{\p\phi},\\\rule{0ex}{1.5em}
W (0,\tau,\phi)=\dfrac{\p\gb}{\p\tau}(\tau,\phi)-\dfrac{\p\mathscr{F}_0}{\p\tau}(\tau)\ \ \text{for}\ \
\sin\phi>0,\\\rule{0ex}{1.5em}
W (L,\tau,\phi)=W (L,\tau,\rr[\phi]),
\end{array}
\right.
\end{eqnarray}
where $\rk'$ represents the $\theta$ derivative of $\rk$. Here we need the regularity estimates of $\ub_0$.

Based on Theorem \ref{pt theorem 2}, we know
\begin{eqnarray}
\\
\lnnm{\ue^{K_0\eta}F(\eta)\cos\phi\frac{\p\ub_0}{\p\phi}}
&\leq&C\abs{\ln(\e)}^8\bigg(\lnm{\gb}+\lnm{(\e+\zeta)\dfrac{\p\gb}{\p\phi}}+\lnnm{\ue^{K_0\eta}\ub_0}\bigg)\leq C\abs{\ln(\e)}^8.\no
\end{eqnarray}
Note that here although $\lnm{\dfrac{\p\gb}{\p\phi}}\leq C\e^{-\alpha}$, with the help of $\e+\zeta$, we can get rid of this negative power. Therefore, by Theorem \ref{Milne theorem 2}, we have
\begin{eqnarray}
\lnnm{\ue^{K_0\eta}W}\leq C\abs{\ln(\e)}^8.
\end{eqnarray}
\ \\
Step 3: Well-Posedness of $\ub_1$.\\
$\ub_1$ satisfies the $\e$-Milne problem with geometric correction
\begin{eqnarray}
\left\{
\begin{array}{l}
\sin\phi\dfrac{\p \ub_1 }{\p\eta}+F(\eta)\cos\phi\dfrac{\p
\ub_1 }{\p\phi}+\ub_1 -\bub_1 =\dfrac{W}{\rk-\e\eta}\cos\phi,\\\rule{0ex}{1.5em}
\ub_1 (0,\tau,\phi)=\vw\cdot\nx\u_0(\vx_0,\vw)-\mathscr{F} _{1,L}(\tau)\ \ \text{for}\ \
\sin\phi>0,\\\rule{0ex}{1.5em}
\ub_1 (L,\tau,\phi)=\ub_1 (L,\tau,\rr[\phi]).
\end{array}
\right.
\end{eqnarray}
Therefore, by Theorem \ref{Milne theorem 2}, we know
\begin{eqnarray}
\lnnm{\ue^{K_0\eta}\ub_1}\leq C\lnnm{\ue^{K_0\eta}W}\leq C\abs{\ln(\e)}^8.
\end{eqnarray}
\ \\
Step 4: Tangential Derivatives of $\ub_1$.\\
The $\tau$ derivative $V=\dfrac{\p\ub_1}{\p\tau}$ satisfies
\begin{eqnarray}
\left\{
\begin{array}{l}
\sin\phi\dfrac{\p V}{\p\eta}+F(\eta)\cos\phi\dfrac{\p V}{\p\phi}+V-\bar V=S_1+S_2+S_3,\\\rule{0ex}{1.5em}
V (0,\tau,\phi)=\dfrac{\p}{\p\tau}\bigg(\vw\cdot\nx\u_0(\vx_0,\vw)-\mathscr{F} _{1,L}(\tau)\bigg)\ \ \text{for}\ \
\sin\phi>0,\\\rule{0ex}{1.5em}
V (L,\tau,\phi)=V (L,\tau,\rr[\phi]),
\end{array}
\right.
\end{eqnarray}
where
\begin{eqnarray}
S_1&=&-\frac{\rk'}{\rk-\e\eta}F(\eta)\cos\phi\frac{\p \ub_1}{\p\phi},\\
S_2&=&-\dfrac{\rk'}{(\rk-\e\eta)^2}W\cos\phi,\\
S_3&=&\dfrac{1}{\rk-\e\eta}\cos\phi\dfrac{\p W}{\p\tau}.
\end{eqnarray}
Based on Theorem \ref{pt theorem 2}, we have
\begin{eqnarray}
\lnnm{\ue^{K_0\eta}S_1}&\leq&C\lnnm{\ue^{K_0\eta}F(\eta)\cos\phi\frac{\p \ub_1}{\p\phi}}\\
&\leq& C\bigg(\lnnm{\ue^{K_0\eta}\dfrac{W}{\rk-\e\eta}\cos\phi}+\lnnm{\ue^{K_0\eta}\zeta\frac{\p}{\p\eta}\bigg(\dfrac{W}{\rk-\e\eta}\cos\phi\bigg)}\bigg)\no\\
&\leq& C\bigg(\lnnm{\ue^{K_0\eta}W}+\lnnm{\ue^{K_0\eta}\zeta\frac{\p W}{\p\eta}}\bigg),\no\\
\lnnm{\ue^{K_0\eta}S_2}&\leq&C\lnnm{\ue^{K_0\eta}\dfrac{\rk'}{(\rk-\e\eta)^2}W\cos\phi}\\
&\leq& C\lnnm{\ue^{K_0\eta}W},\no\\
\lnnm{\ue^{K_0\eta}S_3}&\leq&C\lnnm{\ue^{K_0\eta}\frac{\p W}{\p\tau}}.
\end{eqnarray}
\ \\
Step 5: Tangential Derivatives of $W$.\\
The $\tau$ derivative $Z=\dfrac{\p W}{\p\tau}$ satisfies
\begin{eqnarray}
\left\{
\begin{array}{l}
\sin\phi\dfrac{\p Z}{\p\eta}+F(\eta)\cos\phi\dfrac{\p Z}{\p\phi}+Z-\bar Z=T_1+T_2,\\\rule{0ex}{1.5em}
Z (0,\tau,\phi)=\dfrac{\p^2\gb}{\p\tau^2}(\tau,\phi)-\dfrac{\p^2\mathscr{F}_0}{\p\tau^2}(\tau)\ \ \text{for}\ \
\sin\phi>0,\\\rule{0ex}{1.5em}
Z (L,\tau,\phi)=Z (L,\tau,\rr[\phi]),
\end{array}
\right.
\end{eqnarray}
where
\begin{eqnarray}
T_1&=&-\dfrac{\rk'}{\rk-\e\eta}F(\eta)\cos\phi\dfrac{\p W}{\p\phi},\\
T_2&=&-\dfrac{\p}{\p\tau}\bigg(\dfrac{\rk'}{\rk-\e\eta}\bigg)F(\eta)\cos\phi\dfrac{\p\ub_0}{\p\phi}.
\end{eqnarray}
Based on Theorem \ref{pt theorem 2}, we have
\begin{eqnarray}
\lnnm{\ue^{K_0\eta}T_1}&\leq&C\lnnm{\ue^{K_0\eta}F(\eta)\cos\phi\dfrac{\p W}{\p\phi}},\\
\lnnm{\ue^{K_0\eta}T_2}&\leq&C\lnnm{F(\eta)\cos\phi\dfrac{\p\ub_0}{\p\phi}}\leq C\abs{\ln(\e)}^8.
\end{eqnarray}
Therefore, we have
\begin{eqnarray}
\lnnm{\ue^{K_0\eta}S_3}&\leq& \lnnm{\ue^{K_0\eta}Z}\leq C\abs{\ln(\e)}^8+C\lnnm{\ue^{K_0\eta}F(\eta)\cos\phi\dfrac{\p W}{\p\phi}}.
\end{eqnarray}
In total, we have
\begin{eqnarray}
&&\lnnm{\ue^{K_0\eta}S_1}+\lnnm{\ue^{K_0\eta}S_2}+\lnnm{\ue^{K_0\eta}S_3}\\
&\leq& C\abs{\ln(\e)}^8+C\bigg(\lnnm{\ue^{K_0\eta}\zeta\frac{\p W}{\p\eta}}+\lnnm{\ue^{K_0\eta}F(\eta)\cos\phi\dfrac{\p W}{\p\phi}}\bigg).\no
\end{eqnarray}
Hence, we need the regularity estimate of $W$. However, this cannot be done directly. We will first study the normal derivative of $\ub_0$.\\
\ \\
Step 6: Regularity of Normal Derivative.\\
The normal derivative $A=\dfrac{\p\ub_0}{\p\eta}$ satisfies
\begin{eqnarray}
\left\{
\begin{array}{l}
\sin\phi\dfrac{\p A}{\p\eta}+F(\eta)\cos\phi\dfrac{\p A}{\p\phi}+A-\bar A=\dfrac{\e}{R-\e\eta}F(\eta)\cos\phi\dfrac{\p\ub_0}{\p\phi},\\\rule{0ex}{1.5em}
A (0,\tau,\phi)=\dfrac{1}{\sin\phi}\bigg(F(\eta)\cos\phi\dfrac{\p\gb}{\p\phi}(\tau,\phi)-\gb(0,\tau,\phi)+\bar\ub_0(0,\tau,\phi)\bigg)\ \ \text{for}\ \
\sin\phi>0,\\\rule{0ex}{1.5em}
A (L,\tau,\phi)=A (L,\tau,\rr[\phi]),
\end{array}
\right.
\end{eqnarray}
This is where the cut-off in $\gb$ plays a role.
Based on the construction of $\gb$, we know $\lnm{A(0,\phi,\tau)}\leq C\e^{-\alpha}$ and $\lnm{(\e+\zeta)\dfrac{\p A}{\p\phi}(0,\phi,\tau)}\leq C\e^{-\alpha}$. Therefore, using Theorem \ref{Milne theorem 2}, we have
\begin{eqnarray}
\lnnm{\ue^{K_0\eta}A}&\leq&C\bigg(\lnm{A(0,\phi,\tau)}+\lnnm{\ue^{K_0\eta}F(\eta)\cos\phi\dfrac{\p\ub_0}{\p\phi}}\bigg)\leq C\e^{-\alpha}.
\end{eqnarray}
By Theorem \ref{pt theorem 2}, we know
\begin{eqnarray}
&&\lnnm{\ue^{K_0\eta}\zeta\frac{\p A}{\p\eta}}+\lnnm{\ue^{K_0\eta}F(\eta)\cos\phi\frac{\p A}{\p\phi}}\\
&\leq& C\abs{\ln(\e)}^8\Bigg(\e^{-\alpha}+\lnnm{\ue^{K_0\eta}\frac{\e}{R-\e\eta}F(\eta)\cos\phi\frac{\p \ub_0}{\p\phi}}+\lnnm{\ue^{K_0\eta}\zeta\frac{\p}{\p{\eta}}\bigg(\frac{\e}{R-\e\eta}F(\eta)\cos\phi\frac{\p\ub_0}{\p\phi}\bigg)}\Bigg)\no\\
&\leq& C\abs{\ln(\e)}^8\Bigg(\e^{-\alpha}+\e\lnnm{\ue^{K_0\eta}F(\eta)\cos\phi\frac{\p \ub_0}{\p\phi}}+\e\lnnm{\ue^{K_0\eta}F(\eta)\cos\phi\frac{\p A}{\p\phi}}\Bigg)\no
\end{eqnarray}
Then we may absorb $\lnnm{\ue^{K_0\eta}F(\eta)\cos\phi\dfrac{\p A}{\p\phi}}$ into the left-hand side to obtain
\begin{eqnarray}
&&\lnnm{\ue^{K_0\eta}\zeta\frac{\p A}{\p\eta}}+\lnnm{\ue^{K_0\eta}F(\eta)\cos\phi\frac{\p A}{\p\phi}}
\leq C\e^{-\alpha}\abs{\ln(\e)}^8.
\end{eqnarray}
\ \\
Step 7: Regularity of Tangential Derivative.\\
We turn to the regularity of $W$. Based on Theorem \ref{pt theorem 2}, we have
\begin{eqnarray}
&&\lnnm{\ue^{K_0\eta}\zeta\frac{\p W}{\p\eta}}+\lnnm{\ue^{K_0\eta}F(\eta)\cos\phi\frac{\p W}{\p\phi}}\\
&\leq& C\abs{\ln(\e)}^8\Bigg(\e^{-\alpha}+\lnnm{\ue^{K_0\eta}\dfrac{\rk'}{\rk-\e\eta}F(\eta)\cos\phi\dfrac{\p \ub_0}{\p\phi}}+\lnnm{\ue^{K_0\eta}\zeta\frac{\p}{\p{\eta}}\bigg(\dfrac{\rk'}{\rk-\e\eta}F(\eta)\cos\phi\dfrac{\p \ub_0}{\p\phi}\bigg)}\Bigg)\no\\
&\leq& C\abs{\ln(\e)}^8\Bigg(\e^{-\alpha}+\lnnm{\ue^{K_0\eta}F(\eta)\cos\phi\dfrac{\p \ub_0}{\p\phi}}+\lnnm{\ue^{K_0\eta}F(\eta)\cos\phi\frac{\p A}{\p\phi}}\Bigg)\no\\
&\leq&C\e^{-\alpha}\abs{\ln(\e)}^{16}.\no
\end{eqnarray}
\ \\
Step 8: Synthesis.\\
Using above estimates, we actually have shown that
\begin{eqnarray}
\lnnm{\ue^{K_0\eta}V}\leq C\e^{-\alpha}\abs{\ln(\e)}^{16}.
\end{eqnarray}

\subsection{Analysis of Singular Boundary Layer}

In this subsection, we will justify that the singular boundary layers are all well-defined. We divide it into several steps:\\
\ \\
Step 1: Well-Posedness of $\uf_0$.\\
$\uf_0$ satisfies the $\e$-Milne problem with geometric correction
\begin{eqnarray}
\left\{
\begin{array}{l}
\sin\phi\dfrac{\p \uf_0 }{\p\eta}+F(\eta)\cos\phi\dfrac{\p
\uf_0 }{\p\phi}+\uf_0 -\buf_0 =0,\\\rule{0ex}{1.5em}
\uf_0 (0,\tau,\phi)=\gf(\tau,\phi)-\mathfrak{F} _{0,L}(\tau)\ \ \text{for}\ \
\sin\phi>0,\\\rule{0ex}{1.5em}
\uf_0 (L,\tau,\phi)=\uf_0 (L,\tau,\rr[\phi]).
\end{array}
\right.
\end{eqnarray}
Therefore, by Theorem \ref{Milne theorem 2}, we know
\begin{eqnarray}
\lnnm{\ue^{K_0\eta}\uf_0}\leq C.
\end{eqnarray}
However, this is not sufficient for future use and we need more detailed analysis. We will divide the domain $(\eta,\phi)\in[0,L]\times[-\pi,\pi)$ into two regions:
\begin{itemize}
\item
Region I $\chi_1$: $0\leq\zeta<2\e^{\alpha}$.
\item
Region II $\chi_2$: $2\e^{\alpha}\leq\zeta\leq1$.
\end{itemize}
Here we use $\chi_i$ to represent either the corresponding region or the indicator function. It is easy to see that $\gf=0$ in Region II. Similarly we decompose the solution $\uf_0=\chi_1\uf_0+\chi_2\uf_0=f_1+f_2$ in these two regions. In the following, the estimates for $f_i$ will be restricted to the region $\chi_i$ for $i=1,2$. Using Theorem \ref{Milne lemma 6}, we can easily show that
\begin{eqnarray}
\tnnm{\ue^{K_0\eta}\uf_0}&\leq&C\e^{\alpha}.
\end{eqnarray}
The key to $L^{\infty}$ estimates in Theorem \ref{Milne theorem 3} is Lemma \ref{Milne lemma 3} and Lemma \ref{Milne lemma 5}. Their proofs are basically tracking along the characteristics. Hence, we know
\begin{eqnarray}
\lnnm{\ue^{K_0\eta}\buf_0}&\leq& C\bigg(\e^{\alpha}\ltnm{\ue^{K_0\eta}f_1}+\ltnm{\ue^{K_0\eta}f_2}\bigg)\\
&\leq&C\bigg(\tnnm{\ue^{K_0\eta}\uf_0}+\d\e^{\alpha}\lnnm{\ue^{K_0\eta}f_1}+\d\lnnm{\ue^{K_0\eta}f_2}\bigg).\no
\end{eqnarray}
Thus, considering $\chi_1\gf=\gf$ and $\chi_2\gf=0$, we may directly obtain
\begin{eqnarray}
\lnnm{\ue^{K_0\eta}f_1}&\leq&C\bigg(\lnm{\chi_1\gf}+\lnnm{\ue^{K_0\eta}\buf_0}\bigg)\\
&\leq&C\bigg(\lnm{\chi_1\gf}+\tnnm{\ue^{K_0\eta}\uf_0}+\d\e^{\alpha}\lnnm{\ue^{K_0\eta}f_1}+\d\lnnm{\ue^{K_0\eta}f_2}\bigg)\no\\
&\leq&C\bigg(1+\d\e^{\alpha}\lnnm{\ue^{K_0\eta}f_1}+\d\lnnm{\ue^{K_0\eta}f_2}\bigg),\no\\
\lnnm{\ue^{K_0\eta}f_2}&\leq&C\bigg(\lnm{\chi_2\gf}+\lnnm{\ue^{K_0\eta}\buf_0}\bigg)\\
&\leq&C\bigg(\lnm{\chi_2\gf}+\tnnm{\ue^{K_0\eta}\uf_0}+\d\e^{\alpha}\lnnm{\ue^{K_0\eta}f_1}+\d\lnnm{\ue^{K_0\eta}f_2}\bigg)\no\\
&\leq&C\bigg(\e^{\alpha}+\d\e^{\alpha}\lnnm{\ue^{K_0\eta}f_1}+\d\lnnm{\ue^{K_0\eta}f_2}\bigg).\no
\end{eqnarray}
Letting $\d$ small, absorbing $\lnnm{\ue^{K_0\eta}f_1}$ and $\lnnm{\ue^{K_0\eta}f_2}$, we know
\begin{eqnarray}
\lnnm{\ue^{K_0\eta}f_1}&\leq&C\bigg(1+\d\lnnm{\ue^{K_0\eta}f_2}\bigg),\\
\lnnm{\ue^{K_0\eta}f_2}&\leq&C\bigg(\e^{\alpha}+\d\e^{\alpha}\lnnm{\ue^{K_0\eta}f_1}\bigg).
\end{eqnarray}
Combining them together, we can easily see that
\begin{eqnarray}
\lnnm{\ue^{K_0\eta}f_1}&\leq&C,\\
\lnnm{\ue^{K_0\eta}f_2}&\leq&C\e^{\alpha}.
\end{eqnarray}
In total, we can derive
\begin{eqnarray}
\lnnm{\ue^{K_0\eta}\buf_0}\leq C\e^{\alpha}.
\end{eqnarray}
\ \\
Step 2: Regularity of $\uf_0$.\\
This is very similar to the well-posedness proof, we will also consider the regularity of $\uf_0$ in two regions. Note that in the proof of Theorem \ref{pt theorem 2}, the $L^{\infty}$ estimates relies on two kinds of quantities:
\begin{itemize}
\item
$\abs{\zeta\dfrac{\p\uf_0}{\p\eta}}$ on the same characteristics.
\item
$\ds\int_{-\pi}^{\pi}\zeta\dfrac{\p\uf_0}{\p\eta}\ud{\phi}$ for some $\eta>0$.
\end{itemize}
Correspondingly, we may handle them separately: for the first case, since $\zeta$ is preserved along the characteristics, we can directly separate the estimate of $f_1$ and $f_2$; for the second case, we may use the simple domain decomposition
\begin{eqnarray}
\int_{-\pi}^{\pi}\zeta\dfrac{\p\uf_0}{\p\eta}(\eta,\phi)\ud{\phi}&=&\int_{\chi_1}\zeta\dfrac{\p f_1}{\p\eta}\ud{\phi}+\int_{\chi_2}\zeta\dfrac{\p f_2}{\p\eta}\ud{\phi}\leq C\bigg(\e^{\alpha}\ltnm{\zeta\dfrac{\p f_1}{\p\eta}}+\ltnm{\zeta\dfrac{\p f_2}{\p\eta}}\bigg).
\end{eqnarray}
Then following a similar absorbing argument as in above well-posedness proof, we have
\begin{eqnarray}
&&\lnnm{\ue^{K_0\eta}\zeta\frac{\p f_1}{\p\eta}}+\lnnm{\ue^{K_0\eta}F(\eta)\cos\phi\frac{\p f_1}{\p\phi}}\\
&\leq& C\abs{\ln(\e)}^8\bigg(\lnm{\gf}+\lnm{(\e+\zeta)\frac{\p\gf}{\p\phi}}+\lnnm{\ue^{K_0\eta}\uf_0}\bigg)\leq C\abs{\ln(\e)}^8,\no\\
&&\lnnm{\ue^{K_0\eta}\zeta\frac{\p f_2}{\p\eta}}+\lnnm{\ue^{K_0\eta}F(\eta)\cos\phi\frac{\p f_2}{\p\phi}}\\
&\leq& C\abs{\ln(\e)}^8\bigg(\lnnm{\ue^{K_0\eta}f_2}+\e^{\alpha}\lnnm{\ue^{K_0\eta}f_1}\bigg) \leq C\e^{\alpha}\abs{\ln(\e)}^8.\no
\end{eqnarray}
Note that although $\lnm{\dfrac{\p\gf}{\p\phi}}\leq C\e^{-\alpha}$, with the help of $\e+\zeta$, we can get rid of this negative power. \\
\ \\
Step 3: Tangential Derivatives of $\uf_0$.\\
The $\tau$ derivative $P=\dfrac{\p\uf_0}{\p\tau}$ satisfies
\begin{eqnarray}
\left\{
\begin{array}{l}
\sin\phi\dfrac{\p P}{\p\eta}+F(\eta)\cos\phi\dfrac{\p P}{\p\phi}+P-\bar P=-\dfrac{\rk'}{\rk-\e\eta}F(\eta)\cos\phi\dfrac{\p \uf_0}{\p\phi},\\\rule{0ex}{2.0em}
P (0,\tau,\phi)=\dfrac{\p\gf}{\p\tau}(\tau,\phi)-\dfrac{\p\mathfrak{F}_{0,L}}{\p\tau}(\tau)\ \ \text{for}\ \
\sin\phi>0,\\\rule{0ex}{2.0em}
P (L,\tau,\phi)=P (L,\tau,\rr[\phi]).
\end{array}
\right.
\end{eqnarray}
It is easy to check that
\begin{eqnarray}
\int_{-\pi}^{\pi}\cos\phi\dfrac{\p \uf_0}{\p\phi}\ud{\phi}=\int_{-\pi}^{\pi}\uf_0\sin\phi\ud{\phi}=0,
\end{eqnarray}
due to the orthogonal property. Hence, using Theorem \ref{Milne lemma 6} with $S_Q=0$, we have
\begin{eqnarray}
\tnnm{\ue^{K_0\eta}P}&\leq&C\e^{\alpha}\abs{\ln(\e)}^8,
\end{eqnarray}
which further implies
\begin{eqnarray}
\lnnm{\ue^{K_0\eta}P_1}&\leq&C\bigg(\lnnm{\frac{\p\gf}{\p\tau}}+\tnnm{\ue^{K_0\eta}P}+\lnnm{\ue^{K_0\eta}F(\eta)\cos\phi\frac{\p \uf_0}{\p\phi}}\bigg)\\
&\leq& C\abs{\ln(\e)}^8,\no\\
\\
\lnnm{\ue^{K_0\eta}P_2}&\leq&C\bigg(\ue^{K_0\eta}\tnnm{P}+\e^{\alpha}\lnnm{\ue^{K_0\eta}F(\eta)\cos\phi\frac{\p f_1}{\p\phi}}+\lnnm{\ue^{K_0\eta}F(\eta)\cos\phi\frac{\p f_2}{\p\phi}}\bigg)\no\\
&\leq& C\e^{\alpha}\abs{\ln(\e)}^8.\no
\end{eqnarray}
where $P_1=\dfrac{\p f_1}{\p\tau}$ and $P_2=\dfrac{\p f_2}{\p\tau}$.

\subsection{Analysis of Interior Solution}

In this subsection, we will justify that the interior solutions are all well-defined. We divide it into several steps:\\
\ \\
Step 1: Well-Posedness of $\u_0$.\\
$\u_0$ satisfies an elliptic equation
\begin{eqnarray}
\left\{
\begin{array}{l}
\u_0(\vx,\vw)=\bu_0(\vx) ,\\\rule{0ex}{1.5em} \Delta_x\bu_0(\vx)=0\ \ \text{in}\
\ \Omega,\\\rule{0ex}{1.5em}
\bu_0(\vx_0)=\mathscr{F}_{0,L}(\tau)+\mathfrak{F}_{0,L}(\tau)\ \ \text{on}\ \
\p\Omega.
\end{array}
\right.
\end{eqnarray}
Based on standard elliptic theory, we have
\begin{eqnarray}
\nm{\bu_0}_{H^3(\Omega)}\leq C\bigg(\nm{\mathscr{F}_{0,L}}_{H^{\frac{5}{2}}(\p\Omega)}+\nm{\mathfrak{F}_{0,L}}_{H^{\frac{5}{2}}(\p\Omega)}\bigg)\leq C.
\end{eqnarray}
\ \\
Step 2: Well-Posedness of $\u_1$.\\
$\u_1$ satisfies an elliptic equation
\begin{eqnarray}
\left\{
\begin{array}{rcl}
\u_1(\vx,\vw)&=&\bu_1(\vx)-\vw\cdot\nx\u_0(\vx,\vw),\\\rule{0ex}{1.5em}
\Delta_x\bu_1(\vx)&=&-\displaystyle\int_{\s^1}\Big(\vw\cdot\nx\u_{0}(\vx,\vw)\Big)\ud{\vw}\
\ \text{in}\ \ \Omega,\\\rule{0ex}{1.5em} \bu_1(\vx_0)&=&f _{1,L}(\tau)\ \ \text{on}\ \
\p\Omega.
\end{array}
\right.
\end{eqnarray}
Based on standard elliptic theory, we have
\begin{eqnarray}
\nm{\bu_1}_{H^3(\Omega)}\leq C\bigg(\nm{\mathscr{F}_{1,L}}_{H^{\frac{5}{2}}(\p\Omega)}+\nm{\u_0}_{H^{2}(\Omega)}\bigg)\leq C\abs{\ln(\e)}^8.
\end{eqnarray}
\ \\
Step 3: Well-Posedness of $\u_2$.\\
$\u_2$ satisfies an elliptic equation
\begin{eqnarray}
\left\{
\begin{array}{rcl}
\u_{2}(\vx,\vw)&=&\bu_{2}(\vx)-\vw\cdot\nx\u_{1}(\vx,\vw),\\\rule{0ex}{1.5em}
\Delta_x\bu_{2}(\vx)&=&-\displaystyle\int_{\s^1}\Big(\vw\cdot\nx\u_{1}(\vx,\vw)\Big)\ud{\vw}\
\ \text{in}\ \ \Omega,\\\rule{0ex}{1.5em} \bu_2(\vx_0)&=&0\ \ \text{on}\ \
\p\Omega.
\end{array}
\right.
\end{eqnarray}
Based on standard elliptic theory, we have
\begin{eqnarray}
\nm{\bu_2}_{H^3(\Omega)}\leq C\bigg(\nm{\bu_0}_{H^{3}(\Omega)}+\nm{\bu_1}_{H^{2}(\Omega)}\bigg)\leq C\abs{\ln(\e)}^8.
\end{eqnarray}

\subsection{Proof of Main Theorem}

\begin{theorem}\label{diffusive limit}
Assume $g(\vx_0,\vw)\in C^3(\Gamma^-)$. Then for the steady neutron
transport equation (\ref{transport}), there exists a unique solution
$u^{\e}(\vx,\vw)\in L^{\infty}(\Omega\times\s^1)$. Moreover, for any $0<\d<<1$, the solution obeys the estimate
\begin{eqnarray}
\im{u^{\e}-\u-\uu}{\Omega\times\s^1}\leq C(\d)\e^{\frac{1}{2}-\d},
\end{eqnarray}
where $\u(\vx)$ satisfies the Laplace equation with Dirichlet boundary condition
\begin{eqnarray}
\left\{
\begin{array}{l}
\Delta_x\u(\vx)=0\ \ \text{in}\
\ \Omega,\\\rule{0ex}{1.5em}
\u(\vx_0)=D(\vx_0)\ \ \text{on}\ \
\p\Omega,
\end{array}
\right.
\end{eqnarray}
and $\uu(\eta,\tau,\phi)$ satisfies the $\e$-Milne problem with geometric correction
\begin{eqnarray}
\left\{
\begin{array}{l}
\sin\phi\dfrac{\p \uu }{\p\eta}-\dfrac{\e}{\rk(\tau)-\e\eta}\cos\phi\dfrac{\p
\uu }{\p\phi}+\uu -\buu =0,\\\rule{0ex}{1.5em}
\uu (0,\tau,\phi)=g(\tau,\phi)-D(\tau)\ \ \text{for}\ \
\sin\phi>0,\\\rule{0ex}{1.5em}
\uu (L,\tau,\phi)=\uu (L,\tau,\rr[\phi]),
\end{array}
\right.
\end{eqnarray}
for $L=\e^{-\frac{1}{2}}$, $\rr[\phi]=-\phi$, $\eta$ the rescaled normal variable, $\tau$ the tangential variable, and $\phi$ the velocity variable.
\end{theorem}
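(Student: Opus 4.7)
The plan is to set
\begin{eqnarray*}
R(\vx,\vw) = u^{\e}(\vx,\vw) - \sum_{k=0}^{2}\e^k\u_k(\vx,\vw) - \psi(\mu)\Big(\ub_0 + \e\ub_1 + \uf_0\Big)(\eta,\tau,\phi),
\end{eqnarray*}
where $\psi\in C_c^{\infty}[0,\infty)$ is a cut-off equal to $1$ on $[0,\e^{1/2}\abs{\ln\e}]$ and supported in $[0,2\e^{1/2}\abs{\ln\e}]$, so that the boundary-layer variables $(\eta,\tau)$ are always well-defined on $\operatorname{supp}\psi$. Inserting this Ansatz into (\ref{transport}) produces the remainder problem
\begin{eqnarray*}
\left\{
\begin{array}{l}
\e\vw\cdot\nx R + R - \bar R = \mathcal{S}\ \ \text{in}\ \ \Omega,\\\rule{0ex}{1.2em}
R(\vx_0,\vw) = \mathcal{H}(\vx_0,\vw)\ \ \text{for}\ \ \vw\cdot\vn<0,
\end{array}
\right.
\end{eqnarray*}
whose source $\mathcal{S}$ collects three kinds of terms: (a) the interior mismatch $-\e^3\vw\cdot\nx\u_2$; (b) the boundary-layer mismatches coming from replacing $\vw\cdot\nx$ by its expression in $(\eta,\tau,\phi)$, i.e.\ the commutator between the $\e$-Milne operator with geometric correction and the true transport operator, which can be written explicitly using substitutions (\ref{substitution 1})--(\ref{substitution 5}); and (c) the cut-off commutators $\e\vw\cdot\nx\psi\cdot(\ub_0+\e\ub_1+\uf_0)$, which are supported where $\eta\gs\abs{\ln\e}$. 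The boundary term $\mathcal{H}$ equals $-\e^2\vw\cdot\nx\u_2$ by the matching in Section~2, so is $O(\e^2)$ in any norm.

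Having reduced matters to the remainder equation, the next step is to apply Theorem~\ref{LI estimate} to $R$, which bounds $\lnm{R}$ by appropriate $L^2$, $L^{\frac{2m}{2m-1}}$ and $L^{\infty}$ norms of $\mathcal{S}$ and $\mathcal{H}$. Each piece of $\mathcal{S}$ must now be estimated. For (a) and (c) this is routine: the interior solution bound $\nm{\u_2}_{H^3}\ls\abs{\ln\e}^8$ controls (a), while (c) enjoys the exponential decay $\ue^{-K_0\eta}$ from Theorem~\ref{Milne theorem 2} restricted to $\eta\gs\abs{\ln\e}$, producing $O(\e^N)$ for any $N$. The substantive contribution is (b), whose dominant term is $\cos\phi\,\dfrac{\p \ub_1}{\p\tau}$ coming from $\e^2\ub_2$-level expansion, together with $\cos\phi\,\dfrac{\p \uf_0}{\p\tau}$ from the singular layer. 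The $L^{\infty}$ bound on $\dfrac{\p\ub_1}{\p\tau}$ is $\abs{\ln\e}^{16}\e^{-\alpha}$ by the chain of estimates in Section~6.1, giving in $L^{\infty}$ a total of $\e^{1-\alpha}\abs{\ln\e}^{16}$ for the $\ub$ contribution. For $\uf_0$, the regularity $\dfrac{\p\uf_0}{\p\tau}\in L^{\infty}$ is only $\abs{\ln\e}^8$, but we interpolate in $L^{\frac{2m}{2m-1}}$ via the decomposition $\chi_1\cup\chi_2$ from Section~6.2: on $\chi_1$ of measure $\e^{\alpha}$ the derivative is $O(\abs{\ln\e}^8)$, while on $\chi_2$ we have smallness $O(\e^{\alpha}\abs{\ln\e}^8)$, so that
\begin{eqnarray*}
\nm{\p_{\tau}\uf_0}_{L^{\frac{2m}{2m-1}}}\ls \abs{\ln\e}^8\,\e^{1-\frac{1}{2m}+\frac{(2m-1)\alpha}{2m}} .
\end{eqnarray*}

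Combining the above with Theorem~\ref{LI estimate} yields
\begin{eqnarray*}
\lnm{R} \ls \abs{\ln\e}^{16}\Big(\e^{1-\alpha-\frac{1}{m}} + \e^{\alpha-\frac{1}{m}}\Big) + \text{(lower order)},
\end{eqnarray*}
and optimizing with $\alpha = \tfrac12$ and $m$ arbitrarily large gives $\lnm{R}\le C(\d)\e^{\frac{1}{2}-\d}$ for any $0<\d<<1$. Existence and uniqueness of $u^{\e}\in L^{\infty}$ follow from the same remainder estimate applied to the original equation (taking $f=0$, $h=g$), combined with a standard Banach fixed point argument on the contraction $u\mapsto \bar u$ in the mild formulation along characteristics used in the proof of Theorem~\ref{LI estimate}. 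The main obstacle is the balance in step three: the $\e^{-\alpha}$ loss from differentiating $\gb$ must be offset by $\e^{1-\alpha}$ from the factor $\e$ in front of $\e\ub_1$ and by the $\e^{-\frac{1}{m}}$ in Theorem~\ref{LI estimate}, while the $\uf_0$ contribution only becomes small through the delicate $L^{\frac{2m}{2m-1}}$ splitting; without the improved $W^{1,\infty}$ estimate of Theorem~\ref{pt theorem 1} (which removes dependence on $\dfrac{\p S}{\p\phi}$) the $\e^{-\alpha}$ loss in $\dfrac{\p\gb}{\p\phi}$ and $\dfrac{\p\gf}{\p\phi}$ would not be absorbable by the weight $\e+\zeta$, and the whole estimate would collapse.
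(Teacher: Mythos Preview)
Your proposal is essentially correct and follows the same strategy as the paper: define the remainder $R$ by subtracting the interior expansion through order $\e^2$, the regular boundary layer through order $\e$, and the zeroth-order singular boundary layer; estimate the source $\mathcal{S}$ in $L^2$, $L^{\frac{2m}{2m-1}}$, and $L^\infty$; then invoke Theorem~\ref{LI estimate} and balance $\alpha$ near $\tfrac12$ with $m$ large to obtain $\e^{\frac12-\d}$.

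The one structural difference is your cut-off $\psi$. The paper avoids it entirely by posing the Milne problem on the finite interval $[0,L]$ with $L=\e^{-1/2}$ and specular reflection at $\eta=L$, so the layer is already compactly supported in $\mu\in[0,\e^{1/2}]$ and no cut-off commutator (your term~(c)) ever appears. Your cut-off is placed at $\mu\sim\e^{1/2}\abs{\ln\e}$, i.e.\ $\eta\sim\e^{-1/2}\abs{\ln\e}$, which lies \emph{outside} $[0,L]$; so as written you would need to extend the Milne solution beyond $\eta=L$ before you can cut it off. This is repairable (either extend by the constant $f_L$ or simply drop $\psi$ and use the paper's finite-domain device), but be aware that it is a genuine technical point, not a cosmetic one. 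Apart from this, your power counting is a mild simplification of the paper's (you write $\e^{1-\alpha-\frac{1}{m}}+\e^{\alpha-\frac{1}{m}}$ where the paper tracks $\e^{1-\frac{3}{2m}-\alpha}+\e^{\frac{(2m-1)\alpha}{2m}-\frac{3}{2m}}$), but the optimized rate is the same.
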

\begin{proof}
Based on Theorem \ref{LI estimate}, we know there exists a unique $u^{\e}(\vx,\vw)\in L^{\infty}(\Omega\times\s^1)$, so we focus on the diffusive limit. We can divide the proof into several steps:\\
\ \\
Step 1: Remainder definitions.\\
We define the remainder as
\begin{eqnarray}\label{pf 1_}
R&=&u^{\e}-\sum_{k=0}^{2}\e^k\u_k-\sum_{k=0}^{1}\e^k\ub_k-\uf_0=u^{\e}-\q-\qb-\qf,
\end{eqnarray}
where
\begin{eqnarray}
\q&=&\u_0+\e\u_1+\e^2\u_2,\\
\qb&=&\ub_0+\e\ub_1,\\
\qf&=&\uf_0.
\end{eqnarray}
Noting the equation (\ref{transport temp}) is equivalent to the
equation (\ref{transport}), we write $\ll$ to denote the neutron
transport operator as follows:
\begin{eqnarray}
\ll[u]&=&\e\vw\cdot\nx u+ u-\bar u\\
&=&\sin\phi\frac{\p
u}{\p\eta}-\frac{\e}{R_{\kappa}-\e\eta}\cos\phi\bigg(\frac{\p
u}{\p\phi}+\frac{\p u}{\p\tau}\bigg)+ u-\bar u.\nonumber
\end{eqnarray}
\ \\
Step 2: Estimates of $\ll[\q]$.\\
The interior contribution can be estimated as
\begin{eqnarray}
\ll[\q]=\e\vw\cdot\nx \q+ \q-\bar
\q&=&\e^{3}\vw\cdot\nx \u_2.
\end{eqnarray}
Based on classical elliptic estimates, we have
\begin{eqnarray}
\im{\ll[\q]}{\Omega\times\s^1}&\leq&\im{\e^{3}\vw\cdot\nx \u_2}{\Omega\times\s^1}\leq C\e^{3}\im{\nx\u_2}{\Omega\times\s^1}\leq
C\e^{3}\abs{\ln(\e)}^8.
\end{eqnarray}
This implies
\begin{eqnarray}\label{pf 2_}
\tm{\ll[\q]}{\Omega\times\s^1}&\leq& C\e^{3}\abs{\ln(\e)}^8,\\
\nm{\ll[\q]}_{L^{\frac{2m}{2m-1}}(\Omega\times\s^1)}&\leq& C\e^{3}\abs{\ln(\e)}^8,\\
\im{\ll[\q]}{\Omega\times\s^1}&\leq& C\e^{3}\abs{\ln(\e)}^8.
\end{eqnarray}
\ \\
Step 3: Estimates of $\ll \qb$.\\
We need to estimate $\ub_0+\e\ub_1$. The boundary layer contribution can be
estimated as
\begin{eqnarray}\label{remainder temp 1}
\ll[\ub_0+\e\ub_1]&=&\sin\phi\frac{\p
(\ub_0+\e\ub_1)}{\p\eta}-\frac{\e}{R_{\kappa}-\e\eta}\cos\phi\bigg(\frac{\p
(\ub_0+\e\ub_1)}{\p\phi}+\frac{\p(\ub_0+\e\ub_1)}{\p\tau}\bigg)\\
&&+ (\ub_0+\e\ub_1)-
(\bub_0+\e\bub_1)\no\\
&=&-\e^2\frac{1}{R_{\kappa}-\e\eta}\cos\phi\frac{\p
\ub_1}{\p\tau}\nonumber.
\end{eqnarray}
By previous analysis, we have
\begin{eqnarray}
\im{-\e^2\frac{1}{R_{\kappa}-\e\eta}\cos\phi\frac{\p
\ub_1}{\p\tau}}{\Omega\times\s^1}&\leq&C\e^2\im{\frac{\p
\ub_1}{\p\tau}}{\Omega\times\s^1}\leq C\e^{2-\alpha}\abs{\ln(\e)}^8.
\end{eqnarray}
Also, the exponential decay of $\dfrac{\p\ub_1}{\p\tau}$ and the rescaling $\eta=\frac{\mu}{\e}$ implies
\begin{eqnarray}
\tm{-\e^2\frac{1}{R_{\kappa}-\e\eta}\cos\phi\frac{\p
\ub_1}{\p\tau}}{\Omega\times\s^1}&\leq& \e^2\tm{\frac{\p
\ub_1}{\p\tau}}{\Omega\times\s^1}\\
&\leq&\e^2\Bigg(\int_{-\pi}^{\pi}\int_0^{R_{\min}}(R_{\min}-\mu)\lnm{\frac{\p\ub_1}{\p\tau}(\mu,\tau)}^2\ud{\mu}\ud{\tau}\Bigg)^{\frac{1}{2}}\no\\
&\leq&\e^{\frac{5}{2}}\Bigg(\int_{-\pi}^{\pi}\int_0^{\frac{R_{\min}}{\e}}(R_{\min}-\e\eta)\lnm{\frac{\p\ub_1}{\p\tau}(\eta,\tau)}^2\ud{\eta}\ud{\tau}\Bigg)^{\frac{1}{2}}\no\\
&\leq&C\e^{\frac{5}{2}-\alpha}\abs{\ln(\e)}^8\Bigg(\int_{-\pi}^{\pi}\int_0^{\frac{R_{\min}}{\e}}\ue^{-2K_0\eta}\ud{\eta}\ud{\tau}\Bigg)^{\frac{1}{2}}\no\\
&\leq& C\e^{\frac{5}{2}-\alpha}\abs{\ln(\e)}^8.\no
\end{eqnarray}
Similarly, we have
\begin{eqnarray}
\nm{-\e^2\frac{1}{R_{\kappa}-\e\eta}\cos\phi\frac{\p
\ub_1}{\p\tau}}_{L^{\frac{2m}{2m-1}}(\Omega\times\s^1)}&\leq&C\e^{3-\frac{1}{2m}-\alpha}\abs{\ln(\e)}^8.
\end{eqnarray}
In total, we have
\begin{eqnarray}
\tm{\ll[\qb]}{\Omega\times\s^1}&\leq& C\e^{\frac{5}{2}-\alpha}\abs{\ln(\e)}^8,\\
\nm{\ll[\qb]}_{L^{\frac{2m}{2m-1}}(\Omega\times\s^1)}&\leq& C\e^{3-\frac{1}{2m}-\alpha}\abs{\ln(\e)}^8,\\
\im{\ll[\qb]}{\Omega\times\s^1}&\leq& C\e^{2-\alpha}\abs{\ln(\e)}^8.
\end{eqnarray}
\ \\
Step 4: Estimates of $\ll \qf$.\\
We need to estimate $\uf_0$. The boundary layer contribution can be
estimated as
\begin{eqnarray}\label{remainder temp 1.}
\ll[\uf_0]&=&\sin\phi\frac{\p
\uf_0}{\p\eta}-\frac{\e}{R_{\kappa}-\e\eta}\cos\phi\bigg(\frac{\p
\uf_0}{\p\phi}+\frac{\p\uf_0}{\p\tau}\bigg)+ \uf_0-
\buf_0\\
&=&-\e\frac{1}{R_{\kappa}-\e\eta}\cos\phi\frac{\p
\uf_0}{\p\tau}\nonumber.
\end{eqnarray}
By previous analysis, we have
\begin{eqnarray}
\im{-\e\frac{1}{R_{\kappa}-\e\eta}\cos\phi\frac{\p
\uf_0}{\p\tau}}{\Omega\times\s^1}&\leq&C\e\im{\frac{\p
\uf_0}{\p\tau}}{\Omega\times\s^1}\leq C\e\abs{\ln(\e)}^8.
\end{eqnarray}
Also, the exponential decay of $\dfrac{\p\uf_0}{\p\tau}$ and the rescaling $\eta=\dfrac{\mu}{\e}$ implies
\begin{eqnarray}
&&\tm{-\e\frac{1}{R_{\kappa}-\e\eta}\cos\phi\frac{\p
\uf_0}{\p\tau}}{\Omega\times\s^1}
\leq \e\tm{\frac{\p
\uf_0}{\p\tau}}{\Omega\times\s^1}\\
&\leq&\e\Bigg(\int_{-\pi}^{\pi}\int_0^{R_{\min}}\int_{-\pi}^{\pi}\chi_1(R_{\min}-\mu)\lnm{\frac{\p P_1}{\p\tau}(\mu,\tau)}^2\ud{\phi}\ud{\mu}\ud{\tau}\Bigg)^{\frac{1}{2}}\no\\
&&+\e\Bigg(\int_{-\pi}^{\pi}\int_0^{R_{\min}}\int_{-\pi}^{\pi}\chi_2(R_{\min}-\mu)\lnm{\frac{\p P_2}{\p\tau}(\mu,\tau)}^2\ud{\phi}\ud{\mu}\ud{\tau}\Bigg)^{\frac{1}{2}}\no\\
&\leq&\e^{\frac{3}{2}}\Bigg(\int_{-\pi}^{\pi}\int_0^{\frac{R_{\min}}{\e}}\int_{-\pi}^{\pi}\chi_1(R_{\min}-\e\eta)\lnm{\frac{\p P_1}{\p\tau}(\eta,\tau)}^2\ud{\phi}\ud{\eta}\ud{\tau}\Bigg)^{\frac{1}{2}}\no\\
&&+\e^{\frac{3}{2}}\Bigg(\int_{-\pi}^{\pi}\int_0^{\frac{R_{\min}}{\e}}\int_{-\pi}^{\pi}\chi_2(R_{\min}-\e\eta)\lnm{\frac{\p P_2}{\p\tau}(\eta,\tau)}^2\ud{\phi}\ud{\eta}\ud{\tau}\Bigg)^{\frac{1}{2}}\no\\
&\leq&C\e^{\frac{3}{2}+\frac{\alpha}{2}}\abs{\ln(\e)}^{8}\Bigg(\int_{-\pi}^{\pi}\int_0^{\frac{R_{\min}}{\e}}\ue^{-2K_0\eta}\ud{\eta}\ud{\tau}\Bigg)^{\frac{1}{2}}\no\\
&\leq& C\e^{\frac{3}{2}+\frac{\alpha}{2}}\abs{\ln(\e)}^8.\no
\end{eqnarray}
Similarly, we have
\begin{eqnarray}
\nm{-\e\frac{1}{R_{\kappa}-\e\eta}\cos\phi\frac{\p
\uf_0}{\p\tau}}_{L^{\frac{2m}{2m-1}}(\Omega\times\s^1)}&\leq&C\e^{2-\frac{1}{2m}+\alpha}\abs{\ln(\e)}^8.
\end{eqnarray}
In total, we have
\begin{eqnarray}
\tm{\ll[\qb]}{\Omega\times\s^1}&\leq& C\e^{\frac{3}{2}+\frac{\alpha}{2}}\abs{\ln(\e)}^8,\\
\nm{\ll[\qb]}_{L^{\frac{2m}{2m-1}}(\Omega\times\s^1)}&\leq& C\e^{2-\frac{1}{2m}+\frac{(2m-1)\alpha}{2m}}\abs{\ln(\e)}^8,\\
\im{\ll[\qb]}{\Omega\times\s^1}&\leq& C\e\abs{\ln(\e)}^8.
\end{eqnarray}
\ \\
Step 5: Source Term and Boundary Condition.\\
In summary, since $\ll[u^{\e}]=0$, collecting estimates in Step 2 to Step 4, we can prove
\begin{eqnarray}
\tm{\ll[R]}{\Omega\times\s^1}&\leq& C\bigg(\e^{\frac{5}{2}-\alpha}+\e^{\frac{3}{2}+\frac{\alpha}{2}}\bigg)\abs{\ln(\e)}^8,\\
\nm{\ll[R]}_{L^{\frac{2m}{2m-1}}(\Omega\times\s^1)}&\leq& C\bigg(\e^{3-\frac{1}{2m}-\alpha}+\e^{2-\frac{1}{2m}+\frac{(2m-1)\alpha}{2m}}\bigg)\abs{\ln(\e)}^8,\\
\im{\ll[R]}{\Omega\times\s^1}&\leq& C\bigg(\e^{2-\alpha}+\e\bigg)\abs{\ln(\e)}^8.
\end{eqnarray}
We can directly obtain that the boundary data is satisfied up to $O(\e)$, so we know
\begin{eqnarray}
\tm{R}{\Gamma^-}&\leq& C\e^2,\\
\nm{R}_{L^{m}(\Gamma^-)}&\leq&C\e^2,\\
\im{R}{\Gamma^-}&\leq& C\e^2
\end{eqnarray}
\ \\
Step 6: Diffusive Limit.\\
Hence, the remainder $R$ satisfies the equation
\begin{eqnarray}
\left\{
\begin{array}{l}
\e \vw\cdot\nabla_x R+R-\bar R=\ll[R]\ \ \text{for}\ \ \vx\in\Omega,\\
R=R\ \ \text{for}\ \ \vw\cdot\vn<0\ \ \text{and}\ \
\vx_0\in\p\Omega.
\end{array}
\right.
\end{eqnarray}
By Theorem \ref{LI estimate}, we have for $m$ sufficiently large,
\begin{eqnarray}
\im{R}{\Omega\times\s^1}
&\leq& C\bigg(\frac{1}{\e^{1+\frac{1}{m}}}\tm{\ll[R]}{\Omega\times\s^1}+
\frac{1}{\e^{2+\frac{1}{m}}}\nm{\ll[R]}_{L^{\frac{2m}{2m-1}}(\Omega\times\s^1)}+\im{\ll[R]}{\Omega\times\s^1}\\
&&+\frac{1}{\e^{\frac{1}{2}+\frac{1}{m}}}\nm{R}_{L^2(\Gamma^-)}+\frac{1}{\e^{\frac{1}{m}}}\nm{R}_{L^{m}(\Gamma^-)}+\im{R}{\Gamma^-}\bigg)\no,\\
&\leq& C\Bigg(\frac{1}{\e^{1+\frac{1}{m}}}\bigg(\e^{\frac{5}{2}-\alpha}+\e^{\frac{3}{2}+\frac{\alpha}{2}}\bigg)\abs{\ln(\e)}^8+
\frac{1}{\e^{2+\frac{1}{m}}}\bigg(\e^{3-\frac{1}{2m}-\alpha}+\e^{2-\frac{1}{2m}+\frac{(2m-1)\alpha}{2m}}\bigg)\abs{\ln(\e)}^8+(\e)\abs{\ln(\e)}^8\no\\
&&+\frac{1}{\e^{\frac{1}{2}+\frac{1}{m}}}(\e^2)+\frac{1}{\e^{\frac{1}{m}}}(\e^2)+(\e^2)\Bigg)\no\\
&\leq&C\bigg(\e^{1-\frac{3}{2m}-\alpha}+\e^{\frac{(2m-1)\alpha}{2m}-\frac{3}{2m}}\bigg)\abs{\ln(\e)}^8.\no
\end{eqnarray}
Here, we need
\begin{eqnarray}
1-\frac{3}{2m}-\alpha>0,\quad\frac{(2m-1)\alpha}{2m}-\frac{3}{2m}>0,
\end{eqnarray}
which means
\begin{eqnarray}
\frac{3}{2m-1}<\alpha<1-\frac{3}{2m}.
\end{eqnarray}
For $m>3$, this is always achievable. Also, we know
\begin{eqnarray}
\min_{\alpha}\left\{\e^{1-\frac{3}{2m}-\alpha}+\e^{\frac{(2m-1)\alpha}{2m}-\frac{3}{2m}}\right\}=2\e^{\frac{4m^2-14m+3}{8m^2-2m}}\leq C(\d)\e^{\frac{1}{2}-\delta}.
\end{eqnarray}
Note that the constant $C$ might depend on $m$ and thus depend on $\d$.
Since it is easy to see
\begin{eqnarray}
\im{\sum_{k=1}^{2}\e^k\u_k+\sum_{k=1}^{1}\e^k\ub_k}{\Omega\times\s^1}\leq C\e,
\end{eqnarray}
our result naturally follows. We simply take $\u=\u_0$ and $\uu=\ub_0+\uf_0$. It is obvious that $\uu$ satisfies the $\e$-Milne problem with geometric correction with the full boundary data $g(\phi,\tau)-\mathscr{F}_{0,L}(\tau)-\mathfrak{F}_{0,L}(\tau)$. This completes the proof of main theorem.
\end{proof}

\newpage


\bibliographystyle{siam}
\bibliography{Reference}

\end{document}